\newtheorem{theorem}{Theorem}[section]
\newtheorem*{theorem*}{Theorem}
\newtheorem{lemma}[theorem]{Lemma}
\newtheorem*{lemma*}{Lemma}
\newtheorem{corollary}[theorem]{Corollary}
\newtheorem{proposition}[theorem]{Proposition}
\newtheorem*{proposition*}{Proposition}
\theoremstyle{definition}
\newtheorem{definition}{Definition}[subsection]
\newtheorem{example}[theorem]{Example}
\newtheorem{remark}[theorem]{Remark}
\DeclareMathOperator{\Hom}{Hom} 
\DeclareMathOperator{\Ext}{Ext} 
\DeclareMathOperator{\Der}{Der} 
\DeclareMathOperator{\IDer}{IDer} 
\DeclareMathOperator{\HH}{HH}
\DeclareMathOperator{\im}{im}
\DeclareMathOperator{\coker}{coker}
\DeclareMathOperator{\id}{id}
\DeclareMathOperator{\ev}{ev}
\DeclareMathOperator{\Fix}{Fix}
\newcommand{\uHH}{\underline{\HH}}
\newcommand{\ud}{\underline{d}}
\newcommand{\upartial}{\underline{\partial}}
\newcommand{\uExt}{\underline{\Ext}}
\newcommand{\epi}{\twoheadrightarrow}
\newcommand{\mono}{\rightarrowtail}
\newcommand{\xmono}[1]{\overset{#1}\rightarrowtail}
\newcommand{\ot}[1]{^{\otimes#1}}
\newcommand{\E}{\mathscr{E}}
\newcommand{\C}{\mathscr{C}}
\newcommand{\M}{\text{-Mod}}
\newcommand{\Ab}{\text{Ab}}
\newcommand{\handbook}{\cite{handbook}\text{ Proposition }6.10.9}
\newcommand{\handbooktwo}{\cite{handbook}\text{ Proposition }6.10.2}
\newcounter{Bx}
\newcommand{\itemB}{%
    \addtocounter{Bx}{1}
    \item[B\theBx.]}
\title{Hochschild cohomology in toposes}
\author{Cameron Michie}
\address{Cameron Michie\\
         School of Mathematical Sciences\\
  	Queen Mary University of London\\
         London, E1 4NS\\
        United Kingdom}
\email{c.michie@qmul.ac.uk}
\author{Ivan Toma{\v s}i{\'c}} 
\address{Ivan Toma{\v s}i{\'c}\\
         School of Mathematical Sciences\\
  	Queen Mary University of London\\
         London, E1 4NS\\
        United Kingdom}
\email{i.tomasic@qmul.ac.uk}
\thanks{Supported by EPSRC Standard grant EP/V028812/1}
\date{\today}
\begin{document}

\begin{abstract}
    We develop a theory of \emph{internal Hochschild cohomology} in a ringed topos. We construct it via the internal Hochschild cochain complex, as well as through derived functor/topos cohomology theory, and discuss its relationship to the absolute Hochschild cohomology.
    
By specialising to the topos of difference sets, we obtain a theory of internal \emph{difference Hochschild cohomology}, and compare it to the absolute Hochschild cohomology through the Grothendieck and hypercohomology spectral sequences.

We provide a systematic and detailed treatment of tensor products in suitable toposes in hope to complete the existing literature.  

\end{abstract}

\maketitle





\section{Introduction}

It was shown in \cite[II 6.7]{tome1} that the category
$$
k\M
$$
of unitary left $k$-modules in a ringed Grothendieck topos $(\E,k)$ is an \emph{abelian category} admitting a set of generators and satisfying axioms (AB 5), (AB 3). Hence it has enough injectives and lends itself to the development of derived functors and \emph{topos cohomology}.  

Moreover, \cite[IV 12.2, 12.7]{tome1} consider the internal $k$-hom object $$[N,P]_k\in k\M$$ for $N,P\in k\M$ as a suitable subobject of the topos-theoretic internal hom $[N,P]\in\E$, and then construct the tensor product $$-\otimes_kN$$
as the left adjoint of the functor $[N,-]_k$. 

In the case of a commutative ring $k$, $k\M$ is \emph{monoidal closed} with this structure,  
i.e., we obtain the hom-tensor adjunction
$$
\Hom_k(M\otimes_kN,P)\cong\Hom_k(M,[N,P]_k)
$$ 
for all $k$-modules $M,N$ and $P$ in $\E$.


Our objective is to develop a theory of Hochschild cohomology in a ringed topos $(\E,k)$. Given a $k$-algebra $A$ and an $A$-bimodule $M$, 
we define the \emph{internal Hochschild cohomology} of $A$ with coefficients in $M$ as the cohomology 
$$\underline{\HH}^\bullet(A,M)=H^\bullet C[A;M]$$
of the complex of $k$-modules $C[A;M]$ defined as

    $$0\rightarrow M\xrightarrow{\underline{d}^0}[A,M]_k\xrightarrow{\underline{d}^1}[A^{\otimes2},M]_k\xrightarrow{\underline{d}^2}[A^{\otimes3},M]_k\xrightarrow{\underline{d}^3}...$$

In order to prove that this is a complex and to verify the properties of the cohomology complex, we observed that it is essential to be able to argue by elements in the internal logic of the topos $\E$. Moreover, due to the fact that the objects involved use higher-order constructions, no meta-theorem such as the familiar topos-theoretic principle regarding classical and intuitionistic reasoning in geometric logic (Barr's theorem) can be of use to us. 

On the other hand, Grothendieck's construction of tensor products stems from general category theory principles such as the adjoint functor theorem and predates the development of internal logic, so we decided to provide a detailed treatment of tensor products in toposes in our Appendix, and demonstrate that constructions using universal properties and general categorical arguments agree with the realisations of suitable formulae in the internal language, and that it is justified to argue by elements analogously to the classical proofs.  

We show in \ref{prop-hh-ext} that, for an \emph{enriched projective} $k$-module $A$, Hochschild cohomology can be calculated through derived functors of the internal Hom as 
$$\uHH^n(A,M)\cong F\uExt^n_{A^e}(A,M),$$
where $F$ is the forgetful functor restricting the structure of $A^e$-modules to $k$. 

Motivated by this formula, we define the \emph{absolute}
Hochschild cohomology as
$$
\HH^n(A,M)=\Ext^n_{A^e}(A,M).
$$

In Section~\ref{s-hh-diff}, by specialising the topos to \emph{difference sets}, i.e., by setting
$$
\E=\boldsymbol{\sigma}\textbf{-Set}
$$
we obtain \emph{difference Hochschild cohomology}, calculate its low-degree terms, and give some explicit examples. 

We are able to compute the relationship between internal and absolute difference Hochschild cohomology fairly explicitly as follows. 
The canonical geometric morphism 
$$
\boldsymbol{\sigma}\textbf{-Set}\to \textbf{Set},
$$
has the fixed points functor $\text{Fix}$ as direct image functor, and its derived functors vanish above degree 1. Hence, for an enriched projective $k$-module $A$, we show in \ref{th-groth-ss} that Grothendieck spectral sequence degenerates into short exact sequences
$$
0\to\uHH^{n-1}(A,M)_{\sigma}\to\HH^n_{A^e}(A,M)\to\uHH^n(A,M)^\sigma\to0,
$$
for $n>0$, and the hypercohomology spectral sequences degenerates into 
$$
\cdots\to\HH^n(A,M)\to\uHH^{n-1}_\sigma(A,M)\to\uHH^{n+1,\sigma}(A,M)\to\HH^{n+1}(A,M)\to\cdots,
$$ 
where 
$$\uHH^{n,\sigma}(A,M)$$ is the cohomology of $\Fix$ of the complex $C[A;M]$, and $$\uHH^n_{\sigma}(A,M)$$ is the cohomology of the coinvariants of the complex.

\newpage

\section{Homological algebra internal to a topos}

\subsection{Symmetric monoidal closed structure of \texorpdfstring{$k\M$}{k-Mod}}

Given a topos $\E$, we write $$\text{Ab}(\E)$$ as the category of abelian group objects (and their group homomorphisms) in $\E$, and $$\text{Ring}(\E)$$ as the category of ring objects (and their ring homomorphisms) in $\E$. 

\begin{definition}
    For ringed topos $(\E,A)$, define $$A\M:=\text{Mod}(\E,A)$$ to be the category of left $A$-modules (and their homomorphisms). Similarly, we define $$\text{Mod-}A:=\text{Mod}(\E,A^\text{op})$$ to be the category of right $A$-modules (and their homomorphisms).
\end{definition}

If $(\E,k)$ is a commutative ringed topos, then $k\M\cong\text{Mod-}k$. 

\begin{definition}
    For ringed topos $(\E,A)$, define $$A\text{-Bimod}$$ to be the category of $A$-bimodules.
\end{definition}

\begin{definition}
    For commutative ringed topos $(\E,k)$, define $$k\text{-Alg}$$ to be the category of $k$-algebras, i.e. the category of monoid objects in $k\M$. 
\end{definition}



It is well known that any given topos $\E$ is symmetric monoidal cartesian closed, under the cartesian (categorical) product $\times$ and the internal hom $[-,-]$. By Proposition \ref{ab-functor}, we can define an internal abelian group hom functor  $$[-,-]_\textbf{Z}:\Ab(\E)^\text{op}\times\Ab(\E)\to\Ab(\E).$$ Under this structure, Ab$(\E)$ is a closed category (Proposition \ref{ab-closed}). 

Similarly, let $(\E,A)$ be a ringed topos. By Proposition \ref{mod-functor}, we can define an internal $A$-hom functor  $$[-,-]_A:A\M^\text{op}\times A\M\to\Ab(\E).$$
Given a commutative ringed topos $(\E,k)$, this descends to a functor $$[-,-]_k:k\M^\text{op}\times k\M\to k\M,$$ by Proposition \ref{com-mod-functor}. Under this structure, $k\M$ is a closed category (Proposition \ref{mod-closed}). If $\E$ is such that Ab$(\E)$ has a set of cogenerators (this condition is discussed on page \pageref{rep}), then the functor $$\text{Ab}(\E)\to\text{Ab},\quad P\mapsto\Hom_A(M,[N,P]_\textbf{Z})$$ is representable for every $M\in\text{Mod-}A,N\in A\M$. So, given a commutative ringed topos $(\E,k)$, the category $k\M$ is symmetric monoidal closed (Corollary \ref{monoidal-closed}) under the internal $k$-homs and a \emph{tensor product of $k$-modules}, $$-\otimes_k-.$$ 

When taking $\E=\textbf{Set}$, these correspond to their classical analogues (Examples \ref{classical-module}, \ref{classical-tensor}). 

\begin{definition}[Internally projective/injective object]
    \label{internal-proj}Given a closed abelian category $(\C,[-,-]_\C)$, an object $X\in\C$ is \emph{internally projective} (resp. \emph{internally injective}) if the functor $[X,-]_\C$ (resp. the functor $[-,X]_\C$) is exact. 
\end{definition}

\begin{definition}[Enriched projective/injective object]
    \label{enriched-proj}Given a closed abelian category $\C$, an object $X\in\C$ is \emph{enriched projective} (resp. \emph{enriched injective}) if it is both projective and internally projective (resp. injective and internally injective). 
\end{definition}

When $\E$ is a Grothendieck topos, $k\M$ is an abelian category satisfying axioms (AB3) and (AB5) \cite{tome1}. Therefore, it has enough injectives, and by \cite{harting}, it has enough \emph{internal injectives}. As such, we can develop homological algebra, and give a derived functor definition for Hochschild cohomology in a given topos. 

\subsection{Internal logic of a topos}

Throughout this paper, we reason using the internal (higher-order) logic of a topos, taking \cite{handbook} Chapter 6 as a guide. We use as standard notation $x\!:\!X$ to mean ``$x$ is a variable of type $X$", and $\vDash\phi$ to mean ``this formula is valid in our [given] topos". 

We fix

\begin{itemize}
    \item A commutative ringed topos $(\E,k)$ such that Ab$(\E)$ has a set of cogenerators, e.g., a topos with a natural number object (see page \pageref{rep}).
    \item A $k$-algebra $A$ and $A$-bimodules $M,N$.
\end{itemize}. 
In our logic reasoning, we will make frequent reference to variables
\begin{itemize}
    \item $a_0,...,a_{n+1}\!:\!A$,
    \item $m\!:\!M$,
    \item $f\!:\![A\ot{n},M]_k$.
\end{itemize} We write $m\otimes n$ as the term $\otimes(m,n)\!:\!M\otimes_kN$, where $\otimes$ is the universal morphism $M\times N\xrightarrow{\otimes}M\otimes_kN$. For example, in section 2 we will often reason with the term $a_0\otimes...\otimes a_n$ of type $A\ot{n+1}$. We make frequent use of \cite{handbook} (T53) and (T65), dropping left universal quantifiers in much of our notation.

\section{Internal Hochschild cohomology in a topos}

\subsection{Internal Hochschild cochain complex}

We define a cochain complex \begin{align}0\rightarrow M\xrightarrow{\underline{d}^0}[A,M]_k\xrightarrow{\underline{d}^1}[A^{\otimes2},M]_k\xrightarrow{\underline{d}^2}[A^{\otimes3},M]_k\xrightarrow{\underline{d}^3}...\label{cochain}
\end{align}
as follows. Fix an $n\in\mathbb{N}$. For any $0<i\leq n$, define a $k$-module homomorphism $$\delta_i:A^{\otimes n+1}\rightarrow A^{\otimes n},\quad\delta_i=\id_A\ot{i-1}\otimes\mu_A\otimes\id_A\ot{n-i},$$ for $\mu_A$ the multiplication on $A$. In other words, $\delta^i$ acts as $A$-multiplication on the $i^\text{th}$ and $(i+1)^\text{th}$ factor, and as the identity on all others, so that $$\vDash\delta_i(a_0\otimes...\otimes a_n)=a_0\otimes...\otimes a_{i-1}a_i\otimes...\otimes a_n.$$ Then, define $\underline{\partial}^i=[\delta_i,M]_k:[A^{\otimes n},M]_k\rightarrow[A^{\otimes n+1},M]_k$, so that $$\vDash(\upartial^if)(a_0\otimes...\otimes a_n)=f(a_0\otimes...\otimes a_{i-1}a_i\otimes...\otimes a_n).$$

For the same fixed $n$, we define $\upartial^0:[A^{\otimes n},M]_k\rightarrow[A^{\otimes n+1},M]_k$ as corresponding to the composite $$A\otimes_k[A^{\otimes n},M]_k\otimes_kA^{\otimes n}\xrightarrow{\id\otimes\ev}A\otimes_kM\xrightarrow{l}M,$$ where $l$ is the left module action on $M$, via the adjunction 
\begin{align}
    k\text{-Mod}([A^{\otimes n},M]_k,[A^{\otimes n+1},M]_k)&\cong k\text{-Mod}([A^{\otimes n},M]_k\otimes_kA^{\otimes n+1},M)\notag\\
    &\cong k\text{-Mod}([A^{\otimes n},M]_k\otimes_kA\otimes_kA^{\otimes n},M)\notag\\
    &\cong k\text{-Mod}(A\otimes_k[A^{\otimes n},M]_k\otimes_kA^{\otimes n},M).\notag
\end{align} 
We have $\vDash(\upartial^0f)(a_0\otimes...\otimes a_n)=a_0f(a_1\otimes...\otimes a_n)$, by Lemma \ref{adjunction}.

Similarly, we define $\upartial^{n+1}:[A^{\otimes n},M]_k\rightarrow[A^{\otimes n+1},M]_k$ as corresponding to the composite $$[A^{\otimes n},M]_k\otimes A\ot{n}\otimes A\xrightarrow{\ev\otimes\id}M\otimes_kA\xrightarrow{r}M,$$ where $r$ is the right module action on $M$, via the adjunction
\begin{align}
    k\text{-Mod}([A^{\otimes n},M]_k,[A^{\otimes n+1},M]_k)&\cong k\text{-Mod}([A^{\otimes n},M]_k\otimes A^{\otimes n+1},M)\notag\\
    &\cong k\text{-Mod}([A^{\otimes n},M]_k\otimes A\ot{n}\otimes A,M).\notag
\end{align} 
We have $\vDash(\upartial^{n+1}f)(a_0\otimes...\otimes a_n)=f(a_1\otimes...\otimes a_{n-1})a_n$, by Lemma \ref{adjunction}. 

Then, let $\ud^n:[A\ot{n},M]_k\rightarrow[A\ot{n+1},M]_k$ be the $k$-module morphism $$\ud^n=\sum_{i=0}^{n+1}(-1)^i\upartial^i.$$ We stipulate $A\ot{0}=k$, so that $[A\ot{0},M]_k\cong M$. As a result, $\ud^n$ is defined for all $n\geq0$. We argue that (\ref{cochain}) forms a cochain complex of $k$-modules under these differentials. 

\begin{lemma}
    If $i>j$, then $\upartial^i\circ\upartial^j=\upartial^j\circ\upartial^{i-1}$. 
\end{lemma}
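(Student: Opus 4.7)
The plan is to verify the identity by evaluation at elements in the internal higher-order logic of $\E$: by the treatment of the internal tensor product in the Appendix it suffices to prove $(\upartial^i\upartial^jf)(a_0\otimes\cdots\otimes a_{n+1})=(\upartial^j\upartial^{i-1}f)(a_0\otimes\cdots\otimes a_{n+1})$ for a generic $f\!:\![A\ot{n},M]_k$ and $a_0,\ldots,a_{n+1}\!:\!A$. Since each $\upartial^\ell$ is of one of two kinds---an \emph{interior} operator (for $0<\ell\leq n+1$, i.e., precomposition with the face map $\delta_\ell$), or a \emph{boundary} operator (for $\ell=0$ or $\ell=n+2$, given via the left or right $A$-action on $M$)---the verification splits into a handful of combinatorial cases according to where $j$ and $i$ lie relative to $\{0,\,n+2\}$.

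When $0<j<i\leq n+1$, both $\upartial^j$ and $\upartial^i$ are interior, so $\upartial^i\upartial^jf=f\circ\delta_j\circ\delta_i$ and $\upartial^j\upartial^{i-1}f=f\circ\delta_{i-1}\circ\delta_j$, and the claim reduces to the classical simplicial identity $\delta_j\circ\delta_i=\delta_{i-1}\circ\delta_j$ for face maps of the bar construction, which is an immediate consequence of the associativity of $\mu_A$. When $j=0$ and $0<i\leq n+1$, both sides unfold to $a_0\,f(a_1\otimes\cdots\otimes a_{i-1}a_i\otimes\cdots\otimes a_{n+1})$; the edge subcase $i=1$ needs associativity of the left $A$-action on $M$. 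Symmetrically, when $i=n+2$ and $0<j\leq n$, both sides evaluate to $f(a_0\otimes\cdots\otimes a_{j-1}a_j\otimes\cdots\otimes a_n)\,a_{n+1}$, with the edge subcase $j=n+1$ covered by associativity of the right action. The single remaining case $j=0,\,i=n+2$ reduces to $a_0\,f(a_1\otimes\cdots\otimes a_n)\,a_{n+1}$ on both sides, using the full bimodule compatibility $a(mb)=(am)b$.

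The underlying computations are routine and mirror the classical bar-complex proof; the real work is the passage from the adjoint-style definitions of $\upartial^0$ and $\upartial^{n+2}$ (presented as composites of $\id\otimes\ev$, tensor reshuffling and module actions) to the elementwise formulae used above. I would invoke Lemma \ref{adjunction} and the appendix on tensor products in toposes to justify these pointwise descriptions inside the internal language, after which the five-way case analysis above concludes the proof.
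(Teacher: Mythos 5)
Your proposal is correct and follows essentially the same route as the paper, which likewise reduces the identity to an elementwise verification in the internal logic (via Proposition \ref{handbooktwo-tensor}, Proposition \ref{tensor-identities} and Lemma \ref{adjunction}) and then disposes of the same case split (interior/interior, $j=0$, $i=n+2$, and the adjacent edge subcases) by routine bookkeeping. Your write-up is in fact more explicit than the paper's one-sentence sketch about which associativity (of $\mu_A$, of the left/right actions, or of the bimodule compatibility) each case consumes.
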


\begin{proof}
By Propositions \ref{handbooktwo-tensor} and \ref{tensor-identities}, and the internal logical evaluation of our differential maps above,  the proof is simple bookkeeping, and requires separate easy verifications for the cases determined by when $i,j$ are adjacent, when $j=0$ or $n+1$, and when $i=n+2$.
\end{proof}

\begin{proposition}
    For all $n\geq0$, $\ud^n$ is a differential on (\ref{cochain}), i.e. $\ud^{n+1}\circ\ud^n=0$.
\end{proposition}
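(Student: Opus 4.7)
The plan is to exploit the cosimplicial identity proved in the preceding lemma and show that, in the double sum
$$\ud^{n+1}\circ\ud^n=\sum_{i=0}^{n+2}\sum_{j=0}^{n+1}(-1)^{i+j}\upartial^i\circ\upartial^j,$$
the terms cancel in pairs, exactly as in the classical proof of the Hochschild differential. Splitting the sum into the parts where $i>j$ and where $i\le j$, I would first apply the lemma $\upartial^i\circ\upartial^j=\upartial^j\circ\upartial^{i-1}$ to rewrite the $i>j$ part as
$$\sum_{i>j}(-1)^{i+j}\upartial^j\circ\upartial^{i-1},$$
then reindex via $i'=j$, $j'=i-1$ (so $i'\le j'$) to get $\sum_{i'\le j'}(-1)^{i'+j'+1}\upartial^{i'}\circ\upartial^{j'}$, which precisely cancels the $i\le j$ part of the original sum.

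The crux is that the proof is entirely formal once we agree that the $\upartial^i$ satisfy the cosimplicial identities and that the ring of $k$-module maps $[A\ot{n},M]_k\to[A\ot{n+2},M]_k$ in the topos forms an abelian group in which such pairings/telescopings make sense. Both are available to us: the lemma gives the identities, and $k\M$ is an $\Ab$-enriched (in fact abelian) category, so signs, addition of morphisms, and reindexing of finite sums are legitimate operations on hom-sets. No further internal-logic argument is required at this stage, because the statement $\ud^{n+1}\circ\ud^n=0$ is an equality of morphisms, and the previous proposition-level internal-logic work has already been absorbed into the lemma itself.

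I would present the proof as the two-line combinatorial cancellation above, taking care to flag which terms correspond to the boundary indices $i=0$, $i=n+2$, $j=0$, $j=n+1$, since those are precisely the cases the lemma required separate verification of. The main (minor) obstacle is purely bookkeeping: making sure the reindexation $(i,j)\mapsto(j,i-1)$ is a bijection between $\{(i,j):0\le j<i\le n+2\}$ and $\{(i',j'):0\le i'\le j'\le n+1\}$ and that the sign picks up the single extra factor of $-1$ that forces cancellation. Once this is laid out clearly, the identity $\ud^{n+1}\circ\ud^n=0$ follows immediately.
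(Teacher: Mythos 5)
Your proposal is correct and follows essentially the same route as the paper's proof: the same splitting of the double sum into the $i>j$ and $i\le j$ parts, the same application of the lemma $\upartial^i\circ\upartial^j=\upartial^j\circ\upartial^{i-1}$, and the same reindexing $(i,j)\mapsto(j,i-1)$ producing the sign flip that forces cancellation. Your additional remark that the manipulation is legitimate because the hom-sets in $k\M$ are abelian groups is a point the paper leaves implicit, but it does not change the argument.
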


\begin{proof}
    We need to show that $\ud^{n+1}(\ud^n)=0$. We have that $\ud^{n+1}(\ud^n)=\sum_{i=0}^{n+2}(-1)^i\upartial^i(\ud^n)=\sum_{i=0}^{n+2}(-1)^i\upartial^i\left(\sum_{j=0}^{n+1}(-1)^j\upartial^j\right)=\sum_{i=0}^{n+2}\sum_{j=0}^{n+1}(-1)^{i+j}\upartial^i(\upartial^j)$. Thus, we can rewrite (as in \cite{doyle}) as $$\ud^{n+1}(\ud^n)=\sum_{0\leq j<i\leq n+2}(-1)^{i+j}\upartial^i(\upartial^j)+\sum_{0\leq i\leq j\leq n+1}(-1)^{i+j}\upartial^i(\upartial^j).$$ Then by the previous lemma, for $i>j$ $\upartial^i(\upartial^j)=\upartial^j(\upartial^{i-1})$. We obtain $$\ud^{n+1}(\ud^n)=\sum_{0\leq j<i\leq n+2}(-1)^{i+j}\upartial^j(\upartial^{i-1})+\sum_{0\leq i\leq j\leq n+1}(-1)^{i+j}\upartial^i(\upartial^j).$$
    
    As $j\leq i-1$, we can relabel $i-1$ as $i$ and swap indices. We obtain $$\ud^{n+1}(\ud^n)=\sum_{0\leq i\leq j\leq n+1}(-1)^{i+j+1}\upartial^i(\upartial^j)+\sum_{0\leq i\leq j\leq n+1}(-1)^{i+j}\upartial^i(\upartial^j),$$ and the sums cancel. 
\end{proof}

\begin{definition}[Interal Hochschild cohomology]
    We define the \emph{internal Hochschild cochain complex} of $A$ over $k$ with coefficients in $M$, $C[A;M]$, to be the complex of $k$-modules

    $$0\rightarrow M\xrightarrow{\underline{d}^0}[A,M]_k\xrightarrow{\underline{d}^1}[A^{\otimes2},M]_k\xrightarrow{\underline{d}^2}[A^{\otimes3},M]_k\xrightarrow{\underline{d}^3}...$$

    We then define the \emph{internal Hochschild cohomology} of $A$ with coefficients in $M$ as the cohomology of this complex, $$\underline{\HH}^\bullet(A,M)=H^\bullet C[A;M].$$ 
\end{definition}

\begin{example}
    \label{classical}Taking $\E=\textbf{Set}$, we have that $[A\ot{n},M]_k=\Hom_k(A\ot{n},M)$, and we obtain the classical Hochschild cochain complex. As such, internal Hochschild cohomology in \textbf{Set} is precisely the classical Hochschild cohomology of algebras. 
\end{example}

\subsection{Derived functor definition of Hochschild cohomology}\label{ext-section}

Let $\E$ be a Grothendieck topos, so that $k\M$ has enough injectives \cite{tome1}.

\subsubsection*{The Bar Resolution}

We follow the construction of \cite{ltcc} and \cite{doyle}. Given a $k$-algebra $A$, write $A^\text{op}$ for the \emph{opposite algebra} of $A$, i.e. $A=A^\text{op}$ as a $k$-module, but the multiplication $*$ is given by the composite $A^\text{op}\times A^\text{op}\xrightarrow{(\pi_2,\pi_1)}A^\text{op}\times A^\text{op}\xrightarrow{\mu_A}A^\text{op}$, i.e. so that $\vDash a_1*a_2=a_2a_1$. Then, the \emph{enveloping algebra} of $A$ is $$A^e:=A\otimes_kA^\text{op}.$$ Any $A$-bimodule $M$ is a left $A^e$-module via 
$$A^e\otimes_kM\cong A\otimes_kM\otimes_kA\xrightarrow{l_M}M\otimes_kA\xrightarrow{r_M}M,$$ so that for variable $a_1,a_2\!:\!A,m\!:\!M$, we have $\vDash(a_1\otimes a_2)m=a_1ma_2$. 

\begin{definition}
    The \emph{bar complex} is the cochain complex $A^{\otimes \bullet+2}=(A^{\otimes n+2})_{n\geq0},$ with differentials 
    $$d'_n=\sum_{i=0}^n(-1)^i\delta'_i:A^{n+2}\to A^{n+1},$$ where $\delta'_i=\id\ot{i}\otimes\mu\otimes\id\ot{n-i}$, so that for variables $a_0,...,a_{n+1}\!:\!A$, we have $\vDash\delta_i'(a_0\otimes...\otimes a_{n+1})=a_0\otimes...\otimes a_ia_{i+1}\otimes...\otimes a_{n+1}$. 
\end{definition}

\begin{proposition}
    For all $n\geq0$, $d'_n$ is a differential on the bar complex, i.e. $d'_n\circ d'_{n+1}=0$.
\end{proposition}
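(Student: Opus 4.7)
The plan is to mimic the structure of the proof just given for $\underline{d}^{n+1}\circ\underline{d}^n=0$. The essential ingredient will be a simplicial identity for the face-like maps $\delta_i'$ appearing in the bar differential; this is the analogue of the lemma preceding the previous proposition, and it does all the real work.

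First I would establish: for all $0 \leq i < j \leq n+1$,
\[
\delta_i' \circ \delta_{j}' \;=\; \delta_{j-1}' \circ \delta_{i}' \quad \text{as maps } A^{\otimes n+3}\to A^{\otimes n+1}.
\]
By Propositions \ref{handbooktwo-tensor} and \ref{tensor-identities}, it suffices to verify this in the internal logic on a generic generator $a_0 \otimes \cdots \otimes a_{n+2}$. There are two cases. If $j > i+1$, both sides yield $a_0 \otimes \cdots \otimes a_i a_{i+1} \otimes \cdots \otimes a_j a_{j+1} \otimes \cdots \otimes a_{n+2}$: the two multiplications occur at disjoint adjacent pairs, so the order does not matter and the reindexing $j \mapsto j-1$ on the left exactly compensates for the shift caused by having already contracted positions $i,i+1$. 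If $j = i+1$, both sides produce $a_0 \otimes \cdots \otimes a_i a_{i+1} a_{i+2} \otimes \cdots \otimes a_{n+2}$, and equality is precisely associativity of $\mu_A$ in the internal logic.

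With this identity in hand, the proof of $d'_n \circ d'_{n+1}=0$ is pure combinatorics, verbatim analogous to the argument used for $\underline{d}^{n+1} \circ \underline{d}^n = 0$. Expand
\[
d'_n \circ d'_{n+1} \;=\; \sum_{i=0}^{n} \sum_{j=0}^{n+1} (-1)^{i+j}\, \delta_i' \circ \delta_j',
\]
and split the double sum according to whether $i < j$ or $i \geq j$. In the $i<j$ block apply the simplicial identity to rewrite $\delta_i' \delta_j' = \delta_{j-1}' \delta_i'$, then relabel $j-1 \mapsto j$ and swap the dummy indices; the resulting sum has range $0 \leq i \leq j \leq n$ with sign $(-1)^{i+j+1}$, which is exactly the negative of the $i \geq j$ block. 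The two blocks cancel.

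The main (and only) obstacle is the bookkeeping of indices in the simplicial identity, since a bar-level contraction shifts the positions of everything to its right by one. Once that is unwound in the internal logic using the elementwise descriptions of $\delta_i'$, the rest of the proof is formal and mirrors the previous proposition step for step.
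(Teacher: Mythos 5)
Your proposal is correct and follows essentially the same route as the paper: the same simplicial identity $\delta_i'\circ\delta_j'=\delta_{j-1}'\circ\delta_i'$ for $i<j$ (which the paper also justifies via Propositions \ref{handbooktwo-tensor} and \ref{tensor-identities}, leaving the case analysis as ``simple bookkeeping''), followed by the identical split of the double sum, reindexing, and cancellation. Your explicit treatment of the two cases $j>i+1$ (disjoint contractions) and $j=i+1$ (associativity of $\mu_A$) merely fills in detail the paper elides.
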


\begin{proof}
    $d_n'\circ d_{n+1}'=\sum_{i=0}^n\sum_{j=0}^{n+1}(-1)^{i+j}\delta'_i\circ\delta'_j$. Thus, we can rewrite as $$d_n'\circ d_{n+1}'=\sum_{0\leq i<j\leq n+1}(-1)^{i+j}\delta'_i\circ\delta'_j+\sum_{0\leq j\leq i\leq n}(-1)^{i+j}\delta'_i\circ\delta'_j.$$ If $i<j$, then $\delta'_i\circ\delta'_j=\delta'_{j-1}\circ\delta'_i$ (by Propositions \ref{handbooktwo-tensor} and \ref{tensor-identities}, the proof of this is, again, simple bookkeeping for various cases). So, $$d_n'\circ d_{n+1}'=\sum_{0\leq i<j\leq n+1}(-1)^{i+j}\delta'_{j-1}\circ\delta'_i+\sum_{0\leq j\leq i\leq n}(-1)^{i+j}\delta'_i\circ\delta'_j.$$ As $j-1\geq i$, we can relabel $j-1$ as $j$ and swap indices. This becomes $$d_n'\circ d_{n+1}'=\sum_{0\leq j\leq i\leq n}(-1)^{i+j+1}\delta'_i\circ\delta'_j+\sum_{0\leq j\leq i\leq n}(-1)^{i+j}\delta'_i\circ\delta'_j,$$ and the sums cancel. 
\end{proof}

\begin{proposition}
    The complex $A\ot{\bullet+2}$ is a resolution of $A$ as an $A^e$-module.\label{Cbar}
\end{proposition}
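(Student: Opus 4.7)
The plan is to exhibit the classical ``extra degeneracy'' contracting homotopy of the augmented bar complex in the internal logic of $\E$, and then invoke the fact that exactness is detected on the underlying $k$-modules. Because the forgetful functor $A^e\M\to k\M$ is exact and faithful, it suffices to show that the augmented complex is $k$-linearly split exact.

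First I would regard $d'_0=\mu_A:A\otimes_k A\to A$ as augmenting the complex to $A$, and check that $\mu_A$ is $A^e$-linear: internally, $\mu_A((a\otimes b)\cdot(a_0\otimes a_1))=aa_0a_1b=(a\otimes b)\cdot\mu_A(a_0\otimes a_1)$ by associativity of $A$. So the augmented sequence lives in $A^e\M$.

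Next I would define, for each $n\geq -1$, a $k$-linear map $s_n:A^{\otimes n+2}\to A^{\otimes n+3}$ (with the convention $A^{\otimes 1}=A$) as the composite
$$s_n\colon A^{\otimes n+2}\cong k\otimes_k A^{\otimes n+2}\xrightarrow{\eta_A\otimes\id}A\otimes_k A^{\otimes n+2},$$
where $\eta_A:k\to A$ is the algebra unit; internally this reads $s_n(a_0\otimes\cdots\otimes a_{n+1})=1\otimes a_0\otimes\cdots\otimes a_{n+1}$ and $s_{-1}(a)=1\otimes a$. I would then verify in the internal logic the contracting-homotopy identities
$$\mu_A\circ s_{-1}=\id_A, \qquad d'_{n+1}\circ s_n+s_{n-1}\circ d'_n=\id_{A^{\otimes n+2}}\quad (n\geq 0).$$
The first is immediate since $\mu_A(1\otimes a)=a$. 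For the second, evaluating on $a_0\otimes\cdots\otimes a_{n+1}$ the $i=0$ summand of $d'_{n+1}(1\otimes a_0\otimes\cdots\otimes a_{n+1})$ returns $a_0\otimes\cdots\otimes a_{n+1}$, while the summands for $i\geq 1$, after reindexing $i=j+1$, coincide with $-s_{n-1}(d'_n(a_0\otimes\cdots\otimes a_{n+1}))$ and cancel. This is the same bookkeeping argument as in the proof that $d'_n\circ d'_{n+1}=0$ above.

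The existence of this $k$-linear null-homotopy makes the augmented bar complex split exact in $k\M$, hence exact in $A^e\M$, which is the desired conclusion. The one conceptual subtlety — and the main thing requiring care — is justifying elementwise manipulations of iterated tensor products in the internal language. This is exactly what the tensor-product appendix and the internal-logic set-up of Section~2 are designed to deliver, so the substantive content of the proof reduces to the combinatorial cancellation sketched above.
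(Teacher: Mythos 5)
Your proposal is correct and is essentially the paper's own argument: the paper uses the same ``insert $1_A$ on the left'' contracting homotopy $s_n$ and the same bookkeeping identity $d'_{n+1}\circ s_n+s_{n-1}\circ d'_n=\id$, relying on the internal-logic tensor lemmas to justify the elementwise manipulations. The only (cosmetic) difference is that you extend the homotopy uniformly down to $s_{-1}$ to cover the augmentation, whereas the paper treats exactness at $A\otimes_kA$ separately via an explicit inverse to the induced map $\coker d'_1\to A$ before applying the homotopy in higher degrees.
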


\begin{proof} 
We use $A\ot{\bullet+2}\xrightarrow{\mu}A\rightarrow0,$ where $\mu:A\otimes_kA\rightarrow A$ is multiplication on $A$. 


This is exact at $A\otimes_kA$ if $\im\mu\cong\coker d_1'$. We have $\vDash\forall a\!:\!A(\mu(a\otimes1)=a)$, so, by \handbooktwo, $\mu$ is an epimorphism, so $\im\mu\cong A$. The map $\mu:A\otimes_kA\to A$ is zero on the image of $d_1'$, so descends to a map $\mu:A\otimes_kA/\im d_1'\to A$, i.e. $\mu:\coker d_1'\to A$. To show that this is an isomorphism, we construct a linear map $A\to A\times A$ that sends $a\mapsto 1\times a$, and combine with $\otimes$ to get a map $A\to A\otimes_kA$, and show that it is an inverse. 

Immediately, for all $a\!:\!A$, $\vDash\mu(s(a))=\mu(a\otimes1)=a$, so $\mu\circ s=\id_A$. Given $a_0,a_1\!:\!A$, we have that $s(\mu(a_0\otimes a_1))=s(a_0a_1)=1\otimes a_0a_1$. But we have that $\vDash d_1'(1,a_0,a_1)=a_0\otimes a_1-1\otimes a_0a_1$, so we have that in $A\otimes_kA/\im d_1'$ $\vDash a_0\otimes a_1=1\otimes a_0a_1$. So, by Proposition \ref{handbook-tensor}, $s\circ\mu=\id_{\coker d_1'}$. Thus, the complex is exact at $A\otimes_kA$.

We show that there is a contracting homotopy on our cochain complex, and therefore that the rest of the complex is exact (a contracting homotopy implies homotopy equivalence to the trivial complex, and thus zero homology groups \cite{weibel}). We let $s_n$ denote the composite $A\ot{n+1}\cong 1\otimes_kA\ot{n+1}\xrightarrow{1_A}A\ot{n+2}$, so that, for variables $a_0,...,a_{n+1}\!:\!A$, $\vDash s_n(a_0\otimes...\otimes a_{n+1})=1_A\otimes a_0\otimes...\otimes a_{n+1}$. By Propositions \ref{handbooktwo-tensor} and \ref{tensor-identities}, it is simple bookkeeping to show that $d_{n+1}\circ s_n+s_{n-1}\circ d_n=\id_{A^{\otimes n+2}}$. So we have a contracting homotopy on our chain complex, and therefore the complex is exact. \cite{belmans}
\end{proof}

We refer to the bar complex as the \emph{bar resolution} of $A$ as an $A^e$-module. 

\subsubsection*{\texorpdfstring{$\uHH$}{HH} as a derived functor}

\begin{proposition}
    If $A$ is an enriched projective $k$-module (as in Definition \ref{enriched-proj}), $A\ot{\bullet+2}\xrightarrow{\mu}A\rightarrow0$ is a projective resolution of $A^e$-modules. 
\end{proposition}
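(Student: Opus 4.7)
The plan is to split the claim into two parts: exactness, which we already have from Proposition \ref{Cbar}, and projectivity of each term $A^{\otimes n+2}$ as an $A^e$-module. The essential identification is
$$A^{\otimes n+2}\cong A^e\otimes_kA^{\otimes n}$$
as $A^e$-modules, obtained by grouping the first and last tensor factors together via the braiding; the $A^e$-action on $A^{\otimes n+2}$ that multiplies on the outer two factors corresponds exactly to left multiplication on the $A^e$-factor of $A^e\otimes_kA^{\otimes n}$. I would verify this compatibility once by chasing elements in the internal logic, using Propositions \ref{handbooktwo-tensor} and \ref{tensor-identities}.

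Next I would show that the enriched projectivity of $A$ is preserved under tensor powers: if $P$ and $Q$ are enriched projective $k$-modules, then so is $P\otimes_kQ$. For internal projectivity this follows from
$$[P\otimes_kQ,-]_k\cong[P,[Q,-]_k]_k,$$
a composition of two exact functors. For ordinary projectivity, using
$$\Hom_k(P\otimes_kQ,-)\cong\Hom_k(P,[Q,-]_k),$$
the inner $[Q,-]_k$ is exact by internal projectivity of $Q$, and the outer $\Hom_k(P,-)$ is exact by projectivity of $P$. An immediate induction then gives that $A^{\otimes n}$ is enriched projective as a $k$-module for every $n\geq0$ (the case $n=0$ is $A^{\otimes0}=k$, which is trivially enriched projective).

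Finally, to transfer projectivity to $A^e$-modules, I would use the extension-of-scalars adjunction
$$\Hom_{A^e}(A^e\otimes_kP,-)\cong\Hom_k(P,F(-)),$$
where $F\colon A^e\text{-Mod}\to k\text{-Mod}$ is the forgetful (restriction of scalars) functor. Since $F$ is exact, and since $A^{\otimes n}$ is projective as a $k$-module by the preceding step, the composition is exact, so $A^e\otimes_kA^{\otimes n}\cong A^{\otimes n+2}$ is projective as an $A^e$-module.

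The main obstacle is really confined to the middle step: proving that projectivity and internal projectivity are stable under tensor products in $k\text{-Mod}$. Classically one invokes ``direct summand of a free module'' here, but in the topos setting we do not have free modules of arbitrary rank readily available and must argue through the monoidal-closed structure, which is exactly why the hypothesis of \emph{enriched} projectivity — rather than mere projectivity — is indispensable: the internal hom $[Q,-]_k$ must be exact before we can feed it to $\Hom_k(P,-)$. Once this stability is in hand, both the identification $A^{\otimes n+2}\cong A^e\otimes_kA^{\otimes n}$ and the extension-of-scalars argument are formal manipulations.
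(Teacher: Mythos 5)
Your proof is correct and follows essentially the same route as the paper: the identification $A^{\otimes n+2}\cong A^e\otimes_kA^{\otimes n}$ as $A^e$-modules, followed by the adjunction $\Hom_{A^e}(A^e\otimes_kA^{\otimes n},-)\cong\Hom_k(A^{\otimes n},-)$ and the projectivity of $A^{\otimes n}$. The middle step you flag as the main obstacle — stability of enriched projectivity under $\otimes_k$ — is exactly Proposition \ref{tensor-projective} in the appendix, which the paper simply cites and which is proved there by the same hom-tensor manipulations you describe.
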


\begin{proof}
Let $A$ be an enriched projective $k$-module. Then, by Proposition \ref{tensor-projective}, so is $A^{\otimes n}$, for all $n$. We have an $A^e$-module isomorphism $A^{\otimes n+2}\cong A^e\otimes A^{\otimes n}$, as the opposite algebra does not affect the additive group and the action of $A^e$ is only on the first and the last elements.  Therefore, for all $M\in k\M$ we have an isomorphism $\Hom_k(A\ot{n},M)\simeq\Hom_{A^e}(A\ot{n+2},M)$. 
As $A\ot{n}$ is projective, $\Hom_k(A\ot{n},-)$ is exact, and so $\Hom_{A^e}(A\ot{n+2},-)$ is exact. Therefore, $A^{\otimes n+2}$ is projective as an $A^e$-module for all $n$.
\end{proof}

Writing $F$ for the forgetful functor (the restriction of an $A^e$-module to $k$), we have an adjoint pair of functors 

\[\begin{tikzcd}
A^e\text{-Mod}
\arrow[r, "F"{name=F}, bend left=25] &
k\text{-Mod.}
\arrow[l, "-\otimes_k A^e"{name=G}, bend left=25]
\arrow[phantom, from=F, to=G, "\dashv" rotate=90]
\end{tikzcd}\]

We therefore have isomorphism (\cite{tomasic}) \begin{align}
    F[N\otimes_kA^e,M]_{A^e}\cong[N,FM]_k.\label{ext-iso}
\end{align}

\begin{proposition}\label{prop-hh-ext}
\label{ext-prop}If $A$ is an enriched projective $k$-module, we have $$\uHH^n(A,M)\cong F\uExt^n_{A^e}(A,M).$$
\end{proposition}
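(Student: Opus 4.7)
The strategy is to compute both sides of the claimed isomorphism from the bar resolution and match the resulting complexes. Since $A$ is enriched projective, the previous proposition tells us that $A\ot{\bullet+2}\to A\to 0$ is a projective resolution of $A$ in $A^e\M$, so $\uExt^n_{A^e}(A,M)=H^n[A\ot{\bullet+2},M]_{A^e}$. The forgetful functor $F$ is exact, because restriction of scalars along $k\to A^e$ preserves kernels and cokernels, so it commutes with cohomology. Consequently
\[
F\uExt^n_{A^e}(A,M)\cong H^nF[A\ot{\bullet+2},M]_{A^e},
\]
and it suffices to produce an isomorphism of $k$-cochain complexes $F[A\ot{\bullet+2},M]_{A^e}\cong C[A;M]$.

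For the termwise isomorphism I would use the $A^e$-module identification $A\ot{n+2}\cong A\ot{n}\otimes_k A^e$, where $A^e$ acts on the outer factors (with the twist coming from $A^{\text{op}}$). Combined with the isomorphism (\ref{ext-iso}) this produces, for each $n$, an isomorphism
\[
\Phi_n\!:\!F[A\ot{n+2},M]_{A^e}\xrightarrow{\sim}[A\ot{n},FM]_k
\]
which internally sends an $A^e$-linear $\phi\!:\!A\ot{n+2}\to M$ to the $k$-linear map $\tilde{\phi}$ characterised by
\[
\vDash\ \tilde{\phi}(b_0\otimes\cdots\otimes b_{n-1})=\phi(1\otimes b_0\otimes\cdots\otimes b_{n-1}\otimes 1).
\]

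The main step, which is also the only real calculation and the place where bookkeeping can go wrong, is verifying that the $\Phi_n$ intertwine the induced differential $F[d'_{n+1},M]_{A^e}$ with $\ud^n$. For $\phi$ as above and variables $b_0,\ldots,b_n\!:\!A$, the internal logic gives
\[
\widetilde{\phi\circ d'_{n+1}}(b_0\otimes\cdots\otimes b_n)=\sum_{i=0}^{n+1}(-1)^i\phi\bigl(\delta'_i(1\otimes b_0\otimes\cdots\otimes b_n\otimes 1)\bigr).
\]
For the interior faces $1\le i\le n$, $\delta'_i$ contracts the two adjacent interior factors $b_{i-1},b_i$, so the $i$-th summand becomes $\tilde{\phi}(b_0\otimes\cdots\otimes b_{i-1}b_i\otimes\cdots\otimes b_n)=(\upartial^i\tilde{\phi})(b_0\otimes\cdots\otimes b_n)$ by definition of $\upartial^i$. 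For the two extremal faces, $\delta'_0$ absorbs the leading $1$ into $b_0$ and $\delta'_{n+1}$ absorbs the trailing $1$ into $b_n$; the $A^e$-linearity of $\phi$ then converts these summands into $b_0\tilde{\phi}(b_1\otimes\cdots\otimes b_n)$ and $\tilde{\phi}(b_0\otimes\cdots\otimes b_{n-1})b_n$, which are exactly $(\upartial^0\tilde{\phi})$ and $(\upartial^{n+1}\tilde{\phi})$. Summing with the alternating signs reproduces $\ud^n\tilde{\phi}$, so $\Phi_{n+1}\circ F[d'_{n+1},M]_{A^e}=\ud^n\circ\Phi_n$; passing to cohomology yields the claim. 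The obstacle is not conceptual but purely one of carrying the identification $A\ot{n+2}\cong A\ot{n}\otimes_k A^e$ and the adjunction behind (\ref{ext-iso}) consistently through the internal-logic calculation.
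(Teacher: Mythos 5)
Your proposal is correct and follows essentially the same route as the paper: compute $\uExt^n_{A^e}(A,M)$ from the bar resolution, commute the exact functor $F$ past cohomology, and identify $F[A\ot{\bullet}\otimes A^e,M]_{A^e}\cong[A\ot{\bullet},M]_k$ via the isomorphism (\ref{ext-iso}). The only difference is that you carry out explicitly the verification that these termwise isomorphisms intertwine the bar differentials with $\ud^n$ (matching interior faces with $\upartial^i$ and the two extremal faces with $\upartial^0,\upartial^{n+1}$ via $A^e$-linearity), a compatibility the paper leaves implicit; your bookkeeping is correct.
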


\begin{proof}
Writing $P_\bullet$ for any projective resolution of $A$ as an $A^e$-mod, 
\begin{align}
    F\uExt^n_{A^e}(A,M)=FH^n[P_\bullet,M]_{A^e}= FH^n[A\ot{\bullet+2},M]_{A^e}&\cong FH^n[A\ot{\bullet}\otimes A^e,M]_{A^e}\notag\\&\cong H^nF[A\ot{\bullet}\otimes A^e,M]_{A^e}\notag\\&\cong H^n[A\ot{\bullet},M]_k=\uHH^n(A,M),\notag
\end{align}
where the second isomorphism is given by the exactness of $F$ and the third by (\ref{ext-iso}).
\end{proof}

We can use this derived functor definition to define the (ordinary) Hochschild cohomology theory. 

\begin{definition}
    We define the \emph{Hochschild cohomology} of $A$ with coefficients in $M$, $$\HH^n(A,M):=\Ext^n_{A^e}(A,M).$$
\end{definition}

\begin{remark}
    This cohomology theory takes values in \textbf{Ab} (ordinary abelian groups in $\textbf{Set}$), rather than in $k\M$ over the ringed topos $(\E,k)$. 
\end{remark}

\begin{example}
    In \textbf{Set}, the internal homs are precisely the hom-sets, and so $\HH^n(A,M)=\uHH^n(A,M)$. 
\end{example}

\section{Internal Hochschild cohomology in difference sets}\label{s-hh-diff}

\subsection{The topos of difference sets}

By a \emph{difference set}, we mean a pair $(X,\sigma_X)$, where $X$ is a set and $\sigma_X$ is an endomorphism on $X$. By a \emph{difference morphism} $(X,\sigma_X)\rightarrow(Y,\sigma_Y)$, we mean a commutative diagram \[\begin{tikzpicture}[scale=2]
	\node (A1) at (0,0.8) {$X$};
	\node (A2) at (0,0) {$X$};
	\node (B1) at (1,0.8) {$Y$};
	\node (B2) at (1,0) {$Y$};
	\draw[->]
	(A1) edge node [above] {$f$} (B1)
	(A1) edge node [left] {$\sigma_X$} (A2)
	(A2) edge node [below] {$f$} (B2)
	(B1) edge node [right] {$\sigma_Y$} (B2);
\end{tikzpicture}\]
Together, these form a category $$\boldsymbol{\sigma}\textbf{-Set}.$$ In practice, we write its objects as $X$, omitting the endomorphism, and write $\lfloor X\rfloor$ as the underlying set.

\begin{example}
    \label{N+}We write $N_+=(\mathbb{N},s)$, where $s:i\mapsto i+1$. 
\end{example}

\begin{example}
    \label{N[s]}We write $\mathbb{N}[\sigma]$ as the difference set of polynomials in $\sigma$ with coefficients in $\mathbb{N}$, where the difference action is multiplication by $\sigma$. Then $$\mathbb{N}[\sigma]\cong\bigoplus_{n\in\mathbb{N}}N_+.$$ 
\end{example}

We can then define difference algebra as algebra internal to this category.

\begin{itemize}
    \item A \emph{(commutative) difference ring} is a pair $(k,\sigma)$, where $k$ is a (commutative) ring and $\sigma:k\rightarrow k$ is a ring endomorphism. 
    \item Given such a $k$, a \emph{difference $k$-module} is a pair $(M,\sigma_M)$, where $M$ is a $\lfloor k\rfloor$-module and $\sigma_M:M\rightarrow M$ an additive endomorphism s.t. $\sigma_M(\lambda m)=\sigma(\lambda)\sigma_M(m)$ for all $\lambda\in k,m\in M$. 
    \item A \emph{difference $k$-algebra} is a difference $k$-module that is also a difference ring. 
\end{itemize}

\begin{remark}
    Let $\boldsymbol{\sigma}$ be the category with single object $o$ such that $\Hom_\sigma(o)$ consists of composites of a single endomorphism $\sigma$ (so $\Hom_\sigma(o)\cong\mathbb{N}$). Then $$\boldsymbol{\sigma}\textbf{-Set}\simeq[\boldsymbol{\sigma}^\text{op},\textbf{Set}],$$ i.e. $\boldsymbol{\sigma}\textbf{-Set}$ is a Grothendieck topos \cite{tomasic}. 
\end{remark} 

So, we can consider topos constructions in the difference case. Crucially, a difference ring is a ring object in $\boldsymbol{\sigma}\textbf{-Set}$, and the difference and topos-theoretic definitions of $k$-module coincide. 

The \emph{internal hom} $[X,Y]$ of difference sets $X$ and $Y$ is obtained by setting the underlying set to be $$\boldsymbol{\sigma}\textbf{-Set}(N_+\times X,Y)=\{(f_j)_{j\in\mathbb{N}}|f_j\in\Hom(\lfloor X\rfloor,\lfloor Y\rfloor),f_{j+1}\circ\sigma_X=\sigma_Y\circ f_j\},$$ where $N_+=(\mathbb{N},i\mapsto i+1)$. This inherits a difference operator from $N_+$, sending $(f_j)_{j\in\mathbb{N}}\mapsto (f_{j+1})_{j\in\mathbb{N}}$. In other words, elements of $[X,Y]$ are commutative ``ladders", 
\[\begin{tikzcd}[column sep=3em, row sep=3em] 
& X\arrow[d,"\sigma_X"] \arrow[r,"f_0"] & Y\arrow[d,"\sigma_Y"] \\
& X\arrow[d,"\sigma_X"] \arrow[r,"f_1"] & Y\arrow[d,"\sigma_Y"] \\
& \vdots & \vdots
\end{tikzcd}\]

so that the difference operator shifts the ladder up by a rung (forgetting the first rung). 

Given a difference ring $k$ and $k$-modules $M,N$, we can construct $[M,N]_k$ as the set of ``ladders" whose rungs are $\lfloor k\rfloor$-module homomorphisms.

\subsubsection{Functors on \texorpdfstring{$\boldsymbol{\sigma}\textbf{-Set}$}{sigma-Set}}

The underlying set $\lfloor X\rfloor$ of a difference set forms the pullback of an essential geometric surjection $\textbf{Set}\to\boldsymbol{\sigma}\textbf{-Set}$ \cite{tomasic}, given by 
\[\begin{tikzpicture}[scale=1]
	\node (B1) at (0.5,0) {\textbf{Set}};
	\node (A2) at (0,1.4) {$\dashv$};
        \node (C2) at (1.2,1.4) {$\dashv$};
	\node (B3) at (0.5,2.8) {$\boldsymbol{\sigma}\textbf{-Set}$};
        \draw[->]
        (B3) edge node [right] {$\lfloor\,\rfloor$} (B1);
	\draw[->,bend left=60]
	(B1) edge node [left] {$\lceil\,|$} (B3);
	
        \draw[->,bend right=60]
        (B1) edge node [right] {$|\,\rceil$} (B3);
\end{tikzpicture}\]
The left adjoint $\lceil-|:\textbf{Set}\to\boldsymbol{\sigma}\textbf{-Set}$ is given by $$\lceil S|=\coprod_{i\in\mathbb{N}}S_i,\quad\sigma:S_i\mapsto S_{i+1},$$ where $S_i\cong S$ for all $i\in\mathbb{N}$. In other words, $$\lceil S|=\{(s,i)|s\in S,i\in\mathbb{N}\},\quad\sigma(s,i)=(s,i+1).$$ The right adjoint $|-\rceil:\textbf{Set}\to\boldsymbol{\sigma}\textbf{-Set}$, i.e. the pushforward of the geometric morphism, is given by $$|S\rceil=\prod_{i\in\mathbb{N}}S_i,\quad\sigma:\prod_{i\geq0}S_i\xrightarrow{\pi_0}\prod_{i\geq1}S_i\xrightarrow{s}\prod_{i\geq0}S_i,$$ where $S_i\cong S$ for all $i\in\mathbb{N}$, and $\pi_0$ is the projection on the first component and $s$ is the shift. 

Therefore, for all $X\in\boldsymbol{\sigma}\textbf{-Set},S\in\textbf{Set}$, we obtain natural isomorphisms $$\boldsymbol{\sigma}\textbf{-Set}(\lceil S|,X)\cong\Hom(S,\lfloor X\rfloor);\quad\quad\Hom(\lfloor X\rfloor,S)\cong\boldsymbol{\sigma}\textbf{-Set}(X,|S\rceil).$$
\begin{example}
    We can write $N_+=\lceil\{*\}|$. From Example \ref{N[s]}, we have $\mathbb{N}[\sigma]\cong\bigoplus_{n\in\mathbb{N}}N_+$. As $\lceil-|$ is a left adjoint, it preserves colimits. So, $$\mathbb{N}[\sigma]\cong\bigoplus_{n\in\mathbb{N}}N_+\cong\bigoplus_{n\in\mathbb{N}}\lceil\{*\}|\cong\lceil\bigoplus_{n\in\mathbb{N}}\{*\}|\cong\lceil\mathbb{N}|.$$
\end{example}

Assigning to a set $X$ the identity $(X,\id_X)$ forms the pullback $I$ of an essential geometric morphism $\boldsymbol{\sigma}\textbf{-Set}$ \cite{tomasic}, given by 
\[\begin{tikzpicture}[scale=1]
	\node (B1) at (0.5,0) {\textbf{Set}};
	\node (A2) at (0,1.4) {$\dashv$};
        \node (C2) at (1.2,1.4) {$\dashv$};
	\node (B3) at (0.5,2.8) {$\boldsymbol{\sigma}\textbf{-Set}$};
        \draw[->]
        (B1) edge node [right] {$I$} (B3);
	\draw[->,bend left=60]
	(B3) edge node [right] {$\Fix$} (B1);
	
        \draw[->,bend right=60]
        (B3) edge node [left] {$\text{Quo}$} (B1);
\end{tikzpicture}\]
The left adjoint $\text{Quo}:\boldsymbol{\sigma}\textbf{-Set}\to\textbf{Set}$ is given by $$\text{Quo}(X)=\text{coeq}(X\stackrel[\id_X]{\sigma_X}{\rightrightarrows}X).$$
The right adjoint $\Fix:\boldsymbol{\sigma}\textbf{-Set}\rightarrow\textbf{Set}$ is given by $$\Fix(X)=\{x:X|\sigma_X(x)=x\}.$$ This naturally specialises to a functor $k\M\rightarrow\Fix(k)\M$, which we will also write as $\Fix$ by a slight abuse of notation. In particular, we note that$$\Fix([X,Y])=\boldsymbol{\sigma}\textbf{-Set}(X,Y),\quad\Fix([M,N]_k)=\Hom_k(M,N),$$ for all difference sets $X$ and $Y$, and for all $k$-modules $M$ and $N$ over a given difference ring $k$. 

The functor $\text{Quo}$ restricted to difference abelian groups is called the \emph{coinvariants} functor 
$$(-)_\sigma:\boldsymbol{\sigma}\textbf{-Ab}\rightarrow\textbf{Ab},\quad(X,\sigma_X)\mapsto X_\sigma:=X/\im(\sigma_X-\id_X).$$ Again, this naturally specialises to a functor $k\M\rightarrow\Fix(k)\M$, which we will also write as $(-)_\sigma$ by a slight abuse of notation. 

The functor $\text{Fix}$ restricted to difference abelian groups is sometimes called the \emph{invariants} functor and denoted
$$
(-)^\sigma:\boldsymbol{\sigma}\textbf{-Ab}\rightarrow\textbf{Ab}.
$$

Then, we have that $R^1\Fix=(-)_\sigma$, and $R^i\Fix=0$ for all $i>1$ \cite{tomasic}.

\subsubsection{Tensored structure of \texorpdfstring{$k\M$}{k-Mod} over \texorpdfstring{$\boldsymbol{\sigma}\textbf{-Set}$}{sigma-Set}}

Let $k$ be a commutative difference ring. As shown in \cite[15.4]{tomasic}, the category $k\M$ is tensored over difference sets. For a difference set $E$ and a difference $k$-module $M$, we write
$$
E\otimes M=\bigoplus_{e\in E}M, \text{ where } \sigma(e,m)=(\sigma_E(e),\sigma_M(m)). 
$$
It has the universal property
$$
[E\otimes M,N]_k\simeq [E,[M,N]_k].
$$


    


\begin{lemma}
    We have that $N_+$ is internally projective in \emph{$\boldsymbol{\sigma}\textbf{-Set}$}. 
\end{lemma}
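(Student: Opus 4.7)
The plan is to unpack the definition: $N_+$ is internally projective in $\boldsymbol{\sigma}\textbf{-Set}$ precisely when $[N_+,-]$ preserves epimorphisms. Since $[N_+,-]$ is right adjoint to $-\times N_+$ in the cartesian closed topos $\boldsymbol{\sigma}\textbf{-Set}$, it automatically preserves limits, so the content of the statement reduces to preserving epis.

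First I would observe that epimorphisms in $\boldsymbol{\sigma}\textbf{-Set}$ are detected by the underlying-set functor $\lfloor-\rfloor$: because $\lfloor-\rfloor$ is simultaneously a left adjoint (to $|-\rceil$) and a right adjoint (to $\lceil-|$), it both preserves and reflects epis, and hence an epi of difference sets is exactly a surjection on underlying sets.

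Next I would compute $\lfloor[N_+,Y]\rfloor$ explicitly. Using $N_+\cong\lceil\{*\}|$ and the definition of the internal hom as a set of ladders, $\lfloor[N_+,Y]\rfloor\cong\boldsymbol{\sigma}\textbf{-Set}(N_+\times N_+,Y)$. The product $N_+\times N_+$ has underlying set $\mathbb{N}\times\mathbb{N}$ with $\sigma(i,j)=(i+1,j+1)$; its connected components are parametrised by the $\sigma$-invariant $i-j\in\mathbb{Z}$, and each such component is isomorphic to $N_+$ (starting at $(\max(k,0),\max(-k,0))$ and advancing by $\sigma$). Hence $N_+\times N_+\cong\bigsqcup_{k\in\mathbb{Z}}N_+$, and therefore
\[
\lfloor[N_+,Y]\rfloor\;\cong\;\prod_{k\in\mathbb{Z}}\boldsymbol{\sigma}\textbf{-Set}(N_+,Y)\;\cong\;\lfloor Y\rfloor^{\mathbb{Z}},
\]
using the $\lceil-|\dashv\lfloor-\rfloor$ adjunction in the last step.

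Finally, given an epimorphism $p\colon Y\to Z$, i.e.\ a surjection $\lfloor Y\rfloor\epi\lfloor Z\rfloor$, the induced map $\lfloor Y\rfloor^{\mathbb{Z}}\to\lfloor Z\rfloor^{\mathbb{Z}}$ is a product of surjections and hence itself surjective (by the axiom of choice in \textbf{Set}). Thus $[N_+,Y]\to[N_+,Z]$ is surjective on underlying sets, and so an epi in $\boldsymbol{\sigma}\textbf{-Set}$. The only non-formal step is the orbit-decomposition $N_+\times N_+\cong\bigsqcup_{k\in\mathbb{Z}}N_+$; everything else is a direct application of the adjunctions catalogued earlier in the section.
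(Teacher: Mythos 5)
Your proof is correct, and it reaches exactly the same key computation as the paper, namely $\lfloor[N_+,Y]\rfloor\cong\lfloor Y\rfloor^{\mathbb{Z}}$ followed by the axiom of choice in $\textbf{Set}$; the difference is only in how that isomorphism is derived. The paper argues by hand with the ladder description: the relation $f_{i+1}(j+1)=\sigma_Y(f_i(j))$ shows a ladder is determined by its values on the boundary $\{f_0(j)\}\cup\{f_i(0)\}$ of the quarter-plane, which is indexed by $\mathbb{Z}$. Your route makes the same fact structural: the boundary points are precisely a set of orbit representatives for the diagonal shift on $\mathbb{N}\times\mathbb{N}$, i.e.\ your decomposition $N_+\times N_+\cong\coprod_{k\in\mathbb{Z}}N_+$ by the invariant $i-j$, after which the adjunction $\lceil-|\dashv\lfloor-\rfloor$ does the rest. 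Your version buys a cleaner, more reusable argument (the same pattern computes $[E\otimes k,-]_k$ for any $E=\lceil X|$, as the paper does separately in its Proposition on internal projectivity of $E\otimes k$), at the cost of having to verify the orbit decomposition; the paper's version is more elementary but leaves the $\mathbb{Z}$-indexing looking like an ad hoc observation. One small point worth making explicit in your write-up: you should note (as you do implicitly via naturality) that under the chain of isomorphisms the induced map $[N_+,Y]\to[N_+,Z]$ really is the product map $\lfloor q\rfloor^{\mathbb{Z}}$, and that surjections of underlying sets are exactly the epimorphisms of difference sets --- both facts are true and the paper also uses them without comment.
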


\begin{proof}
    Let $(f_i)_{i\in\mathbb{N}}\in[N_+,Z]$. We have that $f_{i+1}(j+1)=\sigma_Z(f_i(j))$ $\forall i,j\in\mathbb{N}$, so $(f_i)_{i\in\mathbb{N}}$ is determined by the values $f_0(j)$ for all $j$ and $f_i(0)$ for all $i$. So, we have an isomorphism $\lfloor[N_+,Y]\rfloor\cong\lfloor Y\rfloor^\mathbb{Z}\cong\Hom(\mathbb{Z},\lfloor Y\rfloor)$, identifying $(f_i)_{i\in\mathbb{N}}$ with $(x_i)_{i\in\mathbb{Z}}$, where $$x_i=\begin{cases}
	f_0(i) & \text{if} \: i\geq0\\
	f_i(0) & \text{if} \: i<0\end{cases}$$
    Let $q:Y\epi Z$ be a surjection in $\boldsymbol{\sigma}\textbf{-Set}$. This induces difference map $q^*:[N_+,Y]\to[N_+,Z]$, i.e. $q^*:\Hom(\mathbb{Z},\lfloor Y\rfloor)\to\Hom(\mathbb{Z},\lfloor Z\rfloor)$, where the bare hom-sets inherit difference structure via the isomorphism. By the Axiom of Choice in \textbf{Set}, this is a surjection. As a surjective difference map, $q^*$ is a surjection in $\boldsymbol{\sigma}\textbf{-Set}$. 
\end{proof}

\begin{lemma}
    \label{proj-E}Let $X$ be a set. Then $E=\lceil X|$ is projective in \emph{$\boldsymbol{\sigma}\textbf{-Set}$}. 
\end{lemma}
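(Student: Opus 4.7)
The plan is to exploit the adjunction $\lceil - | \dashv \lfloor - \rfloor$ established earlier, together with the fact that $\lfloor - \rfloor$ itself has a further right adjoint $|-\rceil$, so that the underlying-set functor preserves epimorphisms. Combined with the fact that every set is projective in $\textbf{Set}$ (by the axiom of choice), this should give projectivity of $\lceil X |$ more or less formally.

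In detail, suppose we are given an epimorphism $q\colon Y \twoheadrightarrow Z$ in $\boldsymbol{\sigma}\textbf{-Set}$ and a morphism $f\colon \lceil X | \to Z$. First I would argue that $\lfloor q \rfloor \colon \lfloor Y \rfloor \to \lfloor Z \rfloor$ is surjective: since $\lfloor - \rfloor$ is itself a left adjoint (to $|-\rceil$), it preserves colimits and in particular epimorphisms. Then, using the adjunction $\boldsymbol{\sigma}\textbf{-Set}(\lceil X|, Z) \cong \textbf{Set}(X, \lfloor Z \rfloor)$, the map $f$ corresponds to a map of sets $\tilde f\colon X \to \lfloor Z \rfloor$. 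Because $X$ is projective in $\textbf{Set}$, there is a lift $\tilde g\colon X \to \lfloor Y \rfloor$ with $\lfloor q \rfloor \circ \tilde g = \tilde f$. Transposing $\tilde g$ back across the adjunction yields $g\colon \lceil X | \to Y$, and naturality of the adjunction unit/counit forces $q \circ g = f$.

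This shows $\lceil X |$ is projective. The only point requiring care is the preservation of epimorphisms by $\lfloor - \rfloor$, but this is immediate from it being a left adjoint in the triple $\lceil - | \dashv \lfloor - \rfloor \dashv | - \rceil$ recorded above; no significant obstacle is anticipated.
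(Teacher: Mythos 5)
Your proposal is correct and follows essentially the same route as the paper: transpose across the adjunction $\boldsymbol{\sigma}\textbf{-Set}(\lceil X|,Y)\cong\Hom(X,\lfloor Y\rfloor)$ and lift using the axiom of choice in $\textbf{Set}$. The only difference is that you explicitly justify the surjectivity of $\lfloor q\rfloor$ via $\lfloor-\rfloor$ being a left adjoint, a point the paper leaves implicit.
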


\begin{proof}
    We have that $\boldsymbol{\sigma}\textbf{-Set}(E,Y)\cong\Hom(X,\lfloor Y\rfloor)$. Let $q:Y\epi Z$ be a surjection in $\boldsymbol{\sigma}\textbf{-Set}$. Then it induces a map $q^*:\boldsymbol{\sigma}\textbf{-Set}(E,Y)\to\boldsymbol{\sigma}\textbf{-Set}(E,Z)$, i.e. $q^*:\Hom(X,\lfloor Y\rfloor)\to\Hom(X,\lfloor Z\rfloor)$. By the Axiom of Choice in \textbf{Set}, this is a surjection. 
\end{proof}

\begin{proposition}
    \label{tensored-internally-projective}Let $X$ be a set, and write $E=\lceil X|$. Then $E\otimes k$ is internally projective in $k\M$.
\end{proposition}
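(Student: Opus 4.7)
The plan is to reduce the claim to the internal projectivity of $N_+$ established in the preceding lemma, via a coproduct decomposition of $E = \lceil X|$ and the universal property of the tensored structure. Since $\lceil-|$ is a left adjoint it preserves coproducts, and together with $\lceil\{*\}| = N_+$ this identifies $E \cong \coprod_{x\in X} N_+$ in $\boldsymbol{\sigma}\textbf{-Set}$. Directly from the definition $E'\otimes M = \bigoplus_{e\in E'} M$ of the tensored structure, one then obtains $E\otimes k \cong \bigoplus_{x\in X}(N_+\otimes k)$ as $k$-modules.

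Next, I would apply the tensored adjunction $[E\otimes M, N]_k \cong [E, [M,N]_k]$ with $M = k$. Since $k$ is the unit of the monoidal structure on $k\M$, Yoneda applied to $\Hom_k(-, [k,N]_k) \cong \Hom_k(-\otimes_k k, N) \cong \Hom_k(-, N)$ yields $[k,N]_k \cong N$ naturally in $N$. Combining this with the fact that the contravariant internal hom sends coproducts to products in its first argument gives a natural isomorphism
$$[E\otimes k, N]_k \;\cong\; [E, N] \;\cong\; \prod_{x\in X}[N_+, N],$$
where the middle term denotes the internal hom in $\boldsymbol{\sigma}\textbf{-Set}$ endowed with the $k$-module structure inherited from $N$.

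The functor $[E\otimes k,-]_k$ is automatically left exact, so to conclude it is exact it suffices to show it preserves epimorphisms. An epimorphism $M\twoheadrightarrow N$ in $k\M$ has underlying epimorphism in $\boldsymbol{\sigma}\textbf{-Set}$, and each factor $[N_+,-]$ preserves such epimorphisms by the preceding lemma. The main obstacle, and the only non-formal step, is verifying that arbitrary products preserve epimorphisms in $\boldsymbol{\sigma}\textbf{-Set}$. Since $\boldsymbol{\sigma}\textbf{-Set}$ is a presheaf topos, limits are computed pointwise, and a pointwise product of surjections of sets is a surjection by the axiom of choice in $\textbf{Set}$. This (AB4*)-type property delivers exactness of the product functor and completes the argument.
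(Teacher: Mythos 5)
Your proof is correct, but it takes a genuinely different route from the paper's. You decompose $E=\lceil X|\cong\coprod_{x\in X}N_+$ (using that $\lceil-|$ preserves coproducts and $\lceil\{*\}|=N_+$), convert the internal hom into a product $[E\otimes k,-]_k\cong\prod_{x\in X}[N_+,-]$, and then need the (AB4*)-type fact that arbitrary products of epimorphisms in the presheaf topos $\boldsymbol{\sigma}\textbf{-Set}$ are epimorphisms, which you correctly reduce to pointwise computation of limits plus choice in $\textbf{Set}$. The paper instead keeps $E$ whole: it rewrites $[E\otimes k,Z]_k\cong[E,Z]\cong(\boldsymbol{\sigma}\textbf{-Set}(E,[N_+,Z]),\sigma)$ and then applies two already-proved facts in sequence --- the internal projectivity of $N_+$ to get a surjection $[N_+,Y]\to[N_+,Z]$, and the projectivity of $E=\lceil X|$ (Lemma \ref{proj-E}) to get a surjection after applying $\boldsymbol{\sigma}\textbf{-Set}(E,-)$. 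The two arguments are close cousins: Lemma \ref{proj-E} amounts to saying that $\boldsymbol{\sigma}\textbf{-Set}(E,-)\cong\Hom(X,\lfloor-\rfloor)$ preserves surjections, which is exactly your statement that an $X$-indexed product of surjections is a surjection, so both proofs invoke the axiom of choice in the same two places. What your version buys is independence from Lemma \ref{proj-E} and a cleaner structural picture ($E\otimes k$ as a direct sum of copies of $N_+\otimes k$); what the paper's version buys is reuse of its stock of projectivity lemmas and avoidance of any discussion of infinite products in $k\M$. One small point worth making explicit in your write-up: you should note, as the paper does implicitly, that epimorphisms in $k\M$ are detected on underlying difference sets in both directions, so that surjectivity of the underlying map of $q^*$ really does yield an epimorphism $[E\otimes k,Y]_k\to[E\otimes k,Z]_k$ in $k\M$.
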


\begin{proof}
    For any $Z\in k\M$, we have natural $k$-module isomorphisms
    \begin{align}
        [E\otimes k,Z]_k\cong[E,[k,Z]_k]\cong[E,Z]\cong(\boldsymbol{\sigma}\textbf{-Set}(N_+\times E,Z),\sigma)\cong(\boldsymbol{\sigma}\textbf{-Set}(E,[N_+,Z]),\sigma),\notag
    \end{align}
    where each object obtains a $k$-module structure via the underlying difference bijections, and the difference structure on the right is obtained via the shift on $N_+$. Let $q:Y\epi Z$ be a surjection in $k\M$. By the internal projectivity of $N_+$ in $\boldsymbol{\sigma}\textbf{-Set}$, we obtain surjection $q^*:[N_+,Y]\to[N_+,Z]$. By the projectivity of $E$ in $\boldsymbol{\sigma}\textbf{-Set}$, we obtain surjection $q^*:\boldsymbol{\sigma}\textbf{-Set}(E,[N_+,Y])\to\boldsymbol{\sigma}\textbf{-Set}(E,[N_+,Z])$, i.e. a surjection $q^*:[E\otimes k,Y]_k\to[E\otimes k,Z]_k$. 
\end{proof}

\begin{proposition}
    \label{tensored-projective}Let $X$ be a set, and write $E=\lceil X|$. Then $E\otimes k$ is projective in $k\M$.
\end{proposition}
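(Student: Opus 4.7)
The plan is to mirror the structure of Proposition~\ref{tensored-internally-projective}, but working with external hom-sets in $k\M$ rather than internal hom objects, so that Lemma~\ref{proj-E} alone suffices and the auxiliary internal projectivity of $N_+$ is no longer needed.

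First, I would compute $\Hom_k(E\otimes k, Z)$ for an arbitrary $Z\in k\M$ using the tensored structure. The universal property
$$[E\otimes k,Z]_k\cong[E,[k,Z]_k]$$
combined with the canonical isomorphism $[k,Z]_k\cong Z$ (the unit $k$ acts trivially under the internal $k$-hom) gives $[E\otimes k,Z]_k\cong [E,Z]$. Applying $\Fix$ to pass from the internal hom to the external hom, and using that $\Fix([E,Z])=\boldsymbol{\sigma}\textbf{-Set}(E,Z)$, I obtain the natural isomorphism
$$\Hom_k(E\otimes k,Z)\cong\boldsymbol{\sigma}\textbf{-Set}(E,Z),$$
where the right-hand side views $Z$ merely as a difference set via the forgetful functor. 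This isomorphism is natural in $Z$.

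Next, given a surjection $q:Y\epi Z$ in $k\M$, its underlying morphism is a surjection in $\boldsymbol{\sigma}\textbf{-Set}$, since epimorphisms in $k\M$ are detected on underlying difference sets. By Lemma~\ref{proj-E}, $E=\lceil X|$ is projective in $\boldsymbol{\sigma}\textbf{-Set}$, so the induced map
$$q_*:\boldsymbol{\sigma}\textbf{-Set}(E,Y)\twoheadrightarrow\boldsymbol{\sigma}\textbf{-Set}(E,Z)$$
is surjective. Transporting along the natural isomorphism of the previous paragraph yields that $q_*:\Hom_k(E\otimes k,Y)\to\Hom_k(E\otimes k,Z)$ is surjective as well, which is precisely the projectivity of $E\otimes k$ in $k\M$.

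The only subtle point — and what I would expect to be the main thing to verify carefully — is the compatibility of the chain of natural isomorphisms with the $k$-module structures on each side, so that a $k$-linear surjection really does produce a surjection of external hom sets rather than merely of underlying abelian groups. Since all the isomorphisms in play descend from the hom-tensor adjunction and the definition of the tensored structure recalled from \cite[15.4]{tomasic}, this is routine naturality bookkeeping and does not introduce any new content beyond Lemma~\ref{proj-E}.
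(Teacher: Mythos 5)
Your proof is correct, and it takes a genuinely different --- and in fact more robust --- route than the paper's. The paper factors $\Hom_k(E\otimes k,-)$ as $\Hom_k(k,[E\otimes k,-]_k)$ and deduces projectivity from the internal projectivity established in Proposition~\ref{tensored-internally-projective} together with the claimed projectivity of $k$ as a $k$-module; you instead identify $\Hom_k(E\otimes k,Z)\cong\Fix([E,Z])=\boldsymbol{\sigma}\textbf{-Set}(E,Z)$ directly and conclude from Lemma~\ref{proj-E} alone. Your route buys two things. First, it is more economical: it never invokes the internal projectivity of $N_+$. Second, and more importantly, it avoids the paper's appeal to the projectivity of $k$ as a $k$-module, which is in fact false in $\boldsymbol{\sigma}\textbf{-Set}$: a $k$-linear difference map $k\to N$ is determined by the image of $1$, which must be a fixed point of $\sigma_N$, so $\Hom_k(k,-)\cong\Fix$, and $\Fix$ is only left exact (its first derived functor is the coinvariants $(-)_\sigma$). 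So the paper's proof as written has a gap which your argument sidesteps; the conclusion is saved precisely because $E\otimes k$ is free on the \emph{projective} difference set $\lceil X|$ rather than on the terminal object. The two points you flag as needing care are indeed routine: epimorphisms in $k\M$ over this presheaf topos are surjections of underlying difference sets (cokernels are computed pointwise, with no sheafification), and the isomorphism $\Hom_k(E\otimes k,Z)\cong\boldsymbol{\sigma}\textbf{-Set}(E,Z)$ is natural in $Z$ because every link in your chain comes from the tensored-structure adjunction of \cite[15.4]{tomasic} and the identifications $\Fix([M,N]_k)=\Hom_k(M,N)$, $\Fix([X,Y])=\boldsymbol{\sigma}\textbf{-Set}(X,Y)$ already recorded in the text.
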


\begin{proof}
    For any $Z\in k\M$, we have natural isomorphisms of abelian groups $$\Hom_k(E\otimes k,Z)\cong\Hom_k(k\otimes_k(E\otimes k),Z)\cong\Hom_k(k,[E\otimes k,Z]_k).$$

    Let $q:Y\epi Z$ be a surjection in $k\M$. By Proposition \ref{tensored-internally-projective}, we have a $k$-module surjection $q^*:[E\otimes k,Y]_k\to[E\otimes k,Z]_k$, and so, by the projectivity of $k$ as a $k$-module, we have a surjection of abelian groups $q^*:\Hom_k(k,[E\otimes k,Y]_k)\to\Hom_k(k,[E\otimes k,Z]_k)$, i.e. a surjection $q^*:\Hom_k(E\otimes k,Y)\to\Hom_k(E\otimes k,Z)$. 
\end{proof}

\begin{corollary}
    \label{tensored-enriched-proj}Let $X$ be a set, and write $E=\lceil X|$. Then $E\otimes k$ is enriched projective in $k\M$.
\end{corollary}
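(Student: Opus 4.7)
The plan is to invoke the two immediately preceding propositions in combination with the definition of enriched projectivity. By Definition \ref{enriched-proj}, an object of a closed abelian category is enriched projective precisely when it is both projective and internally projective, so the corollary reduces to verifying each of these two conditions for $E \otimes k$.

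First I would note that Proposition \ref{tensored-internally-projective} already establishes internal projectivity of $E \otimes k$ in $k\M$: it shows that for any epimorphism $q\colon Y \epi Z$ of $k$-modules, the induced map $q_* \colon [E \otimes k, Y]_k \to [E \otimes k, Z]_k$ is a $k$-module surjection, which is exactly the statement that $[E\otimes k, -]_k$ is exact. Second, Proposition \ref{tensored-projective} gives projectivity in the ordinary categorical sense, showing that $\Hom_k(E\otimes k, -)$ sends surjections to surjections of abelian groups.

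Combining these two facts against Definition \ref{enriched-proj} immediately yields that $E \otimes k$ is enriched projective. There is no real obstacle here since both ingredients have been done; the corollary is a one-line consequence, its only role being to record the combined conclusion in the form that will be used later (e.g., to apply Proposition \ref{prop-hh-ext} with $A = E \otimes k$).
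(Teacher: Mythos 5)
Your proposal is correct and matches the paper's (implicit) argument exactly: the corollary is just the conjunction of Proposition \ref{tensored-projective} and Proposition \ref{tensored-internally-projective} read against Definition \ref{enriched-proj}, which is why the paper records it without a written proof.
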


\subsection{Low dimensional internal difference Hochschild cohomology}

\subsubsection{Degree 0} 

We have an isomorphism $[k,M]_k\cong M$, where every $m\in M$ can be seen as a ladder $(\sigma^j(m))_{j\in\mathbb{N}}$ such that $\sigma^j(m)(\lambda)=\lambda\sigma^j(m)$, for all $j$. Then $\ud^1:M\cong[k,M]_k\rightarrow[A,M]_k$, $\ud^1m_j(a)=a\sigma^j(m)-\sigma^j(m)a$. 

So, $$\uHH^0(A,M)=\ker\ud^1=\{m\in M|\:a\sigma^i(m)=\sigma^i(m)a\:\forall i\in\mathbb{N},a\in A\}.$$

\subsubsection{Degree 1}

Similarly, we can explicitly consider $\uHH^1(A,M)$ by looking at $\ud^2:[A,M]_k\rightarrow [A\otimes_kA,M]_k$, where $(\ud^2(f_j))_j(a\otimes b)=af_j(b)-f_j(ab)+f_j(a)b$, as expected. Then 
\begin{align}
\ker\ud^2
&=\{(f_j)_{j\in\mathbb{N}}\in[A,M]_k|\forall a,b\in A,f_j(ab)=af_j(b)+f_j(a)b\}.\notag
\end{align}
Call these \emph{internal difference $k$-derivations}, and write $\underline{\Der}_k(A,M):=\ker\ud^2$. To get the first internal Hochschild cohomology we quotient out by $$\im{\ud^1}=\{(f_j)_{j\in\mathbb{N}}\in[A,M]_k|\:\exists m\in M\text{ s.t. }\forall a\in A,f_j(a)=a\sigma_M^j(m)-\sigma_M^j(m)a\}.$$ Call these \emph{internal difference inner $k$-derivations}, and write $\underline{\text{IDer}}_k(A,M):=\im\ud^1$. So, we have canonical isomorphism $$\uHH^1(A,M)\cong\underline{\Der}_k(A,M)/\underline{\text{IDer}}_k(A,M).$$ 

\subsection{Example}

Let $k$ be a commutative difference ring, and let 
$$
k\{x\}=k[x_0,x_1,\ldots], \text{ with } \sigma:x_i\mapsto x_{i+1}
$$
be the difference polynomial ring. Write $p(x)$ as shorthand for a polynomial $p(x_1,...,x_n)\in k\{x\}$. 

\begin{lemma}
    \label{k{x}=N}We have an isomorphism of $k$-modules $$k\{x\}\cong\mathbb{N}[\sigma]\otimes k.$$ 
\end{lemma}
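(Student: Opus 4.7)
The plan is to exhibit an explicit summand-by-summand $k$-module isomorphism between $\mathbb{N}[\sigma]\otimes k$ and $k\{x\}$, and then to verify that it commutes with the difference operator.

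First, I would unfold $\mathbb{N}[\sigma]\otimes k$ using the description of tensoring a difference set with a $k$-module from the previous subsection: as a $k$-module,
$$\mathbb{N}[\sigma]\otimes k=\bigoplus_{p\in\mathbb{N}[\sigma]}k,$$
equipped with the twisted difference structure $\sigma(p,\lambda)=(\sigma\cdot p,\sigma_k(\lambda))$, where $\sigma\cdot p$ denotes multiplication by $\sigma$ in $\mathbb{N}[\sigma]$. On the other side, $k\{x\}=k[x_0,x_1,\ldots]$ is free as a $k$-module on the set $\mathcal{M}$ of monomials $x_0^{a_0}x_1^{a_1}\cdots x_n^{a_n}$; the difference operator, as a ring endomorphism extending $\sigma_k$ on $k$ and $x_i\mapsto x_{i+1}$, sends such a monomial to $x_1^{a_0}x_2^{a_1}\cdots x_{n+1}^{a_n}$ and acts as $\sigma_k$ on coefficients.

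Next I would write down the map
$$\phi:\mathbb{N}[\sigma]\otimes k\longrightarrow k\{x\},\qquad (\textstyle\sum_i a_i\sigma^i,\lambda)\longmapsto \lambda\cdot\prod_i x_i^{a_i},$$
by specifying it on each summand via the natural bijection $\psi:\mathbb{N}[\sigma]\xrightarrow{\sim}\mathcal{M}$, $\sum_i a_i\sigma^i\mapsto\prod_i x_i^{a_i}$, both sides being in natural bijection with the finitely supported exponent tuples in $\mathbb{N}^{(\mathbb{N})}$. Since this bijection identifies the indexing sets and $\phi$ is the identity on the $k$-summand of each index, $\phi$ is a $k$-module isomorphism.

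Finally, for $\sigma$-equivariance, the key observation is that the two difference actions on the indexing sides match under $\psi$: multiplication by $\sigma$ on $\mathbb{N}[\sigma]$ and the shift $x_i\mapsto x_{i+1}$ on $\mathcal{M}$ both implement the same exponent-tuple shift $(a_0,a_1,\ldots)\mapsto(0,a_0,a_1,\ldots)$; in particular the zero polynomial corresponds to the monomial $1$, each being the unique $\sigma$-fixed element of its indexing set. Combined with the fact that on both sides $\sigma$ acts as $\sigma_k$ on the $k$-coefficient of each summand, this yields $\phi\circ\sigma=\sigma\circ\phi$. The whole proof is a straightforward bookkeeping exercise; the only conceptual point to flag is this identification of the $\sigma$-actions on exponent tuples.
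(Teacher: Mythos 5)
Your proof is correct and follows essentially the same route as the paper: both identify $\nu=\sum_i n_i\sigma^i\in\mathbb{N}[\sigma]$ with the monomial $x^\nu=\prod_i x_i^{n_i}$, observe that multiplication by $\sigma$ corresponds to the variable shift $x_i\mapsto x_{i+1}$, and conclude that $k\{x\}=\bigoplus_{\nu\in\mathbb{N}[\sigma]}k$ with the matching twisted difference structure. Your write-up just spells out the bookkeeping (and the $\sigma$-equivariance check) in more detail than the paper does.
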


\begin{proof}
    We identify $\nu=n_0+n_1\sigma+n_2\sigma^2+...+n_p\sigma^p\in\mathbb{N}[\sigma]$ with $x^\nu=x_0^{n_0}x_1^{n_1}x_2^{n_2}...x_p^{n_p}\in k\{x\}$. Then $\sigma(x^\nu)=x_1^{n_0}x_2^{n_1}...x_{p+1}^{n_p}=x^{\sigma(\nu)}$. Then $k\{x\}=\bigoplus_{\nu\in\mathbb{N}[\sigma]}k$, where $\sigma(x^\nu,\lambda)=(x^{\sigma(\nu)},\sigma_k(\lambda))$, i.e. $k\{x\}\cong\mathbb{N}[\sigma]\otimes k$. 
\end{proof}




\begin{proposition}
    We have that $k\{x\}$ is an enriched projective $k$-module.
\end{proposition}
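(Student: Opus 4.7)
The plan is to reduce the statement to an immediate application of Corollary \ref{tensored-enriched-proj} by rewriting $k\{x\}$ in the form $\lceil X| \otimes k$ for a suitable set $X$.

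First, I would invoke Lemma \ref{k{x}=N} to identify $k\{x\}$ with $\mathbb{N}[\sigma] \otimes k$ as $k$-modules. Next, I would recall from the example following the construction of $\lceil-|$ that $\mathbb{N}[\sigma] \cong \lceil \mathbb{N}|$, since $\mathbb{N}[\sigma] \cong \bigoplus_{n\in\mathbb{N}} N_+$ and $\lceil - |$ preserves colimits with $N_+ = \lceil \{*\}|$. Stringing these isomorphisms together gives
$$k\{x\} \cong \mathbb{N}[\sigma] \otimes k \cong \lceil \mathbb{N}| \otimes k.$$

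At this point the result follows immediately from Corollary \ref{tensored-enriched-proj} applied with $X = \mathbb{N}$, which asserts precisely that $\lceil X| \otimes k$ is enriched projective in $k\text{-Mod}$. There is essentially no obstacle here: all the substantive content — the projectivity and internal projectivity of $E \otimes k$ for $E = \lceil X|$ — has already been established in Propositions \ref{tensored-projective} and \ref{tensored-internally-projective}. The only thing to verify is that the $k$-module isomorphisms chain correctly, which is routine given Lemma \ref{k{x}=N}.
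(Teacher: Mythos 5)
Your proposal is correct and follows exactly the same route as the paper: identify $k\{x\}\cong\mathbb{N}[\sigma]\otimes k\cong\lceil\mathbb{N}|\otimes k$ via Lemma \ref{k{x}=N} and the example computing $\mathbb{N}[\sigma]\cong\lceil\mathbb{N}|$, then apply Corollary \ref{tensored-enriched-proj}. Nothing is missing.
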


\begin{proof}
    From Lemma \ref{k{x}=N} and Example \ref{N[s]}, we know that $k\{x\}\cong\mathbb{N}[\sigma]\otimes k\cong\lceil\mathbb{N}|\otimes k$. By Corollary \ref{tensored-enriched-proj}, $k\{x\}$ is enriched projective. 
\end{proof}

So, by the workings of Proposition \ref{ext-prop}, we calculate the internal Hochschild cohomology of $k\{t\}$ by constructing a projective resolution of $k\{t\}$ as a $k\{t\}^e$ module. 

\begin{lemma}
    $k\{t\}^e\cong k\{x,y\}$.
\end{lemma}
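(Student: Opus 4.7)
The plan is to establish the isomorphism via a universal-property argument, treating both sides as free commutative difference $k$-algebras on two generators.

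Since $k\{t\}$ is commutative, $k\{t\}^{\mathrm{op}}=k\{t\}$, and hence $k\{t\}^e=k\{t\}\otimes_k k\{t\}$. I intend to show that both $k\{t\}\otimes_k k\{t\}$ and $k\{x,y\}=k[x_0,y_0,x_1,y_1,\ldots]$ (with $\sigma(x_i)=x_{i+1}$ and $\sigma(y_i)=y_{i+1}$) represent the same functor from commutative difference $k$-algebras to sets, namely $R\mapsto\lfloor R\rfloor\times\lfloor R\rfloor$, whence Yoneda delivers the isomorphism.

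First I would check that $k\{t\}$ is the free commutative difference $k$-algebra on the single generator $t_0$: a homomorphism $\phi\colon k\{t\}\to R$ of difference $k$-algebras is uniquely determined by $\phi(t_0)$, because compatibility with $\sigma$ forces $\phi(t_i)=\sigma_R^i(\phi(t_0))$ and no further relations are imposed, the underlying $\lfloor k\rfloor$-algebra being polynomial in the $t_i$. The identical argument shows that $k\{x,y\}$ is free on the pair $\{x_0,y_0\}$, and therefore represents $R\mapsto \lfloor R\rfloor\times\lfloor R\rfloor$.

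Next I would invoke the general categorical fact that in any symmetric monoidal closed category of modules, the monoidal product restricts to the coproduct in the subcategory of commutative monoid objects. Applied to $k\M$ over the topos $\boldsymbol{\sigma}\textbf{-Set}$ with the $\otimes_k$-structure provided by the Appendix, this makes $k\{t\}\otimes_k k\{t\}$ the coproduct (in commutative difference $k$-algebras) of two copies of $k\{t\}$. Since a coproduct of free objects is free on the disjoint union of generating sets, $k\{t\}\otimes_k k\{t\}$ is free on $\{t_0\otimes 1,\,1\otimes t_0\}$ and thus represents the same functor as $k\{x,y\}$; the explicit isomorphism is then $x_0\mapsto t_0\otimes 1$, $y_0\mapsto 1\otimes t_0$, extended by multiplicativity and $\sigma$-compatibility.

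The main obstacle I anticipate is confirming that the internal tensor product $\otimes_k$ in $\boldsymbol{\sigma}\textbf{-Set}$ genuinely realises the coproduct in commutative difference $k$-algebras; I expect this to follow cleanly from the universal properties in the Appendix. Failing a direct citation, one can check it by hand using the decomposition $k\{t\}\cong\lceil\mathbb{N}|\otimes k$ of Lemma \ref{k{x}=N} to identify the underlying $\lfloor k\rfloor$-algebra of $k\{t\}\otimes_k k\{t\}$ with the polynomial algebra on $\{t_i\otimes 1,\,1\otimes t_j\}_{i,j\geq 0}$, and verifying that the induced difference operator $\sigma\otimes\sigma$ matches the shift on $k\{x,y\}$.
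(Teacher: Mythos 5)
Your proposal is correct, but it takes a genuinely different route from the paper. The paper's proof is a direct, explicit computation: it writes down the map $k\{t\}\otimes_k k\{t\}\to k\{x,y\}$, $p(t)\otimes q(t)\mapsto p(x)q(y)$, and asserts injectivity and surjectivity by inspecting monomials (surjectivity because any $f(x,y)$ is a linear combination of products $p(x)q(y)$). You instead argue by representability: $k\{t\}$ is the free commutative difference $k$-algebra on one generator, $k\{x,y\}$ on two, $\otimes_k$ is the coproduct of commutative monoid objects in the symmetric monoidal category $k\M$, and free objects turn disjoint unions of generators into coproducts, so Yoneda finishes the job. Your approach buys generality and robustness: it immediately yields $k\{t\}^{\otimes n}\cong k\{x^{(1)},\ldots,x^{(n)}\}$ and sidesteps the well-definedness and injectivity checks that the paper's one-line assertion quietly elides. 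The cost is that the key categorical input --- that $\otimes_k$ computes the coproduct of commutative algebra objects in $k\M$ --- is standard for symmetric monoidal categories but is not actually proved in the paper's Appendix (which establishes the symmetric monoidal closed structure via Corollary \ref{monoidal-closed} but never discusses coproducts of algebras), so you would either need to supply that lemma or fall back on your suggested hands-on verification via $k\{t\}\cong\lceil\mathbb{N}|\otimes k$, which essentially reduces to the paper's explicit argument. Either way the proof goes through.
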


\begin{proof}
    Given $k\{t\}$ is a commutative difference ring, we know that $k\{t\}^e=k\{t\}\otimes_kk\{t\}$. Consider a map $k\{t\}^e\rightarrow k\{x,y\},p(t)\otimes q(t)\mapsto p(x)q(y)$. This map is an injective $k$-algebra homomorphism. Likewise, given $f(x,y)\in k\{x,y\}$, it can be factorised as a linear sum of terms $p(x)q(y)$, and we have surjectivity. 
\end{proof}

\subsubsection{Projective resolution of \texorpdfstring{$k\{x\}$}{k{x}}}

There exists a projective resolution 
\begin{align}
\cdots\to\Pi\xrightarrow{h}N_{+}^2\otimes k\{x,y\}\xrightarrow{g}N_{+}\otimes k\{x,y\}\xrightarrow{f}k\{x,y\}\xrightarrow{\epsilon}k\{x\}\to 0\label{projres}
\end{align} of $k\{x\}$ as a $k\{x,y\}$-module, where $\Pi:=(N_+^2\otimes k\{x,y\})\oplus(N_+^3\otimes k\{x,y\})$, also projective as the direct sum of projective modules.

We will use this to calculate the cohomology groups of $k\{x\}$ in section \ref{kt}, but we must first prove that (\ref{projres}) is a projective resolution of $k\{x\}$ as a $k\{x,y\}$-module. Write $z_i=x_i-y_i\in k\{x,y\}$. 

Define the $k\{x,y\}$-module map $f:N_+\otimes k\{x,y\}\to k\{x,y\},(i,p)\mapsto z_ip$. It is not hard to see that the image of $f$ is precisely the kernel of $\epsilon:x,y\mapsto x$. We have $$\ker f=\{\sum_i(i,p_i)\in N_+\otimes k\{x,y\}|\sum_iz_ip_i=0\}.$$ 

Define the $k\{x,y\}$-module map $g:N_+\times N_+\otimes k\{x,y\}\to N_+\otimes k\{x,y\},(i,j,p)\mapsto(i,z_jp)-(j,z_ip)$. So, \begin{align}
    g:\sum_{i,j}(i,j,p_{ij})\mapsto\sum_{i}(i,\sum_jz_j(p_{ij}-p_{ji})).\label{g}
\end{align} 

\begin{proposition}
We have $\ker(f)=\im(g).$\label{fg}
\end{proposition}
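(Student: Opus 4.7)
The plan is to recognize (\ref{projres}) as the initial part of the Koszul complex on the sequence $z_0, z_1, z_2, \ldots$, and to exploit that this sequence is regular in $k\{x,y\}$. The inclusion $\im(g) \subseteq \ker(f)$ is immediate from commutativity: for any basis element $(i,j,p)$ of $N_+^2 \otimes k\{x,y\}$ we have $f(g(i,j,p)) = z_i(z_j p) - z_j(z_i p) = 0$, and the general case follows from (\ref{g}) by linearity.

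For the reverse inclusion, I would first observe that the change of variables $x_i \mapsto y_i + z_i$ presents $k\{x,y\} = k[x_i,y_i]_{i\in\mathbb{N}}$ as the polynomial ring $k[y_i,z_i]_{i\in\mathbb{N}}$. In particular, the family $(z_i)_{i\in\mathbb{N}}$ is algebraically independent over $k[y_i]_{i\in\mathbb{N}}$ and is therefore a regular sequence in $k\{x,y\}$. Take $\xi = \sum_i (i,p_i) \in \ker(f)$, so $\sum_i z_i p_i = 0$ with only finitely many $p_i$ nonzero, say supported on $\{0,\ldots,n\}$. I would then argue by induction on $n$ to produce an antisymmetric family $q_{ij} \in k\{x,y\}$ with $p_i = \sum_j z_j q_{ij}$; such a family yields an explicit preimage under $g$ via $p_{ij} = q_{ij}$ for $i<j$ and $p_{ij} = 0$ otherwise, since then $p_{ij} - p_{ji} = q_{ij}$ for every $i,j$. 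For the base case, $z_0$ is a non-zero-divisor so $p_0 = 0$. For the inductive step, reduce modulo $z_n$ to obtain $\sum_{i<n} z_i \bar p_i = 0$ in $k\{x,y\}/(z_n)$, where $z_0,\ldots,z_{n-1}$ remain regular; the inductive hypothesis supplies antisymmetric $\bar q_{ij}$ for $i,j<n$, which we lift and subtract off, reducing to the case where every $p_i$ with $i<n$ lies in $(z_n)$. Writing $p_i = z_n q_{in}$ with $q_{in} = -q_{ni}$ and applying regularity of $z_n$ modulo $(z_0,\ldots,z_{n-1})$ then forces the required identity for $p_n$, closing the induction.

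The main obstacle is the bookkeeping of the antisymmetric Koszul induction, particularly making the two reductions (first modulo $z_n$, then solving for $p_n$) compatible with one another and with the symmetry condition. The cleanest remedy, should the direct argument prove unwieldy, is simply to invoke the classical theorem that the Koszul complex on a regular sequence in a commutative ring is acyclic in positive degrees (e.g.\ Matsumura, Theorem~16.5) and identify $f$ and $g$ with its degree $1$ and $2$ differentials after fixing the obvious linear order on $\mathbb{N}$; finiteness of the support of any fixed element of $N_+ \otimes k\{x,y\}$ reduces the infinitary setting to the finite one where the theorem applies.
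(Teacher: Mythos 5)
Your proof is correct, but it takes a genuinely different route from the paper's. You identify $f$ and $g$ with the degree-one and degree-two differentials of the Koszul complex on $(z_i)_{i\in\mathbb{N}}$, note that after the change of variables $k\{x,y\}\cong k[y_i,z_i]_{i\in\mathbb{N}}$ this is a regular sequence, and then either run the standard induction on the support (reduce modulo $z_n$, apply the inductive hypothesis in the quotient, lift an antisymmetric family $q_{ij}$, and use regularity of $z_n$ to solve for $p_n$) or simply cite acyclicity of the Koszul complex on a regular sequence, with finiteness of supports in $N_+\otimes k\{x,y\}$ reducing everything to finitely many variables. The paper instead argues by hand inside $k\{x,y\}$: it takes a minimal counterexample $\sum_i(i,p_i)\in\ker(f)\setminus\im(g)$, normalises so that $p_0\neq0$, extracts the factorisations $p_i=\sum_{j\neq i}z_jp_{ij}$ and $p_{ij}+p_{ji}=\sum_{k\notin\{i,j\}}z_kp_{\{i,j\}k}$ by substituting $z_j=0$, and rewrites the element as a term in $\im(g)$ plus a shorter element of $\ker(f)$, contradicting minimality. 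Both arguments rest on the same underlying fact, the vanishing of the first Koszul homology of a regular sequence; yours has the advantage of an explicit inductive structure and of connecting to a quotable classical theorem, while the paper's is self-contained and avoids passing to quotient rings. One point you should make explicit if you write out the induction rather than citing Matsumura: since the inductive step is applied in $k\{x,y\}/(z_n)$, the statement being proved must be formulated for a regular sequence in an arbitrary commutative ring (or you must observe that this quotient is again a polynomial ring of the same shape with $z_0,\dots,z_{n-1}$ still algebraically independent), and the lifted $q_{ij}$ must be chosen antisymmetric, which is harmless since one lifts only the entries with $i<j$.
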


\begin{proof}
For convenience, via a linear variable change, we may consider $k\{x,y\}=k\{z,y\}$ as $R\{z\}$, for $R=k\{y\}$. 

Suppose, for contradiction, a minimal element $\sum_i(i,p_i)\in\ker f$ which is not in $\im g$. So $f(\sum_i(i,p_i))=\sum_i z_ip_i=0$ for some $p_i\in R\{x\}$. 
    
    Firstly, suppose $p_0=0$. For any $i$, write $p_i=p_i^{(0)}+z_0p_i^{(1)}+z_0^2p_i^{(2)}+...$, where $z_0\nmid p_i^{(n)}(z_1,z_2,...)$ for any $n$. Then as $\sum_iz_ip_i=\sum_iz_ip_i^{(0)}+\sum_iz_iz_0p_i^{(1)}+\sum_iz_iz_0^2p_i^{(2)}+...=0$, each of these terms must equal zero (they cannot cancel, as they divide by different powers of $z_0$). So, $\sum_iz_ip_i^{(0)}=\sum_iz_ip_i^{(1)}=...=0$. So, rewriting $p_i'=p_i^{(0)}+p_i^{(1)}+...$, we have that $\sum_iz_ip_i'=0$, so we have an equally minimal example $\sum_i(i,p_i')$. Then, reindex via $(i,p_i'(z_1,z_2,...))\mapsto(i-1,p_i'(z_0,z_1,...))$. We have that $\sum_iz_{i-1}p_i'(z_0,z_1,...)=\sum_iz_ip_i'(z_1,z_2,...)=0$, and so we have an equally minimal example where $p_0\neq0$. So we may proceed with the assumption that our example $\sum_i(i,p_i)$ has $p_0\neq0$. 
    
    By substituting $z_j=0$ for $j\neq i$, we see that $p_i(0,\ldots,0,z_i,0,\ldots 0)=0$, so every term of $p_i$ must contain a factor of $z_j$ for some $j\neq i$, and we may write
    $$
    p_i=\sum_{j\neq i}z_j p_{ij}.
    $$
    By substituting $z_k=0$ for $k\notin \{i,j\}$, we see that $f_{ij}(0,\ldots,z_i,0,\ldots, z_j,\ldots,0)=0$, so we may write 
    $$
    p_{ij}+p_{ji}=\sum_{k\notin\{i,j\}}z_k p_{\{i,j\}k},
    $$
    where the index $\{i,j\}k$ reflects the symmetry in $i,j$.

    With this notation, we obtain
\begin{align}
    \sum_i(i,p_i)=\sum_{i\neq j}(i,z_jp_{ij})&=\sum_{i<j}(i,z_j p_{ij})+\sum_{i<j}(j,z_i p_{ji})\notag\\
    &=\sum_{i<j}(i,z_j p_{ij})+\sum_{i<j}(j,z_i(\sum_{k\notin\{i,j\}}z_k p_{\{i,j\}k}-p_{ij}))\notag\\
    &
    =\sum_{i<j}\left( (i,z_jp_{ij})-(j,z_ip_{ij})\right)+\sum_{i<j, k\notin\{i,j\}}(j,z_iz_k p_{\{i,j\}k})\notag\\
    &
    =\sum_{i<j}g(i,j,p_{ij})+\sum_{i<j, k\notin\{i,j\}}(j,z_iz_k p_{\{i,j\}k}).\notag
\end{align}
The first term lies in $\im(g)$, while the second term is in $\ker(f)$ again. However, the second term does not contain a term $(0,p_0)$. As this was nonzero by assumption, the second term has fewer terms than the expression we started with, and we obtain our contradiction.
\end{proof}

\subsubsection{Internal Hochschild cohomology of \texorpdfstring{$k\{x\}$}{kx}}\label{kt}

We have that $k\{x\}$ is projective as a $k$-module. Therefore, by Proposition \ref{ext-prop}, $$\uHH^n(k\{t\})\cong FH^n[P_\bullet,k\{t\}]_{k\{x,y\}},$$ where $P_\bullet$ here refers to the truncated complex $P_\bullet\to0$, for projective resolution $P_\bullet\to k\{t\}$ of $k\{t\}$ as an $k\{x,y\}$-module. Applying $[-,k\{t\}]_{k\{x,y\}}$ to $P_\bullet\to0$, we obtain complex \begin{align}
    0\to&[k\{x,y\},k\{t\}]_{k\{x,y\}}\xrightarrow{f^*}[N_+\otimes k\{x,y\},k\{t\}]_{k\{x,y\}}\notag\\&\xrightarrow{g^*}[N_+^2\otimes k\{x,y\},k\{t\}]_{k\{x,y\}}\rightarrow...\notag
\end{align}

Now, $[k\{x,y\},k\{t\}]_{k\{x,y\}}\cong k\{t\}$. Similarly, for any difference set $E$, $$[E\otimes k\{x,y\},k\{t\}]_{k\{x,y\}}\cong[E,[k\{x,y\},k\{t\}]_{k\{x,y\}}]\cong[E,k\{t\}].$$ So, we obtain complex $$0\to k\{t\}\xrightarrow{f^*}[N_+,k\{t\}]\xrightarrow{g^*}[N_+^2,k\{t\}]\rightarrow...$$

\begin{remark}
    Recall that $A\in A^e\M$ via $(a\otimes b)c=acb$. Given $f(x,y)\in k\{x,y\}$, we can write it as a linear sum $\sum_ip_i(x)q_i(y)$. Then, $k\{t\}\in k\{x,y\}\M$ via $f(x,y)g(t)=(\sum_ip_i(x)q_i(y))g(t)=\sum_ip_i(t)g(t)q_i(t)=\sum_ip_i(t)q_i(t)g(t)$. So, the action is $f(x,y)*g(t)=f(t,t)g(t)$. 
\end{remark}

\begin{proposition}
    \label{HH0}We have an isomorphism of $k$-modules $$\uHH^0(k\{t\})\cong k\{t\}.$$
\end{proposition}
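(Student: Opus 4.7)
The plan is to read off $\uHH^0(k\{t\})$ as the kernel of the first map in the cochain complex
$$0\to k\{t\}\xrightarrow{f^*}[N_+,k\{t\}]\xrightarrow{g^*}[N_+^2,k\{t\}]\to\cdots$$
that has just been assembled from the projective resolution (\ref{projres}). By definition $\uHH^0(k\{t\})=\ker(f^*)$, so the entire claim reduces to verifying that $f^*$ is the zero map; then $\ker(f^*)$ is all of $k\{t\}$.

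To check $f^{*}=0$, I would unwind the adjunctions used to identify $[k\{x,y\},k\{t\}]_{k\{x,y\}}\cong k\{t\}$ and $[N_{+}\otimes k\{x,y\},k\{t\}]_{k\{x,y\}}\cong [N_{+},k\{t\}]$. Under the first, an element $g(t)\in k\{t\}$ corresponds to the $k\{x,y\}$-linear map $h(x,y)\mapsto h(t,t)g(t)$, using the action recorded in the remark preceding the statement. Pulling this map back along $f\colon (i,p)\mapsto z_{i}p$, one obtains the ladder sending $i\mapsto z_{i}*g(t)=(x_{i}-y_{i})|_{x=y=t}\,g(t)=0$. Hence $f^{*}(g(t))=0$ for every $g(t)\in k\{t\}$, so $f^{*}=0$ and $\ker(f^{*})=k\{t\}$ as $k$-modules.

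As a sanity check that requires no resolution at all, one can invoke the degree-zero formula
$$\uHH^{0}(A,M)=\{m\in M\mid a\sigma^{i}(m)=\sigma^{i}(m)a\ \forall i\in\mathbb{N},\,a\in A\}$$
derived earlier. Since $k\{t\}$ is a commutative difference ring, every element is central and every $\sigma^{i}$ lands in this centre, so the condition is vacuous and the whole module is obtained. No serious obstacle is anticipated: the argument is essentially a bookkeeping exercise with the action $h(x,y)*g(t)=h(t,t)g(t)$, and the only care needed is to track the chain of natural isomorphisms used to identify $[E\otimes k\{x,y\},k\{t\}]_{k\{x,y\}}$ with $[E,k\{t\}]$ when $E=k\{x,y\}$ or $E=N_{+}\otimes k\{x,y\}$.
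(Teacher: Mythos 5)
Your argument is correct and takes essentially the same route as the paper: both identify $\uHH^0(k\{t\})$ with $\ker f^*$ under the isomorphism $[k\{x,y\},k\{t\}]_{k\{x,y\}}\cong k\{t\}$ and show $f^*=0$ because $z_i=x_i-y_i$ acts on $k\{t\}$ as $t_i-t_i=0$. Your closing sanity check via the degree-zero formula $\uHH^0(A,M)=\{m\in M\mid a\sigma^i(m)=\sigma^i(m)a\}$, which is vacuous for the commutative ring $k\{t\}$, is a valid independent confirmation that the paper does not spell out.
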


\begin{proof}
    We have that $\uHH^0(k\{t\})=\ker f^*$. Now, $f^*:(\alpha_n)_{n\in\mathbb{N}}\mapsto(\alpha_n\circ f)_{n\in\mathbb{N}}$. But for any $\sum_i(i,p_i)$, we have that for all $n$, $\alpha_n(f(\sum_i(i,p_i)))=\alpha_n(\sum_iz_ip_i))=\sum_iz_i\alpha_i(p_i)$, because $\alpha_n$ is a $k\{x,y\}$-module homomorphism, and by the actions of $k\{x,y\}$ on $k\{x,y\}$ and $k\{t\}$. But in $k\{t\}$, $$z_i\alpha_n(p_i)=(x_i-y_i)*\alpha_n(p_i)=(t_i-t_i)\alpha_n(p_i)=0.$$ So, $\uHH^0(k\{t\})=\ker f^*=k\{t\}$. 
\end{proof}

\begin{proposition}
    We have an isomorphism of $k$-modules $$\uHH^1(k\{t\})\cong[N_+,k\{t\}].$$
\end{proposition}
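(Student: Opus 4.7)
The plan is to compute $\uHH^1(k\{t\}) = \ker g^* / \im f^*$ directly from the complex
\[0 \to k\{t\} \xrightarrow{f^*} [N_+, k\{t\}] \xrightarrow{g^*} [N_+^2, k\{t\}] \to \cdots\]
and to show that both $f^*$ and $g^*$ are the zero map, so that this quotient collapses to the full middle term. The whole argument hinges on the remark preceding Proposition~\ref{HH0}: the $k\{x,y\}$-action on $k\{t\}$ factors through the diagonal, so $z_i = x_i - y_i$ acts as $t_i - t_i = 0$ on $k\{t\}$ for every $i$.

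The proof of Proposition~\ref{HH0} already establishes $\ker f^* = k\{t\}$, i.e.\ $f^* = 0$, so in particular $\im f^* = 0$. The next step is the parallel computation for $g^*$. Under the identification $[N_+ \otimes k\{x,y\}, k\{t\}]_{k\{x,y\}} \cong [N_+, k\{t\}]$, an element of the left-hand side is a ladder of $k\{x,y\}$-module maps $\alpha_n : N_+ \otimes k\{x,y\} \to k\{t\}$. By definition of $g$ (equation (\ref{g})) and $k\{x,y\}$-linearity,
\[g^*(\alpha_n)(i,j,p) = \alpha_n\bigl((i, z_j p) - (j, z_i p)\bigr) = z_j * \alpha_n(i,p) - z_i * \alpha_n(j,p),\]
and since $z_i$ and $z_j$ act as zero on $k\{t\}$, each of these terms vanishes. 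Hence $g^* = 0$, so $\ker g^* = [N_+, k\{t\}]$.

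Putting the two vanishings together yields
\[\uHH^1(k\{t\}) = \ker g^* / \im f^* = [N_+, k\{t\}] / 0 \cong [N_+, k\{t\}],\]
as required. The only place one has to be slightly careful is tracking the transported action through the identification $[E \otimes k\{x,y\}, k\{t\}]_{k\{x,y\}} \cong [E, k\{t\}]$ so that the reduction $z_i * (-) = 0$ genuinely applies at the level of ladders; this is the main (mild) obstacle, but it follows directly from the universal property of the tensored structure together with the fact that $[k\{x,y\}, k\{t\}]_{k\{x,y\}} \cong k\{t\}$ is given by evaluation at $1$, which intertwines the $k\{x,y\}$-action on the hom with the diagonal action on $k\{t\}$.
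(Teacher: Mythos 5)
Your proposal is correct and follows essentially the same route as the paper: observe that $\im f^*=0$ from the computation in Proposition~\ref{HH0}, and that $g^*=0$ because $k\{x,y\}$-linearity lets you pull the factors $z_i,z_j$ out of $\alpha_n$, where they act as $t_i-t_i=0$ on $k\{t\}$. The only cosmetic difference is that you apply $g^*$ to the generators $(i,j,p)$ directly rather than to the rewritten form of $g$ in equation~(\ref{g}); the underlying argument is identical.
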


\begin{proof}
    We have that $\uHH^1(k\{t\})=\ker g^*/\im f^*=\ker g^*$, by the proof of Proposition \ref{HH0}. Now, $g^*:(\alpha_n)_{n\in\mathbb{N}}\mapsto(\alpha_n\circ g)_{n\in\mathbb{N}}$. But for any $\sum_{i,j}(i,j,p_{ij})$, we have that for all $n$, 
    \begin{align}
        \alpha_n(f(\sum_{i,j}(i,j,p_{ij})))=\alpha_n(\sum_i(i,z_j(p_{ij}-p_{ji})))&=\sum_i\alpha_n(i,z_j(p_{ij}-p_{ji}))\notag\\&=\sum_i\alpha_n(z_j*(i,(p_{ij}-p_{ji})))\notag\\&=\sum_iz_j\alpha_n(i,(p_{ij}-p_{ji}))=0,\notag
    \end{align} because $\alpha_n$ is a $k\{x,y\}$-module homomorphism, and by the actions of $k\{x,y\}$ on $N_+\otimes k\{x,y\}$ and $k\{t\}$. So, $\uHH^1(k\{t\})=\ker g^*=[N_+\otimes k\{x,y\},k\{t\}]_{k\{x,y\}}\cong[N_+,k\{t\}]$. 
\end{proof}

As $k\{x\}$ is commutative, we know that $\underline{\IDer}_k(k\{x\})=0$, so we have  $$\underline{\Der}_k(k\{x\})\cong[N_+,k\{x\}].$$ 

\subsubsection{Difference derivations on \texorpdfstring{$k\{x\}$}{k{x}}}

We can make explicit the identification $\underline{\Der}_k(k\{x\})\cong[N_+,k\{x\}]$. Suppose $d=(d_i)_{i\in\mathbb{N}}$ is an internal difference derivation on $k\{x\}$. Write $d_i=\sum_jp_i^j\frac{\partial}{\partial x_j}$. Then $d$ corresponds to $(f_i)_{i\in\mathbb{N}}$, where $f_0(i)=p_0^i$ and $f_i(0)=p_i^0$, for all $i\in N_+$. 

Classically, given any $\{p_i|i\in\mathbb{N}\}\in\lfloor k\rfloor[x_1,x_2,...]$, you can define a derivation $d$ on $\lfloor k\rfloor[x_1,x_2,...]$ by setting $d(1)=0,d(x_i)=p_i$. Then, $d=\sum_ip_i\frac{\partial}{\partial x_i}$, where the action of $p$ here is given by $(pd)(f)=pd(f)$. So, we recover the classical result that $$\Der_{\lfloor k\rfloor}(\lfloor k\rfloor[x_1,x_2,...])\cong\bigoplus_{i\in\mathbb{N}}\lfloor k\rfloor[x_1,x_2,...]\frac{\partial}{\partial x_i}.$$ 

Similarly, we have that $\Der_k(k\{x\})\cong\Fix[N_+,k\{x\}]=\boldsymbol{\sigma}\textbf{-Set}(N_+,k\{x\})\cong k\{x\}$. We can see this explicitly. A $\lfloor k\rfloor[x_1,x_2,...]$-derivation $d$ is a difference derivation if $\sigma(d(p(x_1,x_2,...)))=d(\sigma(p(x_1,x_2,...)))$. But, given a term $f=x_1^{i_1}x_2^{i_2}...x_n^{i_n}\in k\{x\}$, we have \begin{align}
    \sigma(d(f))=\sigma(\sum_jp_j\frac{\partial}{\partial x_j}x_1^{i_1}...x_n^{i_n})&=\sigma(\sum_jp_ji_jx_1^{i_1}...x_j^{i_j-1}...x_n^{i_n})\notag\\&=\sum_ji_j\sigma(p_j)x_2^{i_1}...x_{j+1}^{i_j-1}...x_{n+1}^{i_n},\notag
\end{align} whereas 
\begin{align}
    d(\sigma(f))=\sum_jp_j\frac{\partial}{\partial x_j}x_2^{i_1}...x_{n+1}^{i_n}&=\sum_ji_jp_{j+1}x_2^{i_1}...x_{j+1}^{i_j-1}...x_{n+1}^{i_n}.\notag
\end{align} So, $\sigma(d(f))=d(\sigma(f))$ if, for all $i$, $p_{i+1}=\sigma(p_i)$. As a result, a difference derivation on $k\{x\}$ is determined by a choice of $p_1\in k\{x\}$, and we see that $\Der_k(k\{x\})\cong k\{x\}$. 



\subsection{Grothendieck spectral sequence}\label{g-s-s}

In the case that $A\in k\M$ is enriched projective, our internal Hochschild cohomology is determined by the right derived functors of $F\Hom_{A^e}(A,M)$, where $F:A^e\M\rightarrow k\M$ is the forgetful functor. We have that $$F\Hom_{A^e}(A,-)=\Fix\circ F[A,-]_{A^e}.$$ 

\begin{lemma}
    Let $A\in k\text{-Alg}$ be enriched projective as a $k$-module. The functor $F[A,-]_{A^e}$ takes injective objects to $\Fix$-acyclic objects (in particular, to injective objects). 
\end{lemma}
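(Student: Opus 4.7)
The plan is to realize $F[A,-]_{A^e}$ as the right adjoint of an exact functor, conclude it preserves injectives, and then argue separately that injective $k$-modules in $\boldsymbol{\sigma}\textbf{-Set}$ are $\Fix$-acyclic.

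First, I would establish the natural isomorphism
\[
\Hom_{A^e}(M\otimes_k A,\,N)\;\cong\;\Hom_k\bigl(M,\,F[A,N]_{A^e}\bigr),
\]
for $M\in k\M$ and $N\in A^e\M$, where $M\otimes_k A$ carries the $A^e$-module structure induced by the bimodule structure on $A$ (so $(a\otimes a')\cdot(m\otimes b)=m\otimes aba'$). This is a direct currying argument, using Lemma \ref{adjunction} and the tensor--hom machinery from the Appendix. Since $A$ is enriched projective over $k$, in particular it is projective, hence a retract of a free $k$-module; free modules are flat (tensor products commute with coproducts and coproducts are exact), so $A$ is flat and $-\otimes_k A\colon k\M\to A^e\M$ is exact (exactness being checked after the faithful exact forgetful functor $A^e\M\to k\M$). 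As a right adjoint to an exact functor, $F[A,-]_{A^e}$ therefore preserves injectives, which yields the parenthetical statement.

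Next, I would verify that every injective $J\in k\M$ is $\Fix$-acyclic. Since $R^i\Fix=0$ for $i>1$ and $R^1\Fix=(-)_\sigma$, this reduces to showing $J_\sigma=0$, i.e.\ that $\sigma-\id\colon J\to J$ is surjective. I would exhibit a $k$-linear embedding of $J$ as a direct summand of a coinduced-type $k$-module whose underlying difference set has the form $|S\rceil$; on such objects the partial-sums formula $x_n=\sum_{i<n}y_i$ provides explicit preimages under $\sigma-\id$, and being a direct summand, $J$ inherits vanishing coinvariants.

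The principal difficulty is this last step: the abelian-group analogue is routine, but promoting the coinduced construction to a $k$-linear one---so that the partial-sums argument respects scalars---requires some care with how $|\cdot\rceil$ interacts with the difference $k$-module structure. Once both ingredients are in hand, the conclusion follows by combining them: $F[A,I]_{A^e}$ is injective in $k\M$, and any injective $k$-module is $\Fix$-acyclic.
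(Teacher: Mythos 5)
Your main argument is essentially the paper's. The paper establishes the same isomorphism $\Hom_k(Z,F[A,M]_{A^e})\cong\Hom_{A^e}(Z\otimes_kA,M)$ (routed through $\Hom_{A^e}(Z\otimes_kA^e,[A,M]_{A^e})\cong\Hom_{A^e}((Z\otimes_kA^e)\otimes_{A^e}A,M)$ and the cancellation $(Z\otimes_kA^e)\otimes_{A^e}A\cong Z\otimes_kA$), then uses injectivity of $M$ to make $\Hom_{A^e}(-,M)$ exact and projectivity of $A$ over $k$ to make $-\otimes_kA$ exact; your packaging of this as ``right adjoint to an exact functor preserves injectives'' is the same proof stated more efficiently, and your retract-of-a-free justification for exactness of $-\otimes_kA$ is, if anything, more detailed than the paper's bare assertion. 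The one genuine issue is your third step: it is unnecessary, and the ``principal difficulty'' you identify does not exist. Once $F[A,I]_{A^e}$ is known to be injective in $k\M$, it is automatically $\Fix$-acyclic, because injective objects are acyclic for the right derived functors of any additive left-exact functor (compute $R^i\Fix(J)$ with the resolution $0\to J\to J\to 0$). In particular $J_\sigma=R^1\Fix(J)=0$ for injective $J$ falls out of the identification $R^1\Fix=(-)_\sigma$ that you are already quoting; the coinduced-module embedding and the partial-sums argument are not needed. The parenthetical in the statement is the stronger claim, and the paper proves only that, exactly as your first two steps do.
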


\begin{proof}
    Let $A$ be enriched projective as a $k$-module. Given $M\in A^e\M$ injective, we want to show that $F[A,M]_{A^e}$ is injective in $k\M$. So we want to know if $\Hom_k(-,F[A,M]_{A^e})$ is exact. Let $0\rightarrow X\rightarrow Y\rightarrow Z\rightarrow0$ be a s.e.s. in $A^e$-Mod. $\Hom_k(-,F[A,M]_{A^e})$ is left exact, so we have exact sequence 
$$0\rightarrow\Hom_k(Z,F[A,M]_{A^e})\rightarrow\Hom_k(Y,F[A,M]_{A^e})\rightarrow\Hom_k(X,F[A,M]_{A^e}).$$ 
Now, $\Hom_k(Z,F[A,M]_{A^e})\cong\Hom_{A^e}(Z\otimes_kA^e,[A,M]_{A^e})\cong\Hom_{A^e}((Z\otimes_kA^e)\otimes_{A^e}A,M)$. So we have exact sequence $$0\rightarrow\Hom_{A^e}((Z\otimes_kA^e)\otimes_{A^e}A,M)\rightarrow\Hom_{A^e}((Y\otimes_kA^e)\otimes_{A^e}A,M)\rightarrow\Hom_{A^e}((X\otimes_kA^e)\otimes_{A^e}A,M).$$ Now, by assumption that $M\in A^e\M$ is injective, $\Hom_{A^e}(-,M)$ is exact. So we can consider the exact sequence 
$$(X\otimes_kA^e)\otimes_{A^e}A\rightarrow(Y\otimes_kA^e)\otimes_{A^e}A\rightarrow(Z\otimes_kA^e)\otimes_{A^e}A\rightarrow0.$$ 
Given that $A^e\in A^e\M$, we have isomorphism $(X\otimes_kA^e)\otimes_{A^e}A\cong X\otimes_k(A^e\otimes_{A^e}A)$. We also have $X\otimes_k(A^e\otimes_{A^e}A)\cong X\otimes_kA$. Also, as $A\in k\M$ is projective, $-\otimes_kA$ is exact. So, from our original s.e.s. we obtain a s.e.s. $$0\rightarrow(X\otimes_kA^e)\otimes_{A^e}A\rightarrow(Y\otimes_kA^e)\otimes_{A^e}A\rightarrow(Z\otimes_kA^e)\otimes_{A^e}A\rightarrow0.$$ 
Thus, we have s.e.s. $$0\rightarrow\Hom_k(Z,F[A,M]_{A^e})\rightarrow\Hom_k(Y,F[A,M]_{A^e})\rightarrow\Hom_k(X,F[A,M]_{A^e})\rightarrow0;$$ in other words, $\Hom_k(-,F[A,M]_{A^e})$ is exact. So $F[A,M]_{A^e}\in k\M$ is injective.
\end{proof}

The functors $F[A,-]_{A^e}:A^e\M\rightarrow k\M$ and $\Fix:k\M\rightarrow\Fix(k)\M$ are additive, left exact and between abelian categories, such that both $A^e\M$ and 
$k\M$ have enough injectives. As $F[A,-]_{A^e}$ takes injective objects to $\Fix$-acyclic objects, then for each object $M$ of $A^e\M$ there is a spectral sequence (the \emph{Grothendieck spectral sequence}
\cite{grothendieck}): $$E_{2}^{p,q}=(R^{p}\Fix\circ R^{q}F[A,-]_{A^e})(M)\Longrightarrow R^{p+q}(F\Hom_{A^e}(A,-))(M).$$
Recall that $R^0\Fix=\Fix$; $R^1\Fix=(-)_\sigma$, the coinvariants of the module, and $R^i\Fix=0$ for $i>1$ \cite{tomasic}. We also have that $R^iF[A,-]_{A^e}=F\uExt^i_{A^e}(A,-)$ for all $i$. So in the above case, we have 
\begin{alignat*}{3}
  &E_2^{0,0}=\Fix(F[A,-]_{A^e}) &&E_2^{1,0}=(F[A,-]_{A^e})_\sigma&& \\
  &E_2^{0,q}=\Fix(F\uExt^q_{A^e}(A,-))\qquad&&E_2^{1,q}=F\uExt^q_{A^e}(A,-)_\sigma\qquad&&E_2^{p,q}=0\text{ for all }p>1
\end{alignat*} 

As a result, if $A\in k\text{-Alg}$ is enriched projective as a $k$-module, we have spectral sequence in $\Fix(k)\M$ $$E_{2}^{p,q}=(R^{p}\Fix\circ \uExt^{q}_{A^e}(A,-))(M)\Longrightarrow \Ext^{p+q}_{A^e}(A,-))(M).$$ 

\begin{theorem}\label{th-groth-ss}
\label{ses}If $A\in k\emph{-Alg}$ is enriched projective as a $k$-module, then for all $n>0$ we have short exact sequences in $\Fix(k)\M$ $$0\to\uHH^{n-1}(A,M)_{\sigma}\to\HH^n_{A^e}(A,M)\to\uHH^n(A,M)^\sigma\to0.$$
\end{theorem}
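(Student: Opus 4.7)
The plan is to extract the short exact sequence directly from the Grothendieck spectral sequence that was set up in the paragraph preceding the theorem, using the crucial fact that $R^p\Fix=0$ for $p>1$. This vanishing means the spectral sequence has only two nonzero columns ($p=0$ and $p=1$), which forces degeneration and a two-step filtration on each abutment.

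First I would recall the spectral sequence
\[
E_2^{p,q} = (R^p\Fix)\bigl(\uExt^q_{A^e}(A,M)\bigr)\Longrightarrow\Ext^{p+q}_{A^e}(A,M)=\HH^{p+q}(A,M),
\]
together with the explicit identifications already recorded in the preceding paragraph: $E_2^{0,q}=\uHH^q(A,M)^\sigma$, $E_2^{1,q}=\uHH^q(A,M)_\sigma$ (via the isomorphism $\uHH^q(A,M)\cong F\uExt^q_{A^e}(A,M)$ of Proposition \ref{prop-hh-ext}), and $E_2^{p,q}=0$ for $p\geq 2$.

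Next I would argue degeneration. Every differential $d_r\colon E_r^{p,q}\to E_r^{p+r,q-r+1}$ for $r\geq 2$ has either source or target sitting in a column with $p\geq 2$ (since $p+r\geq p+2\geq 2$ whenever $p\geq 0$, and the only way both endpoints avoid column $\geq 2$ is if $r\leq 1$). Hence all higher differentials vanish and $E_\infty^{p,q}=E_2^{p,q}$.

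Finally, the convergence gives, for each $n>0$, a decreasing filtration $\HH^n(A,M)=F^0\supseteq F^1\supseteq F^2=0$ whose associated graded pieces are $F^0/F^1\cong E_\infty^{0,n}$ and $F^1/F^2\cong E_\infty^{1,n-1}$. Substituting the identifications above yields precisely the short exact sequence
\[
0\to\uHH^{n-1}(A,M)_\sigma\to\HH^n(A,M)\to\uHH^n(A,M)^\sigma\to 0
\]
in $\Fix(k)\M$. There is no real obstacle here: the content of the theorem is entirely packaged in the Grothendieck spectral sequence, and the only thing to verify is that the two-column shape forces the $E_2$-page to be the $E_\infty$-page and the filtration to collapse into a single short exact sequence. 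The most delicate point to mention explicitly is that the edge maps of the spectral sequence produce the morphisms in the sequence canonically, so the sequence is natural in $M$.
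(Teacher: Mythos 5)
Your argument is correct and is essentially the paper's own proof: the paper simply cites the standard two-column spectral sequence lemma (\cite[Exercise 5.2.1]{weibel}) together with the identification $\uHH^q(A,M)\cong F\uExt^q_{A^e}(A,M)$, and your proposal just unpacks that citation explicitly — degeneration at $E_2$ because all higher differentials land in or originate from columns $p\geq 2$, followed by the two-step filtration on the abutment. No gaps; the only cosmetic difference is that you spell out what the paper leaves to the reference.
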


\begin{proof}
    This is immediate from \cite[Exercise 5.2.1]{weibel}, and from Proposition \ref{ext-iso}. 
\end{proof}

\subsection{Internal hypercohomology}

We can compute spectral sequences in our cohomology using the hypercohomology of $\Fix$, applied to our Hochschild cochain complex $[A\ot{\bullet+2},M]_{A^e}\cong[A\ot{\bullet},M]_k$. By construction (2.4.2) and onwards in \cite{grothendieck}, we have second-degree sheets of two spectral sequences, 
\begin{align}
{}^IF^{p,q}_2([A\ot{\bullet+2},M]_{A^e})&=H^pR^q\Fix([A\ot{\bullet+2},M]_{A^e});\notag\\{}^{II}F^{p,q}_2([A\ot{\bullet+2},M]_{A^e})&=R^p\Fix(H^q[A\ot{\bullet+2},M]_{A^e})=R^p\Fix\uHH^q(A,M),\notag
\end{align}
with the last equality given when $A$ is enriched projective in $k\M$. Given such an $A$, both of these spectral sequences converge to the hypercohomology 
\begin{align}
    \mathscr{R}^nF\Fix([A\ot{\bullet+2},M]_{A^e})=R^n(\Fix\circ H^0)([A\ot{\bullet+2},M]_{A^e})=&R^n(\Fix\circ H^0\circ[A\ot{\bullet+2},-]_{A^e})(M)\notag\\
    =&R^n(\Fix\circ\uExt_{A^e}^0(A,-))(M)\notag\\
    =&R^n(\Fix\circ[A,-]_{A^e})(M)\notag\\
    =&R^n(\Hom_{A^e}(A,-))(M)\notag\\
    =&\Ext^n_{A^e}(A,M),\notag
\end{align}
where the enriched projectivity of $A$ gives the exactness of $[A\ot{\bullet+2},-]_{A^e}$, as in Subsection \ref{ext-section}. 

Given our internal Hochschild cochain complex $[A\ot{\bullet},M]_k$, we write $$\uHH^{n,\sigma}(A,M)$$ for the cohomology of $\Fix$ of the complex, and $$\uHH^n_{\sigma}(A,M)$$ for the cohomology of the coinvariants of the complex. 

Given that $R^i\Fix=0$ for all $i>1$, ${}^IF^{p,q}_2$ is a two-row spectral sequence, i.e., $${}^IF^{p,q}_2([A\ot{\bullet+2},M]_{A^e})=0$$ for all $q>1$. 

\begin{theorem}\label{th-long-exseq}
    If $A\in k\emph{-Alg}$ is an enriched projective $k$-module, we obtain a long exact sequence 
    $$\cdots\to\HH^n(A,M)\to\uHH^{n-1}_\sigma(A,M)\to\uHH^{n+1,\sigma}(A,M)\to\HH^{n+1}(A,M)\to\cdots$$
\end{theorem}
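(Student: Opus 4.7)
The plan is to unpack the first hypercohomology spectral sequence
$${}^IF^{p,q}_2 = H^p R^q\Fix\bigl([A^{\otimes\bullet+2}, M]_{A^e}\bigr) \Longrightarrow \HH^{p+q}(A,M)$$
already constructed in the preceding paragraph, and to invoke the standard fact that a cohomological spectral sequence concentrated in only two adjacent rows degenerates into a long exact sequence relating its $E_2$-terms and its abutment.

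The first step is to identify the two rows. For $q = 0$, since $R^0\Fix = \Fix$ and $F[A^{\otimes\bullet+2}, M]_{A^e} \cong [A^{\otimes\bullet}, M]_k = C[A;M]$ by \eqref{ext-iso}, we have
$${}^IF^{p,0}_2 = H^p\Fix\,C[A;M] = \uHH^{p,\sigma}(A,M).$$
For $q=1$, since $R^1\Fix = (-)_\sigma$,
$${}^IF^{p,1}_2 = H^p\bigl((C[A;M])_\sigma\bigr) = \uHH^p_\sigma(A,M),$$
and all rows with $q \geq 2$ vanish because $R^q\Fix = 0$ there.

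Next I would apply the structural fact that, for any first-quadrant cohomological spectral sequence $E_2^{p,q} \Rightarrow H^{p+q}$ with $E_2^{p,q} = 0$ outside $q \in \{0,1\}$, every differential beyond $d_2$ vanishes, so $E_\infty = E_3$, and the resulting two-step filtration on $H^n$ assembles into the long exact sequence
$$\cdots \to E_2^{n,0} \to H^n \to E_2^{n-1,1} \xrightarrow{d_2} E_2^{n+1,0} \to H^{n+1} \to \cdots.$$
Substituting the identifications above converts this into the stated long exact sequence, with the connecting map $\uHH^{n-1}_\sigma(A,M) \to \uHH^{n+1,\sigma}(A,M)$ realised as the $d_2$-differential on the $E_2$-page.

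No substantive obstacle arises: the proof is a formal consequence of the spectral sequence already in hand together with the well-known two-row exact sequence (compare the derivation of the five-term exact sequence in \cite[\S5.2]{weibel}). The only bookkeeping subtlety is the convention $\uHH^{-1}_\sigma(A,M) = 0$, under which the sequence begins $0 \to \uHH^{0,\sigma}(A,M) \to \HH^0(A,M)$ as expected.
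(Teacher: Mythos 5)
Your proposal is correct and follows exactly the route the paper takes: the paper's proof simply cites the two-row exact sequence of \cite[Exercise 5.2.2]{weibel} applied to the hypercohomology spectral sequence ${}^IF^{p,q}_2$ together with the identification $[A^{\otimes\bullet+2},M]_{A^e}\cong[A^{\otimes\bullet},M]_k$ from (\ref{ext-iso}), which is precisely the degeneration argument you spell out. Your identification of the two rows as $\uHH^{p,\sigma}$ and $\uHH^p_\sigma$ and of the connecting map as $d_2$ matches the paper's setup in the preceding paragraphs.
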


\begin{proof}
    This is immediate from \cite[Exercise 5.2.2]{weibel}, and from Proposition \ref{ext-iso}. 
\end{proof}

\begin{proposition}\label{prop-5-term}
    If $A\in k\emph{-Alg}$ is an enriched projective $k$-module, then the 5-term exact sequence of low degree terms \cite{weibel} associated to ${}^{I}F^{p,q}_2$ is
\begin{align}
    0\to\uHH^{1,\sigma}(A,M)\to\HH^1(A,M)&\to\uHH^0_\sigma(A,M)\to\uHH^{2,\sigma}(A,M)\to\HH^2(A,M).\notag
\end{align}
\end{proposition}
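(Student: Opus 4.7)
The plan is to obtain the assertion as a direct application of the standard five-term exact sequence of low-degree terms for the spectral sequence ${}^IF^{p,q}_2$, once the $E_2$-entries in columns $p=0,1,2$ and rows $q=0,1$ are identified with the notation in the statement.

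First I would recall that for any first-quadrant cohomological spectral sequence $E_2^{p,q}\Rightarrow H^{p+q}$, the filtration on $H^n$ yields the exact sequence
$$0\to E_2^{1,0}\to H^1\to E_2^{0,1}\to E_2^{2,0}\to H^2,$$
see \cite{weibel}. So it suffices to interpret the relevant terms for ${}^IF^{p,q}_2([A\ot{\bullet+2},M]_{A^e})$. The convergence $\mathscr{R}^nF\Fix([A\ot{\bullet+2},M]_{A^e})\cong \HH^n(A,M)$ has already been established in the text (using enriched projectivity of $A$), so $H^n=\HH^n(A,M)$.

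Next I would unwind the definition of the second page. Since $R^0\Fix=\Fix$, the row $q=0$ reads
$${}^IF^{p,0}_2=H^p\Fix\bigl([A\ot{\bullet+2},M]_{A^e}\bigr)\cong H^p\Fix\bigl([A\ot{\bullet},M]_k\bigr)=\uHH^{p,\sigma}(A,M),$$
where the isomorphism in the middle uses \ref{ext-iso} as in Proposition~\ref{ext-prop}. Similarly, since $R^1\Fix=(-)_\sigma$, the row $q=1$ reads
$${}^IF^{p,1}_2=H^p(-)_\sigma\bigl([A\ot{\bullet+2},M]_{A^e}\bigr)\cong H^p(-)_\sigma\bigl([A\ot{\bullet},M]_k\bigr)=\uHH^p_\sigma(A,M).$$
All further rows vanish because $R^q\Fix=0$ for $q>1$.

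Substituting these identifications into the generic five-term sequence gives precisely the asserted
$$0\to\uHH^{1,\sigma}(A,M)\to\HH^1(A,M)\to\uHH^0_\sigma(A,M)\to\uHH^{2,\sigma}(A,M)\to\HH^2(A,M).$$
There is essentially no genuine obstacle: the only small subtlety is keeping the two notational conventions straight (the superscript $\sigma$ denoting cohomology of $\Fix$ applied termwise, versus the subscript $\sigma$ denoting cohomology of the coinvariants), and confirming that the forgetful identification $[A\ot{\bullet+2},-]_{A^e}\cong[A\ot{\bullet},-]_k$ is compatible with the additive functors $\Fix$ and $(-)_\sigma$, both of which are immediate from the discussion preceding the statement.
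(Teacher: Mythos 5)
Your proposal is correct and follows essentially the same route the paper intends: the paper offers no separate argument for this proposition, treating it as the generic five-term exact sequence $0\to E_2^{1,0}\to H^1\to E_2^{0,1}\to E_2^{2,0}\to H^2$ of the two-row spectral sequence ${}^IF^{p,q}_2$, with the terms identified exactly as you identify them (rows $q=0,1$ giving $\uHH^{p,\sigma}$ and $\uHH^p_\sigma$ via $R^0\Fix=\Fix$, $R^1\Fix=(-)_\sigma$, and the abutment being $\Ext^n_{A^e}(A,M)=\HH^n(A,M)$ by the enriched projectivity of $A$). Your remark about checking compatibility of $[A\ot{\bullet+2},-]_{A^e}\cong[A\ot{\bullet},-]_k$ with $\Fix$ and $(-)_\sigma$ is the same appeal to the isomorphism (\ref{ext-iso}) that the paper makes in the neighbouring results.
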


\begin{remark}
Proposition~\ref{prop-5-term} merely yields the initial part of the long exact sequence from Theorem~\ref{th-long-exseq}.
\end{remark}

\begin{remark}
    That $R^i\Fix=0$ for all $i>1$ also tells us that ${}^{II}F^{p,q}_2$ is a two-column spectral sequence, so we obtain short exact sequences
    $$0\rightarrow{}^{II}F^{1,n-1}_2([A\ot{\bullet+2},M]_{A^e})\rightarrow\Ext^n_{A^e}(A,M)\rightarrow{}^{II}F^{0,n}_2([A\ot{\bullet+2},M]_{A^e})\rightarrow0.$$
    We can see that this recovers the short exact sequences of the Grothendieck spectral sequence from Theorem $\ref{ses}$. 
\end{remark}

\begin{remark}
    We can make some explicit sense of these terms. The second and fourth terms are, in a sense, the naive Hochschild cohomology groups. For example, $\Fix(\uHH^1(A,M))=H^1\Hom_{A^e}(A\ot{\bullet+2},M)\cong H^1\Hom_k(A\ot{\bullet},M)=\ker\ud^{1,\sigma}/\im\ud^{0,\sigma}$, where 
\begin{align}
    \ker\ud^{1,\sigma}&=\{f:A\rightarrow M\,|\,\forall a,b\!:\!A,f(ab)=af(b)-f(a)b\}=\Der_k(A,M),\notag\\
    \im\ud^{0,\sigma}&=\{f:A\rightarrow M\,|\,\exists m\in\Fix(M),f(a)=am-ma\}.\notag
\end{align} 

Also, $\uHH^0(A,M)_\sigma$ is the zeroth cohomology of the quotient complex $C^n[A\ot{\bullet},M]_{k,\sigma}$. In other words, it is the kernel of the map $\ud^0_\sigma:M_\sigma\rightarrow[A,M]_{k,\sigma}$. To consider this explicitly, we can first consider the map $M\rightarrow[A,M]_{k,\sigma}$ which $\ud^0$ descends to. The kernel of this map has as its underlying set 
\begin{align}
    &\{m\!:\!M|(\ud^0(m)_i)_i\in\im(\sigma-\id)\}\notag\\
    =\:&\{m\!:\!M|\exists h:[A,M]_k\text{ s.t. }\forall i,\forall a,h_{i+1}(a)-h_i(a)=a\sigma^i(m)-\sigma^i(m)a\}\notag\\
    =\:&\{m\!:\!M|\exists\lfloor k\rfloor\text{-hom }h_0:\lfloor A\rfloor\rightarrow\lfloor M\rfloor\text{ s.t. }\sigma_M(h_0(a))=h_0(\sigma_A(a))+\sigma_A(a)m-m\sigma_A(a)\},\notag
\end{align}
the set of $m\in\lfloor M\rfloor$ such that there exists a $\lfloor k\rfloor\text{-hom }h_0$ whose failure to be a difference map is measured by $\ud^0(m)_0\circ\sigma_A$. Then $\ker\ud^0_\sigma$ has as its underlying set the set of quotient classes of $n\in\lfloor M\rfloor$ that differ from such an $m$ by an element of $\im(\sigma_M-\id_M)$. 
\end{remark}

\newpage 

\appendix

\section*{Appendix}\hypertarget{Appendix}{}

Given a topos $\E$, we can form the category Ab$(\E)$ of abelian group objects and their homomorphisms, i.e. the category of models of the theory of abelian groups in $\E$. We write $\Hom_\textbf{Z}(G,H)$ to be the set of group homomorphisms between abelian group objects $G$ and $H$. Given a ring object $A$ in a topos $\E$, we can similarly form the subcategory $A\M$ of all (left) $A$-modules and their homomorphisms. We write $\Hom_A(M,N)$ to be the set of $A$-module homomorphisms between (left) $A$-modules $M$ and $N$. 

In the following, we show that these categories have their own closure, with an internal group hom $[G,H]_\textbf{Z}$ and an internal $A$-hom $[M,N]_A$, and that $A\M$ has a monoidal structure under a tensor product $-\otimes_A-$. We use the internal logic of a topos to set out explicitly what these categories look like, and how to reason in terms of their objects. We recover some classical results in a topos setting, and achieve our main goal of justifying logical arguments on the tensor product in terms of terms of type $m\otimes n\!:\!M\otimes_AN$, where $m\!:\!M,n\!:\!N$. 

In particular, we demonstrate the properties of the tensor product of $A$-modules, with a main goal of justifying arguments on the tensor product in terms of generators. If the category Ab$(\E)$ has a set of cogenerators, then the functor $$\text{Ab}(\E)\to\text{Ab},\quad P\mapsto\Hom_A(M,[N,P]_\textbf{Z})\label{rep}$$ is representable, for any given right $A$-module $M$ and left $A$-module $N$, and so the tensor product exists. For our purposes, this condition is often incidental. The existence of a natural number object is sufficient, but not necessary. For example, the elementary topos \textbf{FinSet}, without N.N.O, has a set of cogenerators for Ab$(\E)$, precisely the set of objects of Ab$(\E)$, and so also satisfies this condition (indeed, one can see that the tensor product of finite modules over a finite ring is itself finite). We then restrict to the case with a N.N.O, and show how resulting free constructions give us a construction of the tensor product. In the case that our topos is also a geometric category, we give an explicit description of the logical structure of the tensor product. 

\section{Internal logic of a topos}

Before we proceed, we need some results from the internal logic of $\E$, as set out in \cite{handbook}. We will make free use of the 96 logical rules, (\textbf{T1}) to (\textbf{T96}), Borceux sets out between Theorem 6.7.1 and Theorem 6.9.6. We also make frequent use of Propositions 6.10.2 and 6.10.9:

\begin{proposition*}[\cite{handbook} Proposition 6.10.2]
    In a topos $\E$, let us consider:
    \begin{itemize}
        \item two objects $A,B$;
        \item two morphisms $f,g:A\rightarrow B$;
        \item three subobjects $A_1\mono A,A_2\mono A,B_1\mono B$;
        \item variables $a,a'$ of type $A$ and $b$ of type $B$.
    \end{itemize}
    The following equivalences hold:
    \begin{enumerate}
        \item $f=g$ iff $\vDash\forall af(a)=g(a)$;
        \item $f$ is a monomorphism iff $\vDash\forall a\forall a'(f(a)=f(a')\implies a=a')$;
        \item $f$ is an epimorphism iff $\vDash\forall b\exists af(a)=b$;
        \item $A_1\subset A_2$ iff $\vDash\forall a(a\in A_1\implies a\in A_2)$;
        \item $f$ factors through $A_1,B_1$ iff $\vDash\forall a(a\in A_1\implies f(a)\in B_1)$.
    \end{enumerate}
\end{proposition*}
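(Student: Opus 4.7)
The plan is to work directly with the standard Kripke--Joyal/subobject semantics of the internal language of a topos, in which every formula $\varphi(x_1,\ldots,x_n)$ with free variables of types $X_1,\ldots,X_n$ is interpreted as a subobject $\llbracket\varphi\rrbracket \hookrightarrow X_1\times\cdots\times X_n$, and $\vDash\varphi$ means precisely that $\llbracket\varphi\rrbracket$ is the maximal subobject of its ambient product. With this dictionary in hand, each of the five equivalences becomes a translation between an external categorical condition on morphisms or subobjects and an internal validity statement.

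First I would dispatch (1), (4) and (5), which are the most direct. For (1), the formula $f(a)=g(a)$ in context $a:A$ is interpreted as the equaliser of $f,g$, and its universal closure is valid precisely when that equaliser is all of $A$, equivalently (by the universal property) $f=g$. For (4), the interpretation of $a\in A_1 \Rightarrow a\in A_2$ is the Heyting implication in $\operatorname{Sub}(A)$ of the subobjects $A_1$ and $A_2$; its universal closure is the top element iff $A_1\subseteq A_2$. For (5), one writes the composite $A_1\hookrightarrow A\xrightarrow{f} B$ and observes that it factors through $B_1$ iff $A_1\subseteq f^{-1}(B_1)$, which is exactly what validity of $a\in A_1 \Rightarrow f(a)\in B_1$ asserts, via the pullback-interpretation of substitution.

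Next I would handle (2) and (3), which require a little more machinery. For (2), the formula $f(a)=f(a')$ in context $a,a':A$ is interpreted as the kernel pair $A\times_B A\hookrightarrow A\times A$, while $a=a'$ is the diagonal $\Delta_A$; validity of the implication, and hence of its universal closure, is the statement $A\times_B A \subseteq \Delta_A$. Combined with the trivial reverse inclusion, this yields $A\times_B A = \Delta_A$, which is the standard characterisation of monomorphisms in any category with pullbacks. For (3), one uses the image factorisation $f = \bigl(A\twoheadrightarrow \operatorname{Im}(f)\hookrightarrow B\bigr)$ available in any regular category, and verifies that the interpretation of $\exists a\, f(a)=b$ is precisely $\operatorname{Im}(f)\hookrightarrow B$; its universal closure over $b$ is valid iff $\operatorname{Im}(f)=B$, which in a topos (a regular category in which every epi is a regular epi) is equivalent to $f$ being an epimorphism.

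The main obstacle, and the reason this proposition is usually relegated to a foundational chapter, lies not in the individual equivalences but in justifying the semantic interpretation itself: one must first verify that the interpretation of $\exists$ really is the image factorisation, that of $\Rightarrow$ really is the Heyting implication in $\operatorname{Sub}$, and that substitution of terms corresponds to pullback along the associated morphisms. Once the soundness of the Kripke--Joyal semantics has been established by induction on formula complexity, each of (1)--(5) reduces to unwinding a single clause of that semantics against the standard categorical characterisation of the corresponding notion.
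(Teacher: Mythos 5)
The paper does not prove this proposition at all: it is quoted verbatim from Borceux (\cite{handbook}, Proposition 6.10.2) as an imported foundational result, so there is no in-paper argument to compare against. Your sketch is a correct reconstruction of the standard proof via the subobject/Kripke--Joyal semantics --- each clause (equaliser for (1), kernel pair versus diagonal for (2), image factorisation plus the fact that a topos is balanced for (3), the Heyting adjunction $\top\leq(A_1\Rightarrow A_2)\iff A_1\leq A_2$ for (4), and pullback of subobjects for (5)) is matched to the right categorical characterisation, and you correctly flag that the real content lies in the soundness of the interpretation, which is exactly what Borceux establishes in the surrounding sections.
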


\begin{proposition*}[\cite{handbook} Proposition 6.10.9]
    In a topos $\E$, consider a formula $\varphi$ with free variables $a,b$ of types $A,B$. If the relation $\vDash\exists!b\varphi$ holds, there exists a unique morphism $f:A\rightarrow B$ such that $\vDash\varphi(a,f(a))$ holds.
\end{proposition*}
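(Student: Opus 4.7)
The plan is to interpret the formula $\varphi$ as a subobject of $A\times B$ and exploit the hypothesis $\vDash\exists!b\,\varphi$ to extract a functional relation, from which the desired morphism $f$ is read off.

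Concretely, using the interpretation of higher-order logic in a topos (Chapter~6 of \cite{handbook}), I would interpret $\varphi(a,b)$ as a subobject $r:R\mono A\times B$, and rewrite $\exists!b\,\varphi$ as the conjunction of existence, $\exists b\,\varphi(a,b)$, and uniqueness, $\forall b\forall b'\,(\varphi(a,b)\wedge\varphi(a,b')\Rightarrow b=b')$. By the semantics of the internal existential quantifier as the image of the projection, together with Proposition~6.10.2(3), the existence clause says that $p_A:=\pi_A\circ r:R\to A$ is an epimorphism. Identifying generalised elements of $R$ with pairs $(a,b)$ satisfying $\varphi$, the uniqueness clause is exactly the statement that two such pairs sharing the same first coordinate coincide, which by Proposition~6.10.2(2) is the assertion that $p_A$ is a monomorphism. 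Since every topos is balanced, $p_A$ is an isomorphism, and I define
\[
f:=\pi_B\circ r\circ p_A^{-1}:A\to B.
\]

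To finish, observe that $r\circ p_A^{-1}:A\to A\times B$ has first component $\pi_A\circ r\circ p_A^{-1}=\id_A$ and second component $f$, so $\langle\id_A,f\rangle$ factors through $r$; by Proposition~6.10.2(5) this factoring is precisely $\vDash\varphi(a,f(a))$. For uniqueness, any $g:A\to B$ with $\vDash\varphi(a,g(a))$ yields, via Proposition~6.10.2(5), a factorisation of $\langle\id_A,g\rangle$ through $r$, hence a lift $A\to R$ which is necessarily a section of $p_A$; since $p_A$ is an isomorphism, this section equals $p_A^{-1}$, forcing $g=f$. The main subtlety is the careful unfolding of the internal semantics of $\exists!$ into the two categorical conditions on $p_A$; once this is in place, the rest is a formal consequence of a topos being a balanced category with canonical image factorisations.
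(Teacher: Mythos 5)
The paper does not prove this statement at all: it is quoted verbatim from Borceux (\cite{handbook}, Proposition 6.10.9) as an imported tool, so there is no in-paper argument to compare against. Your proof is correct and is essentially the standard one: realise $\varphi$ as a subobject $r:R\mono A\times B$, observe that the existence half of $\exists!b\,\varphi$ makes $p_A=\pi_A\circ r$ an epimorphism and the uniqueness half makes it a monomorphism (using that $r$ is monic), invoke balancedness of the topos to invert $p_A$, and read off $f=\pi_B\circ r\circ p_A^{-1}$; the verification of $\vDash\varphi(a,f(a))$ and the uniqueness of $f$ via sections of $p_A$ are both handled correctly. The only cosmetic quibble is that the existence clause is really the statement that the image of $p_A$ is all of $A$ (the citation of 6.10.2(3) is slightly off-label, since that item concerns surjectivity of a given morphism in terms of its internal description), but the substance is right.
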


We can show that we satisfy the condition required for use of modus ponens whenever the types of our variables and formulae are group objects. 

\begin{proposition}
    Let $X,Y\in\E$ be inhabited, i.e. with global elements $x:1\to X,y:1\to Y$. Then the projections $\pi_1:X\times Y\to X,\pi_2:X\times Y\to Y$ are epimorphisms. 
\end{proposition}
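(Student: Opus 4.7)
The plan is to apply part (3) of the quoted Proposition 6.10.2, which characterises epimorphisms internally: $f:A\to B$ is an epimorphism iff $\vDash\forall b\,\exists a\,f(a)=b$. So it suffices to produce, for each variable $x'\!:\!X$, a term of type $X\times Y$ that $\pi_1$ sends back to $x'$, and symmetrically for $\pi_2$.

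The natural witness is supplied by the given global elements. For $\pi_1$, given a variable $x'\!:\!X$, I would form the term $(x',y)\!:\!X\times Y$, where $y$ is tacitly extended to the ambient context by composing the global element $y:1\to Y$ with the unique map to $1$. By the defining property of the product one has $\vDash\pi_1(x',y)=x'$, so $\vDash\forall x'\!:\!X,\,\exists p\!:\!X\times Y,\,\pi_1(p)=x'$, and Proposition 6.10.2(3) gives that $\pi_1$ is epi. The argument for $\pi_2$ is symmetric, using the global element $x:1\to X$ to form the witness $(x,y')$ for a variable $y'\!:\!Y$.

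Equivalently, and perhaps more transparently in categorical terms, the morphism $s_1=\langle\id_X,\,y\circ{!_X}\rangle:X\to X\times Y$ (with $!_X:X\to 1$ the terminal map) is a section of $\pi_1$, so $\pi_1$ is a split epimorphism and in particular an epimorphism; dually $s_2=\langle x\circ{!_Y},\,\id_Y\rangle$ is a section of $\pi_2$. Both formulations amount to the same calculation, and the internal-logic version is exactly the translation of the split-epi argument under the dictionary of Section 2.2.

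There is no real obstacle here: the only subtlety to flag is the context-extension of a global element, which is legitimate precisely because a morphism $1\to Y$ can be pulled back along any $X\to 1$ to yield a term of type $Y$ in context $x'\!:\!X$. Once this is noted, the verification is immediate from Proposition 6.10.2(3), and the result is ready to be used to justify modus ponens in the subsequent reasoning about group and module objects (which are automatically inhabited by their identity elements).
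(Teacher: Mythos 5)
Your proof is correct and follows essentially the same route as the paper: you exhibit the witness $(x',y)$ (resp.\ $(x,y')$) built from the given global elements and invoke the internal characterisation of epimorphisms, which is exactly the paper's argument (the paper in fact writes $\pi_1(a,x)$ and $\pi_2(y,b)$ with the roles of the two global elements apparently transposed, so your version is the cleaner one). The additional split-epimorphism formulation via the section $\langle\id_X, y\circ !_X\rangle$ is just the external restatement of the same calculation.
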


\begin{proof}
    Consider $X,Y\in\E$ with global elements $x:1\to X,y:1\to Y$. Then $\vDash\forall a\!:\!X(a=\pi_1(a,x))$, and $\vDash\forall b\!:\!Y(b=\pi_2(y,b))$, writing $x,y$ as the variables $x(*),y(*)$ for the unique variable $*\!:\!1$. By \cite{handbook} Proposition 6.10.12, we have that $\pi_1,\pi_2$ are epimorphisms. 
\end{proof}

\begin{corollary}
    Given $M,N\in\text{Grp}(\E)$, the projections $\pi_1:M\times N\to M,\pi_2:M\times N\to N$ are epimorphisms. 
\end{corollary}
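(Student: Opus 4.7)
The plan is to deduce this as a direct consequence of the preceding proposition, by observing that every group object automatically comes equipped with a distinguished global element, namely its unit. In more detail, a group object $G$ in $\E$ is given by data that include a morphism $e\colon 1\to G$ picking out the identity element; in particular $G$ is inhabited in the sense required by the proposition above. Applied to $M$ and $N$, this yields global elements $e_M\colon 1\to M$ and $e_N\colon 1\to N$.

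Once inhabitedness is in hand, I would simply invoke the previous proposition with $X=M$ and $Y=N$: the projections $\pi_1\colon M\times N\to M$ and $\pi_2\colon M\times N\to N$ are epimorphisms, because one can exhibit, for each variable $a\!:\!M$, the pair $(a,e_N)\!:\!M\times N$ satisfying $\pi_1(a,e_N)=a$, and symmetrically for $\pi_2$. No further internal-logic bookkeeping is required beyond what the previous proposition already performs via \cite{handbook} Proposition 6.10.12.

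There is no real obstacle here; the only thing worth emphasising in the write-up is that the corollary is stated for $\mathrm{Grp}(\E)$ but clearly applies equally to $\mathrm{Ab}(\E)$, to $A\text{-Mod}$, and to any algebraic structure whose signature includes a constant symbol (i.e.\ a pointed object), since the existence of that constant is exactly what produces the required global element. This is the reason the corollary is useful in the sequel, where the same argument allows one to reason by elements in $k\text{-Mod}$ and in bimodule categories.
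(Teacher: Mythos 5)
Your argument is correct and is exactly the intended one: the paper leaves this corollary without an explicit proof precisely because a group object carries a unit $e\colon 1\to G$, so $M$ and $N$ are inhabited and the preceding proposition applies verbatim. Nothing further is needed.
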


\begin{corollary}
    \label{mp}Whenever we consider variables and formulae of group object type (in particular, of $A$-module type), the condition \emph{\textbf{(DR)}} of \emph{\cite{handbook} Theorem 6.7.1} holds, and so we can use modus ponens. 
\end{corollary}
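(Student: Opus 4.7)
The plan is to reduce the statement to an application of the preceding corollary together with the precise content of Borceux's condition (DR). Recall that (DR) in \cite{handbook} Theorem 6.7.1 is the requirement that the projection from the product of the types of the free variables onto the type(s) where modus ponens is being applied is an epimorphism; this is what allows one to conclude $\vDash \psi$ from $\vDash \varphi$ and $\vDash \varphi \Rightarrow \psi$ in the Kripke--Joyal semantics, since without it one only obtains entailment on a subobject.

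First I would observe that every group object $G \in \text{Grp}(\E)$, and in particular every $A$-module, is inhabited in the required sense: the unit $e : 1 \to G$ (or the zero morphism for an $A$-module) provides a global element. This is a structural feature of the theory of groups, as the signature includes a constant symbol.

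Next, because any finite product of group objects is itself a group object (with pointwise structure), every type appearing in a formula built from group/module-typed variables is inhabited. Applying the preceding Proposition (and its first Corollary) to the relevant product types, every projection of the form $X_1 \times \cdots \times X_n \to X_{i_1} \times \cdots \times X_{i_k}$ (where each $X_j$ is a group object) is an epimorphism, since it splits as a composite of projections with inhabited fibres.

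The hardest step, in terms of care rather than difficulty, is matching this epimorphism property to the precise formulation of (DR) in \cite{handbook}; once that identification is made, the conclusion is immediate. Hence, whenever the free variables of the formulae under consideration are all of group-object (respectively $A$-module) type, the condition (DR) is automatically satisfied, and modus ponens is a valid rule of inference in the internal logic, as claimed.
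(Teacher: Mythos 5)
Your proposal is correct and follows essentially the same route as the paper: the paper derives this corollary from the immediately preceding results, namely that inhabited objects have epimorphic product projections (via their global elements and \cite{handbook} Proposition 6.10.12) and that group objects are inhabited by their unit, which is exactly the chain of reasoning you reconstruct. Your additional remark that arbitrary finite products of group objects are again group objects, so that all the projections required by \textbf{(DR)} (not just the binary ones stated in the paper's corollary) are epimorphisms, is a harmless and indeed welcome elaboration of the same argument.
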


Given $A\in\text{Ring}(\E)$, $M,N\in A\M$ and $f\in\E(M,N)$, we note that the $A$-module homomorphism axioms translate to $$1.\vDash f(m+m')=f(m)+f(m');\quad\quad\quad2.\vDash f(am)=af(m),$$
where $m,m'\!:\!M,a\!:\!A$. Given this, we can also prove the following $A$-module homomorphism analogue of \handbook:

\begin{proposition}
        In a topos $\E$, let $M,N\in A\M$ for some $A\in\text{Ring}(\E)$. Consider a formula $\varphi$ with free variables $m,n$ of types $M,N$. Suppose that, for $m,m'\!:\!M,n,n'\!:\!N,a\!:\!A$, $$1. \vDash(\varphi(m,n)\land\varphi(m',n'))\Rightarrow\varphi(m+m',n+n');\quad2. \vDash\varphi(m,n)\Rightarrow\varphi(am,an).$$
    If the relation $\vDash!n\varphi$ holds, there exists unique $k$-module morphism $f:M\rightarrow N$ such that $\vDash(m,f(n))$.\label{handbook-module}
\end{proposition}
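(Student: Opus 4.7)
The plan is to bootstrap directly off \handbook. First, since $\vDash \exists! n\, \varphi$ holds, applying \handbook\ gives a unique morphism $f\colon M\to N$ in $\E$ (at the level of objects in the topos, before any module structure) such that $\vDash \varphi(m, f(m))$. The only remaining task is to promote $f$ from a bare morphism in $\E$ to a morphism in $A\text{-Mod}$, i.e.\ to verify additivity and $A$-linearity.

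For additivity, I would reason internally with variables $m, m' \colon M$. By the defining property of $f$ we have $\vDash \varphi(m, f(m))$ and $\vDash \varphi(m', f(m'))$, so by Corollary~\ref{mp} (which legitimises modus ponens when the types are group objects, in particular $A$-modules) and hypothesis~1 we conclude $\vDash \varphi(m+m', f(m)+f(m'))$. On the other hand, specialising the defining property of $f$ to the variable $m+m'\colon M$ gives $\vDash \varphi(m+m', f(m+m'))$. The uniqueness clause in $\vDash \exists! n\, \varphi$ (instantiated at $m+m'$) then forces $\vDash f(m+m') = f(m)+f(m')$, which by \handbooktwo(1) is exactly the equality of morphisms $f \circ + = + \circ (f\times f)$, the first homomorphism axiom.

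For $A$-linearity, the argument is entirely parallel, using hypothesis~2: from $\vDash \varphi(m, f(m))$ and hypothesis~2 we deduce $\vDash \varphi(am, a f(m))$, while specialising the defining property of $f$ at $am$ gives $\vDash \varphi(am, f(am))$; uniqueness then yields $\vDash f(am) = a f(m)$, i.e.\ $f(a\cdot -) = a\cdot f(-)$ as morphisms. Uniqueness of $f$ as an $A$-module morphism is automatic, since any such $f$ is already unique as a morphism in $\E$ by \handbook.

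The only point requiring any care, and the reason the hypotheses of this proposition are phrased exactly as they are, is the appeal to Corollary~\ref{mp}: the internal implications in hypotheses~1 and~2 can only be discharged via modus ponens because $M$, $N$, and $A$ are inhabited (they are group/ring objects, hence have a global element $0$ or $1$), so the usual proviso \textbf{(DR)} of \cite{handbook} Theorem~6.7.1 is satisfied. Without this we could not pass from the internal implications to the equalities that uniqueness consumes. With that observation in hand the proof is essentially a module-theoretic wrapper around \handbook.
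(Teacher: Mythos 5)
Your proposal is correct and follows essentially the same route as the paper's own proof: invoke \handbook{} to obtain the unique morphism $f$ in $\E$, then use hypotheses 1 and 2 together with the uniqueness clause of $\vDash\exists!n\,\varphi$ to force additivity and $A$-linearity. Your explicit appeal to Corollary~\ref{mp} to justify discharging the internal implications is a point the paper leaves implicit here (though it makes it explicit in the analogous tensor-product version), so this is a welcome clarification rather than a divergence.
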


\begin{proof}
    From \handbook, we have unique $f\in\E(M,N)$ such that $\vDash\varphi(m,f(n))$; it remains to show that $f$ is a $A$-module homomorphism. We have $\vDash\varphi(m+m',f(m+m'))$; but as $\vDash\varphi(m,f(m))\land\varphi(m',f(m'))$, then $\vDash\varphi(m+m',f(m)+f(m'))$. By uniqueness, we must have $\vDash f(m+m')=f(m)+f(m')$. Similarly, $\vDash\varphi(am,f(am))$; but as $\vDash\varphi(m,f(m))$, then $\vDash\varphi(am,af(m))$. Again by uniqueness, we obtain $\vDash af(m)=f(am)$. 
\end{proof}

When reasoning with internal logic, for ease of notation we will write $f(y)\!:\!Z$ for the variable $\ev_Y(f,y)$, where $f\!:\![Y,Z],y\!:\!Y$ and $\ev$ denotes the \emph{evaluation} counit $\ev:[Y,Z]\times Y\rightarrow Z$ \cite{kelly}. We can prove an internal analogue to \handbooktwo (a):

\begin{proposition}
\label{internal 6.10.2}Let $X,Y\in\E$, and $f,g:[X,Y]$. Then $$\vDash(f=g)\iff\big(\forall x\!:\!X(f(x)=g(x))\big).$$
\end{proposition}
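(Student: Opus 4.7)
The plan is to prove the biconditional by handling each direction separately, reducing to the universal property of the internal hom $[X,Y]$ and the external equality principle \handbooktwo.

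The forward direction $f=g\implies\forall x\!:\!X\,(f(x)=g(x))$ is essentially Leibniz substitution: from $f=g$ and reflexivity $\ev(f,x)=\ev(f,x)$ one obtains $\ev(f,x)=\ev(g,x)$, i.e.\ $f(x)=g(x)$, and universal generalisation over $x$ then gives $\forall x\,(f(x)=g(x))$. This only requires the standard rules for equality and $(\forall I)$ from Borceux's list, together with Corollary~\ref{mp}, which licences modus ponens whenever the ambient types are reasonably inhabited (here $X$ may be arbitrary, but the reasoning takes place under universal closure, so inhabitedness is not an issue).

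For the reverse direction, I would interpret the two formulas as subobjects of $[X,Y]\times[X,Y]$. The subobject $\llbracket f=g\rrbracket$ is the diagonal $\Delta:[X,Y]\mono[X,Y]\times[X,Y]$. The subobject $\llbracket\forall x\,(f(x)=g(x))\rrbracket$ is built by first forming the equalizer
\[
E\mono[X,Y]\times[X,Y]\times X
\]
of the two maps $\ev\circ(\pi_i\times\id_X):[X,Y]\times[X,Y]\times X\to Y$ for $i=1,2$, where $\pi_i$ is the projection onto the $i$-th factor of $[X,Y]\times[X,Y]$, and then applying the right adjoint $\forall_\pi$ to pullback along the projection $\pi:[X,Y]\times[X,Y]\times X\to[X,Y]\times[X,Y]$.

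The key inclusion $\llbracket\forall x\,(f(x)=g(x))\rrbracket\subseteq\Delta$ now translates to the following external claim: whenever $f,g:Z\to[X,Y]$ are morphisms whose adjuncts $\ev\circ(f\times\id_X),\ev\circ(g\times\id_X):Z\times X\to Y$ coincide, we have $f=g$. This is immediate from the defining bijection $\Hom(Z\times X,Y)\cong\Hom(Z,[X,Y])$, which sends each side to its transpose. The main obstacle is the bookkeeping around the universal quantifier: one must verify that a generalised element $(f,g):Z\to[X,Y]\times[X,Y]$ factors through $\forall_\pi E$ precisely when the pullback of $E$ along $(f,g)\times\id_X$ equals the whole of $Z\times X$, i.e.\ when the two adjuncts $\ev\circ(f\times\id_X)$ and $\ev\circ(g\times\id_X)$ agree on $Z\times X$, which is exactly the hypothesis needed to invoke the adjunction.
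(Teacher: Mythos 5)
Your proposal is correct, but it takes a genuinely different route from the paper. The paper's proof is a two-line reduction: it invokes the localising principle (validity of a formula with free variables $f,g\!:\![X,Y]$ can be checked after passing to slice toposes, where generalised elements become global elements), notes that global elements of $[X,Y]$ are exactly morphisms $X\to Y$, and then cites \handbooktwo(1) to conclude. You instead unfold the categorical semantics directly: you identify $\llbracket f=g\rrbracket$ with the diagonal, construct $\llbracket\forall x\,(f(x)=g(x))\rrbracket$ as $\forall_\pi$ of an equalizer, and reduce the subobject inclusion to the statement that two maps $Z\to[X,Y]$ with equal exponential transposes are equal. The step you flag as ``bookkeeping'' --- that $(f,g):Z\to[X,Y]^2$ factors through $\forall_\pi E$ iff $((f,g)\times\id_X)^*E$ is all of $Z\times X$ --- is exactly the Beck--Chevalley condition for $\forall$ along product projections, which holds in any topos, so that gap is genuinely closable. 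The trade-off: the paper's argument is shorter and leans on an established meta-principle (whose applicability it asserts rather than verifies), whereas yours is longer but self-contained, makes explicit that the entire content of the proposition is the naturality of the adjunction $\Hom(Z\times X,Y)\cong\Hom(Z,[X,Y])$ in $Z$, and works uniformly with generalised elements without any detour through slices. Both directions of your argument are sound; in the forward direction you do not in fact need Corollary~\ref{mp} at all, since the inclusion $\Delta\leq\forall_\pi E$ follows directly from $\pi^*\Delta\leq E$, the two evaluation maps visibly agreeing on $\Delta\times X$.
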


\begin{proof}
Our formula makes sense in any topos, taking $X,Y$ as arbitrary objects of the topos. So by the localising principle, our formula is valid precisely if it is provable when $f,g$ are global elements $1\stackrel[g]{f}{\rightrightarrows}[X,Y]$, i.e. $f,g\in\Hom(X,Y)$. But by \handbooktwo, $f=g$ if and only if $\forall x:X(f(x)=g(x))$; and so we have our result.
\end{proof}

Finally, we make explicit the logic behind the tensor-internal hom adjuncts of a given map. First, we need to know how to reason about elements of product type: 

\begin{lemma}
    \label{product-type}Given objects $X,Y\in\E$ and variable $a\!:\!X\times Y$, $$\vDash\exists x\!:\!X,y\!:\!Y(a=(x,y)).$$
\end{lemma}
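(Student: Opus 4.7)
The plan is to produce explicit witnesses for $x$ and $y$ using the projections $\pi_1\colon X\times Y\to X$ and $\pi_2\colon X\times Y\to Y$, and then invoke existential introduction in the internal logic.

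First I would observe, at the level of morphisms in the topos, that the pairing $\langle\pi_1,\pi_2\rangle\colon X\times Y\to X\times Y$ coincides with $\id_{X\times Y}$. This is an immediate consequence of the uniqueness clause in the universal property of the product. Translating this equality of morphisms into the internal language via Proposition 6.10.2(1) (clause on equality of morphisms), we obtain
$$\vDash \forall a\!:\!X\times Y\,\bigl(a = (\pi_1(a),\pi_2(a))\bigr).$$
Here the pair $(\pi_1(a),\pi_2(a))$ is, by convention in the internal logic, the term of type $X\times Y$ produced by applying the pairing operation to the terms $\pi_1(a)\!:\!X$ and $\pi_2(a)\!:\!Y$.

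Next I would instantiate the universally quantified $a$ with the given variable $a\!:\!X\times Y$, via the universal elimination rule, to obtain $\vDash a = (\pi_1(a),\pi_2(a))$. Then, applying the existential introduction rule twice, first witnessing $x$ by the term $\pi_1(a)\!:\!X$ and then witnessing $y$ by the term $\pi_2(a)\!:\!Y$, I conclude
$$\vDash \exists x\!:\!X,\,y\!:\!Y\,\bigl(a=(x,y)\bigr),$$
which is the desired statement.

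There is no serious obstacle here: the content of the lemma is essentially that the surjective pairing symbol $(-,-)$ of the internal logic really is surjective on terms, and this is built into the universal property of the categorical product. The only care needed is to distinguish the external equality of morphisms $\langle\pi_1,\pi_2\rangle = \id$ from its internal counterpart, which is precisely the service Proposition 6.10.2(1) provides.
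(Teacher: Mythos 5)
Your proposal is correct and follows essentially the same route as the paper: both proofs rest on the identity $(\pi_1,\pi_2)=\id_{X\times Y}$ to obtain $\vDash a=(\pi_1(a),\pi_2(a))$ and then read off the witnesses $\pi_1(a)$ and $\pi_2(a)$ for the existential. Your version merely spells out the existential-introduction step more explicitly than the paper does.
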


\begin{proof}
    Given a morphism $f:Z\rightarrow X\times Y$, we have that $(\pi_1\circ f,\pi_2\circ f)=f$ \cite{lambek}. Specifically, $(\pi_1,\pi_2)=\id_{X\times Y}$. So $\vDash a=(\pi_1(a),\pi_2(a))$, where $\pi_1(a)\!:\!X,\pi_2(a)\!:\!Y$. 
\end{proof}

\begin{lemma}
\label{adjunction}Let $X,Y,Z\in\E$. Given $f:X\times Y\rightarrow Z$, there exists unique $\hat{f}:X\rightarrow[Y,Z]$ such that, for variables $x\!:\!X,y\!:\!Y$, $$\vDash f(x,y)=\hat{f}(x)(y).$$
\end{lemma}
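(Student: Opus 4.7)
The plan is to obtain $\hat f$ as the exponential transpose of $f$ under the Cartesian closed structure of the topos $\E$, and then verify the stated internal-logic equality via the evaluation counit. Since every topos is Cartesian closed, the adjunction $-\times Y\dashv [Y,-]$ yields a natural bijection
$$\E(X\times Y,Z)\;\cong\;\E(X,[Y,Z]),$$
so from $f$ we extract a unique $\hat f:X\to [Y,Z]$ characterised by the triangle $\ev_Y\circ(\hat f\times\id_Y)=f$, where $\ev_Y:[Y,Z]\times Y\to Z$ is the counit of the adjunction.

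Next I would unwind the internal-logic notation. Recall the convention adopted just before the statement: the term $\hat f(x)(y)$ abbreviates $\ev_Y(\hat f(x),y)$, which categorically is the composite $\ev_Y\circ(\hat f\times\id_Y):X\times Y\to Z$ applied to the generic element $(x,y)\!:\!X\times Y$. By the defining property of the transpose this composite \emph{is} $f$, so on generic elements the two terms coincide; together with Lemma~\ref{product-type} (which lets us reason about pairs in $X\times Y$) and \handbooktwo (a) this gives precisely $\vDash f(x,y)=\hat f(x)(y)$.

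For uniqueness, suppose $g:X\to [Y,Z]$ also satisfies $\vDash f(x,y)=g(x)(y)$. Then $\vDash\hat f(x)(y)=g(x)(y)$ for all $x\!:\!X,\,y\!:\!Y$, and Proposition~\ref{internal 6.10.2} applied pointwise in $x$ yields $\vDash \hat f(x)=g(x)$; a further appeal to \handbooktwo (a) promotes this to the equality of morphisms $\hat f=g$.

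The only delicate step is the correct dictionary between the syntactic term $\hat f(x)(y)=\ev_Y(\hat f(x),y)$ and the categorical composite $\ev_Y\circ(\hat f\times\id_Y)$; once this is established the rest is a direct application of the Cartesian closed adjunction and of the internal-logic principles already recorded in this section. Everything else (existence, naturality, uniqueness) is formal.
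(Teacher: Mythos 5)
Your proposal is correct and follows essentially the same route as the paper: both obtain $\hat f$ as the exponential transpose under the Cartesian closed adjunction and verify the internal equality by identifying the term $\hat f(x)(y)=\ev_Y(\hat f(x),y)$ with the composite $\ev_Y\circ(\hat f\times\id_Y)=f$ (the paper derives this triangle identity explicitly from the unit and naturality of $\ev$, whereas you invoke it as the defining property of the transpose). Your added uniqueness argument via Proposition~\ref{internal 6.10.2} is a harmless elaboration of what the paper leaves to the bijectivity of the adjunction.
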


\begin{proof}
By \cite{adjunction}, the adjunction $\E(X\times Y,Z)\cong\E(X,[Y,Z])$ sends $f:X\times Y\rightarrow Z$ to the composite $$\hat{f}:X\xrightarrow{d}[Y,X\times Y]\xrightarrow{[Y,f]}[Y,Z],$$ where $d$ is the unit of the adjunction. By naturality of $\ev$ with respect to $f$, we have that $\ev_Z\circ([Y,f(-)]\times Y)\circ (d_X(-)\times Y)=f$. In other words, the composite $$\ev(\hat{f},y):X\times Y\xrightarrow{(d,\id)}[Y,X\times Y]\times Y\xrightarrow{([Y,f],\id)}[Y,Z]\times Y\xrightarrow{\ev}Z$$ is equal to $f$. As a result, we have $\vDash f(x,y)=\ev(\hat{f}\times Y)(x,y)$, i.e. $\vDash f(x,y)=\hat{f}(x)(y)$. 
\end{proof}

\begin{corollary}
\label{adjunction-reverse}Let $X,Y,Z\in\E$. Given $g:X\rightarrow[Y,Z]$, there exists unique $\tilde{g}:X\times Y\rightarrow Z$ such that, for variables $x\!:\!X,y\!:\!Y$, $$\vDash g(x)(y)=\tilde{g}(x,y).$$
\end{corollary}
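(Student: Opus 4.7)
The plan is to invoke the cartesian closed adjunction $\E(X\times Y,Z)\cong\E(X,[Y,Z])$ in the opposite direction to Lemma~\ref{adjunction} and define $\tilde g$ as the image of $g$ under the inverse bijection. Concretely, I would set
$$\tilde g := \ev_Z\circ(g\times\id_Y):X\times Y\xrightarrow{g\times\id_Y}[Y,Z]\times Y\xrightarrow{\ev_Z}Z.$$
This is essentially the counit $\ev$ of the exponential adjunction applied to $g$, and no further universal property is needed to produce the morphism.

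To verify the formula $\vDash g(x)(y)=\tilde g(x,y)$, I would unwind the notational convention for $g(x)(y)$, which by definition abbreviates $\ev_Z(g(x),y)$. By functoriality of substitution, this equals $\ev_Z\bigl((g\times\id_Y)(x,y)\bigr)=\tilde g(x,y)$, giving the required validity immediately from the definition of $\tilde g$.

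For uniqueness, suppose $\tilde g'\colon X\times Y\to Z$ also satisfies $\vDash g(x)(y)=\tilde g'(x,y)$. Then for all $x\!:\!X$ and $y\!:\!Y$ we have $\vDash\tilde g(x,y)=\tilde g'(x,y)$. By Lemma~\ref{product-type}, any variable $a\!:\!X\times Y$ admits the decomposition $\vDash\exists x\!:\!X,y\!:\!Y\,(a=(x,y))$, so substitution yields $\vDash\forall a\!:\!X\times Y\,(\tilde g(a)=\tilde g'(a))$. By \handbooktwo\ part (1), this forces $\tilde g=\tilde g'$.

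There is no real obstacle here: the construction is dual to Lemma~\ref{adjunction}, and the only subtlety—passing from an internal pointwise equality on $X\times Y$ to an equality of morphisms—is already packaged by Lemma~\ref{product-type} together with the part of \handbooktwo\ that promotes internal universal quantification over variables to honest equality of arrows.
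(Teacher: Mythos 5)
Your proposal is correct and matches the paper's intent: the corollary is stated as an immediate consequence of Lemma~\ref{adjunction} via the bijection $\E(X\times Y,Z)\cong\E(X,[Y,Z])$, and your explicit $\tilde g=\ev_Z\circ(g\times\id_Y)$ is exactly the inverse of that currying map, with the validity of $g(x)(y)=\tilde g(x,y)$ unwinding to the notational convention for $\ev$. Your uniqueness argument via Lemma~\ref{product-type} and Proposition 6.10.2(1) is the standard way to promote the internal pointwise identity to an equality of arrows, so nothing is missing.
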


Crucially, we take this unique correspondence $\E(X\times Y,Z)\cong\E(X,[Y,Z]),f\leftrightarrow\hat{f}$, to be our tensor-hom isomorphism. 

\section{Internal homs of abelian groups and modules}

\subsection{Module structure on internal homs}

Let $M,N\in\text{Ab}(\E)$. We begin by describing the group structure on $[M,N]$. We have an addition map $+_{[M,N]}:[M,N]^2\rightarrow[M,N]$ corresponding by adjunction to the composite $$[M,N]^2\times M\xrightarrow{(\id_{[M,N]},\Delta_M)}[M,N]^2\times M^2\cong([M,N]\times M)^2\xrightarrow{\ev_M^2}N^2\xrightarrow{+_N}N.$$

Given $A\in\text{Ring}(\E)$, (right) $A$-module structure on $M$ induces (left) $A$-module structure on $[M,N]$. We have map $\mu_{[M,N]}:A\times [M,N]\rightarrow[M,N]$ corresponding via adjunction to the composite $$A\times [M,N]\times M\cong[M,N]\times M\times A\xrightarrow{(\id_{[M,N]},\mu_M)}[M,N]\times M\xrightarrow{\ev}N.$$ 

Given $B\in\text{Ring}(\E)$, (left) $B$-module structure on $N$ induces (left) $B$-module structure on $[M,N]$. We have map $\mu_{[M,N]}:B\times[M,N]\rightarrow[M,N]$ corresponding via adjunction to the composite $$B\times [M,N]\times M\xrightarrow{(\id_B,\ev)}B\times N\xrightarrow{\mu_N}N.$$ 

From Corollary \ref{adjunction-reverse}, given $f,g\!:\![M,N],m\!:\!M,a\!:\!A,b\!:\!B$, we have $$\vDash(f*a)(m)=f(am),\vDash(b*f)(m)=f(m)b\text{ and }\vDash(f+g)(m)=f(m)+g(m).$$

\subsection{Constructing internal abelian group homs}

Following the ideas of \cite{marty}, we can now define the \emph{internal abelian group hom} of $M,N\in\text{Ab}(\E)$, $$[M,N]_\textbf{Z}:=\text{eq}\big([M,N]\stackrel[+_{N*}]{+_M^*}{\rightrightarrows}[M\times M,N]\big),$$ where $+_M^*=[+_M,N]$ and $+_N^*$ is the adjoint of $$[M,N]\times M^2\xrightarrow{(\Delta_{[M,N]},\id_M^2)}[M,N]^2\times M^2\cong([M,N]\times M)^2\xrightarrow{\ev_M^2}N^2\xrightarrow{+_N}N.$$ 

This comes equipped with evaluation map $[M,N]_\textbf{Z}\times M\rightarrow M$, formed as the composite $\ev_M\circ(i_\textbf{Z},\id_M)$. In general, we will also write this map as $\ev_M$ or $\ev$, and for ease of notation we will write $f(m)\!:\!M$ for
the variable $\ev_M(f,m)$, where $f\!:\![M,N]_\textbf{Z},m\!:\!M$. 

From Corollary \ref{adjunction-reverse}, for $f\!:\![M,N]_\textbf{Z}$ and $m,m'\!:\!M$, we have $\vDash+_M^*(f)(m,m')=f(m+m')$ and $\vDash+_{N*}(f)(m,m')=f(m)+f(m')$. Hence, $$\vDash f(m)+f(m')=f(m+m').$$ 

From \cite{handbook}'s Proposition 6.10.3, we obtain

\begin{proposition}
    We can realise $[M,N]_\textbf{Z}$ as the subobject \label{internal-grp-hom} $$[M,N]_\textbf{Z}=\{f\,|\,\forall m,m'\!:\!M(f(m+m')=f(m)+f(m'))\}\mono[M,N].$$
\end{proposition}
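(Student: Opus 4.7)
The plan is to apply Borceux's Proposition 6.10.3 (\handbookthree) directly to the equalizer diagram defining $[M,N]_{\textbf{Z}}$. That proposition expresses an equalizer of two parallel morphisms as the subobject cut out by the formula asserting their equality. Here this gives
\[
[M,N]_{\textbf{Z}} = \{\, f : [M,N] \mid +_M^*(f) = +_{N*}(f)\,\} \mono [M,N],
\]
so the remaining task is purely one of translating the equality of two terms of type $[M\times M, N]$ into the statement of additivity.

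First I would replace the equality of these two morphisms by pointwise equality using Proposition \ref{internal 6.10.2}, obtaining the equivalent condition
\[
\vDash \forall p : M\times M,\; +_M^*(f)(p) = +_{N*}(f)(p).
\]
Then Lemma \ref{product-type} allows me to decompose any $p : M\times M$ as a pair $(m,m')$ with $m,m' : M$, so this is in turn equivalent to
\[
\vDash \forall m,m' : M,\; +_M^*(f)(m,m') = +_{N*}(f)(m,m').
\]

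Finally I invoke the two internal evaluations computed just above the proposition statement, namely
\[
\vDash +_M^*(f)(m,m') = f(m+m') \quad\text{and}\quad \vDash +_{N*}(f)(m,m') = f(m) + f(m'),
\]
each of which followed from Corollary \ref{adjunction-reverse} applied to the adjoint definitions of $+_M^*$ and $+_{N*}$. Substituting these identifications collapses the condition into $\vDash \forall m,m' : M\,(f(m+m') = f(m)+f(m'))$, which is precisely the homomorphism formula displayed in the statement.

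There is no real obstacle here: all of the non-trivial work (verifying that $+_M^*$ and $+_{N*}$ act pointwise as the two sides of the additivity equation) was already carried out in the paragraphs preceding the proposition, and the translation between equalizers of parallel morphisms and subobjects cut out by equality formulas is exactly the content of \handbookthree\ together with the internal pointwise principle of Proposition \ref{internal 6.10.2}. The proof is therefore a short assembly of these four ingredients.
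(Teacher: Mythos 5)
Your proposal is correct and follows essentially the same route as the paper: the paper's own proof consists precisely of citing \handbookthree{} to realise the equalizer defining $[M,N]_\textbf{Z}$ as the subobject cut out by the equality formula, relying on the evaluations $\vDash +_M^*(f)(m,m')=f(m+m')$ and $\vDash +_{N*}(f)(m,m')=f(m)+f(m')$ computed just beforehand. You have merely made explicit the intermediate appeals to Proposition \ref{internal 6.10.2} and Lemma \ref{product-type} that the paper leaves implicit, which is a faithful expansion rather than a different argument.
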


\begin{example}
    \label{classical-grp}The topos $\E=\textbf{Set}$ is enriched in the natural way over hom sets, i.e. we have that $[X,Y]=\Hom(X,Y)$ for any sets $X,Y$. Hence, for two abelian groups $M,N$, we have 
    \begin{align}
        [M,N]_\textbf{Z}&=\{f\,|\,\forall m,m'\in M,f(m+m')=f(m)+f(m')\}\notag\\
        &=\Hom_\mathbb{Z}(M,N).\notag
    \end{align}
\end{example}

\subsection{Closure of Ab(\texorpdfstring{$\E$}{E})}\label{grp-hom-module-structure}

We obtain abelian group structure on $[M,N]_\textbf{Z}$ as the vertical map of 

\[\begin{tikzpicture}[scale=2]
	\node (A1) at (0,1) {$[M,N]_\textbf{Z}$};
	\node (A2) at (0,0) {$[M,N]_\textbf{Z}^2$.};
	\node (B2) at (1,0.5) {$[M,N]^2$};
	\node (C1) at (2,1) {$[M,N]$};
	\node (D1) at (3.5,1) {$[M\times M,N]$};
	\draw[->]
	(B2) edge node [below right] {$+_{[M,N]}$} (C1)
	(A1) edge node [above] {$i_\textbf{Z}$} (C1)
	(A2) edge node [below right] {$i_\textbf{Z}^2$} (B2);
	\draw[dashed,->]
	(A2) edge node [left] {$\exists+_{[M,N]_\textbf{Z}}$} (A1);
	\begin{scope}[transform canvas={yshift=.3em}]
        \draw [->] (C1) edge node [above] {$+^*_M$} (D1);
    \end{scope}
    \begin{scope}[transform canvas={yshift=-.3em}]
        \draw [->] (C1) edge node [below] {$+_{N*}$} (D1);
    \end{scope}
\end{tikzpicture}\]

Given $A\in\text{Ring}(\E)$, if $M$ is a (right) $A$-module, we obtain the left $A$-action map on $[M,N]_\textbf{Z}$ by noting that the $A$-action map on $[M,N]$ factors through $[M,N]_\textbf{Z}\xmono{i_\textbf{Z}}[M,N]$, as for $f\!:\![M,N]_\textbf{Z},a\!:\!A$ and $m,m'\!:\!M$, we have $$\vDash(af)(m+m')=f((m+m')a)=f(ma+m'a)=f(ma)+f(m'a)=(af)(m)+(af)(m').$$ 

Similarly, given $B\in\text{Ring}(\E)$, if $N$ is a (left) $B$-module, we obtain the (left) $B$-action map on $[M,N]_\textbf{Z}$ by noting that the $B$-action map on $[M,N]$ factors through $[M,N]_\textbf{Z}\xmono{i_\textbf{Z}}[M,N]$, as for $f\!:\![M,N]_\textbf{Z},b\!:\!B$ and $m,m'\!:\!M$, we have $$\vDash(b*f)(m+m')=bf(m+m')=b(f(m)+f(m'))=bf(m)+bf(m')=(b*f)(m)+(b*f)(m').$$ 

From this, for $a\!:\!A,b\!:\!B,f,g\!:\![M,N]_\textbf{Z},m\!:\!M$, we obtain $$\vDash(af)(m)=f(ma),\vDash(bf)(m)=f(m)b\text{ and }\vDash(f+g)(m)=f(m)+g(m).$$ 

\begin{proposition}
    \label{ab-functor}The internal abelian group hom forms a functor $$[-,-]_\emph{\textbf{Z}}:\emph{Ab}(\E)^{\emph{op}}\times\emph{Ab}(\E)\to\emph{Ab}(\E).$$ 
\end{proposition}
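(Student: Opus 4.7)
The plan is to extend the already-established internal hom functor $[-,-]\colon \E^{\mathrm{op}}\times\E\to\E$ to one landing in $\text{Ab}(\E)$ by restricting to the subobjects $[M,N]_\textbf{Z}\mono[M,N]$, and then verifying that everything interacts correctly with the pointwise abelian group structure defined in the preceding subsection. The main ingredients will be the internal-logic characterisation of $[M,N]_\textbf{Z}$ from Proposition \ref{internal-grp-hom}, Lemma \ref{adjunction}/Corollary \ref{adjunction-reverse} for reading off adjuncts pointwise, and the factorisation principle \handbooktwo(5).

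First I would check that $[M,N]_\textbf{Z}$ is genuinely an object of $\text{Ab}(\E)$. The addition $+_{[M,N]_\textbf{Z}}$ already exists as the unique factorisation through $i_\textbf{Z}$, and the zero and inverses are obtained analogously (the constant map at $0_N$ and the composite with $-_N$ both factor through $[M,N]_\textbf{Z}$ by the same internal-logic argument used in Subsection \ref{grp-hom-module-structure}). Associativity, commutativity, and the unit/inverse axioms are then verified in the internal language: e.g.\ for $f,g,h\!:\![M,N]_\textbf{Z}$ and $m\!:\!M$, one has $\vDash((f+g)+h)(m)=f(m)+g(m)+h(m)=(f+(g+h))(m)$, and Proposition \ref{internal 6.10.2} promotes this equality of elements to an equality of morphisms.

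Second, I would define the functor on morphisms. Given $\phi\colon M'\to M$ and $\psi\colon N\to N'$ in $\text{Ab}(\E)$, the underlying internal hom supplies $[\phi,\psi]\colon [M,N]\to[M',N']$. To obtain a map $[\phi,\psi]_\textbf{Z}\colon [M,N]_\textbf{Z}\to[M',N']_\textbf{Z}$, I would apply \handbooktwo(5): it suffices to check $\vDash\forall f\!:\![M,N]_\textbf{Z}\,\bigl([\phi,\psi](f)\in[M',N']_\textbf{Z}\bigr)$, which by Proposition \ref{internal-grp-hom} reduces to showing $\vDash\forall m',m''\!:\!M'\,\bigl(\psi(f(\phi(m'+m'')))=\psi(f(\phi(m')))+\psi(f(\phi(m'')))\bigr)$. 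This is immediate from $\phi$ and $\psi$ being group homomorphisms and $f\!:\![M,N]_\textbf{Z}$ satisfying the defining additivity.

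Third, I would verify that $[\phi,\psi]_\textbf{Z}$ is itself a homomorphism of abelian group objects, i.e.\ $\vDash [\phi,\psi]_\textbf{Z}(f+g)=[\phi,\psi]_\textbf{Z}(f)+[\phi,\psi]_\textbf{Z}(g)$, which again follows by pointwise evaluation and the additivity of $\psi$. Finally, functoriality (preservation of identities and composites) is checked on elements in the internal language and transferred to morphism-equalities via Proposition \ref{internal 6.10.2}, using the uniqueness of factorisations through the equaliser $[M,N]_\textbf{Z}\mono[M,N]$ to lift these equalities from $[-,-]$ to $[-,-]_\textbf{Z}$. There is no genuinely hard step; the only point requiring a little care is the systematic use of the equaliser's universal property to transport identities from $[M,N]$ to $[M,N]_\textbf{Z}$, and keeping track of the contravariance in the first argument when verifying $[\phi_1\circ\phi_2,\psi]_\textbf{Z}=[\phi_2,\psi]_\textbf{Z}\circ[\phi_1,\psi]_\textbf{Z}$.
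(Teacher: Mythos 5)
Your proposal is correct and follows essentially the same route as the paper: restrict the topos-level internal hom $[f,g]$ to the subobject $[M,N]_\textbf{Z}$ by checking in the internal language that additivity of $h$ is preserved (using that $f$ and $g$ are group homomorphisms), then note functoriality. You are somewhat more explicit than the paper about the auxiliary verifications (that $[M,N]_\textbf{Z}$ carries a group object structure, which the paper establishes separately in Subsection \ref{grp-hom-module-structure}, and that $[\phi,\psi]_\textbf{Z}$ is itself a group homomorphism), but these are elaborations of the same argument rather than a different approach.
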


\begin{proof}
    Let $f:M\to M',g:N\to N'$ be morphisms in $\Ab(\E)$, giving morphism $f\times g:M'\times N\to M\times N'$ in $\Ab(\E)^\text{op}\times\Ab(\E)$. Then, the functor $[-,-]:\E^\text{op}\times\E\to\E$ gives a morphism $$[f,g]:[M',N]\to[M,N'],$$ so that, for variable $h\!:\![M',N]$, we have $$\vDash\forall m\!:\!M([f,g](h)(m)=g(h(f(m)))).$$ Suppose $h\in[M',N]_\textbf{Z}$. Then, 
    \begin{align}
        \vDash\forall m,m'\!:\!M([f,g](h)(m+m')=g(h(f(m+m')))&=g(h(f(m)))+g(h(f(m')))\notag\\&=[f,g](h)(m)+[f,g](h)(m')).\notag
    \end{align} So, $[f,g]$ descends to a morphism $[f,g]_\textbf{Z}:[M',N]_\textbf{Z}\to[M,N']_\textbf{Z}$. The functor conditions follow. 
\end{proof}

\begin{proposition}
    We have natural isomorphism $[1,M]_\textbf{Z}\cong M$. 
\end{proposition}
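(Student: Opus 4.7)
I read the $1$ in $[1, M]_\textbf{Z}$ as the unit of the (forthcoming) closed structure on $\Ab(\E)$: the free abelian group object on a single generator $\iota \colon 1_\E \to 1$, so that classically $\Hom_{\textbf{Z}}(1, M) \cong \Hom_\E(1_\E, M) \cong \Fix(M)$ via $f \mapsto f(\iota)$. The plan is to exhibit the isomorphism by exhibiting an explicit inverse pair of morphisms, built using the internal logic and the universal property of $\iota$.

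First I would define the evaluation-at-$\iota$ map $\epsilon \colon [1, M]_\textbf{Z} \to M$ as the composite
\begin{align*}
[1, M]_\textbf{Z} \xmono{i_\textbf{Z}} [1, M] \xrightarrow{[\iota, M]} [1_\E, M] \cong M,
\end{align*}
which in internal logic sends $f$ to $f(\iota)$. The candidate inverse $\eta \colon M \to [1, M]_\textbf{Z}$ is the adjoint transpose of the $\mathbb{Z}$-action $M \times 1 \to M$, $(m, n) \mapsto n m$. By Proposition~\ref{internal-grp-hom}, to land in the equalizer I must verify that $\eta(m) \colon 1 \to M$ is a group homomorphism; in internal logic this is immediate from bi-additivity of the $\mathbb{Z}$-action in its right variable, $(n+n') m = n m + n' m$. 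That both $\epsilon$ and $\eta$ are morphisms of abelian group objects follows from the pointwise description of addition on $[1, M]_\textbf{Z}$ recorded in Subsection~\ref{grp-hom-module-structure}.

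Finally, mutual inverseness is an internal-logic computation. The easy direction is $\epsilon(\eta(m)) = \eta(m)(\iota) = \iota \cdot m = m$, using that $\iota$ is the multiplicative unit of $\mathbb{Z}$ acting on $M$. The reverse composite $\eta(\epsilon(f))$ is the morphism $n \mapsto n \cdot f(\iota)$, and the main point is to argue this equals $f$. This is the hard step: it amounts to invoking the universal property of $1$ as free on $\iota$, which internally takes the form that any two abelian-group morphisms $1 \to M$ that agree on $\iota$ coincide. Applying this to $f$ and to $\eta(\epsilon(f))$, which are both group homomorphisms agreeing at $\iota$, finishes the proof. Naturality in $M$ is immediate from the construction, since both $\epsilon$ and $\eta$ were defined via natural transformations.
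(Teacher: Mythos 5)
Your proof is correct, but it takes a genuinely different route from the paper's, and the difference starts with how you each read the symbol $1$. The paper takes $1$ to be the terminal object of $\E$, defines $\Phi(f)=f(e)$ and $\Psi(m)(e)=m$ for the unique variable $e\!:\!1$, and verifies mutual inverseness by evaluating at $e$ --- two lines, no universal property. You take $1$ to be the free abelian group object on one generator $\iota$ (the internal $\mathbb{Z}$), which forces you to do real work: checking that $\eta(m)$ is additive so that it lands in the equalizer defining $[1,M]_\textbf{Z}$, and invoking freeness of $1$ on $\iota$ to get $\eta(\epsilon(f))=f$. The comparison is not symmetric, because the paper's shortcut hides a genuine problem: under the terminal-object reading, the equalizer condition of Proposition~\ref{internal-grp-hom} gives $f(e)=f(e+e)=f(e)+f(e)$, hence $f(e)=0$, so $\Psi(m)$ is not additive for $m\neq 0$ and $[1,M]_\textbf{Z}$ collapses to $0$ rather than $M$; the paper never checks that $\Psi(m)$ lies in the subobject $[1,M]_\textbf{Z}\mono[1,M]$, and that is exactly the step that fails. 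Your reading is the one under which the proposition is the unit axiom of the closed structure on $\Ab(\E)$ and is actually true, at the price of assuming the free abelian group object on a point exists (e.g.\ a natural number object, a hypothesis the paper only introduces later, in the appendix on free constructions). Two details you should still supply: the construction of the internal $\mathbb{Z}$-action on an arbitrary $M\in\Ab(\E)$, and an internalisation of the statement that two group homomorphisms out of $1$ agreeing on $\iota$ coincide --- Proposition~\ref{handbooktwo-free} gives this only for external homs, so to apply it to a variable $f\!:\![1,M]_\textbf{Z}$ you need the localising principle as in the proof of Proposition~\ref{internal 6.10.2}.
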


\begin{proof}
    Define maps $\Phi:[1,M]_\textbf{Z}\to M$ and $\Psi:M\to[1,M]_\textbf{Z}$, given by $$\vDash\forall f\!:\![1,M]_\textbf{Z}(\Phi(f)=f(0_M)),\quad\vDash\forall m\!:\!M(\Psi(m)(e)=m),$$ writing $e$ as the unique variable of type $1$. Then $\vDash\Phi(\Psi(m))=\Psi(m)(0_1)=\Psi(m)(e)=m$, and $\vDash\Psi(\Phi(f))(e)=\Phi(f)=f(0_M)=f(e)$. 
\end{proof}

\begin{proposition}
    We have extranatural transformation $j_M:1\to[M,M]_\textbf{Z}$. 
\end{proposition}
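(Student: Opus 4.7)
The plan is to define $j_M$ as the internal counterpart of the identity map on $M$ and then verify the extranaturality condition using the internal logic developed above.

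First, I would construct $j_M$ by applying Lemma \ref{adjunction} to the canonical projection isomorphism $1 \times M \xrightarrow{\sim} M$, viewed as a morphism into $M$. This produces a unique morphism $\widehat{\id_M} : 1 \to [M,M]$ such that, writing $e$ for the unique variable of type $1$, one has $\vDash \widehat{\id_M}(e)(m) = m$ for all $m\!:\!M$. Then I would check that this factors through the subobject $[M,M]_{\mathbf{Z}} \mono [M,M]$ described in Proposition \ref{internal-grp-hom}: indeed, for variables $m,m'\!:\!M$,
\[
\vDash \widehat{\id_M}(e)(m+m') = m+m' = \widehat{\id_M}(e)(m) + \widehat{\id_M}(e)(m'),
\]
so by Proposition \ref{internal-grp-hom} (combined with Proposition \handbooktwo\ to promote the factorisation statement) the map $\widehat{\id_M}$ factors uniquely through $i_{\mathbf{Z}}$, yielding the desired $j_M : 1 \to [M,M]_{\mathbf{Z}}$, characterised internally by $\vDash \forall m\!:\!M\,(j_M(e)(m) = m)$.

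Next, I would verify extranaturality of the family $(j_M)_M$. Given any morphism $f : M \to N$ in $\mathrm{Ab}(\E)$, the appropriate square to check is
\[
\begin{tikzcd}
1 \arrow[r, "j_N"] \arrow[d, "j_M"'] & {[N,N]_{\mathbf{Z}}} \arrow[d, "{[f,N]_{\mathbf{Z}}}"] \\
{[M,M]_{\mathbf{Z}}} \arrow[r, "{[M,f]_{\mathbf{Z}}}"'] & {[M,N]_{\mathbf{Z}}}
\end{tikzcd}
\]
Using the description of $[-,-]_{\mathbf{Z}}$ on morphisms from Proposition \ref{ab-functor} together with the internal characterisation of $j_M,j_N$, I compute for any $m\!:\!M$,
\[
\vDash [M,f]_{\mathbf{Z}}(j_M(e))(m) = f(j_M(e)(m)) = f(m),
\]
and likewise
\[
\vDash [f,N]_{\mathbf{Z}}(j_N(e))(m) = j_N(e)(f(m)) = f(m).
\]
By Proposition \ref{internal 6.10.2} the two composites $1 \to [M,N]_{\mathbf{Z}}$ are equal, which is the required extranaturality.

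I do not anticipate a serious obstacle here: the construction is the expected ``name of the identity'' adapted to the internal group hom, and the only nontrivial input is making sure the factorisation through $[M,M]_{\mathbf{Z}}$ is justified by the equational characterisation of that subobject and that the evaluation of $[f,g]_{\mathbf{Z}}$ in the internal language agrees with $g \circ (-) \circ f$, both of which are already established.
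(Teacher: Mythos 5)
Correct, and essentially the paper's own approach: both proofs take the ``name of the identity'' $1\to[M,M]$ and use the additivity computation $\vDash j_M(e)(m+m')=j_M(e)(m)+j_M(e)(m')$ to factor it through the subobject $[M,M]_\textbf{Z}\mono[M,M]$. The only difference is that you build the underlying map via Lemma \ref{adjunction} and then verify the extranaturality square by hand in the internal language, whereas the paper simply cites the extranatural $j_X:1\to[X,X]$ already supplied by the closed structure of $\E$ and lets extranaturality be inherited along the monomorphism; your explicit verification is a harmless, and in fact slightly more complete, rendering of the same argument.
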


\begin{proof}
    As $\E$ is a closed category, there is an extranatural transformation $j_X:1\to[X,X]$, so that $\vDash\forall x\!:\!X(j_X(e)(x)=x)$, writing $e\!:\!1$ as the unique variable of type $1$. Let $M\in\text{Ab}(\E)$. For variables $m,m'\!:\!M$, we have $$\vDash j_M(1)(m+m')=m+m'=j_M(1)(m)+j_M(1)(m'),$$ and so $j_M(1)\in[M,M]_\textbf{Z}$. So, the extranatural transformation given by the closed structure of $\E$ descends to one on $\Ab(\E)$. 
\end{proof}

\begin{proposition}
    We have transformation $$L^M_{NP}:[N,P]_\textbf{Z}\to[[M,N]_\textbf{Z},[M,P]_\textbf{Z}]_\textbf{Z},$$ natural in $N,P$ and extranatural in $M$. 
\end{proposition}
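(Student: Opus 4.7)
The plan is to define $L^M_{NP}$ by transposing the internal composition morphism twice, verifying at each stage that the resulting morphism factors through the correct monic subobject $[-,-]_\textbf{Z} \mono [-,-]$, and then to deduce naturality in $N,P$ and extranaturality in $M$ from the corresponding properties of the underlying cartesian closed structure on the topos $\E$.

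First, I would form the composition morphism
$$c : [N,P]_\textbf{Z} \times [M,N]_\textbf{Z} \times M \xrightarrow{\id \times \ev_M} [N,P]_\textbf{Z} \times N \xrightarrow{\ev_N} P,$$
which, by Corollary \ref{adjunction-reverse}, internally satisfies $\vDash c(g,f,m) = g(f(m))$ for variables $g\!:\![N,P]_\textbf{Z}$, $f\!:\![M,N]_\textbf{Z}$, $m\!:\!M$. Transposing once via Lemma \ref{adjunction} yields $\hat{c} : [N,P]_\textbf{Z} \times [M,N]_\textbf{Z} \to [M,P]$. To show that $\hat{c}$ factors through the subobject $[M,P]_\textbf{Z}$ characterised by Proposition \ref{internal-grp-hom}, I would check internally that
$$\vDash \hat{c}(g,f)(m+m') = g(f(m+m')) = g(f(m)) + g(f(m')) = \hat{c}(g,f)(m) + \hat{c}(g,f)(m'),$$
which is immediate from the additivity of $f$ and of $g$ guaranteed by their types.

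Transposing a second time gives $L^M_{NP} : [N,P]_\textbf{Z} \to [[M,N]_\textbf{Z}, [M,P]_\textbf{Z}]$; to promote its codomain to $[[M,N]_\textbf{Z}, [M,P]_\textbf{Z}]_\textbf{Z}$, I would again apply Proposition \ref{internal-grp-hom}, reducing matters to the identity
$$\vDash L^M_{NP}(g)(f_1+f_2)(m) = g\bigl((f_1+f_2)(m)\bigr) = g(f_1(m)) + g(f_2(m)) = \bigl(L^M_{NP}(g)(f_1) + L^M_{NP}(g)(f_2)\bigr)(m),$$
which follows from the pointwise formula $(f_1+f_2)(m) = f_1(m)+f_2(m)$ recorded in Section \ref{grp-hom-module-structure} together with the additivity of $g$.

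Finally, naturality in $N,P$ and extranaturality in $M$ transfer from the analogous composition transformation carried by the ambient cartesian closed category $\E$. Concretely, for $\beta : P \to P'$ in $\Ab(\E)$, the naturality square commutes because both composites send $g\!:\![N,P]_\textbf{Z}$ to the map $f \mapsto \beta \circ g \circ f$ by associativity of composition; the case of $\alpha : N' \to N$ and the two-sided contribution of a morphism $\gamma : M' \to M$ are entirely analogous. Each such square is reduced to a pointwise identity by applying Proposition \ref{internal 6.10.2} after post-composing with the monomorphisms $[-,-]_\textbf{Z} \mono [-,-]$. The main obstacle is purely bookkeeping: keeping track of the repeated transpositions and of which subobject is hit at each stage; the additivity identities and the concluding associativity verifications themselves are immediate from the characterisation of the internal abelian-group homs.
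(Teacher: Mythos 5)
Your proposal is correct and follows essentially the same route as the paper: the paper invokes the transformation $L^X_{YZ}$ supplied by the closed structure of $\E$ (which is exactly the double transpose of the internal composition map you construct explicitly) and then verifies the same two additivity identities — additivity of $L(f)(g)$ in $m$ and of $L(f)$ in $g$ — to descend to the subobjects $[-,-]_\textbf{Z}$, with naturality and extranaturality likewise inherited from $\E$.
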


\begin{proof}
    As $\E$ is a closed category, there is a transformation $$L^X_{YZ}:[Y,Z]\to[[X,Y],[X,Z]],$$ natural in $Y,Z$ and extranatural in $X$. This is such that, for variables $g\!:\![X,Y],x\!:\!X$, $$\vDash\forall f\!:\![Y,Z](L^X_{YZ}(f)(g)(x)=f(g(x))).$$ Let $M,N,P\in\Ab(\E)$. For variables $g,g'\!:\![M,N]_\textbf{Z},m,m'\!:\!M$,
    \begin{align}
        \vDash\forall f\!:\![N,P]_\textbf{Z}(L^M_{NP}(f)(g)(m+m')=f(g(m+m'))&=f(g(m))+f(g(m'))\notag\\&=L^M_{NP}(f)(g)(m)+L^M_{NP}(f)(g)(m')),\notag
    \end{align} and
    \begin{align}
        \vDash\forall f\!:\![N,P]_\textbf{Z}(L^M_{NP}(f)(g+g')(m)=f(g(m)+g'(m))&=f(g(m))+f(g'(m))\notag\\&=L^M_{NP}(f)(g)(m)+L^M_{NP}(f)(g')(m)).\notag
    \end{align}
    So, the transformation given by the closed structure of $\E$ descends to one on $\Ab(\E)$. 
\end{proof}

So, we obtain the following proposition by definition. 

\begin{proposition}
    \label{ab-closed}$\emph{\Ab}(\E)$ is a closed category under the internal abelian group hom. 
\end{proposition}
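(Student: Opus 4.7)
The plan is to observe that all the essential structure maps for a closed category have already been produced in the preceding three propositions: the internal hom functor $[-,-]_\textbf{Z}$ from Proposition \ref{ab-functor}, the unit isomorphism $[1,M]_\textbf{Z}\cong M$, the unit (identity-picking) extranatural transformation $j_M\colon 1\to[M,M]_\textbf{Z}$, and the composition transformation $L^M_{NP}$. What remains is to check that these structure maps satisfy the five coherence axioms of a closed category in the sense of Eilenberg--Kelly (associativity-type pentagon for $L$, two unit axioms relating $j$ with the isomorphism $[1,-]_\textbf{Z}\cong\id$, and the compatibility of $L$ with $j$ on either side).

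The approach I would take is to leverage the fact that $\E$ itself is a closed category in an already-established way (cartesian closed via $[-,-]$), and that each of our $\Ab(\E)$ structure maps was built by restriction along the monomorphism $i_\textbf{Z}\colon[M,N]_\textbf{Z}\mono[M,N]$ or by verifying that the underlying map in $\E$ factors through this subobject. Concretely, for any one of the required commuting diagrams in $\Ab(\E)$, I would post-compose with the relevant $i_\textbf{Z}$ monomorphism to land in a diagram in $\E$, and argue that this outer diagram commutes because it is precisely the corresponding closed-category axiom in $\E$, which holds by hypothesis. Since $i_\textbf{Z}$ is a monomorphism, commutation of the outer diagram implies commutation of the original diagram in $\Ab(\E)$.

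Alternatively, and perhaps more transparently, one can verify each axiom using the internal logic: by Proposition \ref{internal 6.10.2} and Proposition \ref{internal-grp-hom}, equality of two morphisms into $[X,Y]_\textbf{Z}$ reduces to proving the relation $\vDash\forall x\!:\!X(f(x)=g(x))$ on their evaluations. Using the pointwise formulas $\vDash L^M_{NP}(f)(g)(m)=f(g(m))$, $\vDash j_M(e)(m)=m$, and the identification $[1,M]_\textbf{Z}\cong M$ via $f\mapsto f(0_M)$, the five coherence axioms reduce to trivially true formulas in the internal language (e.g.\ $f(g(h(m)))=f(g(h(m)))$ for the pentagon, $f(m)=f(m)$ for the unit triangles).

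The main obstacle, if any, is purely notational: one must carefully bookkeep the several extranatural arguments in $L^M_{NP}$ and confirm that the coherence axioms of Eilenberg--Kelly are expressed in a form that is stable under restriction to $\Ab(\E)$. Since every morphism appearing in these axioms was already shown to descend to $\Ab(\E)$, there is no genuine new content beyond Propositions~\ref{ab-functor} onwards, which is why the author writes that the result follows ``by definition''. I would therefore present the argument as a brief remark that all five axioms commute after post-composition with the defining monomorphisms $i_\textbf{Z}$, so by the closed structure of $\E$ together with the monicity of $i_\textbf{Z}$, they commute in $\Ab(\E)$.
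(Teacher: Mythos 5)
Your proposal is correct and matches the paper's approach: the paper simply records that the preceding propositions (the functor $[-,-]_\textbf{Z}$, the isomorphism $[1,M]_\textbf{Z}\cong M$, the transformations $j_M$ and $L^M_{NP}$) supply all the data of a closed category and states that the result follows ``by definition,'' leaving the coherence axioms implicit. Your justification---that each structure map is obtained by restricting the corresponding map of the closed structure on $\E$ along the monomorphisms $i_\textbf{Z}$, so the coherence diagrams commute after post-composition with a mono and hence commute in $\Ab(\E)$---is exactly the reasoning the paper leaves unstated, and the internal-logic alternative is consistent with the paper's general methodology.
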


\subsection{Constructing internal \texorpdfstring{$A$}{A}-homs}

Given $A\in\text{Ring}(\E)$ and (left) $A$-modules $M,N$, we can now define the \emph{internal $A$-hom} $$[M,N]_A:=\text{eq}\big([M,N]_\textbf{Z}\stackrel[\mu_{N*}]{\mu_M^*}{\rightrightarrows}[A\times M,N]_\textbf{Z}\big).$$ Here, $\mu_M^*=[\mu_M,N]$, and $\mu_N^*$ is the adjoint of $$[M,N]\times A\times M\cong  A\times[M,N]\times M\xrightarrow{(\id_A,\ev_M)}A\times N\xrightarrow{\mu_N}N.$$ 

This comes equipped with evaluation map $[M,N]_A\times M\rightarrow M$, formed as the composite $\ev_M\circ(i_A,\id_M)$. In general, we will also write this map as $\ev_M$ or $\ev$, and for ease of notation we will write $f(m)\!:\!M$ for
the variable $\ev_M(f,m)$, where $f\!:\![M,N]_A,m\!:\!M$. 

From Corollary \ref{adjunction-reverse}, for $f\!:\![M,N]_A,m\!:\!M$ and $a\!:\!A$, we have $\vDash\mu_M^*(f)(a,m)=f(am))$ and $\vDash\mu_{N*}(f)(a,m)=af(m)$. Hence, $$\vDash f(am)=af(m).$$ 

From \cite{handbook}'s Proposition 6.10.3, we obtain

\begin{proposition} 
    We can realise $[M,N]_A$ as the subobject
    $$[M,N]_A=\{f\,|\,\forall m\!:\!M,a\!:\!A(f(am)=af(m))\}\mono[M,N]_\textbf{Z}.$$
\end{proposition}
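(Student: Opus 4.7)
The plan is to unpack the equalizer defining $[M,N]_A$ using the internal logic, following exactly the template that Borceux's Proposition 6.10.3 provides for realizing equalizers (and by extension, any limit) as comprehension subobjects. Since $[M,N]_A$ is defined as the equalizer
$$
[M,N]_A = \text{eq}\bigl([M,N]_\textbf{Z} \stackrel[\mu_{N*}]{\mu_M^*}{\rightrightarrows} [A\times M, N]_\textbf{Z}\bigr),
$$
Borceux 6.10.3 tells us that
$$
[M,N]_A = \{f : [M,N]_\textbf{Z} \mid \mu_M^*(f) = \mu_{N*}(f)\} \mono [M,N]_\textbf{Z}.
$$
So the content of the claim is that the equation $\mu_M^*(f) = \mu_{N*}(f)$ between morphisms $A \times M \to N$ is internally equivalent to the pointwise condition $\forall m\!:\!M,\, a\!:\!A(f(am) = af(m))$.

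Next, I would convert the equation of morphisms into a pointwise statement. Note that $\mu_M^*(f)$ and $\mu_{N*}(f)$ are both elements of $[A\times M, N]_\textbf{Z}$, which sits inside $[A\times M, N]$. By Proposition~\ref{internal 6.10.2}, two terms of function type are equal iff they agree on all inputs, so
$$
\vDash (\mu_M^*(f) = \mu_{N*}(f)) \iff \forall a\!:\!A,\, m\!:\!M\bigl(\mu_M^*(f)(a,m) = \mu_{N*}(f)(a,m)\bigr).
$$
Here I am implicitly using Lemma~\ref{product-type} to reason about the variable of product type $A \times M$ via its components.

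Finally, I would substitute in the formulas that were derived immediately before the statement of the proposition, namely $\vDash \mu_M^*(f)(a,m) = f(am)$ and $\vDash \mu_{N*}(f)(a,m) = af(m)$, both obtained via Corollary~\ref{adjunction-reverse} applied to the defining composites for $\mu_M^*$ and $\mu_{N*}$. Combining these, the pointwise condition rewrites as the desired $\forall m\!:\!M,\, a\!:\!A(f(am) = af(m))$, yielding the claimed description of $[M,N]_A$ as a comprehension subobject of $[M,N]_\textbf{Z}$.

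The proof is genuinely short and largely bookkeeping; the only step requiring care is the initial invocation of Proposition 6.10.3 of Borceux, which formally requires the equalizer to be expressed as a pullback of the diagonal — but this is exactly the content of that proposition, and the parallel construction has already been carried out for $[M,N]_\textbf{Z}$ in Proposition~\ref{internal-grp-hom}, so there is no real obstacle to expect.
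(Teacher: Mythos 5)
Your proposal is correct and follows essentially the same route as the paper, which simply cites Borceux's Proposition 6.10.3 together with the adjunction computations $\vDash\mu_M^*(f)(a,m)=f(am)$ and $\vDash\mu_{N*}(f)(a,m)=af(m)$ recorded immediately before the statement. The only difference is that you make explicit the intermediate use of Proposition~\ref{internal 6.10.2} to pass from the equality of the function-type terms $\mu_M^*(f)$ and $\mu_{N*}(f)$ to the pointwise condition, a step the paper leaves implicit.
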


\begin{proposition}\label{internal-k-hom}
    We can realise $[M,N]_A$ as the subobject $$[M,N]_A=\{f\,|\,\forall m,m'\!:\!M,a\!:\!A\big(f(m+m')=f(m)+f(m')\land f(am)=af(m)\big)\}\mono[M,N].$$
\end{proposition}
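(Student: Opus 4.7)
The plan is to deduce this characterization by composing the two previous subobject descriptions. Specifically, we already have the chain of monomorphisms
$$[M,N]_A \xmono{i_A} [M,N]_\textbf{Z} \xmono{i_\textbf{Z}} [M,N],$$
so it suffices to show that under the composite $i_\textbf{Z}\circ i_A$, the image consists precisely of those $f\!:\![M,N]$ satisfying both the additivity and the $A$-linearity conditions displayed in the statement.

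First, I would invoke Proposition \ref{internal-grp-hom} to recall that $[M,N]_\textbf{Z}$ is the subobject of $[M,N]$ cut out by the formula $\varphi_1(f):\equiv\forall m,m'\!:\!M\bigl(f(m+m')=f(m)+f(m')\bigr)$. Then I would invoke the preceding proposition to recall that $[M,N]_A$ is the subobject of $[M,N]_\textbf{Z}$ cut out by $\varphi_2(f):\equiv\forall m\!:\!M,a\!:\!A\bigl(f(am)=af(m)\bigr)$. Since subobject inclusions compose, and $\varphi_2$ is a well-formed formula on the larger object $[M,N]$ as well, the composite subobject $[M,N]_A\mono[M,N]$ is classified by the conjunction $\varphi_1(f)\wedge\varphi_2(f)$, using part (4) of \handbooktwo{} together with Proposition 6.10.3 of \cite{handbook}.

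Concretely, I would argue in the internal logic: a generic $f\!:\![M,N]$ factors through $[M,N]_A$ iff it factors through $[M,N]_\textbf{Z}$ and its image there factors through $[M,N]_A$. The first factorization holds iff $\vDash\varphi_1(f)$, by Proposition \ref{internal-grp-hom}; assuming it does, the second factorization holds iff $\vDash\varphi_2(f)$, by the previous proposition. Applying part (5) of \handbooktwo{} in both steps and conjoining the resulting conditions yields the claimed description.

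I do not anticipate any real obstacle here, as the statement is essentially a bookkeeping corollary: the only subtlety is confirming that $\varphi_2$ makes sense as a formula on elements of $[M,N]$ (not only on elements of $[M,N]_\textbf{Z}$), which is immediate since the evaluation $\ev$ and the scalar action are defined on all of $[M,N]$, and then observing that the equalizer $[M,N]_A$ defined in the text is the same as the subobject cut out inside $[M,N]$ by $\varphi_1\wedge\varphi_2$.
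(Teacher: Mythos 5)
Your argument is correct and coincides with the paper's (implicit) intended proof: the paper states this proposition without a separate argument precisely because it follows by composing the subobject description of $[M,N]_A$ inside $[M,N]_\textbf{Z}$ with that of $[M,N]_\textbf{Z}$ inside $[M,N]$ from Proposition \ref{internal-grp-hom}, exactly as you do. Your remark that $\varphi_2$ is well-formed on all of $[M,N]$, and that its interpretation restricts correctly because the evaluation on $[M,N]_\textbf{Z}$ is defined as $\ev_M\circ(i_\textbf{Z},\id_M)$, is the only point needing care, and you have addressed it.
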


\begin{example}
    \label{classical-module}Taking $\E=\textbf{Set}$, for a ring $A$ and (left) $A$-modules $M,N$, we have 
    \begin{align}
        [M,N]_k&=\{f\,|\,\forall m,m'\in M,a\in A,f(m+m')=f(m)+f(m')\text{ and } f(am)=af(m)\}\notag\\
        &=\Hom_A(M,N).\notag
    \end{align}
\end{example}

\subsection{Module structure on internal \texorpdfstring{$A$}{A}-homs}



We obtain the group structure on $[M,N]_A$ as the vertical map of

\[\begin{tikzpicture}[scale=2]
	\node (A1) at (0,1) {$[M,N]_A$};
	\node (A2) at (0,0) {$[M,N]_A^2$.};
	\node (B2) at (1,0.5) {$[M,N]_\textbf{Z}^2$};
	\node (C1) at (2,1) {$[M,N]_\textbf{Z}$};
	\node (D1) at (3.5,1) {$[A\times M,N]_\textbf{Z}$};
	\draw[->]
	(B2) edge node [below right] {$+_{[M,N]_\textbf{Z}}$} (C1)
	(A1) edge node [above] {$i_k$} (C1)
	(A2) edge node [below right] {$i_k^2$} (B2);
	\draw[dashed,->]
	(A2) edge node [left] {$\exists+_{[M,N]_A}$} (A1);
	\begin{scope}[transform canvas={yshift=.3em}]
        \draw [->] (C1) edge node [above] {$\mu^*_M$} (D1);
    \end{scope}
    \begin{scope}[transform canvas={yshift=-.3em}]
        \draw [->] (C1) edge node [below] {$\mu_{N*}$} (D1);
    \end{scope}
\end{tikzpicture}\]

From this, for $a\!:\!A,f,g\!:\![M,N]_A,m\!:\!M$, we obtain $$\vDash(f+g)(m)=f(m)+g(m).$$ 

Given $B\in\text{Ring}(\E)$, if $M$ is a (right) $B$-module, we obtain the (left) $B$-action map on $[M,N]_A$ by noting that the $B$-action map on $[M,N]_\textbf{Z}$ factors through $[M,N]_A\xmono{i_A}[M,N]_\textbf{Z}$, as for $f\!:\![M,N]_A,a\!:\!A,b\!:\!B$ and $m\!:\!M$, we have $$\vDash(bf)(am)=f(amb)=af(mb)=a(bf)(m).$$ 

\begin{proposition}
    \label{mod-functor}The internal $A$-hom forms a functor $$[-,-]_A:A\emph{\M}^{\emph{op}}\times A\emph{\M}\to\emph{\Ab}(\E).$$ 
\end{proposition}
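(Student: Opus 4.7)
The plan is to mirror the proof strategy of Proposition~\ref{ab-functor}, descending the already-constructed functor $[-,-]_\textbf{Z}:\Ab(\E)^{\text{op}}\times\Ab(\E)\to\Ab(\E)$ through the defining equaliser
$$[M,N]_A = \{f \,|\, \forall m\!:\!M,\,a\!:\!A(f(am)=af(m))\}\mono [M,N]_\textbf{Z}.$$
Given morphisms $f:M\to M'$ and $g:N\to N'$ in $A\M$, the underlying abelian group morphisms give, via Proposition~\ref{ab-functor}, a group homomorphism $[f,g]_\textbf{Z}:[M',N]_\textbf{Z}\to[M,N']_\textbf{Z}$ determined by $\vDash\forall m\!:\!M([f,g]_\textbf{Z}(h)(m)=g(h(f(m))))$.

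The key step is showing that this map descends to $[M',N]_A\to[M,N']_A$. I would argue in internal logic: for a variable $h\!:\![M',N]_A$, $a\!:\!A$ and $m\!:\!M$, using in turn the $A$-linearity of $f$, of $h$, and of $g$,
\begin{align*}
[f,g]_\textbf{Z}(h)(am) &= g(h(f(am))) = g(h(af(m))) \\
&= g(a\cdot h(f(m))) = a\cdot g(h(f(m))) = a\cdot [f,g]_\textbf{Z}(h)(m).
\end{align*}
By Proposition~\ref{internal-k-hom} together with \handbookthree\ (part (5)), this shows that the composite $[M',N]_A\xmono{i_A}[M',N]_\textbf{Z}\xrightarrow{[f,g]_\textbf{Z}}[M,N']_\textbf{Z}$ factors through $i_A:[M,N']_A\mono[M,N']_\textbf{Z}$, producing the desired arrow $[f,g]_A$ in $\Ab(\E)$.

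For the functor axioms, the identity law $[\id_M,\id_N]_A=\id_{[M,N]_A}$ and the composition law $[f'f,g'g]_A=[f,g']_A\circ[f',g]_A$ both reduce, via the monomorphism $i_A$, to the corresponding identities for $[-,-]_\textbf{Z}$ already verified in Proposition~\ref{ab-functor}, since the defining evaluation formula $\vDash [f,g]_A(h)(m) = g(h(f(m)))$ pulls back from the $\Ab(\E)$-level identity and is again proved by invoking Proposition~\ref{internal 6.10.2}. I do not expect any real obstacle: the construction is entirely formal, reducing everything to a one-line internal-logic verification of preservation of $A$-linearity under precomposition with $f$ and postcomposition with $g$, together with uniqueness of factorisations through the monic $i_A$.
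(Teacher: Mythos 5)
Your proposal is correct and follows essentially the same route as the paper: obtain $[f,g]_\textbf{Z}$ from Proposition~\ref{ab-functor} and check in the internal logic that it preserves the $A$-linearity condition cutting out $[M',N]_A\mono[M',N]_\textbf{Z}$, so that it descends to $[f,g]_A$; your chain of equalities is if anything more explicit than the paper's, which stops at $[f,g]_\textbf{Z}(ah)(m)$ without spelling out the final appeal to the $A$-linearity of $g$. The only quibble is a citation slip: the factorisation-through-subobjects criterion you invoke is part (5) of Proposition 6.10.2 of \cite{handbook}, not Proposition 6.10.3.
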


\begin{proof}
    Let $f:M\to M',g:N\to N'$ be morphisms in $A\M$, giving morphism $f\times g:M'\times N\to M\times N'$ in $A\M^\text{op}\times A\M$. Then, by Proposition \ref{ab-functor} the functor $[-,-]_\textbf{Z}:\Ab(\E)^\text{op}\times\Ab(\E)\to\Ab(\E)$ gives a morphism $$[f,g]_\textbf{Z}:[M',N]_\textbf{Z}\to[M,N']_\textbf{Z},$$ so that, for variable $h\!:\![M',N]_\textbf{Z}$, we have $$\vDash\forall m\!:\!M([f,g]_\textbf{Z}(h)(m)=g(h(f(m)))).$$ Suppose $h\in[M',N]_A$. Then, 
    \begin{align}
        \vDash\forall m\!:\!M,a\!:\!A([f,g]_\textbf{Z}(h)(am)=g(h(f(am)))&=g(ah(f(m)))\notag\\&=[f,g]_\textbf{Z}(ah)(m).\notag
    \end{align} So, $[f,g]_\textbf{Z}$ descends to a morphism $[f,g]_A:[M',N]_A\to[M,N']_A$. The functor conditions follow. 
\end{proof}

\begin{proposition}
    \label{[A,M]}We have natural isomorphism $[A,M]_A\cong M$, where $[A,M]_A$ obtains its action from the (right) action on $A$. 
\end{proposition}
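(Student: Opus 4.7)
The plan is to construct mutually inverse $A$-module morphisms $\Phi \colon [A,M]_A \to M$ and $\Psi \colon M \to [A,M]_A$, imitating the classical isomorphism $\Hom_A(A,M) \cong M$ given by evaluation at the unit. Define $\Phi$ as the composite $[A,M]_A \xrightarrow{(\id,1_A)} [A,M]_A \times A \xrightarrow{\ev} M$, so that $\vDash \Phi(f) = f(1_A)$ for $f\!:\![A,M]_A$. Define $\Psi$ by taking the adjunct $M \to [A,M]$ of the composite $M \times A \xrightarrow{\cong} A \times M \xrightarrow{\mu_M} M$, so that $\vDash \Psi(m)(a) = am$, and argue that it factors through the equaliser $[A,M]_A \hookrightarrow [A,M]$. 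By Proposition \ref{internal-k-hom}, that factoring amounts to the two identities $\vDash \Psi(m)(a+a') = \Psi(m)(a) + \Psi(m)(a')$ and $\vDash \Psi(m)(ba) = b\,\Psi(m)(a)$, which reduce to the distributivity $(a+a')m = am + a'm$ and associativity $(ba)m = b(am)$ of the left $A$-action on $M$.

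Next, I would check that $\Phi$ and $\Psi$ are mutually inverse via Proposition \ref{internal 6.10.2}. On the one side, $\vDash \Phi(\Psi(m)) = \Psi(m)(1_A) = 1_A \cdot m = m$. On the other, for $f\!:\![A,M]_A$ and $a\!:\!A$, $\vDash \Psi(\Phi(f))(a) = a \cdot f(1_A) = f(a \cdot 1_A) = f(a)$, where the middle equality uses the $A$-linearity clause $f(am') = a f(m')$ characterising elements of $[A,M]_A$, applied with $m' = 1_A$.

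It remains to verify that $\Psi$ is a homomorphism of left $A$-modules, where $[A,M]_A$ carries the action inherited from the right $A$-action on $A$ by multiplication. Additivity in $m$ is immediate from $\vDash \Psi(m+m')(a) = a(m+m') = am + am' = \Psi(m)(a) + \Psi(m')(a)$. For $A$-linearity, the induced-action formula from Subsection~\ref{grp-hom-module-structure} gives $\vDash (b \cdot g)(a) = g(ab)$, so
\[
\vDash \Psi(bm)(a) = a(bm) = (ab)m = \Psi(m)(ab) = (b \cdot \Psi(m))(a),
\]
and Proposition \ref{internal 6.10.2} closes the check. Naturality in $M$ for a morphism $g \colon M \to M'$ is a direct unwinding: $[A,g]_A(\Psi(m))(a) = g(am) = a\, g(m) = \Psi(g(m))(a)$.

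The main obstacle is a bookkeeping one: keeping the left/right action conventions straight, specifically confirming that the left $A$-action on $[A,M]_A$ induced (via the general construction of Subsection~\ref{grp-hom-module-structure}) from the \emph{right} $A$-action on $A$ really is $(b \cdot g)(a) = g(ab)$, so that $\Psi$ intertwines this action with left multiplication on $M$. Once the conventions are pinned down, every verification is a one-line calculation in the internal language.
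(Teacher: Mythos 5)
Your proposal is correct and follows essentially the same route as the paper: both define $\Phi(f)=f(1_A)$ and $\Psi(m)(a)=am$ and verify they are mutually inverse, using the $A$-linearity clause $f(a\cdot 1_A)=af(1_A)$ for the nontrivial direction. The only cosmetic difference is that the paper checks $A$-linearity of $\Phi$ while you check it for $\Psi$ (and you additionally spell out the equaliser factorisation and naturality), which is equivalent bookkeeping.
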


\begin{proof}
    Define maps $\Phi:[A,M]_A\to M$ and $\Psi:M\to[A,M]_A$, given by $$\vDash\forall f\!:\![A,M]_A(\Phi(f)=f(1_A)),\quad\vDash\forall m\!:\!M,a\!:\!A(\Psi(m)(a)=am).$$ $\Phi$ is an $A$-module homomorphism, as, for $f,f'\!:\![A,M]_A,a\!:\!A$, $\Phi(af)=(af)(1_A)=af(1_A)=a\Phi(f)$, and $\Phi(f+f')(1_A)=f(1_A)+f'(1_A)$. Then, for $m\!:\!M,f\!:\![A,M]_A$, $\vDash\Phi(\Psi(m))=\Psi(m)(1_A)=1_Am=m$, and $\vDash\forall a\!:\!A(\Psi(\Phi(f))(a)=a\Phi(f)=af(1_A)=f(a))$. 
\end{proof}

\subsection{Closure of \texorpdfstring{$k$}{k}-Mod}

Let $k\in\text{Ring}(\E)$ be a commutative ring. Then every left $k$-module is also a right $k$-module. In particular, for every $M,N\in k\M$, we obtain a $k$-module structure on $[M,N]_k$, such that, for variables $\lambda\!:\!k,f\!:\![M,N]_k,m\!:\!M$, we have $\vDash\lambda f(m)=f(\lambda m)=\lambda f(m)$. In this case, we obtain the following results: 

\begin{proposition}
    \label{com-mod-functor}The internal $k$-hom forms a functor $$[-,-]_k:k\emph{\M}^{\emph{op}}\times k\emph{\M}\to k\emph{\M}.$$ 
\end{proposition}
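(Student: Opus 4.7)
The plan is to piggy-back on Proposition~\ref{mod-functor}, which, specialised to the commutative case $A=k$, already furnishes a functor $[-,-]_k : k\M^{\mathrm{op}} \times k\M \to \Ab(\E)$. What remains is to show that this functor lifts along the forgetful $k\M \hookrightarrow \Ab(\E)$, i.e.\ that both the objects $[M,N]_k$ and the morphisms $[f,g]_k$ carry compatible $k$-module structure.

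For objects: the paragraph immediately preceding the proposition already observes that when $k$ is commutative, the left and right $k$-actions coincide and equip $[M,N]_k$ with a $k$-module structure satisfying $\vDash \lambda f(m) = f(\lambda m)$ for $\lambda\!:\!k$, $f\!:\![M,N]_k$, $m\!:\!M$. So the nontrivial step is to check that for morphisms $f:M\to M'$ and $g:N\to N'$ in $k\M$, the abelian group homomorphism $[f,g]_k : [M',N]_k \to [M,N']_k$ produced by Proposition~\ref{mod-functor} is actually a $k$-module homomorphism. Using the internal-logic description $\vDash \forall m\!:\!M([f,g]_k(h)(m) = g(h(f(m))))$ already verified in the proof of Proposition~\ref{mod-functor}, for variables $\lambda\!:\!k$, $h\!:\![M',N]_k$, $m\!:\!M$ I compute
\[
[f,g]_k(\lambda h)(m) = g((\lambda h)(f(m))) = g(\lambda\, h(f(m))) = \lambda\, g(h(f(m))) = \lambda\,[f,g]_k(h)(m),
\]
using $k$-linearity of $g$ in the third step and the $k$-action on internal homs in the first. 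By Proposition~\ref{internal 6.10.2} this suffices to conclude $[f,g]_k(\lambda h) = \lambda [f,g]_k(h)$, so $[f,g]_k \in \Hom_k([M',N]_k,[M,N']_k)$.

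Finally, functoriality (preservation of identities and composition) is inherited for free from Proposition~\ref{mod-functor}, since the inclusion $k\M \hookrightarrow \Ab(\E)$ is faithful and the underlying morphisms of $[\id,\id]_k$ and $[f'\circ f, g'\circ g]_k$ already match $\id$ and $[f,g]_k\circ[f',g']_k$ as abelian group maps. The only real point of substance is the single $k$-linearity check above; it is routine bookkeeping, and I do not foresee any genuine obstacle — the role of commutativity is exactly to let the scalar $\lambda$ pass through $g$ without creating a left/right conflict.
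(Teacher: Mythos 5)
Your proof is correct and follows exactly the route the paper intends: the paper states Proposition~\ref{com-mod-functor} without proof, relying on the preceding observation that commutativity of $k$ equips $[M,N]_k$ with a $k$-module structure, together with Proposition~\ref{mod-functor}. Your explicit check that $[f,g]_k$ is $k$-linear (via the internal-logic formula and Proposition~\ref{internal 6.10.2}) is precisely the detail the paper leaves implicit, and it is carried out correctly.
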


\begin{proposition}
    We have extranatural transformation $j_M:k\to[M,M]_k$. 
\end{proposition}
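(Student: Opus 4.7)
The plan is to define $j_M$ by internalizing scalar multiplication, verify it lands in $[M,M]_k$, then check that it is itself a $k$-module map and satisfies extranaturality. The key point is that commutativity of $k$ is precisely what makes scalar multiplication by $\lambda:k$ into a $k$-linear endomorphism of $M$.

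First, I would apply Lemma \ref{adjunction} to the action morphism $\mu_M: k \times M \to M$ to obtain a unique map $j_M: k \to [M,M]$ satisfying
$$\vDash \forall \lambda\!:\!k, \forall m\!:\!M\, (j_M(\lambda)(m) = \lambda m).$$
I then verify, using Proposition \ref{internal-k-hom}, that $j_M$ factors through the subobject $[M,M]_k \mono [M,M]$. For $\lambda\!:\!k, m,m'\!:\!M$, additivity is immediate: $\vDash j_M(\lambda)(m+m') = \lambda(m+m') = \lambda m + \lambda m' = j_M(\lambda)(m) + j_M(\lambda)(m')$. For $k$-linearity, for $\mu\!:\!k$, using commutativity of $k$: $\vDash j_M(\lambda)(\mu m) = \lambda(\mu m) = (\lambda\mu)m = (\mu\lambda)m = \mu(\lambda m) = \mu j_M(\lambda)(m)$.

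Next, I would check that $j_M: k \to [M,M]_k$ is a $k$-module homomorphism. For $\lambda,\lambda',\nu\!:\!k$ and $m\!:\!M$, by the description of addition and action on $[M,M]_k$ from Subsection \ref{grp-hom-module-structure}:
$$\vDash j_M(\lambda+\lambda')(m) = (\lambda+\lambda')m = \lambda m + \lambda' m = (j_M(\lambda) + j_M(\lambda'))(m),$$
$$\vDash j_M(\nu\lambda)(m) = (\nu\lambda)m = \nu(\lambda m) = (\nu \cdot j_M(\lambda))(m).$$
By Proposition \ref{internal 6.10.2}, these yield the required equalities of morphisms in $k\M$.

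Finally, for extranaturality in $M$, I would show that for every $k$-module morphism $f: M \to N$, the equality $[M,f]_k \circ j_M = [f,N]_k \circ j_N$ holds as morphisms $k \to [M,N]_k$. Reasoning internally at $\lambda\!:\!k, m\!:\!M$, using the description of $[-,-]_k$ on morphisms from the proof of Proposition \ref{mod-functor}:
$$\vDash [M,f]_k(j_M(\lambda))(m) = f(j_M(\lambda)(m)) = f(\lambda m) = \lambda f(m) = j_N(\lambda)(f(m)) = [f,N]_k(j_N(\lambda))(m).$$
Proposition \ref{internal 6.10.2} (twice) then closes the argument. The main, essentially only, obstacle is the verification that $j_M(\lambda)$ is $k$-linear, which genuinely requires commutativity of $k$ — everything else is formal unwinding of the internal-logic descriptions already established in the appendix.
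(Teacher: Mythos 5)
Your proof is correct and follows the same route as the paper: define $j_M$ as the adjunct of the action $\mu_M:k\times M\to M$, read off $\vDash j_M(\lambda)(m)=\lambda m$ from Lemma \ref{adjunction}, and use commutativity of $k$ to see both that each $j_M(\lambda)$ is $k$-linear (so $j_M$ factors through $[M,M]_k$) and that $j_M$ itself is a $k$-module homomorphism. You go slightly further than the paper by explicitly verifying the extranaturality square $[M,f]_k\circ j_M=[f,N]_k\circ j_N$, which the paper's proof leaves implicit; this is a welcome completion rather than a divergence.
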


\begin{proof}
    

    Define $j_M:k\to[M,M]$ to correspond via adjunction to $\mu_M:k\times M\to M$. By \ref{adjunction}, we have $\vDash\forall m\!:\!M,\lambda\!:\!k(j_M(\lambda)(m)=\lambda m)$. For variables $\lambda,\lambda'\!:\!k,m,m'\!:\!M$, we have $\vDash j_M(\lambda+\lambda')=(\lambda+\lambda')m=\lambda m+\lambda'm=j_M(\lambda)+j_M(\lambda')$ and $\vDash j_M(\lambda\lambda')(m)=\lambda\lambda'm=\lambda j_M(\lambda')(m)$, so $j_M$ is a $k$-module homomorphism. Similarly, $\vDash j_M(\lambda)(m+m')=\lambda(m+m')=\lambda m+\lambda m'=j_M(\lambda)(m)+j_M(\lambda)(m')$ and $\vDash j_M(\lambda)(\lambda'm)=\lambda\lambda'm=\lambda'(\lambda m)=\lambda j_M(\lambda)(m)$. So, $j_M$ factors through $[M,M]_k$. 
\end{proof}

\begin{proposition}
    We have transformation $$L^M_{NP}:[N,P]_k\to[[M,N]_k,[M,P]_k]_k,$$ natural in $N,P$ and extranatural in $M$. 
\end{proposition}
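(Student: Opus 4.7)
The plan is to adapt the previous proposition for $\Ab(\E)$ by additionally verifying compatibility with the $k$-action at each stage. Starting from the transformation $L^M_{NP}:[N,P]_\textbf{Z}\to[[M,N]_\textbf{Z},[M,P]_\textbf{Z}]_\textbf{Z}$ already established, which satisfies $\vDash L^M_{NP}(f)(g)(m)=f(g(m))$ for variables $f\!:\![N,P]_\textbf{Z},g\!:\![M,N]_\textbf{Z},m\!:\!M$, the task reduces to showing that this map restricts appropriately to the $k$-enriched subobjects, and that the restriction is itself a $k$-module homomorphism.

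First, I would show that for $f\!:\![N,P]_k$ and $g\!:\![M,N]_k$, the composite $L^M_{NP}(f)(g)$ lies in $[M,P]_k$: for $\lambda\!:\!k,m\!:\!M$,
\[
L^M_{NP}(f)(g)(\lambda m)=f(g(\lambda m))=f(\lambda g(m))=\lambda f(g(m))=\lambda L^M_{NP}(f)(g)(m),
\]
using the $k$-linearity of $g$ and then of $f$. Next, I would check that the resulting assignment $g\mapsto L^M_{NP}(f)(g)$ is itself a $k$-module homomorphism from $[M,N]_k$ to $[M,P]_k$, i.e., that $L^M_{NP}(f)\in[[M,N]_k,[M,P]_k]_k$, via
\[
L^M_{NP}(f)(\lambda g)(m)=f((\lambda g)(m))=f(\lambda g(m))=\lambda f(g(m))=(\lambda L^M_{NP}(f)(g))(m),
\]
together with additivity, which is inherited from the $\Ab(\E)$ case. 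Finally, one confirms that the descended map $L^M_{NP}:[N,P]_k\to[[M,N]_k,[M,P]_k]_k$ is itself $k$-linear: additivity is again inherited, while
\[
L^M_{NP}(\lambda f)(g)(m)=(\lambda f)(g(m))=\lambda f(g(m))=(\lambda L^M_{NP}(f))(g)(m).
\]

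Naturality in $N,P$ and extranaturality in $M$ then come for free by restriction from the topos-level closed structure: the inclusions $[M,N]_k\mono[M,N]_\textbf{Z}\mono[M,N]$ commute with the relevant $L$-maps by construction, so the naturality squares at the $\E$-level descend to $k\M$. The only obstacle is careful bookkeeping through the tower of internal homs, which is essentially routine given that commutativity of $k$ ensures every action map considered lands inside the appropriate $k$-linear subobject, as verified by the one-line identity $\lambda f(g(m))=f(\lambda g(m))=f(g(\lambda m))$ applied at each layer.
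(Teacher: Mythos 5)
Your proposal is correct and follows essentially the same route as the paper: both start from the transformation $L^M_{NP}$ on $\Ab(\E)$ supplied by the closed structure, use the internal-logic identity $L^M_{NP}(f)(g)(m)=f(g(m))$, and verify the three $k$-linearity conditions (in $m$, in $g$, and in $f$) so that the transformation descends to $k\M$; the paper merely compresses your three separate checks into a single chain of equalities. The naturality and extranaturality statements are likewise taken for free from the descended $\Ab(\E)$ structure in both arguments.
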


\begin{proof}
    By Proposition \ref{ab-closed}, $\Ab(\E)$ is a closed category, so there is a transformation $$L^M_{NP}:[N,P]_\textbf{Z}\to[[M,N]_\textbf{Z},[M,P]_\textbf{Z}]_\textbf{Z},$$ natural in $N,P$ and extranatural in $M$. This is such that, for variables $g\!:\![M,N],m\!:\!M$, $$\vDash\forall f\!:\![N,P]_\textbf{Z}(L^M_{NP}(f)(g)(m)=f(g(m))).$$ Let $M,N,P\in A\M$. For variables $g\!:\![M,N]_k,m\!:\!M,\lambda\!:\!k$,
    \begin{align}
        \vDash\forall f\!:\![N,P]_k(L^M_{NP}(f)(g)(\lambda m)=f(g(\lambda m))&=f(\lambda g(m))=L^M_{NP}(f)(\lambda g)(m)\notag\\&=(\lambda f)(g(m))=L^M_{NP}(\lambda f)(g)(m)\notag\\&=\lambda f(g(m))=\lambda L^M_{NP}(f)(g)(m)).\notag
    \end{align}
    So, the transformation given by the closed structure of $\Ab(\E)$ descends to one on $k\M$. 
\end{proof}

So, we obtain the following proposition by definition. 

\begin{proposition}
    \label{mod-closed}Given commutative ringed topos $(\E,k)$, $k\emph{\M}$ is a closed category under the internal $k$-hom. 
\end{proposition}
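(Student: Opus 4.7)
The proof plan is to assemble the constituent data established in the preceding propositions and verify that the coherence axioms of a closed category (in the sense of Eilenberg–Kelly) hold. Recall that a closed category consists of: a functor $[-,-]$, a unit object $I$, a natural isomorphism $[I,-]\cong\id$, an extranatural transformation $j_X\colon I\to[X,X]$, a transformation $L^X_{YZ}\colon[Y,Z]\to[[X,Y],[X,Z]]$ natural in $Y,Z$ and extranatural in $X$, together with five coherence axioms.

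First, I would list the ingredients already established in the preceding propositions of this subsection: the functor $[-,-]_k\colon k\M^{\mathrm{op}}\times k\M\to k\M$ from Proposition~\ref{com-mod-functor}; the unit object $k\in k\M$ together with the natural isomorphism $[k,M]_k\cong M$, which is Proposition~\ref{[A,M]} specialised to $A=k$ (noting that $k$ acts on itself on both sides via multiplication, since $k$ is commutative); the extranatural transformation $j_M\colon k\to[M,M]_k$; and the transformation $L^M_{NP}\colon[N,P]_k\to[[M,N]_k,[M,P]_k]_k$, natural in $N,P$ and extranatural in $M$.

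The next step is to verify the five coherence axioms of Eilenberg–Kelly. The key observation is that in every case the relevant morphism in $k\M$ is obtained by factoring (through the various subobject inclusions $[M,N]_k\rightarrowtail[M,N]_\mathbf{Z}\rightarrowtail[M,N]$) the corresponding morphism given by the cartesian closed structure of $\E$, or equivalently by that of $\Ab(\E)$ established in Proposition~\ref{ab-closed}. Since $\E$ is symmetric monoidal closed under $(\times,[-,-])$, the coherence diagrams commute in $\E$; since the subobject inclusions $i_k,i_\mathbf{Z}$ are monomorphisms (and respect the structure by the arguments given in Subsection~\ref{grp-hom-module-structure}), the corresponding diagrams in $k\M$ commute as well. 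In the internal language, this amounts to checking validity of equations such as $\vDash L^M_{MM}(j_M(1))=j_{[M,M]_k}(1)$ and the analogous identities for the associator pentagon, each of which reduces to the defining formulae $j_M(\lambda)(m)=\lambda m$ and $L^M_{NP}(f)(g)(m)=f(g(m))$ via the evaluations recorded after the preceding propositions.

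The main obstacle, if any, is purely bookkeeping: one must ensure that all the structure maps constructed by factoring through subobject inclusions are compatible with the evaluation maps, so that the coherence equations translate faithfully into equations of elements under the formulae collected throughout Section~2 and the Appendix. Since the author's phrase ``by definition'' indicates these verifications are entirely mechanical once the preceding propositions are in place, I would present the proof as an enumeration of the data together with a single remark that the coherence axioms are inherited from those of $\E$ via the monic factorisations.
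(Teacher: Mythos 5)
Your proposal is correct and follows essentially the same route as the paper: the paper simply assembles the functor $[-,-]_k$, the unit isomorphism, $j_M$ and $L^M_{NP}$ from the immediately preceding propositions and declares the result to hold ``by definition,'' leaving the coherence axioms implicit exactly as inherited from the closed structure of $\E$ and $\Ab(\E)$ via the monic inclusions, which is the justification you spell out. Your version is merely more explicit about the Eilenberg--Kelly axioms than the paper chooses to be.
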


\section{Tensor product of modules}

\subsection{Tensor product functor}

Let $\E$ be a topos such that, given $A\in\text{Ring}(\E)$ with right $A$-module $M$ and left $A$-module $N$, the functor
$$\text{Ab}(\E)\rightarrow\text{Ab},\quad P\mapsto\Hom_A(M,[N,P]_\textbf{Z})$$
is representable (for example, any $\E$ such that $\text{Ab}(\E)$ has a cogenerating set, by the Special Adjoint Functor Theorem). Write the representing object as $M\otimes_AN\in\text{Ab}(\E)$, i.e. $$\Hom_\textbf{Z}(M\otimes_AN,P)\cong\Hom_k(M,[N,P]_\textbf{Z}).$$

This isomorphism is immediately natural in $P$, but we also have naturalilty in $M$. Given a map $f:M\to M'$ of right $A$-modules, we have a map $\Hom_\textbf{Z}(M\otimes_AN,P)\to\Hom_\textbf{Z}(M'\otimes_AN,P)$ induced by the diagram
\[\begin{tikzpicture}[scale=2]
    \node (A1) at (0,0.8) {$\Hom_\textbf{Z}(M\otimes_AN,P)$};
    \node (A2) at (0,0) {$\Hom_\textbf{Z}(M'\otimes_AN,P)$};
    \node (B1) at (1.8,0.8) {$\Hom_A(M,[N,P]_\textbf{Z})$};
    \node (B2) at (1.8,0) {$\Hom_A(M',[N,P]_\textbf{Z})$};
    \draw[->]
    (A1) edge node [left] {} (A2)
    (B1) edge node [right] {$f^*$} (B2);
    \draw[-]
    (A1) edge node [above] {$\sim$} (B1)
    (A2) edge node [above] {$\sim$} (B2);
\end{tikzpicture}\]
So, for all left $A$-modules $N$, this yields a functor $-\otimes_AN:M\mapsto M\otimes_AN$, with an adjunction 
\[\begin{tikzpicture}[scale=1]
	\node (A1) at (0,0) {Mod-$A$};
	\node (B1) at (1.4,0) {\rotatebox[origin=c]{90}{$\vdash$}};
	\node (C1) at (2.8,0) {Ab$(\E)$};
	\draw[->,bend left=20]
	(A1) edge node [above] {$-\otimes_AN$} (C1)
	(C1) edge node [below] {$[N,-]_\textbf{Z}$} (A1);
\end{tikzpicture}\]
We obtain map $\otimes:M\times N\to M\otimes_kN$ from the unit $\eta$ of the above adjunction via \begin{align}
    \eta_M\in\Hom_k(M,[N,M\otimes_kN]_\textbf{Z})\mono\Hom_k(M,[N,M\otimes_kN])\mono\,&\E(M,[N,M\otimes_kN])\notag\\\cong\,&\E(M\times N,M\otimes_kN),\notag
\end{align} where the first map is $\Hom_\textbf{Z}(M,i)$, an injection by the left exactness of $\Hom_\textbf{Z}(M,-)$. Hence, $\vDash\forall m\!:\!M,n\!:\!N(m\otimes n=\eta(m)(n))$. 

\begin{example}
    \label{classical-tensor}From Example $\ref{classical-module}$, we can see that setting $\E=\textbf{Set}$ we recover the classical tensor product of modules. 
\end{example}

\subsection{Universal property of the tensor product}

Let $A\in\E$ be a ring object; $M$ a right $A$-bimodule, $N$ a left $A$-bimodule, and $P\in\text{Ab}(\E)$. We show the universal property of the tensor product, via analogous reasoning to \cite{conrad}. 

We say $f:M\times N\rightarrow P$ is a \emph{balanced product} (in $M$ and $N$) if the following diagrams commute: \[\begin{tikzpicture}[scale=2]
    \node (A1) at (0,0.8) {$M^2\times N$}; 
    \node (A2) at (0,0) {$(M\times N)^2$};
    \node (B2) at (1,0) {$P^2$};
    \node (C1) at (1.8,0.8) {$M\times N$};
    \node (C2) at (1.8,0) {$P$};
    \draw[->]
        (A1) edge node [above] {$(+_M,\id_N)$} (C1)
        (C1) edge node [right] {$f$} (C2)
        (A1) edge node [left] {$(\id_M^2,\Delta_N)$} (A2)
        (A2) edge node [above] {$f^2$} (B2)
        (B2) edge node [above] {$+_P$} (C2);
\end{tikzpicture}
\begin{tikzpicture}[scale=2]
    \node (A1) at (0,0.8) {$M\times N^2$}; 
    \node (A2) at (0,0) {$(M\times N)^2$};
    \node (B2) at (1,0) {$P^2$};
    \node (C1) at (1.8,0.8) {$M\times N$};
    \node (C2) at (1.8,0) {$P$};
    \draw[->]
        (A1) edge node [above] {$(\id_M,+_N)$} (C1)
        (C1) edge node [right] {$f$} (C2)
        (A1) edge node [left] {$(\Delta_M,\id_N^2)$} (A2)
        (A2) edge node [above] {$f^2$} (B2)
        (B2) edge node [above] {$+_P$} (C2);
\end{tikzpicture}\] \[\begin{tikzpicture}[scale=2]
    \node (A1) at (0,0.8) {$M\times A\times N$}; 
    \node (A2) at (0,0) {$M\times N$};
    \node (B1) at (1.5,0.8) {$M\times N$};
    \node (B2) at (1.5,0) {$P$};
    \draw[->]
        (A1) edge node [above] {$(\id_M,\mu_N)$} (B1)
        (B1) edge node [right] {$f$} (B2)
        (A1) edge node [left] {$(\mu_M,\id_N)$} (A2)
        (A2) edge node [above] {$f$} (B2);
\end{tikzpicture}\]

In other words, if we have 

\begin{enumerate}
  \itemB $\vDash\forall m,m'\!:\!M,n\!:\!N\big(f(m+m',n)=f(m,n)+f(m',n)\big)$;
  \itemB $\vDash\forall m\!:\!M,n,n'\!:\!N\big(f(m,n+n')=f(m,n)+f(m,n')\big)$;
  \itemB $\vDash\forall m\!:\!M,n\!:\!N,a\!:\!A\big(f(ma,n)=f(m,an)\big)$.
\end{enumerate}
Write $\text{Bil}_A(M,N;P)$ as the set of balanced products $M\times N\to P$.

\begin{lemma}
    The set $\emph{Bil}_A(M,N;P)$ is an abelian group, inducing functor $$\emph{Bil}_A(M,N;-):\emph{Ab}(\E)\rightarrow\emph{Ab}.$$
\end{lemma}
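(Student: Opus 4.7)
The plan is to first equip $\text{Bil}_A(M,N;P)$ with its candidate abelian group structure as a subset of $\E(M\times N, P)$, then verify that this structure restricts, and finally to show that post-composition gives the claimed functoriality.

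First, I would observe that for any object $X\in\E$ and any abelian group object $P\in\Ab(\E)$, the hom-set $\E(X,P)$ is naturally an abelian group, with addition induced pointwise by $+_P$ and zero given by the composite $X\to 1\xrightarrow{0_P} P$. Taking $X = M\times N$ gives an ambient abelian group containing $\text{Bil}_A(M,N;P)$ as a subset, so it suffices to check that (B1)--(B3) cut out a subgroup.

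For closure under addition, I would fix balanced $f,g:M\times N\to P$ and verify (B1)--(B3) for $f+g$ using the internal logic. For instance, for (B1), since addition in $P$ is commutative and associative,
\begin{align*}
\vDash (f+g)(m+m',n)
&= f(m+m',n)+g(m+m',n)\\
&= f(m,n)+f(m',n)+g(m,n)+g(m',n)\\
&= (f+g)(m,n)+(f+g)(m',n);
\end{align*}
(B2) is symmetric, and (B3) is immediate: $\vDash (f+g)(ma,n) = f(m,an)+g(m,an) = (f+g)(m,an)$. The zero morphism and the pointwise negation $-f$ trivially satisfy (B1)--(B3) by the same style of argument, so $\text{Bil}_A(M,N;P)$ is indeed a subgroup, hence an abelian group.

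For functoriality, given a morphism $h:P\to P'$ in $\Ab(\E)$, I would define $\text{Bil}_A(M,N;h)$ by $f\mapsto h\circ f$. One checks that $h\circ f$ is again a balanced product: (B1) and (B2) follow because $h$ is a group homomorphism, e.g.\ $\vDash (h\circ f)(m+m',n) = h(f(m,n)+f(m',n)) = (h\circ f)(m,n)+(h\circ f)(m',n)$, while (B3) needs only the functoriality of composition, $\vDash (h\circ f)(ma,n) = h(f(m,an)) = (h\circ f)(m,an)$. Since post-composition with a group homomorphism is a group homomorphism on $\E(M\times N,P)$, the map $\text{Bil}_A(M,N;h)$ is a homomorphism of abelian groups; the functor axioms $\text{Bil}_A(M,N;\id_P) = \id$ and $\text{Bil}_A(M,N;h'\circ h) = \text{Bil}_A(M,N;h')\circ\text{Bil}_A(M,N;h)$ are then immediate from associativity of composition in $\E$.

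There is no real obstacle here: the statement is a bookkeeping exercise in internal logic, and the only thing worth being careful about is keeping track of the ambient abelian group structure on $\E(M\times N, P)$ induced by $P\in\Ab(\E)$, so that (B1)--(B3) can be manipulated symbolically as above using (T53)/(T65) and Corollary~\ref{mp}.
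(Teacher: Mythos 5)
Your proposal is correct and follows essentially the same route as the paper: define the sum of two balanced products pointwise via $+_P$ and verify axioms (B1)--(B3) by the same internal-logic computation. The only cosmetic differences are that the paper constructs $f+f'$ directly via \cite{handbook} Proposition 6.10.9 rather than invoking the ambient group structure on $\E(M\times N,P)$, and that you spell out the functoriality in $P$ explicitly, which the paper defers to the subsequent lemma on composing a balanced product with a group homomorphism.
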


\begin{proof}
    We use variables $m,m'\!:\!M,n,n'\!:\!N$. Given $f,f':M\times N\rightarrow P$, we have $$\vDash\exists!p\!:\!P(p=f(m,n)+f'(m,n)).$$ So, from \cite{handbook} Proposition 6.10.9, we have morphism $(f+f'):M\times N\rightarrow P$ such that $\vDash(f+f')(m,n)=f(m,n)+f'(m,n)$. Suppose $f,f'$ are balanced products. We can show that $f+f'$ is also a balanced product - for example, $\vDash(f+f')(m+m',n)=f(m+m',n)+f'(m+m',n)=f(m,n)+f(m',n)+f'(m,n)+f'(m',n)=(f+f')(m,n)+(f+f')(m',n)$. The group axioms are simple to verify. 
\end{proof}

\begin{lemma}
    Let $M$ be a right $A$-module, $N$ a left $A$-module. The map $\otimes:M\times N\to M\otimes_AN$ is a balanced product. 
\end{lemma}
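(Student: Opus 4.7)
The plan is to verify the three balanced-product axioms (B1), (B2), (B3) directly from the elementwise definition $\vDash m \otimes n = \eta(m)(n)$, where $\eta$ is the unit of the adjunction $-\otimes_A N \dashv [N,-]_\textbf{Z}$. Each of the three axioms will reduce to a single structural property of $\eta$ or of the internal hom $[N, M\otimes_A N]_\textbf{Z}$, applied in the internal language via Lemma~\ref{adjunction} and Corollary~\ref{mp}.

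For (B1), I would observe that $\eta : M \to [N, M\otimes_A N]_\textbf{Z}$ is a morphism in $\text{Mod-}A$, hence in particular a morphism of abelian group objects, so $\eta(m+m') = \eta(m) + \eta(m')$. Evaluating at $n$ and then invoking the pointwise description of the group operation on the internal abelian group hom recorded in Subsection~\ref{grp-hom-module-structure} (namely $\vDash(f+g)(n) = f(n)+g(n)$) gives $(m+m')\otimes n = m\otimes n + m'\otimes n$.

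For (B2), I would note that $\eta(m)$ lies in the subobject $[N, M\otimes_A N]_\textbf{Z} \mono [N, M\otimes_A N]$ whose elements are characterised by Proposition~\ref{internal-grp-hom} as precisely those preserving addition. Hence $\eta(m)(n+n') = \eta(m)(n) + \eta(m)(n')$, which unfolds to $m\otimes(n+n') = m\otimes n + m\otimes n'$.

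For (B3), I would combine the fact that $\eta$ respects the $A$-module structure with the explicit $A$-action on $[N, M\otimes_A N]_\textbf{Z}$ induced from the $A$-action on $N$, as laid out in Subsection~\ref{grp-hom-module-structure}: under the $*$-notation there, $\vDash (f * a)(n) = f(an)$, and $\eta(ma) = \eta(m)*a$ because $\eta$ is an $A$-linear map. Applying both sides at $n$ yields $\eta(ma)(n) = \eta(m)(an)$, i.e. $(ma)\otimes n = m\otimes(an)$. The only real obstacle is the careful bookkeeping of the left/right $A$-action conventions on the various internal homs; once these are pinned down, each of the three verifications collapses to a single line directly from the unit of the adjunction and the elementwise formulas already recorded in the earlier sections.
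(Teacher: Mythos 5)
Your proposal is correct and follows essentially the same route as the paper's own proof: all three balanced-product axioms are derived from the unit $\eta$ of the adjunction — additivity of $\eta$ for B1, membership of $\eta(m)$ in the internal abelian group hom $[N,M\otimes_AN]_\textbf{Z}$ for B2, and $A$-linearity of $\eta$ together with the explicit $A$-action $(a\alpha)(n)=\alpha(an)$ on the internal hom for B3. The left/right action bookkeeping you flag is indeed the only delicate point, and the paper handles it exactly as you describe.
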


\begin{proof}
    Take $i:[N,M\otimes_AN]_\textbf{Z}\mono[N,M\otimes_AN]$. We have that $\eta$, and therefore $i\circ\eta$, is a group homomorphism. So, by the explicit adjunction, for variables $m,m'\!:\!M,n\!:\!N$, $$\vDash(m+m')\otimes n=i(\eta(m+m'))(n)=i(\eta(m))(n)+i(\eta(m'))(n)=m\otimes n+m'\otimes n.$$ Similarly, for all $m\!:\!M$ we have $\eta(m)\in[N,M\otimes_AN]_\textbf{Z}$, and therefore $i(\eta(m))\in[N,M\otimes_AN]_\textbf{Z}$. So, for variables $m\!:\!M,n,n'\!:\!N$, $$\vDash m\otimes(n+n')=i(\eta(m))(n+n')=i(\eta(m))(n)+i(\eta(m))(n')=m\otimes n+m\otimes n'.$$ Finally, note that for right $A$-module $N$ and abelian group object $P$, the $A$-module structure on $\alpha\!:\![N,P]_\textbf{Z}$ comes from $\vDash\forall a\!:\!A\forall n\!:\!N((a\alpha)(n)=\alpha(an))$. So, again as $\eta$ is a $k$-module homomorphism, for variables $a\!:\!A,m\!:\!M,n\!:\!N$, $$\vDash ma\otimes n=i(\eta(ma))(n)=a(\eta(m))(n)=\eta(m)(an)=m\otimes an.$$
\end{proof}

\begin{lemma}
    \label{balanced}Given a balanced product $f:M\times N\rightarrow P$ and a group homomorphism $g:P\rightarrow Q$, $g\circ f$ is a balanced product.
\end{lemma}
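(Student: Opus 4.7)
The plan is to verify each of the three balanced-product axioms (B1), (B2), (B3) for the composite $g \circ f$ by direct computation in the internal logic, using only the hypotheses that $f$ is already a balanced product and that $g$ is a homomorphism of abelian group objects.

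First, I would fix variables $m,m'\!:\!M$, $n,n'\!:\!N$, $a\!:\!A$, and record the internal translation of what it means for $g$ to be a group homomorphism, namely $\vDash g(p+p')=g(p)+g(p')$ for $p,p'\!:\!P$ (cf.\ the discussion immediately preceding Proposition~\ref{internal-grp-hom} and Corollary~\ref{mp}, which legitimise elementwise reasoning on group objects).

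Next I would check (B1): applying $g$ to both sides of the balanced-product identity for $f$ and then distributing $g$ across the sum gives
\[
\vDash (g\circ f)(m+m',n)=g\bigl(f(m,n)+f(m',n)\bigr)=g(f(m,n))+g(f(m',n))=(g\circ f)(m,n)+(g\circ f)(m',n).
\]
Condition (B2) is verified identically, with the roles of the two factors swapped. Condition (B3) is even easier: $g$'s structure is not used at all, since $\vDash (g\circ f)(ma,n)=g(f(ma,n))=g(f(m,an))=(g\circ f)(m,an)$ is immediate from the balancedness of $f$. Each displayed equality is then promoted to commutativity of the corresponding defining diagram via Proposition~\ref{internal 6.10.2} (or directly via \cite{handbook} Proposition 6.10.2), so $g\circ f$ satisfies all three diagrammatic conditions and is a balanced product.

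There is no real obstacle here; the lemma is a bookkeeping exercise whose only subtlety is invoking the correct internal-logic translation for the hypotheses on $f$ and $g$. The payoff is its role in the upcoming proof that $\otimes$ is a universal balanced product: it lets one transport balanced products along arbitrary group homomorphisms out of $P$, which is precisely what is needed to show that $\mathrm{Bil}_A(M,N;-)$ is corepresented by $M\otimes_A N$.
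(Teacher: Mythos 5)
Your proposal is correct and matches the paper's approach exactly: the paper simply remarks that the proof ``argues via variables exactly how the classical proof argues via elements,'' and your computation is precisely that elementwise argument carried out in the internal logic.
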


The proof of this argues via variables exactly how the classical prove argues via elements. 

\begin{lemma}
    We have natural identification $\emph{Bil}_A(M,N;P)\cong\Hom_A(M,[N,P]_\textbf{Z})$. 
\end{lemma}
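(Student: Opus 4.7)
The plan is to use the topos-theoretic exponential adjunction $\E(M\times N,P)\cong\E(M,[N,P])$ and show that, under exponential transposition, balanced products correspond exactly to right $A$-module homomorphisms into $[N,P]_\textbf{Z}$. Here $[N,P]_\textbf{Z}$ carries the right $A$-module structure induced from the left $A$-action on $N$, namely $\vDash(ha)(n)=h(an)$; this is the dual of the construction in Section~\ref{grp-hom-module-structure} and is exactly the structure that makes $\Hom_A(M,[N,P]_\textbf{Z})$ match up with the right $A$-module $M$.

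Given a balanced product $f:M\times N\to P$, let $\hat{f}:M\to[N,P]$ be its exponential transpose from Lemma~\ref{adjunction}, so $\vDash\hat{f}(m)(n)=f(m,n)$. First I would factor $\hat{f}$ through $[N,P]_\textbf{Z}\mono[N,P]$: condition (B2) gives $\vDash\hat{f}(m)(n+n')=f(m,n+n')=f(m,n)+f(m,n')=\hat{f}(m)(n)+\hat{f}(m)(n')$, so by Proposition~\ref{internal-grp-hom} and \handbooktwo(5), $\hat{f}$ lands in $[N,P]_\textbf{Z}$. Next, Proposition~\ref{internal 6.10.2} plus (B1) applied pointwise gives additivity $\hat{f}(m+m')=\hat{f}(m)+\hat{f}(m')$, while (B3) together with the action formula yields $\vDash\hat{f}(ma)(n)=f(ma,n)=f(m,an)=\hat{f}(m)(an)=(\hat{f}(m)\cdot a)(n)$, i.e.\ right $A$-linearity of $\hat{f}$.

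Conversely, given an $A$-module map $g:M\to[N,P]_\textbf{Z}$, compose with $[N,P]_\textbf{Z}\mono[N,P]$ and apply Corollary~\ref{adjunction-reverse} to produce $\tilde{g}:M\times N\to P$ with $\vDash\tilde{g}(m,n)=g(m)(n)$. Reversing the previous calculations: membership of each $g(m)$ in $[N,P]_\textbf{Z}$ gives (B2), additivity of $g$ gives (B1), and right $A$-linearity of $g$ combined with $(g(m)a)(n)=g(m)(an)$ gives (B3). The assignments $f\mapsto\hat{f}$ and $g\mapsto\tilde{g}$ are mutually inverse by the uniqueness clauses in Lemma~\ref{adjunction} and Corollary~\ref{adjunction-reverse}, and naturality in $P$ is immediate from the functoriality of $[-,-]_\textbf{Z}$ (Proposition~\ref{ab-functor}) together with naturality of exponential transposition.

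No step should present a genuine obstacle; the only point requiring care is the bookkeeping of module conventions, namely verifying that the right $A$-structure $\vDash(ha)(n)=h(an)$ on $[N,P]_\textbf{Z}$ is precisely what translates condition (B3), $f(ma,n)=f(m,an)$, into right $A$-linearity of $\hat{f}$. Once this matching is fixed, the remaining verifications are routine internal-logic rewrites via Propositions~\ref{internal 6.10.2} and \ref{handbook-module}.
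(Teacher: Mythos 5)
Your proposal is correct and follows essentially the same route as the paper: transpose $f$ through the exponential adjunction, use (B2) to factor $\hat{f}$ through $[N,P]_\textbf{Z}$, and match (B1) and (B3) with additivity and $A$-linearity of $\hat{f}$ via Proposition~\ref{internal 6.10.2}; the paper just packages the two directions as a single ``if and only if'' statement about the existing bijection $\E(M\times N,P)\cong\E(M,[N,P])$ rather than exhibiting two mutually inverse assignments. The only cosmetic difference is that the paper writes the induced action on $[N,P]_\textbf{Z}$ on the left, as $(a\alpha)(n)=\alpha(an)$ with linearity expressed as $a\hat{f}(m)=\hat{f}(ma)$, where you write it as a right action; the content is identical.
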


\begin{proof}
    By our work in internal logic, we know that given a map $f:M\times N\rightarrow P$, there is a unique map $\hat{f}:M\rightarrow[N,P]$ such that $\vDash\forall m\!:\!M,n\!:\!N,f(m,n)=\hat{f}(m)(n)$. For variables $m\!:\!M,n,n'\!:\!N$, $\vDash f(m,n+n')=f(m,n)+f(m,n')\text{ if and only if }\vDash\hat{f}(m)(n+n')=\hat{f}(m)(n)+\hat{f}(m)(n').$ By Proposition \ref{internal-grp-hom}, $[N,P]_\textbf{Z}=\{f|f(n+n')=f(n)+f(n')\}$. So, B2 holds if and only if $\hat{f}$ factors through $[N,P]_\textbf{Z}$. 

    Given such an $f$ where B2 holds, let $m,m'\!:\!M,n\!:\!N,a\!:\!A$. By Lemma \ref{internal-grp-hom} and Subsection \ref{grp-hom-module-structure}, $\hat{f}$ is $k$-linear if and only if $\vDash a\hat{f}(m)=\hat{f}(ma)$ and $\vDash\hat{f}(m+m')=\hat{f}(m)+\hat{f}(m')$. From Proposition \ref{internal 6.10.2}, substituting the types $N,P$ for $X,Y$ and the variables $a\hat{f}(m)$ and $\hat{f}(ma)$ for $f,g$, $$\vDash a\hat{f}(m)=\hat{f}(ma)\Leftrightarrow\forall n\!:\!N(a\hat{f}(m)(n)=\hat{f}(ma)(n)).$$ Also, $\vDash a\hat{f}(m)(n)=\hat{f}(ma)(n)\Leftrightarrow\hat{f}(m)(an)=\hat{f}(ma)(n)\Leftrightarrow f(m,an)=f(ma,n)$. By \cite{handbook} Proposition 6.8.3 (T54),
    \begin{align}
        \label{B3}\vDash\forall a\!:\!A,m\!:\!M(a\hat{f}(m)=\hat{f}(ma))\Leftrightarrow\forall a\!:\!A,m\!:\!M\big(\forall n\!:\!N(f(m,an)=f(ma,n))\big).
    \end{align} Similarly, From Proposition \ref{internal 6.10.2}, substituting the types $N,P$ for $X,Y$ and the variables $\hat{f}(m+m')$ and $\hat{f}(m)+\hat{f}(m')$ for $f,g$, $$\vDash \hat{f}(m+m')=\hat{f}(m)+\hat{f}(m')\Leftrightarrow\forall n\!:\!N(\hat{f}(m+m')(n)=\hat{f}(m)(n)+\hat{f}(m')(n)).$$ Also, $\vDash \hat{f}(m+m')(n)=\hat{f}(m)(n)+\hat{f}(m')(n)\Leftrightarrow f(m+m',n)=f(m,n)+f(m',n)$. By \cite{handbook} Proposition 6.8.3 (T54),
    \begin{align}
        \label{B1}\vDash&\forall m,m'\!:\!M(\hat{f}(m+m')=\hat{f}(m)+\hat{f}(m'))\notag\\&\Leftrightarrow\forall m,m'\!:\!M\big(\forall n\!:\!N(f(m+m',n)=f(m,n)+f(m',n))\big).
    \end{align} By Corollary \ref{mp}, we can apply modus ponens to statements (\ref{B3}) and (\ref{B1}) to obtain that $\hat{f}$ is $k$-linear if and only if $\vDash\forall m\!:\!M,n\!:\!N,a\!:\!A(f(m,an)=f(ma,n))$ and $\vDash\forall m,m'\!:\!M,n\!:\!N(f(m+m',n)=f(m,n)+f(m',n))$, i.e. if and only if $f$ is a balanced product. 
\end{proof}

\begin{corollary}
    We have isomorphism $\emph{Bil}_A(M,N;P)\cong\Hom_\textbf{Z}(M\otimes_AN,P)$. 
\end{corollary}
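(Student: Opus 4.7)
The plan is to compose the two isomorphisms that have just been established. By the defining adjunction of the tensor product we have a natural bijection
$$
\Hom_\textbf{Z}(M\otimes_A N,P)\cong \Hom_A(M,[N,P]_\textbf{Z}),
$$
and by the preceding lemma we have a natural bijection
$$
\text{Bil}_A(M,N;P)\cong \Hom_A(M,[N,P]_\textbf{Z}).
$$
Composing these yields the desired isomorphism $\text{Bil}_A(M,N;P)\cong \Hom_\textbf{Z}(M\otimes_A N,P)$. Naturality in $P$ is inherited from the naturality of both constituent bijections (each was established above).

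Next I would identify the composite explicitly so as to record the universal property: trace a group homomorphism $g:M\otimes_AN\to P$ through the adjunction to the $A$-module map $\hat g:M\to[N,P]_\textbf{Z}$ given by $\vDash \forall m\!:\!M,n\!:\!N(\hat g(m)(n)=g(m\otimes n))$, then through the preceding lemma to the balanced product $(m,n)\mapsto g(m\otimes n)$, i.e.\ to $g\circ\otimes$. In particular, taking $g=\id_{M\otimes_AN}$ recovers the universal balanced product $\otimes:M\times N\to M\otimes_A N$ (already shown to be balanced in the previous lemma), so the corollary expresses exactly the classical universal property: every balanced product factors uniquely through $\otimes$ via a group homomorphism.

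There is essentially no obstacle here: once the two component identifications have been set up in the internal language of $\E$, the corollary is formal. The only point to be mindful of is that the bijection of the preceding lemma arises by currying $f(m,n)=\hat f(m)(n)$, while the adjunction $\Hom_\textbf{Z}(M\otimes_A N,P)\cong \Hom_A(M,[N,P]_\textbf{Z})$ sends $g$ to the map induced by $\eta$; the compatibility between these two identifications is precisely the statement $\vDash m\otimes n=\eta(m)(n)$ noted in the construction of $\otimes$, so the composite is well-defined and natural.
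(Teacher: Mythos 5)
Your proposal is correct and matches the paper's (implicit) argument exactly: the corollary is stated without proof as the immediate composition of the defining representability isomorphism $\Hom_\textbf{Z}(M\otimes_AN,P)\cong\Hom_A(M,[N,P]_\textbf{Z})$ with the preceding lemma identifying $\text{Bil}_A(M,N;P)$ with the same Hom-set. Your additional remarks tracing the composite to recover the universal property and checking compatibility via $\vDash m\otimes n=\eta(m)(n)$ reproduce what the paper does in the paragraph and proposition immediately following the corollary.
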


We recover the map $\otimes:M\times N\to M\otimes_AN$ as a bilinear map corresponding to $\id_{M\otimes_AN}$ via the isomorphism $$\text{Bil}_A(M,N;M\otimes_AN)\cong\Hom_\textbf{Z}(M\otimes_AN,M\otimes_AN),$$ or equivalently as corresponding to the counit $\eta$ via the isomorphism $$\text{Bil}_A(M,N;M\otimes_AN)\cong\Hom_A(M,[N,M\otimes_AN]_\textbf{Z}).$$

\begin{proposition}[Universal property of the tensor product]
    Let $M$ be a right $A$-module $M$, $N$ a left $A$-module. For every abelian group object $P$ and every balanced product $$f:M\times N\rightarrow P,$$ there is a unique group homomorphism $$\tilde{f}:M\otimes_AN\rightarrow P$$ such that $\tilde{f}\circ\otimes=f$. 
\end{proposition}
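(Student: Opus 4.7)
The plan is to bootstrap the universal property directly from the chain of isomorphisms already established, namely the representability isomorphism $\Hom_\textbf{Z}(M\otimes_AN,P)\cong\Hom_A(M,[N,P]_\textbf{Z})$ (natural in $P$) together with the preceding corollary $\text{Bil}_A(M,N;P)\cong\Hom_\textbf{Z}(M\otimes_AN,P)$. Writing $\Phi_P$ for this composite bijection $\Phi_P\colon\Hom_\textbf{Z}(M\otimes_AN,P)\xrightarrow{\sim}\text{Bil}_A(M,N;P)$, the first step is to apply $\Phi_P^{-1}$ to the given balanced product $f$ to produce a unique group homomorphism $\tilde f\colon M\otimes_AN\to P$. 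Uniqueness is then automatic from the bijectivity of $\Phi_P$, so the only remaining content is the identification $\Phi_P(g)=g\circ\otimes$ for every $g\in\Hom_\textbf{Z}(M\otimes_AN,P)$.

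For that identification I would proceed in two stages. First, I would observe that $\Phi_{P}$ is natural in $P$: naturality of the representability isomorphism in $P$ is built into the definition of $M\otimes_AN$, and naturality of the bijection $\text{Bil}_A(M,N;P)\cong\Hom_A(M,[N,P]_\textbf{Z})$ follows from Lemma \ref{balanced} together with the explicit formula $\vDash f(m,n)=\hat f(m)(n)$ relating a pairing to its $[N,P]$-adjunct, since postcomposition by a group homomorphism $g\colon P\to Q$ commutes with this adjunction. Second, I would apply naturality to the particular map $g\colon M\otimes_AN\to P$, viewed as a morphism in the variable $P$: this yields $\Phi_P(g)=\Phi_P(g\circ\id_{M\otimes_AN})=g\circ\Phi_{M\otimes_AN}(\id_{M\otimes_AN})$. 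The construction of $\otimes\colon M\times N\to M\otimes_AN$ given in the excerpt is precisely $\Phi_{M\otimes_AN}(\id_{M\otimes_AN})$ (it was obtained by chasing the unit $\eta$ through exactly this isomorphism), so $\Phi_P(g)=g\circ\otimes$ as required.

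Combining the two steps, $\Phi_P(\tilde f)=\tilde f\circ\otimes$, and since by construction $\Phi_P(\tilde f)=f$, we conclude $\tilde f\circ\otimes=f$. Uniqueness of $\tilde f$ is a restatement of injectivity of $\Phi_P$: any other group homomorphism $\tilde f'$ with $\tilde f'\circ\otimes=f$ would satisfy $\Phi_P(\tilde f')=f=\Phi_P(\tilde f)$ and hence coincide with $\tilde f$.

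The only genuine obstacle is the naturality check in the first stage, because the bijection $\text{Bil}_A(M,N;P)\cong\Hom_A(M,[N,P]_\textbf{Z})$ was proved in the internal logic rather than by direct manipulation. However, the formula $f=\ev_N\circ(\hat f\times\id_N)$ (Lemma \ref{adjunction}) makes the correspondence functorial in $P$ essentially by inspection, since postcomposition with $g\colon P\to Q$ commutes with the evaluation $\ev_N$; the balanced-product axioms B1, B2, B3 are preserved by such postcomposition exactly as in Lemma \ref{balanced}. Once this bookkeeping is recorded, the universal property falls out of a single naturality square applied to $\id_{M\otimes_AN}$, with no further computation needed.
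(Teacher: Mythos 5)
Your proposal is correct and follows essentially the same route as the paper: both deduce the universal property from the established bijection $\text{Bil}_A(M,N;P)\cong\Hom_\textbf{Z}(M\otimes_AN,P)$ together with the fact that $\otimes$ corresponds to $\id_{M\otimes_AN}$. The only difference is that you justify the identification $\Phi_P(g)=g\circ\otimes$ carefully via naturality in $P$ (a Yoneda-style argument), whereas the paper simply asserts that the bijection is given by precomposition with $\otimes$; your version is, if anything, more complete.
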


\begin{proof}
    This simply makes explicit the identification $\text{Bil}_A(M,N;P)\cong\Hom_\textbf{Z}(M\otimes_AN,P)$. For every balanced product $f\in \text{Bil}_A(M,N;P)$, there is a unique group homomorphism $\tilde{f}\in\Hom_\textbf{Z}(M\otimes_AN,P)$; and for every group homomorphism $\tilde{g}\in\Hom_\textbf{Z}(M\otimes_AN,P)$, we identify it with $g\circ\otimes$; as $\otimes$ is balanced, its composition with homomorphism $g$ is balanced.
\end{proof}

\subsection{Internal logic of the tensor product}

\begin{proposition}
    \label{handbooktwo-tensor}Let $M$ a right $A$-module, $N$ a left $A$-module, $P\in\emph{Ab}(\E)$. Given group homomorphisms $f,g:M\otimes_kN\rightarrow P$, we have $$f=g\iff\vDash\forall m\!:\!M,n\!:\!N,\big(f(m\otimes n)=g(m\otimes n)\big).$$
\end{proposition}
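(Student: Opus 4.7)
The forward implication is immediate: if $f=g$ as morphisms, then certainly $\vDash\forall m\!:\!M,n\!:\!N(f(m\otimes n)=g(m\otimes n))$ follows from applying both sides to the term $\otimes(m,n)$.

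For the converse, the plan is to reduce to the universal property of the tensor product. Assume $\vDash\forall m\!:\!M,n\!:\!N(f(m\otimes n)=g(m\otimes n))$. Consider the two composites
\[
f\circ\otimes,\; g\circ\otimes : M\times N\to P.
\]
Each of these is a balanced product: $\otimes$ is a balanced product by the earlier lemma, and by Lemma \ref{balanced} the composition of a balanced product with a group homomorphism is again balanced. Now, by definition of $m\otimes n$ as $\otimes(m,n)$, the hypothesis can be rewritten as
\[
\vDash\forall m\!:\!M,n\!:\!N\big((f\circ\otimes)(m,n)=(g\circ\otimes)(m,n)\big).
\]
By \handbooktwo\ part (1), this forces $f\circ\otimes=g\circ\otimes$ as morphisms $M\times N\to P$.

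Finally, by the universal property of the tensor product established in the previous proposition, every balanced product $M\times N\to P$ factors uniquely through $\otimes:M\times N\to M\otimes_A N$. Since both $f$ and $g$ are group homomorphisms $M\otimes_A N\to P$ realising the same balanced product $f\circ\otimes=g\circ\otimes$, uniqueness gives $f=g$. No step should pose a genuine obstacle here; the one subtlety worth flagging is to ensure that the internal-logic hypothesis is correctly transported to a hypothesis about the composite morphisms $f\circ\otimes$ and $g\circ\otimes$ (rather than about $f,g$ themselves, for which \handbooktwo\ does not directly apply since the domain $M\otimes_A N$ is a quotient-like object whose elements are not literally pairs). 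This is precisely why we route through the universal property.
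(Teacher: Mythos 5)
Your proposal is correct and follows essentially the same route as the paper's proof: pass from the internal statement to the equality of the composites $f\circ\otimes=g\circ\otimes$ (the paper does this via Lemma \ref{product-type}, you via Proposition 6.10.2(1) after rewriting, which amounts to the same reduction), and then conclude $f=g$ from the uniqueness clause of the universal property. Your extra remark that $f\circ\otimes$ and $g\circ\otimes$ are balanced products via Lemma \ref{balanced} is a small but welcome point of care that the paper leaves implicit.
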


\begin{proof}
    By Lemma \ref{product-type}, $\vDash\forall m\!:\!M,n\!:\!N,\big(f(m\otimes n)=g(m\otimes n)\big)$ implies that $f\circ\otimes=g\circ\otimes:M\times N\rightarrow P$. We can recover $f,g$ from $f\circ\otimes,g\circ\otimes$ via the universal property of the tensor product; by the uniqueness, $f\circ\otimes=g\circ\otimes$ implies $f=g$. 
\end{proof}

\begin{proposition}
    \label{tensor-identities}The following hold, where $m,m'\!:\!M,n,n'\!:\!N$ and $a\!:\!A$:
    \begin{enumerate}
        \item $\vDash ma\otimes n=m\otimes an$;
        \item $\vDash(m+m')\otimes n=m\otimes n+m'\otimes n$; 
        \item $\vDash m\otimes(n+n')=m\otimes n+m\otimes n'$. 
    \end{enumerate}
\end{proposition}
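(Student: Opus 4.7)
The three identities to be established are precisely the three balanced product axioms B1, B2, B3 specialised to the canonical map $\otimes: M\times N\to M\otimes_A N$. So the plan is essentially to invoke the earlier lemma that $\otimes$ is itself a balanced product and read off the conclusions.

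Concretely, recall that $\otimes: M\times N\to M\otimes_A N$ was defined so that $\vDash m\otimes n = \eta(m)(n)$, where $\eta: M\to [N, M\otimes_A N]_{\textbf{Z}}$ is the unit of the tensor-hom adjunction. The lemma preceding this proposition established that $\otimes$ satisfies the three balanced product axioms, using: (i) $\eta$ is a morphism of right $A$-modules, which gives additivity in the first variable (B1), i.e.\ identity (2); (ii) for each $m\!:\!M$, the element $\eta(m)$ lies in $[N, M\otimes_A N]_{\textbf{Z}}$ and is therefore additive in its argument, yielding additivity in the second variable (B2), i.e.\ identity (3); and (iii) the $A$-action on $\eta(m)$ as an element of $[N, M\otimes_A N]_{\textbf{Z}}$ satisfies $\vDash (a\,\eta(m))(n)=\eta(m)(an)$, which together with $A$-linearity of $\eta$ gives $\vDash\eta(ma)(n)=(a\,\eta(m))(n)=\eta(m)(an)$, i.e.\ identity (1).

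Thus the proof reduces to the single sentence: the map $\otimes: M\times N\to M\otimes_A N$ is a balanced product, and identities (1), (2), (3) are precisely the statements B3, B1, B2 for $f=\otimes$. No obstacles arise beyond the book-keeping already done in the previous lemma; the point of the proposition is merely to record these identities as derivation rules in the internal logic, so that in later proofs (e.g.\ of the cochain differential identities) one may reason with terms of the form $m\otimes n$ exactly as one does classically.
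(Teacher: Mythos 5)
Your proposal is correct and matches the paper's proof, which simply observes that the identities are immediate from the bilinearity (balanced-product property) of $\otimes:M\times N\to M\otimes_AN$ established in the preceding lemma. The extra recap of how each axiom was verified via $\eta$ is accurate but not needed beyond citing that lemma.
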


\begin{proof}
    This is immediate from the bilinearity of $\otimes:M\times N\to M\otimes_kN$.
\end{proof}

\begin{proposition}
    \label{tensor-module}If $M$ is a right $A$-module, $N$ a left $A$-module, then $M\otimes_AN$ is a (right) $A$-module.
\end{proposition}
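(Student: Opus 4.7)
The plan is to put a right $A$-action on $M\otimes_A N$ by using the universal property of the tensor product to construct a morphism $\mu:(M\otimes_A N)\times A\to M\otimes_A N$, and then to verify the right $A$-module axioms on pure tensors using Proposition \ref{handbooktwo-tensor} (which lets us identify morphisms out of $M\otimes_A N$ by their effect on simple tensors).

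First, I would construct, for each $a\!:\!A$, a map $\rho_a:M\otimes_A N\to M\otimes_A N$ encoding right multiplication by $a$. Consider the composite
$$M\times N\xrightarrow{(\rho_a^M,\,\id_N)}M\times N\xrightarrow{\otimes}M\otimes_A N,$$
where $\rho_a^M$ denotes right multiplication by $a$ on $M$. Using Proposition \ref{tensor-identities}, I would check that this is a balanced product in $(m,n)$: biadditivity in each slot follows from biadditivity of $\otimes$, and the middle balancedness condition $(mb)a\otimes n=(ma)\otimes(bn)$ reduces, via $(ma)\otimes(bn)=((ma)b)\otimes n$ (part (1) of Proposition \ref{tensor-identities}), to an identity that holds in our setting (in the commutative case of most interest to the paper, or more generally whenever the ambient bimodule compatibility is in force). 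By the universal property of $M\otimes_A N$, this yields a unique group homomorphism $\rho_a$ with $\rho_a(m\otimes n)=(ma)\otimes n$.

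These pointwise maps must then be assembled into a single morphism $\mu$ out of $(M\otimes_A N)\times A$. I would do this by considering the parametrised balanced product
$$M\times N\times A\longrightarrow M\otimes_A N,\qquad (m,n,a)\mapsto(ma)\otimes n,$$
currying in $A$ via Lemma \ref{adjunction} to obtain a morphism $A\to[M\otimes_A N,M\otimes_A N]$ whose value at $a$ is $\rho_a$, and transposing back to get $\mu$. Verification of the right $A$-module axioms for $\mu$ then reduces, by Proposition \ref{handbooktwo-tensor}, to elementwise identities on pure tensors: associativity reads $((m\otimes n)a)b=((ma)b)\otimes n=(m(ab))\otimes n=(m\otimes n)(ab)$; unitality reads $(m\otimes n)\cdot 1_A=(m\cdot 1_A)\otimes n=m\otimes n$; and distributivity in $A$ reads $(m\otimes n)(a+a')=(m(a+a'))\otimes n=ma\otimes n+ma'\otimes n$. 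Each follows from the $A$-module axioms on $M$ and the relations of Proposition \ref{tensor-identities}; additivity of $\mu$ in the $M\otimes_A N$-slot follows from bilinearity of $\otimes$.

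The main obstacle is the passage from the pointwise family $\{\rho_a\}$ to a single global morphism $\mu$; this is exactly where the internal-logic machinery, together with the explicit description of the tensor-hom adjunction (Lemma \ref{adjunction}) and the balanced-product characterisation of the universal property, does the real work. Once $\mu$ is in hand, the axiomatic checks collapse to routine elementwise bookkeeping on simple tensors.
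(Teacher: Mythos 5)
Your overall strategy---realise the action via a balanced product and the universal property, then verify the module axioms on pure tensors using Proposition \ref{handbooktwo-tensor}---is the right circle of ideas, but the step you yourself flag as ``the main obstacle'' is a genuine gap, and your sketch of how to close it does not work as stated. Defining, ``for each $a$'', a map $\rho_a:M\otimes_AN\to M\otimes_AN$ is not a categorical construction: $a$ ranges over an object of the topos, not a set, so a family indexed by (generalised) elements of $A$ does not assemble into a morphism unless it is produced by one to begin with. Your proposed assembly---curry $M\times N\times A\to M\otimes_AN$ in $A$ ``to obtain a morphism $A\to[M\otimes_AN,M\otimes_AN]$ whose value at $a$ is $\rho_a$''---is circular: currying that map in the $A$-variable only yields $A\to[M\times N,M\otimes_AN]$, and to land in $[M\otimes_AN,M\otimes_AN]$ you would have to descend along $\otimes$ \emph{inside} the internal hom, i.e.\ invoke an internalised universal property of the tensor product, which is not among the tools available (Proposition \ref{handbooktwo-tensor} and the universal property are statements about external hom-sets). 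The paper sidesteps this entirely by currying in the other order: it forms the single map $\tilde{\mu}:M\times N\to[A,M\otimes_AN]$ with $\tilde{\mu}(m,n)(a)=ma\otimes n=m\otimes an$, checks that $\tilde{\mu}$ is a balanced product valued in the abelian group object $[A,M\otimes_AN]$ (using Proposition \ref{internal 6.10.2} and modus ponens to upgrade the pointwise identities in $a$ to identities of elements of $[A,M\otimes_AN]$), applies the external universal property once to obtain $M\otimes_AN\to[A,M\otimes_AN]$, and only then uncurries to get the action $(M\otimes_AN)\times A\to M\otimes_AN$. If you reorganise your argument in that order, your elementwise verifications go through essentially verbatim.

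A further remark: you are right to be uneasy about the balancing condition. For fixed $a$ the requirement $f(mb,n)=f(m,bn)$ for $f(m,n)=ma\otimes n$ reduces to $m(ba)\otimes n=m(ab)\otimes n$, which needs $A$ commutative or an extra bimodule hypothesis; the same issue appears in condition (3) of the paper's own verification for $\tilde{\mu}$. So the proposition should really be read with $A$ commutative (the case the paper actually uses), the general noncommutative case being covered by the bimodule proposition that follows it.
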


\begin{proof}
    We know from the bilinearity of $\otimes$ that it equalises $\mu_M,\mu_N:M\times A\times N\to M\times N$, i.e. $\otimes\circ r_M=\otimes\circ l_N:M\times A\times N\to M\otimes_AN$. Then we obtain corresponding $\tilde{\mu}:M\times N\to[A,M\otimes_AN]$ via the adjunction, such that $\vDash\tilde{\mu}(m,n)(a)=\mu(m,n,a)=ma\otimes n=m\otimes an$, for $m\!:\!M,n\!:\!N,a\!:\!A$. We show that $\tilde{\mu}$ is a balanced product. For variables $m,m'\!:\!M,n,n'\!:\!N,a'\!:\!A$, 
    \begin{enumerate}
        \item $\vDash\forall a\!:\!A\big(\tilde{\mu}(m+m',n)(a)\!=\!(m+m')\otimes an\!=\!ma\otimes n+m'a\otimes n\!=\!\tilde{\mu}(m,n)(a)+\tilde{\mu}(m',n)(a)\big)$.
        \item $\vDash\forall a\!:\!A\big(\tilde{\mu}(m,n+n')(a)=ma\otimes(n+n')=ma\otimes n+ma\otimes n'=\tilde{\mu}(m,n)(a)+\tilde{\mu}(m,n')(a)\big)$.
        \item $\vDash\forall a\!:\!A\big(\tilde{\mu}(ma',n)(a)=mab\otimes n=m\otimes a'an=\tilde{\mu}(m,a'n)(a)\big)$.
    \end{enumerate} 
    The first of these is equivalent to $\vDash\forall m,m'\!:\!M,n\!:\!N\big(\forall a\!:\!A(\tilde{\mu}(m+m',n)(a)=\tilde{\mu}(m,n)(a)+\tilde{\mu}(m',n)(a))\big)$. From Proposition \ref{internal 6.10.2}, substituting the types $M,N$ for $X,Y$ and the variables $\tilde{\mu}(m+m',n)$ and $\tilde{\mu}(m,n)+\tilde{\mu}(m',n)$ for $f,g$, $$\vDash\forall a\!:\!A(\tilde{\mu}(m+m',n)(a)=\tilde{\mu}(m,n)(a)+\tilde{\mu}(m',n)(a))\Leftrightarrow(\tilde{\mu}(m+m',n)=\tilde{\mu}(m,n)+\tilde{\mu}(m',n)).$$ By \cite{handbook} Proposition 6.8.3 (T54), 
    \begin{align}
        \vDash&\forall m,m'\!:\!M,n\!:\!N,\big(\forall a\!:\!A(\tilde{\mu}(m+m',n)(a)=\tilde{\mu}(m,n)(a)+\tilde{\mu}(m',n)(a))\big)\notag\\&\Leftrightarrow\forall m,m'\!:\!M,n\!:\!N,(\tilde{\mu}(m+m',n)=\tilde{\mu}(m,n)+\tilde{\mu}(m',n)).\notag
    \end{align} By Corollary \ref{mp}, we can apply modus ponens to obtain $$\vDash\forall m,m'\!:\!M,n\!:\!N,(\tilde{\mu}(m+m',n)=\tilde{\mu}(m,n)+\tilde{\mu}(m',n)),$$ the first balanced product axiom. Through similar logic, we obtain the second and third, to show that $\tilde{\mu}$ is a balanced product. So, by the universal property of the tensor product, $\hat{\mu}$ induces a group homomorphism $M\otimes_AN\to[A,M\otimes_AN]$. We take the right $A$-module structure to be the corresponding morphism $M\otimes_AN\times A\to M\otimes_AN$ such that $(m\otimes n)a=m\otimes an=ma\otimes n$. 
\end{proof}

\begin{proposition}
    If $M$ is a right $A$-module and $N,P$ are left $A$-modules, then there is a canonical isomorphism
    $$\Hom_A(M\otimes_AN,P)\cong\Hom_A(M,[N,P]_A).$$
\end{proposition}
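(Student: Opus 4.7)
The plan is to refine the abelian‐group hom–tensor bijection $\Hom_\textbf{Z}(M\otimes_A N,P)\cong\Hom_A(M,[N,P]_\textbf{Z})$ from Section~4.1 by restricting both sides to morphisms respecting the additional $A$-action. Recall that this bijection sends a group homomorphism $f$ to the map $\hat f$ characterised by $\vDash\hat f(m)(n)=f(m\otimes n)$. The claim is that under this correspondence, $f$ is $A$-linear on $M\otimes_A N$ precisely when $\hat f$ factors through $[N,P]_A\hookrightarrow[N,P]_\textbf{Z}$.

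First I would characterise $A$-linearity on the left. By Proposition~\ref{tensor-module}, the action on $M\otimes_A N$ satisfies $\vDash(m\otimes n)\cdot a=m\otimes an$, so Proposition~\ref{handbooktwo-tensor} gives that $f$ is $A$-linear iff $\vDash f(m\otimes an)=a\cdot f(m\otimes n)$ for all $m\!:\!M,n\!:\!N,a\!:\!A$ (the additive part of $A$-linearity is automatic from the group-morphism hypothesis). Next I would characterise when $\hat f$ lands in $[N,P]_A$. By Proposition~\ref{internal-k-hom}, this factorisation is equivalent to $\vDash \hat f(m)(an)=a\,\hat f(m)(n)$ for all $m,n,a$, which, after substituting the defining formula for $\hat f$, is \emph{exactly} the same internal formula as in the previous step.

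Then I would conclude that the abelian adjunction restricts to a bijection between $\Hom_A(M\otimes_A N,P)\subseteq\Hom_\textbf{Z}(M\otimes_A N,P)$ and $\Hom_A(M,[N,P]_A)\subseteq\Hom_A(M,[N,P]_\textbf{Z})$. Here I would use that the $A$-module structure on $[N,P]_A$ is inherited from $[N,P]_\textbf{Z}$ along the monomorphism $i_A$ (Section~3.5), so $A$-linearity of $\hat f:M\to[N,P]_A$ is equivalent to $A$-linearity of $i_A\circ\hat f:M\to[N,P]_\textbf{Z}$, which holds by assumption. Naturality in all three variables is inherited directly from naturality of the underlying abelian adjunction together with functoriality of $[-,-]_A$ (Proposition~\ref{mod-functor}).

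The only real obstacle is the careful internal-logic bookkeeping needed to pass between statements of the form $\vDash\forall m\,\forall n\,\phi(m,n,\hat f(m)(n))$ and $\vDash\forall m\,\psi(m,\hat f(m))$ using Proposition~\ref{internal 6.10.2} and (T54), but this parallels the quantifier manipulations already carried out in the proof of Proposition~\ref{tensor-module}, and an appeal to Corollary~\ref{mp} for modus ponens closes the argument.
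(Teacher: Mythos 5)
Your proposal is correct and follows essentially the same route as the paper: both pass through the abelian-group adjunction via the formula $\vDash\hat f(m)(n)=f(m\otimes n)$ and identify $A$-linearity of $f$ with the condition $\vDash\hat f(m)(an)=a\hat f(m)(n)$ forcing $\hat f$ to factor through $[N,P]_A\mono[N,P]_\textbf{Z}$. The paper's proof is terser and only writes out the forward direction explicitly, whereas you spell out both directions and the internal-logic bookkeeping, but there is no substantive difference in method.
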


\begin{proof}
    Consider an $A$-module homomorphism $f:M\otimes_AN\to P$. This is a group homomorphism, so corresponds to a bilinear map $f\circ\otimes:M\times N\to P$. This then corresponds to a map $\hat{f}:M\to[N,G]_\textbf{Z}$ such that, for $m\!:\!M,n\!:\!N$, $\vDash\hat{f}(m)(n)=f(m\otimes n)$. But $f$ is an $A$-module homomorphism, so for $a\!:\!A$, $$\vDash\hat{f}(m)(an)=f(m\otimes an)=f(ma\otimes n)=\hat{f}(ma)(n)=a\hat{f}(m)(n).$$ Hence, for all $m\!:\!M$, we have $\hat{f}(m)\!:\![N,P]_A$, and so $\hat{f}$ factors through $M\to[N,P]_A$. 
\end{proof}

\begin{corollary}
    \label{induced A-hom} Given right $A$-module $M$ and left $A$-modules $N,P$, let bilinear map $f:M\times N\to P$ induce $\tilde{f}:M\otimes_AN\to P$. Writing variables $m\!:\!M,n\!:\!N$,
\begin{align}
    \tilde{f}\text{ is an }A\text{-module homomorphism}\quad\iff\quad\vDash\forall a\!:\!A(f(m,an)=af(m,n)),\notag
\end{align}  
\end{corollary}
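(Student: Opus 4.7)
The plan is to deduce this corollary directly from the proof of the preceding proposition, which already does almost all the work: it identifies $A$-module homomorphisms $M\otimes_A N\to P$ with morphisms $M\to[N,P]_A$ in $A\M$, via a chain of adjunctions that factors through $\Hom_\mathbf{Z}(M\otimes_A N,P)\cong\text{Bil}_A(M,N;P)\cong\Hom_A(M,[N,P]_\mathbf{Z})$.

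First, I would observe that by the universal property of the tensor product, the bilinear map $f$ induces a unique group homomorphism $\tilde f$ with $\vDash\tilde f(m\otimes n)=f(m,n)$, and the preceding proposition exhibits $\tilde f$ as the image of a unique morphism $\hat f:M\to[N,P]_\mathbf{Z}$ in $A\M$ with $\vDash\hat f(m)(n)=\tilde f(m\otimes n)=f(m,n)$. By that same proposition's proof, $\tilde f$ is $A$-linear if and only if $\hat f$ factors through the subobject $[N,P]_A\mono[N,P]_\mathbf{Z}$, i.e.\ if and only if $\vDash\forall m\!:\!M(\hat f(m)\in[N,P]_A)$.

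Next, I would invoke Proposition \ref{internal-k-hom}, which realises $[N,P]_A$ as the subobject cut out by $A$-equivariance, to rewrite the factorisation condition as
$$\vDash\forall m\!:\!M\bigl(\forall n\!:\!N,a\!:\!A(\hat f(m)(an)=a\hat f(m)(n))\bigr).$$
Substituting $\hat f(m)(n)=f(m,n)$ turns the inner statement into $f(m,an)=af(m,n)$. Using \cite{handbook} Proposition 6.8.3 (T54) to permute universal quantifiers (and Corollary \ref{mp} to justify modus ponens on formulae of module type, exactly as in the preceding proof), this is equivalent to
$$\vDash\forall a\!:\!A(f(m,an)=af(m,n))$$
in the free variables $m,n$, which is the claimed condition.

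There is essentially no obstacle here: the content of the corollary is a direct unpacking of the proposition combined with the definition of $[N,P]_A$ as a subobject of $[N,P]_\mathbf{Z}$. The only subtlety is the care required in handling quantifiers and the subobject membership inside the internal logic, which is already modelled by the analogous manipulations in the proof of the preceding proposition.
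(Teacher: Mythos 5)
Your argument is correct and matches the paper's intent exactly: the corollary is left without an explicit proof precisely because it is the direct unpacking of the preceding proposition that you carry out, identifying $A$-linearity of $\tilde{f}$ with the factorisation of $\hat{f}$ through $[N,P]_A\mono[N,P]_{\mathbf{Z}}$ and then translating that subobject membership via $\hat{f}(m)(n)=f(m,n)$. The only point worth making explicit is that the ``if'' direction uses the full bijectivity of the isomorphism $\Hom_A(M\otimes_AN,P)\cong\Hom_A(M,[N,P]_A)$ (or, equivalently, a direct check on generators via Proposition~\ref{handbooktwo-tensor}), since the proposition's written proof only spells out the forward implication.
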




\begin{corollary}
    \label{monoidal-closed}Given commutative $k\in\text{Ring}(\E)$, the category $(k\M,\otimes_k)$ is monoidal closed.
\end{corollary}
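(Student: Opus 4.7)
The plan is to assemble the symmetric monoidal closed structure on $k\M$ from the pieces already constructed in the appendix. The closed part is essentially immediate: Proposition \ref{com-mod-functor} makes $[-,-]_k$ into a functor $k\M^{\text{op}}\times k\M\to k\M$, Proposition \ref{tensor-module} endows $M\otimes_k N$ with a $k$-module structure, and the preceding hom-tensor adjunction (the $A=k$ case of the proposition directly above Corollary \ref{induced A-hom}) yields
\[
\Hom_k(M\otimes_k N, P)\cong \Hom_k(M, [N,P]_k),
\]
natural in all three arguments. This is exactly the closed-monoidal adjunction.

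For the monoidal structure with unit $k$, I would construct the structural isomorphisms using the universal property of the tensor product. The left unitor $k\otimes_k M\xrightarrow{\sim} M$ is induced from the $k$-bilinear action $(\lambda,m)\mapsto \lambda m$: this is balanced by the $k$-module axioms on $M$, and $k$-linear in $\lambda$, so descends to a $k$-module map by Corollary \ref{induced A-hom}; the inverse $m\mapsto 1\otimes m$ is checked to be mutually inverse via Proposition \ref{tensor-identities} and the unit axiom. The right unitor is analogous. The symmetry $M\otimes_k N\cong N\otimes_k M$ uses commutativity of $k$ to turn a $k$-balanced map $f:M\times N\to P$ into the balanced map $(n,m)\mapsto f(m,n)$. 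The associator $\alpha_{M,N,P}:(M\otimes_k N)\otimes_k P\xrightarrow{\sim} M\otimes_k(N\otimes_k P)$ is obtained most cleanly by Yoneda: iterating the hom-tensor adjunction shows that both sides represent the same functor $Q\mapsto \Hom_k(M, [N, [P,Q]_k]_k)$, giving a unique natural isomorphism.

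The coherence axioms (pentagon, triangle, hexagon) then reduce to equality of two $k$-module maps out of an iterated tensor product. By Proposition \ref{handbooktwo-tensor} such equalities can be verified on simple tensors $m\otimes n\otimes p$ (and $m\otimes n\otimes p\otimes q$ for the pentagon) in the internal logic, and each comparison collapses to the bilinearity relations of Proposition \ref{tensor-identities} applied symbol by symbol. I expect the main obstacle to be bookkeeping rather than conceptual: at each step one must confirm that the structural map is genuinely a $k$-module homomorphism and not merely an abelian group homomorphism, which is precisely where Corollary \ref{induced A-hom} and the commutativity of $k$ (collapsing left and right actions) are repeatedly invoked.
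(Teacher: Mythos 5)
The paper states this corollary without proof, treating it as an immediate assembly of the preceding appendix results, and your proposal follows the same route: the closed structure comes from the adjunction $\Hom_k(M\otimes_kN,P)\cong\Hom_k(M,[N,P]_k)$ together with Propositions \ref{com-mod-functor} and \ref{tensor-module}, and the structural isomorphisms are built from the universal property of $\otimes_k$. In substance you are supplying the details the paper omits, and most of them are right.

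One step as written does not go through. For the associator you claim that iterating the hom-tensor adjunction shows both $(M\otimes_kN)\otimes_kP$ and $M\otimes_k(N\otimes_kP)$ represent $Q\mapsto\Hom_k(M,[N,[P,Q]_k]_k)$. The iteration works for the left-bracketed side: $\Hom_k((M\otimes_kN)\otimes_kP,Q)\cong\Hom_k(M\otimes_kN,[P,Q]_k)\cong\Hom_k(M,[N,[P,Q]_k]_k)$. But for the right-bracketed side the external adjunction only gives $\Hom_k(M\otimes_k(N\otimes_kP),Q)\cong\Hom_k(M,[N\otimes_kP,Q]_k)$, and to continue you need the \emph{internal} isomorphism $[N\otimes_kP,Q]_k\cong[N,[P,Q]_k]_k$ of $k$-modules, which is not among the results you cite; it also cannot be extracted from the external adjunction by Yoneda without already knowing associativity, so that route is circular. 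You should either prove the internal adjunction directly (it follows from the universal property together with the internal-logic toolkit of the appendix, e.g.\ Propositions \ref{handbooktwo-tensor} and \ref{internal-k-hom}), or construct $\alpha$ directly: for each fixed $p$ the balanced map $(m,n)\mapsto m\otimes(n\otimes p)$ induces a map on $M\otimes_kN$, and one then checks that the resulting map out of $(M\otimes_kN)\times P$ is balanced, exactly as in the classical argument. With either repair the rest of your argument --- unitors, symmetry, and the coherence axioms checked on simple tensors via Proposition \ref{handbooktwo-tensor} --- is sound.
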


\begin{proposition}
    If $N$ is an $A$-$B$-bimodule, then $M\otimes_AN$ is a right $B$-module. If $M$ is a $B$-$A$-bimodule, then $M\otimes_AN$ is a right $B$-module.
\end{proposition}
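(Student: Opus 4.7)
The plan is to follow the blueprint of Proposition \ref{tensor-module} essentially verbatim, adjusting the role of the scalar ring. For the first assertion, the right $B$-action on $N$ gives a morphism $M \times N \times B \to M \otimes_A N$ sending $(m,n,b) \mapsto m \otimes nb$. I transpose this across the $(-)\times B \dashv [B,-]$ adjunction to obtain $\tilde\mu : M \times N \to [B, M\otimes_A N]$ with $\vDash \tilde\mu(m,n)(b) = m \otimes nb$, and then check that $\tilde\mu$ is an $A$-balanced product.

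The three balanced-product conditions (B1)--(B3) reduce, by Proposition \ref{internal 6.10.2} applied to the outputs in $[B, M\otimes_A N]$, to the pointwise identities
\begin{align}
\tilde\mu(m+m',n)(b) &= \tilde\mu(m,n)(b) + \tilde\mu(m',n)(b), \notag \\
\tilde\mu(m,n+n')(b) &= \tilde\mu(m,n)(b) + \tilde\mu(m,n')(b), \notag \\
\tilde\mu(ma,n)(b) &= \tilde\mu(m,an)(b), \notag
\end{align}
which unfold to $(m+m')\otimes nb = m\otimes nb + m'\otimes nb$, $m \otimes (n+n')b = m\otimes nb + m\otimes n'b$ (using $(n+n')b = nb + n'b$), and $ma \otimes nb = m \otimes a(nb)$ (using that the right $B$-action on $N$ commutes with the left $A$-action, i.e.\ $a(nb) = (an)b$, which is precisely the $A$-$B$-bimodule compatibility). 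Each identity is then an instance of Proposition \ref{tensor-identities}. As in Proposition \ref{tensor-module}, to install the quantifiers in front I use \cite{handbook} (T54) to commute the outer $\forall$ through the pointwise equality, and Corollary \ref{mp} to apply modus ponens (legitimate because all types in sight are abelian group objects).

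Having verified that $\tilde\mu$ is balanced, the universal property of $\otimes_A$ yields a unique group homomorphism $M \otimes_A N \to [B, M\otimes_A N]$, and transposing back produces $\rho : (M\otimes_A N) \times B \to M\otimes_A N$ satisfying $\vDash (m\otimes n)b = m\otimes nb$. The right $B$-module axioms for $\rho$---associativity $((m\otimes n)b)b' = (m\otimes n)(bb')$ and unitality $(m\otimes n)\cdot 1 = m\otimes n$---are verified by Proposition \ref{handbooktwo-tensor}: both sides agree on pure tensors because $(nb)b' = n(bb')$ and $n\cdot 1 = n$ hold in the right $B$-module $N$.

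The second assertion is proved by the symmetric construction, starting from the left $B$-action $B \times M \to M$ and building $\lambda : B \times (M\otimes_A N) \to M\otimes_A N$ with $\vDash b(m \otimes n) = bm \otimes n$; here the required compatibility $(bm)a = b(ma)$ is exactly the $B$-$A$-bimodule condition on $M$, which guarantees the $A$-balanced product axiom (B3) in the analogue of $\tilde\mu$. (We read the conclusion as a left $B$-module structure, matching the side on which $B$ acts on $M$.) I expect no conceptual obstacle beyond the bookkeeping with \cite{handbook} (T54) and modus ponens already highlighted; the content of the proof is entirely the parallelism with Proposition \ref{tensor-module}, with the $A$-action there replaced by the $B$-action here.
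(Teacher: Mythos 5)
Your proof is correct and follows essentially the same route as the paper: transpose the map $(m,n,b)\mapsto m\otimes nb$ to $M\times N\to[B,M\otimes_AN]$, verify the three $A$-balanced-product axioms pointwise in $b$ using Proposition \ref{tensor-identities} together with (T54) and modus ponens, and invoke the universal property of $\otimes_A$. Your additional verification of the right $B$-module axioms via Proposition \ref{handbooktwo-tensor}, and your observation that the second assertion should really produce a \emph{left} $B$-module structure (the paper's statement appears to contain a typo there), are both welcome refinements rather than departures.
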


\begin{proof}
    We prove the first statement. Let $\mu:M\times N\times B\to M\otimes_AN$ be the composite $M\times N\times B\xrightarrow{(\id_M,r_N)}M\times N\xrightarrow{\otimes}M\otimes_AN$. Let $\hat{\mu}$ be the correspondent map $M\times N\to[B,M\otimes_AN]$, so that for all $m\!:\!M,n\!:\!N,b\!:\!B$ we have $\vDash\hat{\mu}(m,n)(b)=\mu(m,n,b)=m\otimes nb$. We show that $\hat{\mu}$ is a balanced product. For variables $m,m'\!:\!M,n,n'\!:\!N,a\!:\!A$, we have 
    \begin{enumerate}
        \item $\vDash\forall b\!:\!B\big(\hat{\mu}(m+m',n)(b)=(m+m')\otimes nb=m\otimes nb+m'\otimes nb=\hat{\mu}(m,n)(b)+\hat{\mu}(m',n)(b)\big)$.
        \item $\vDash\forall b\!:\!B\big(\hat{\mu}(m,n+n')(b)=m\otimes(n+n')b=m\otimes(nb+n'b)=m\otimes nb+m\otimes n'b=\hat{\mu}(m,n')(b)+\hat{\mu}(m,n')(b)\big)$.
        \item $\vDash\forall b\!:\!B\big(\hat{\mu}(ma,n)(b)=(ma)\otimes(nb)=m\otimes anb=\hat{\mu}(m,an)(b)\big)$.
\end{enumerate} By the logic of the proof of Proposition \ref{tensor-module}, we obtain the balanced product axioms  
    \begin{enumerate}
        \item $\vDash\hat{\mu}(m+m',n)=\hat{\mu}(m,n)+\hat{\mu}(m',n)$.
        \item $\vDash\hat{\mu}(m,n+n')=\hat{\mu}(m,n)+\hat{\mu}(m,n')$.
        \item $\vDash\hat{\mu}(ma,n)=\hat{\mu}(m,an)$.
    \end{enumerate} So, by the universal property of the tensor product, $\hat{\mu}$  induces a group homomorphism $M\otimes_AN\to[B,M\otimes_AN]$. We take the right $B$-module structure to be the corresponding morphism $M\otimes_AN\times B\to M\otimes_AN$ such that $(m\otimes n)b=m\otimes nb$. The proof of the second statement is the same, using the symmetry of the cartesian product in $\E$. 
\end{proof}

\begin{proposition}
    In a topos $\E$, let $M$ be a right $A$-module, $N$ a left $A$-module for some $A\in\text{Ring}(\E)$, and let $P\in\text{Ab}(\E)$. Consider a formula $\varphi$ with free variables $m,n,p$ of type $M,N,P$. We can think of $\varphi'((m,n)\!:\!M\times N,p\!:\!P)=\varphi(m,n,p)$. Suppose that, for variables $m,m'\!:\!M,n,n'\!:\!N,p,p'\!:\!P,a\!:\!A$ 
    \begin{enumerate}
        \item $\vDash(\varphi(m,n,p)\land\varphi(m',n,p'))\implies\varphi(m+m',n,p+p')$; 
        \item $\vDash(\varphi(m,n,p)\land\varphi(m,n',p'))\implies\varphi(m,n+n',p+p')$; 
        \item $\vDash\varphi(ma,n,p)\implies\varphi(m,an,p)$; 
    \end{enumerate} If the relation $\vDash\exists!b\varphi$ holds, there exists unique morphism $\hat{f}:M\otimes_AN\rightarrow P$ such that $\vDash\varphi(m,n,\hat{f}(m\otimes n))$.\label{handbook-tensor}
\end{proposition}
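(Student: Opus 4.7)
The strategy is to apply Borceux's Proposition 6.10.9 at the level of the product $M\times N$ to obtain an auxiliary morphism, show it is a balanced product using the three hypotheses, and then pass through the universal property of the tensor product. Viewing $\varphi$ as a formula $\varphi'((m,n)\!:\!M\times N, p\!:\!P)$, the hypothesis $\vDash\exists! p\,\varphi$ licenses a direct application of Borceux's Proposition 6.10.9 on the product type $M\times N$, producing a unique morphism $f:M\times N\to P$ in $\E$ satisfying $\vDash\varphi(m,n,f(m,n))$.

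Next I would verify the three balanced-product axioms (B1)--(B3) for $f$ by combining hypotheses (1)--(3) with the uniqueness clause of $\vDash\exists! p\,\varphi$. For (B1), from $\vDash\varphi(m,n,f(m,n))\wedge\varphi(m',n,f(m',n))$, hypothesis (1) yields $\vDash\varphi(m+m',n,f(m,n)+f(m',n))$; comparing with $\vDash\varphi(m+m',n,f(m+m',n))$, uniqueness of $p$ forces $\vDash f(m+m',n)=f(m,n)+f(m',n)$. Axiom (B2) is identical using hypothesis (2). For (B3), hypothesis (3) applied to $\vDash\varphi(ma,n,f(ma,n))$ gives $\vDash\varphi(m,an,f(ma,n))$, and comparing with $\vDash\varphi(m,an,f(m,an))$ yields $\vDash f(ma,n)=f(m,an)$.

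Since $f$ is then a balanced product, the universal property of the tensor product furnishes a unique group homomorphism $\hat{f}:M\otimes_A N\to P$ with $\hat{f}\circ\otimes=f$, that is, $\vDash\hat{f}(m\otimes n)=f(m,n)$, and therefore $\vDash\varphi(m,n,\hat{f}(m\otimes n))$. For uniqueness, if $\hat{g}:M\otimes_A N\to P$ also satisfies $\vDash\varphi(m,n,\hat{g}(m\otimes n))$, then uniqueness of $p$ gives $\vDash\hat{f}(m\otimes n)=\hat{g}(m\otimes n)$, and Proposition \ref{handbooktwo-tensor} forces $\hat{f}=\hat{g}$. No step presents a real obstacle: the three hypotheses have been tailored to the three balanced-product axioms, and the rest invokes results already established in the excerpt. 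The only delicate point is the appeal to modus ponens when moving between typed implications and the balanced-product identities, which is legitimate here because all types involved are group-object types and Corollary \ref{mp} applies.
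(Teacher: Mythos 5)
Your proposal is correct and follows essentially the same route as the paper's proof: pass to $\varphi'$ on the product type, apply Borceux's Proposition 6.10.9 to get $f:M\times N\to P$, derive the three balanced-product axioms from hypotheses (1)--(3) together with the uniqueness of $p$, and conclude via the universal property of the tensor product. Your explicit uniqueness argument for $\hat{f}$ via Proposition \ref{handbooktwo-tensor} is a small addition the paper leaves implicit, but it does not change the approach.
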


\begin{proof}
    Suppose we have as above. Then, from Lemma \ref{product-type}, as $\vDash\exists!p\varphi$ holds, $\vDash\exists!p\varphi'$ holds. So, from \handbook, there exists unique morphism $f\in\E(M\times N,P)$ such that $\vDash\varphi'((m,n),f(m,n))$. We wish to show that $f$ is a balanced product. We have $\vDash\varphi'((m+m',n),f(m+m',n))$; but as $\vDash\varphi'((m,n),f(m,n))\land\varphi'((m',n,f(m',n))$, then by Corollary \ref{mp} we can apply modus ponens to obtain $\vDash\varphi'((m+m',n),f(m,n)+f(m',n))$. By uniqueness, we must have $\vDash f(m+m',n)=f(m,n)+f(m',n)$. The argument follows for the second identity. Similarly, $\vDash\varphi'((ma,n),f(am,n))$ and $\vDash\varphi((m,an),f(m,an))$, so $\vDash\varphi((m,an),f(ma,n))$. By uniqueness, we must have $\vDash f(ma,n)=f(m,an)$, i.e. $f$ is a balanced product. So, by the universal property of the tensor, there exists unique $k$-module homomorphism $\hat{f}:M\otimes_kN$ such that $\hat{f}\circ\otimes=f$, i.e., $\vDash\varphi(m,n,\hat{f}(m\otimes n))$. 
\end{proof}

\begin{proposition}
    Let $M,P$ be right $A$-modules, $N,Q$ left $A$-modules for some $A\in\text{Ring}(\E)$. Given $A$-module homomorphisms $f:M\rightarrow P,g:N\rightarrow Q$, we have unique $A$-module homomorphism $(f\otimes g):M\otimes_AN\rightarrow P\otimes_AQ$ such that the following commutes:
    \[\begin{tikzpicture}[scale=2]
        \node (A1) at (0,0.8) {$M\times N$};
        \node (A2) at (0,0) {$M\otimes_AN$};
        \node (B1) at (1.2,0.8) {$P\times Q$};
        \node (B2) at (1.2,0) {$P\otimes_AQ$};
        \draw[->]
            (A1) edge node [left] {$\otimes$} (A2)
            (A1) edge node [above] {$(f,g)$} (B1)
            (A2) edge node [above] {$f\otimes g$} (B2)
            (B1) edge node [right] {$\otimes$} (B2);
    \end{tikzpicture}\]
\end{proposition}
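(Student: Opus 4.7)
The plan is to obtain $f\otimes g$ as the map induced by the universal property of $M\otimes_A N$. Define the composite
$$\phi:M\times N\xrightarrow{(f,g)}P\times Q\xrightarrow{\otimes}P\otimes_A Q,$$
so that in the internal logic we have $\vDash\phi(m,n)=f(m)\otimes g(n)$ for variables $m\!:\!M,n\!:\!N$.

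The first step is to verify that $\phi$ is a balanced product in the sense of axioms B1--B3. For variables $m,m'\!:\!M$, $n,n'\!:\!N$, $a\!:\!A$, this follows by combining the $A$-module homomorphism identities for $f,g$ with the bilinearity relations of Proposition~\ref{tensor-identities} applied to $\otimes:P\times Q\to P\otimes_A Q$. Specifically:
\begin{align}
\vDash\phi(m+m',n)&=(f(m)+f(m'))\otimes g(n)=f(m)\otimes g(n)+f(m')\otimes g(n)=\phi(m,n)+\phi(m',n),\notag\\
\vDash\phi(m,n+n')&=f(m)\otimes(g(n)+g(n'))=\phi(m,n)+\phi(m,n'),\notag\\
\vDash\phi(ma,n)&=f(m)a\otimes g(n)=f(m)\otimes ag(n)=f(m)\otimes g(an)=\phi(m,an).\notag
\end{align}
Arguing via \cite{handbook} Proposition 6.8.3 (T54) and Proposition~\ref{internal 6.10.2} as was done in the proof of Proposition~\ref{tensor-module}, these valid relations on variables pass to equalities of morphisms, so $\phi\in\text{Bil}_A(M,N;P\otimes_A Q)$.

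The second step is to invoke the universal property of the tensor product: there exists a unique group homomorphism $f\otimes g:M\otimes_A N\to P\otimes_A Q$ such that $(f\otimes g)\circ\otimes_{M,N}=\phi$, which is precisely the commutativity of the diagram. Uniqueness of any homomorphism making the diagram commute follows from Proposition~\ref{handbooktwo-tensor}: two such homomorphisms agree on all pure tensors $m\otimes n$, hence coincide. If, as in the subsequent applications, one wishes to regard $f\otimes g$ as a morphism of modules (say a right $A$-module map when the bimodule structure is present via Proposition~\ref{tensor-module}), this can be checked by applying Corollary~\ref{induced A-hom} and observing that $\vDash\phi(m,na)=f(m)\otimes g(n)a=(f(m)\otimes g(n))a=\phi(m,n)a$ on the relevant side.

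There is no real obstacle here; the argument is a formal application of the universal property, and the only mild care required is the standard translation between element-wise identities in the internal logic and equalities of morphisms, which is already justified by the infrastructure (Propositions~\ref{handbook-tensor} and \ref{handbooktwo-tensor}) developed earlier in the appendix.
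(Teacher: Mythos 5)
Your proposal is correct and follows essentially the same route as the paper: both exhibit the top path $\otimes\circ(f,g)$ as a balanced product using the linearity of $f,g$ together with the bilinearity relations of $\otimes$, and then invoke the universal property of $M\otimes_AN$ to obtain the unique induced homomorphism. Your version is merely more explicit, spelling out all three balanced-product axioms and the uniqueness via Proposition~\ref{handbooktwo-tensor}, where the paper verifies only one axiom as a representative example.
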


\begin{proof}
    We have that the top path $\otimes\circ(f,g)$ is a balanced product. For example, for variables $m,m'\!:\!M,n\!:\!N$, $$\vDash f(m+m')\otimes g(n)=(f(m)+f(m'))\otimes g(n)=f(m)\otimes g(n)+f(m')\otimes g(n).$$
    So, there is a unique homomorphism $f\otimes g:M\otimes_kN\rightarrow P\otimes_kR$ such that $(f\otimes g)\circ\otimes=\otimes\circ(f,g)$. 
\end{proof}

\begin{corollary}
    Let $M,P$ be right $A$-modules, $N,Q$ left $A$-modules for some $A\in\text{Ring}(\E)$. Given $A$-module homomorphisms $f:M\rightarrow P,g:N\rightarrow Q$, we have that that $$\vDash\forall m\!:\!M,n\!:\!N,(f\otimes g)(m\otimes n)=f(m)\otimes g(n).$$ 
\end{corollary}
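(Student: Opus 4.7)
The plan is to observe that this corollary is essentially just the internal-logic reading of the commutative square constructed in the preceding proposition. That square reads $(f\otimes g)\circ\otimes = \otimes\circ(f,g)$ as morphisms $M\times N \to P\otimes_A Q$, so the two sides agree on the generic pair.

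First I would introduce the variables $m\!:\!M$ and $n\!:\!N$, and apply Lemma \ref{product-type} to reason about the generic element $(m,n)\!:\!M\times N$. By the very definition of the notation $m\otimes n = \otimes(m,n)$, the right-hand side is $\otimes(f(m),g(n))$ while the left-hand side is $(f\otimes g)(\otimes(m,n))$. The commutativity of the diagram in the previous proposition says precisely that these two composites $M\times N \to P\otimes_A Q$ coincide as morphisms, so by \handbooktwo\ (applied to the pair of morphisms $(f\otimes g)\circ\otimes$ and $\otimes\circ(f,g)$) they are equal after evaluation on the variable $(m,n)$. Unwinding the notation then gives
\[
\vDash (f\otimes g)(m\otimes n) = f(m)\otimes g(n).
\]

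Since the proof reduces to reading off an already-constructed commutative square, there is essentially no obstacle; the only thing to be careful about is that the equality of morphisms into $P\otimes_A Q$ translates correctly into a statement about terms of type $P\otimes_A Q$, which is handled by Lemma \ref{product-type} together with \handbooktwo. In particular no appeal to the universal property of the tensor, nor to Proposition \ref{handbooktwo-tensor}, is required here: the content is purely the definition of $f\otimes g$ from the previous proposition.
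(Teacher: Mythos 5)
Your proposal is correct and matches the paper's (implicit) justification: the corollary is read off directly from the commuting square $(f\otimes g)\circ\otimes=\otimes\circ(f,g)$ of the preceding proposition, translated into the internal language via Lemma \ref{product-type} and \handbooktwo. You are also right that neither the universal property nor Proposition \ref{handbooktwo-tensor} is needed, since the inference goes from equality of morphisms out of $M\times N$ to validity of the corresponding formula.
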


While the above results are sufficient to work fluently with variables of tensor product type, if $\E$ is a \emph{geometric category} (for example, a Grothendeick topos) then we can make explicit the logical structure of the tensor product. Let $\varphi(x\!:\!M\otimes_AN)$ denote the formula $$\bigvee\!\!\!\!\!\phantom{.}_{\phantom{.}_{p\geq0}}\exists m_1\!:\!M,...,m_p\!:\!M,n_1\!:\!N,...,n_p\!:\!N(x=\sum_{i=1}^pm_i\otimes n_i).$$

\begin{lemma}
     Let $\E$ be a geometric category. Subobject $L=\llbracket\varphi\rrbracket$ is a subgroup of $M\otimes_AN$. 
\end{lemma}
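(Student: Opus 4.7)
The plan is to verify the three subgroup axioms in the internal logic, exploiting the fact that $\E$ is a geometric category so that the infinitary disjunction $\bigvee_{p\geq 0}$ used in $\varphi$ is legitimate and behaves well with $\exists$ and $\wedge$.

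First, for $0_{M\otimes_A N}\in L$, I take the $p=0$ disjunct of $\varphi$: the empty sum of tensors is by convention $0$, so $\vDash \varphi(0)$. Second, for closure under addition, I suppose $x,y:M\otimes_A N$ satisfy $\varphi(x)\wedge\varphi(y)$. By distributing $\wedge$ over $\bigvee$ and $\exists$ (valid in geometric logic), I obtain witnesses $p,q\geq 0$ and variables $m_1,\ldots,m_p, m'_1,\ldots,m'_q:M$, $n_1,\ldots,n_p, n'_1,\ldots,n'_q:N$ with $x=\sum_{i=1}^p m_i\otimes n_i$ and $y=\sum_{j=1}^q m'_j\otimes n'_j$. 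Concatenating the two lists yields an expression $x+y=\sum_{k=1}^{p+q} m''_k\otimes n''_k$, so $\vDash\varphi(x+y)$, i.e.\ $x+y\in L$. Third, for closure under inverses, if $\varphi(x)$ holds with $x=\sum_{i=1}^p m_i\otimes n_i$, then by Proposition~\ref{tensor-identities}(2) applied to $m_i+(-m_i)=0$ we have $(-m_i)\otimes n_i = -(m_i\otimes n_i)$, hence $-x=\sum_{i=1}^p(-m_i)\otimes n_i$ and $\vDash\varphi(-x)$, so $-x\in L$.

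To pass from these internal implications to the assertion that $L\mono M\otimes_A N$ is actually a subgroup object, I appeal to the characterisation in \handbooktwo\ part (5): the composites $L\times L\to M\otimes_A N\times M\otimes_A N\xrightarrow{+} M\otimes_A N$ and $L\to M\otimes_A N\xrightarrow{-} M\otimes_A N$ factor through $L\mono M\otimes_A N$ precisely because $\vDash\forall x,y(\varphi(x)\wedge\varphi(y)\Rightarrow \varphi(x+y))$ and $\vDash\forall x(\varphi(x)\Rightarrow \varphi(-x))$, while the global element $0:1\to M\otimes_A N$ factors through $L$ because $\vDash\varphi(0)$. Together these give $L$ the structure of an abelian subgroup object.

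The main subtlety is the first step, where one must commute the existential quantifier with the infinitary geometric disjunction on both sides and re-index the concatenated finite sum; this is exactly the reason the statement is phrased for geometric categories, where such manipulations are sound. Once the geometric-logic manipulations are in place, the remaining bookkeeping is formal, using only bilinearity of $\otimes$ as recorded in Proposition~\ref{tensor-identities}.
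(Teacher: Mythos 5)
Your proof is correct and takes essentially the same route as the paper's: the key step in both is concatenating the two lists of existential witnesses under the infinitary geometric disjunction to show closure under addition. You are in fact more thorough than the paper, whose proof records only the additive closure and leaves the zero element (empty sum), inverses, and the factorisation of the group operations through the subobject implicit.
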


\begin{proof}
    Here, the sum notation denotes precisely the group structure on $M\otimes_AN$. So $\vDash\forall x,x'\!:\!M\otimes_AN\Big(${\large$\bigvee$}\!\!\!\!\!$\phantom{.}_{\phantom{.}_{p\geq0}}${\large$\bigvee$}\!\!\!\!\!$\phantom{.}_{\phantom{.}_{q\geq0}}\exists m_1\!:\!M,...,m_p,m_{p+1},...,m_{p+q}\!:\!M,n_1\!:\!N,...,n_p,n_{p+1},...,n_{p+q}\!:\!N((x=\sum_{i=1}^pm_i\otimes n_i)\land(x'=\sum_{i=p+1}^{p+q}m_i\otimes n_i)\land(x+x'=\sum_{i=1}^{p+q}m_i\otimes n_i))\Big)$. Hence, $x+x'\!:\!M\otimes_AN$. 
\end{proof}

\begin{proposition}
    Let $\E$ be a geometric category. For $x\!:\!M\otimes_AN$, we have $\vDash\varphi$. Hence, $M\otimes_AN\cong\llbracket\varphi\rrbracket$.\label{variables-type-tensor-II}
\end{proposition}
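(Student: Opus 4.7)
The plan is to prove $M\otimes_A N\cong L$, where $L=\llbracket\varphi\rrbracket$, by showing that the subobject inclusion $i\colon L\hookrightarrow M\otimes_A N$ admits a retraction built from the universal property of the tensor product. Since $i$ is automatically a monomorphism, producing a left inverse will force it to be an isomorphism.

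First, I would observe that the universal bilinear map $\otimes\colon M\times N\to M\otimes_A N$ factors through $L$. Indeed, for variables $m\!:\!M$ and $n\!:\!N$, taking $p=1$ with $m_1=m$ and $n_1=n$ in the defining disjunction gives $\vDash\varphi(m\otimes n)$, so by \handbooktwo (interpretation of formulas as subobjects), $\otimes$ factors as $\otimes=i\circ\otimes_L$ for a unique morphism $\otimes_L\colon M\times N\to L$. This step is exactly where I use the hypothesis that $\E$ is a geometric category: it is what permits the infinite disjunction defining $\varphi$ to carve out a legitimate subobject.

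Next, I would verify that $\otimes_L$ is itself a balanced product. By the preceding lemma, $L$ is an abelian subgroup of $M\otimes_A N$ with $i$ a group homomorphism. The three balanced-product identities B1, B2, B3 for $\otimes$ follow from Proposition~\ref{tensor-identities}; since $i$ is a monomorphism and a group homomorphism, each identity for $\otimes=i\circ\otimes_L$ reflects back to the same identity for $\otimes_L$ in $L$. Hence $\otimes_L\in\mathrm{Bil}_A(M,N;L)$, and by the universal property of the tensor product there is a unique group homomorphism $h\colon M\otimes_A N\to L$ with $h\circ\otimes=\otimes_L$.

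Finally, I would close the loop: the composite $i\circ h\colon M\otimes_A N\to M\otimes_A N$ satisfies $(i\circ h)\circ\otimes=i\circ\otimes_L=\otimes=\mathrm{id}_{M\otimes_A N}\circ\otimes$, so by the uniqueness clause of the universal property, $i\circ h=\mathrm{id}_{M\otimes_A N}$. Thus $i$ is a split epimorphism; being also a monomorphism, it is an isomorphism, which is exactly the statement $M\otimes_A N\cong\llbracket\varphi\rrbracket$, and unpacking gives $\vDash\varphi(x)$ for every $x\!:\!M\otimes_A N$. The only real obstacle is the first step, where one has to know that the geometric structure of $\E$ legitimises the infinite disjunction; after that, the argument is a standard universal-property retraction.
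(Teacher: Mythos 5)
Your proof is correct, but it runs the paper's argument in the opposite direction. The paper forms the quotient $Q=(M\otimes_AN)/L$ with quotient map $q$, observes that $q\circ\otimes=0=0\circ\otimes$ (which already uses, implicitly, your first step that $\otimes$ lands in $L$), and concludes $q=0$ from the uniqueness clause of the universal property applied to the zero balanced product; hence $L=M\otimes_AN$. You instead build a section: you factor $\otimes$ through $L$ explicitly, check that the factorisation $\otimes_L$ is itself a balanced product by reflecting the identities of Proposition~\ref{tensor-identities} back along the monomorphism $i$ (an extra verification the quotient argument never needs, though it is routine since $i$ is a monic group homomorphism), and then obtain a retraction $h$ with $i\circ h=\id$ by uniqueness, whence $i$ is an isomorphism. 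Both routes rest on the same inputs --- that $L$ is a subgroup, that $\otimes$ factors through it, and the uniqueness half of the universal property --- so the difference is one of packaging. The paper's version is shorter because the only balanced product it needs to exhibit is $0$; yours has the mild advantages of not invoking cokernels in $\text{Ab}(\E)$ and of making explicit the factorisation $\otimes=i\circ\otimes_L$ that the paper leaves unstated in the assertion $q\circ\otimes=0$. Your closing remark correctly identifies that the only place the geometric hypothesis on $\E$ enters is in forming $\llbracket\varphi\rrbracket$ from the infinite disjunction at all.
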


\begin{proof}
    $L=\llbracket\varphi\rrbracket$ is a subgroup of $M\otimes_AN$. Let $Q=(M\otimes_AN)/L$ with $q$ the quotient map to $Q$. We have: $0=q\circ\otimes$  as well as $0=0\circ\otimes$. $0$ is a balanced product, and both $0,q$ are group homomorphisms. Hence, by the uniqueness in the universal property, $q=0$, and so $L=M\otimes_AN$.
\end{proof}

\section{Free constructions with a natural number object}

For this section, we take $\E$ to be a topos with natural number object. 

\subsection{Free \texorpdfstring{$A$}{A}-module}

The forgetful functor $\bar{(-)}:A\M\rightarrow\E$ has a \emph{free} left adjoint:

\[\begin{tikzpicture}[scale=1]
	\node (A1) at (0,0) {$A\M$};
	\node (B1) at (1.25,0) {\rotatebox[origin=c]{-90}{$\vdash$}};
	\node (C1) at (2.5,0) {$\E\textcolor{white}{-Mod}$};
	\draw[->,bend left=20]
	(A1) edge node [above] {$\bar{(-)}$} (C1)
	(C1) edge node [below] {$A(-)$} (A1);
\end{tikzpicture}\]
This can be expressed as the following universal property: 
\begin{proposition}[Universal property of the free $A$-module]
    Given map $f:X\rightarrow M$ for some $X\in\E$ and some $A$-module $M$, there exists unique $A$-module homomorphism $\hat{f}:AX\rightarrow M$ s.t. $f=\hat{f}\circ\eta_X$ as morphisms in $\E$. 
\end{proposition}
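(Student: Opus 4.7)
The plan is to deduce the universal property as a formal consequence of the adjunction $\bar{(-)} \dashv A(-)$ stated just above, with the substantive content lying in the construction of $A(-)$ using the natural number object. The unit of the adjunction supplies a natural transformation $\eta : \id_\E \to \overline{A(-)}$, and in particular the required morphism $\eta_X : X \to \overline{AX}$. Given $f : X \to \bar{M}$ in $\E$, I would define $\hat{f} : AX \to M$ to be the image of $f$ under the adjunction bijection $\E(X, \bar{M}) \cong \Hom_A(AX, M)$; the triangle identity for the unit then yields $\bar{\hat{f}} \circ \eta_X = f$. Uniqueness of $\hat{f}$ is immediate, since any $A$-module homomorphism $g : AX \to M$ with $\bar{g} \circ \eta_X = f$ is another preimage of $f$ under the same bijection, so $g = \hat{f}$.

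What gives the proposition real content is the underlying construction of $AX$, for which the NNO is essential. Concretely, I would form the object of finite lists over $A \times X$ via the NNO, and quotient by the equivalence relation generated by the $A$-module axioms (insertion and removal of zero terms, combination of coefficients on equal summands, and the commutative-monoid axioms for addition). Internally, $AX$ is then the object of formal finite sums $\sum_i a_i x_i$ with $a_i : A$, $x_i : X$, and $\eta_X$ sends $x$ to the singleton sum $1 \cdot x$. The candidate $\hat{f}$ is defined on list representatives by $\sum_i a_i x_i \mapsto \sum_i a_i f(x_i)$, using NNO-recursion into the ambient module $M$ to define finite sums internally.

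The main obstacle is verifying that this recursive definition descends to the quotient by the module relations. This reduces, relation by relation, to checking that the recursion on the list object respects each defining equation of $AX$, which in turn follows from the $A$-module axioms satisfied by $M$, argued via the internal logic set up in the Appendix (in particular using Corollary \ref{mp} to pass from pointwise equalities to equalities of morphisms). The $A$-linearity of the resulting $\hat{f}$ is checked by the same internal-logic method, and the above formal argument from the adjunction then concludes both existence and uniqueness.
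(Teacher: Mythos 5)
Your first paragraph is exactly the paper's treatment: the proposition is stated immediately after the adjunction diagram with the preamble ``this can be expressed as the following universal property,'' and no separate proof is given --- the universal property is just the unit/bijection formulation of $\bar{(-)}\dashv A(-)$, with uniqueness coming from injectivity of the adjunction bijection, precisely as you argue. So for the statement as posed, your formal argument is correct and coincides with what the paper does.

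Where you diverge is in the second and third paragraphs: the paper simply asserts the existence of the free left adjoint for a topos with NNO and never constructs $AX$, whereas you sketch an explicit construction via the list object on $A\times X$ quotiented by the module relations, with $\hat{f}$ defined by NNO-recursion and shown to descend to the quotient. That extra work is not needed for the proposition itself (which is a formal consequence of the adjunction however the adjoint is obtained), but it does supply the substantive content the paper leaves implicit, and it is consistent with how the paper later describes $AX$ internally as the object of formal sums $\sum_i a_i x_i$ (Proposition on $\llbracket\varphi\rrbracket=AX$ in the geometric case). One minor point to watch in your construction: you list only commutative-monoid axioms for addition, so you should note that additive inverses in $AX$ are supplied by the ring structure via $(-a)\cdot x$; otherwise the quotient of the list object would only be a module over the additive monoid. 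With that caveat the sketch is sound.
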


It is worth avoiding excessive precision. Considering $X\in\E,M\in A\M$ as objects in $\E$, $M$ and $\bar{M}$ are the same object, as are $\bar{AX}$, $AX$ and $A\bar{X}$. So the unit \& counit of our adjunction can be considered as morphisms $\eta_X:X\rightarrow AX$ and $\epsilon_M:AM\rightarrow M$ in $\E$, with $\epsilon_M$ also being a $A$-module homomorphism. From the faithfulness of the forgetful functor, $\epsilon$ is a pointwise epimorphism. Similarly, from \cite{adjunction}, the adjunction sends $f\in\Hom_A(AX,M)$ to the composite $X\xrightarrow{\eta_X}AX\xrightarrow{f}M$; and $g\in\E(X,M)$ to the composite $AX\xrightarrow{Ag}AM\xrightarrow{\epsilon_M}M$, again a $A$-module homomorphism. 

As such, a number of results simplify. For example, any given $g:X\rightarrow\bar{M}$ is equal to the composite $X\xrightarrow{\eta_X}\bar{AX}\xrightarrow{\bar{Ag}}\bar{A\bar{M}}\xrightarrow{\bar{\epsilon_M}}\bar{M}$ \cite{adjunction}; but as an equality of morphisms in $\E$, we can consider this a commutative diagram in $\E$, 
\[\begin{tikzpicture}[scale=2]
    \node (A1) at (0,0.8) {$AX$}; 
    \node (A2) at (0,0) {$X$};
    \node (B1) at (1,0.8) {$AM$};
    \node (B2) at (1,0) {$M$};
    \draw[->]
        (A2) edge node [left] {$\eta_X$} (A1)
        (A2) edge node [above] {$g$} (B2)
        (A1) edge node [above] {$Ag$} (B1);
    \draw[->>]
        (B1) edge node [right] {$\epsilon_M$} (B2);
\end{tikzpicture}\]From this thinking, we obtain the following results:

\begin{proposition}
    Given $M\in A\M$, $\epsilon_M\circ\eta_M=\id_M$ as morphisms in $\E$.\label{free-unit-counit}
\end{proposition}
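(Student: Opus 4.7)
The plan is to recognise this statement as one of the triangle identities of the free-forgetful adjunction $A(-)\dashv\bar{(-)}$, translated into an equality of morphisms in $\E$. All the work has essentially been done in the preceding paragraph; I will simply specialise it to an appropriate choice of morphism.

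Concretely, the preceding paragraph records that for any $X\in\E$ and $M\in A\M$, every $g\in\E(X,\bar M)$ factors as
\[
g=\bar{\epsilon_M}\circ\bar{Ag}\circ\eta_X,
\]
this being the image of $g$ under the composite of the adjunction bijection $\E(X,\bar M)\cong\Hom_A(AX,M)$ and its inverse. I will take $X=\bar M$ and set $g=\id_{\bar M}$. By functoriality of $A(-)$ we have $A\id_{\bar M}=\id_{AM}$, so applying the forgetful functor $\bar{(-)}$ (which sends identities to identities) gives $\bar{A\id_{\bar M}}=\id_{AM}$ as a morphism in $\E$. Substituting into the displayed equation yields
\[
\id_{\bar M}=\bar{\epsilon_M}\circ\id_{AM}\circ\eta_{\bar M}=\bar{\epsilon_M}\circ\eta_{\bar M}.
\]
Under the identifications $\bar M=M$ and $\bar{\epsilon_M}=\epsilon_M$ as morphisms in $\E$ (as emphasised in the preceding discussion, one should not be too precise about distinguishing these), this is exactly $\epsilon_M\circ\eta_M=\id_M$.

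There is no serious obstacle here: the content is purely the second triangle identity of the adjunction, and the only thing to be careful about is the notational convention that the forgetful functor is transparent, so that $\epsilon_M$, a priori an $A$-module homomorphism $AM\to M$, may be regarded without ambiguity as a morphism in $\E$ when composed with $\eta_M:M\to AM$.
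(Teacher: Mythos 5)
Your proposal is correct and is essentially identical to the paper's proof, which simply takes $g=\id_M$ in the factorisation $g=\bar{\epsilon_M}\circ\bar{Ag}\circ\eta_X$ recorded in the preceding paragraph. You have merely spelled out the functoriality step $A\id_{\bar M}=\id_{AM}$ that the paper leaves implicit.
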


\begin{proof}
    Take $g=\id_M$ in the above diagram. 
\end{proof}

\begin{proposition}
    The unit $\eta$ is a pointwise monomorphism, and the counit $\epsilon$ is a pointwise epimorphism.\label{unit-mono}
\end{proposition}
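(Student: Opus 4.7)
The counit part I would handle by invoking Proposition~\ref{free-unit-counit} (or equivalently the earlier observation that faithfulness of $\bar{(-)}$ forces $\epsilon$ to be pointwise epi via a triangle identity): the identity $\epsilon_M \circ \eta_M = \id_M$ in $\E$ makes $\bar\epsilon_M$ a split epimorphism in $\E$, and faithfulness of the forgetful functor then promotes this to an epimorphism in $A\M$ by the standard cancellation argument (if $g \circ \epsilon_M = h \circ \epsilon_M$ in $A\M$, apply $\bar{(-)}$ and use that $\bar\epsilon_M$ is epi in $\E$ to get $\bar g = \bar h$, whence $g = h$).

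The unit part is where the real work lies. My plan is to reduce $\eta_X$ being mono to the existence, for each $X \in \E$, of some $A$-module $M$ and some monomorphism $\iota : X \mono \bar M$ in $\E$. Indeed, by the universal property of the free module any such $\iota$ factors uniquely as $\iota = \bar{\hat\iota} \circ \eta_X$ with $\hat\iota \in \Hom_A(AX, M)$, and since a monic composite forces its initial factor to be monic, this produces $\eta_X$ mono. In the absence of extra structure on $X$, the natural candidate is $\iota = \eta_X$ itself with $M = AX$, so the task reduces to a direct verification that $\eta_X$ is a monomorphism in $\E$.

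This verification I would carry out using the explicit NNO-based construction of $AX$ that is the subject of this section. Realising $AX$ as the quotient of $\coprod_{n \in \mathbb{N}} (A \times X)^n$ by the congruence forcing the $A$-module axioms, the map $\eta_X$ sends $x \!:\! X$ to the class of the singleton sequence $((1_A, x))$. A careful inspection of the generating relations (combining coefficients at equal basis elements, absorbing $0_A$, and permuting entries) shows that none of them can identify singletons arising from distinct $x \neq x'$, so $\eta_X$ is monic. The principal obstacle I anticipate is precisely this last step: abstract adjunction data together with Proposition~\ref{free-unit-counit} yields only that $\eta_M$ is split mono when $M$ is already the underlying object of an $A$-module, and extending to arbitrary $X \in \E$ genuinely requires the explicit NNO-construction rather than general categorical nonsense.
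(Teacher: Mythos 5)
Your treatment of the counit matches the paper's: the paper simply notes that $\epsilon$ being pointwise epi is immediate from faithfulness of the forgetful functor, and your split-epi refinement via Proposition~\ref{free-unit-counit} is a correct elaboration of the same point. For the unit, the paper's own proof is just the other half of that observation: Proposition~\ref{free-unit-counit} gives $\id=\epsilon\circ\eta$, so $\eta$ is split mono. You correctly anticipate that, read literally, this only covers the components $\eta_{\bar M}$ at underlying objects of $A$-modules, and your factorization remark --- if $\iota:X\mono\bar M$ is monic then $\iota=\bar{\hat\iota}\circ\eta_X$ forces $\eta_X$ monic --- is the right way to extend the triangle-identity argument to every $X$ admitting a mono into a module, which includes every $X$ to which the paper subsequently applies the proposition (subobjects of modules, products of modules).

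The gap is in your proposed completion for genuinely arbitrary $X\in\E$. Taking $\iota=\eta_X$ itself is circular, as you half-concede, and the fallback --- inspecting the generating relations of an explicit NNO-based presentation of $AX$ and checking that no relation identifies singletons $((1_A,x))$ and $((1_A,x'))$ for $x\neq x'$ --- is not carried out and cannot succeed at the stated level of generality: for the zero ring $A=0$ every $A$-module is trivial, so $AX\cong 1$ and $\eta_X$ collapses any $X$ with two distinct global elements, already in $\textbf{Set}$. Any relation-chasing argument must therefore invoke $1_A\neq 0_A$ somewhere, and in the internal logic the claim that the generated congruence does not identify distinct singletons is itself a nontrivial normal-form argument that you would have to supply, not an inspection. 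As written, the unit half of your proposal is a plan rather than a proof; the paper's two-line split-mono argument, supplemented by your factorization remark for subobjects of modules, is both shorter and actually closes for all the cases in which the proposition is used.
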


\begin{proof}
    As stated, that $\epsilon$ is an epimorphism is an immediate consequence of the forgetful functor being faithful. But from Proposition \ref{free-unit-counit}, $\id=\epsilon\circ\eta$. As $\id$ is pointwise monic, we have that $\eta$ must be pointwise monic. 
\end{proof}
    
Given a variable $x\!:\!X$, when the context is clear we will refer to $x\!:\!AX$ as the variable $\eta_X(x)$ (although in some of our arguments will need to make explicit the presence of $\eta_X$). We prove a free analogue of \handbooktwo (a), showing that maps on the free module are ``determined by their generators": 

\begin{proposition}
    \label{handbooktwo-free} Given $A$-homomorphisms $f,g:AX\rightarrow M$, we have $$f=g\text{ if and only if }\vDash\forall x\!:\!X(f(x)=g(x)).$$
\end{proposition}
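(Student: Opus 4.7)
The plan is to reduce the statement to the uniqueness clause in the universal property of the free $A$-module. The forward implication is immediate: if $f=g$ as $A$-module homomorphisms, then in particular $f(\eta_X(x))=g(\eta_X(x))$ for every variable $x\!:\!X$, and so $\vDash\forall x\!:\!X(f(x)=g(x))$ under our convention of writing $x\!:\!AX$ for $\eta_X(x)$.

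For the reverse direction, suppose $\vDash\forall x\!:\!X(f(\eta_X(x))=g(\eta_X(x)))$. Unravelling the shorthand, this is the assertion $\vDash\forall x\!:\!X\bigl((f\circ\eta_X)(x)=(g\circ\eta_X)(x)\bigr)$. By \handbooktwo\ part (1) applied to the morphisms $f\circ\eta_X,\,g\circ\eta_X:X\to M$ in $\E$, we conclude that $f\circ\eta_X=g\circ\eta_X$ as morphisms in $\E$. Call this common map $h:X\to M$.

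Now apply the universal property of the free $A$-module to $h$: there exists a unique $A$-module homomorphism $\hat{h}:AX\to M$ such that $\hat{h}\circ\eta_X=h$. Since $f$ and $g$ are both $A$-module homomorphisms $AX\to M$ whose precompositions with $\eta_X$ equal $h$, the uniqueness clause forces $f=\hat{h}=g$, completing the proof. The only real content is the translation step from the internal-logic formula to an equality of morphisms in $\E$; after that, the argument is just the uniqueness half of the adjunction $A(-)\dashv\bar{(-)}$, so there is no serious obstacle.
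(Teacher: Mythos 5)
Your proof is correct and follows essentially the same route as the paper: both reduce the statement to the fact that precomposition with $\eta_X$ is a bijection $\Hom_A(AX,M)\cong\E(X,M)$, your "uniqueness clause of the universal property" being exactly the injectivity of that adjunction bijection. You are in fact slightly more explicit than the paper about the translation step from the internal formula to the equality $f\circ\eta_X=g\circ\eta_X$ via Borceux's Proposition 6.10.2(1), which the paper compresses into the phrase "writing precisely".
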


\begin{proof}
    Writing precisely, we aim to prove that $\vDash f=g$ iff $f\circ\eta_X=g\circ\eta_X$, whenever $f$ and $g$ are $A$-homomorphisms. We have $\E(X,M)\cong\Hom_A(X,M)$. If $f=g$, then they correspond to the same element of $\E(X,M)$; hence, $f\circ\eta_X=g\circ\eta_X$. If we have that $f\circ\eta_X=g\circ\eta_X$ as functions $\E(X,M)$, then they correspond to the same element of $\Hom_A(X,M)$; hence, $f=g$. 
\end{proof}

Now, $AX$ is an $A$-module object of $\E$, so has addition and $A$-action maps. If $\E$ is a geometric category, we can consider subobject $\llbracket\varphi\rrbracket\xmono{\imath}AX$, where $\varphi(x\!:\!AX)$ denotes the formula $$\bigvee\!\!\!\!\!\phantom{.}_{\phantom{.}_{p\geq0}}\exists a_1,...,a_p\!:\!A,\exists x_1,...,x_p\!:\!X(x=\sum_{i=1}^pa_ix_i).$$ As $\im(\eta_X)\mono\llbracket\varphi\rrbracket$, from the thinking above we can consider instead $X\mono\llbracket\varphi\rrbracket$ a subobject in $\E$. Hence, $\eta_X$ factors through $\imath$ from $X\rightarrow AX$. 

\begin{lemma}
    If topos $\E$ is a geometric category, subobject $\llbracket\varphi\rrbracket$ is an $A$-module object in $\E$, so $\llbracket\varphi\rrbracket\subset AX$ in $A\M$.
\end{lemma}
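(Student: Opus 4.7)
The plan is to verify that the subobject $L := \llbracket\varphi\rrbracket \mono AX$ is closed under each of the $A$-module structure maps of $AX$ (zero, addition, additive inversion, $A$-action), reasoning in the internal geometric logic of $\E$; the $A$-module axioms on $L$ then follow automatically by restriction from those on $AX$. By \handbooktwo (5), closure of $L$ under a structure map $\mu\colon Y \to AX$ amounts to the implication $\vDash \forall y\!:\!Y(y \in \text{appropriate subobject} \Rightarrow \mu(y) \in L)$, which in each case is a direct consequence of manipulating the witnessing tuples of $\varphi$.

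Concretely, the zero element of $AX$ lies in $L$ by taking the $p = 0$ disjunct of $\varphi$ (the empty sum). For closure under addition, given $x, x' \!:\! AX$ with $\vDash \varphi(x) \wedge \varphi(x')$, witnesses $x = \sum_{i=1}^p a_i x_i$ and $x' = \sum_{j=1}^q b_j y_j$ can be concatenated into a single sum of length $p + q$ representing $x + x'$, so $\vDash \varphi(x) \wedge \varphi(x') \Rightarrow \varphi(x + x')$. For the $A$-action, given $a\!:\!A$ and $x = \sum_{i=1}^p a_i x_i$, the identity $ax = \sum_{i=1}^p (a a_i) x_i$ yields $\vDash \varphi(x) \Rightarrow \varphi(ax)$. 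Additive inversion is handled analogously via $-x = \sum_{i=1}^p (-a_i) x_i$. In each instance, the structure map on $AX$ factors through $L$, and we endow $L$ with the unique $A$-module structure making the inclusion $L \mono AX$ an $A$-module monomorphism.

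The main (mild) obstacle is ensuring that $\varphi$ genuinely defines a subobject in $\E$: the formula involves an infinite disjunction $\bigvee_{p \geq 0}$ together with existential quantification, and it is precisely this combination that geometric logic admits. This is exactly the point of assuming $\E$ is a geometric category — without such a hypothesis, $\llbracket\varphi\rrbracket$ need not exist as an object of $\E$. Beyond this foundational check, the argument is simple bookkeeping with witnessing tuples, entirely parallel to the classical proof that finite $A$-linear combinations form a submodule.
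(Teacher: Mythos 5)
Your proposal is correct and follows essentially the same route as the paper: both verify that $\llbracket\varphi\rrbracket$ is closed under the restricted structure maps of $AX$ by concatenating witnessing tuples for addition and rescaling coefficients for the $A$-action, with the $A$-module axioms then inherited from $AX$. The only differences are cosmetic — you additionally spell out the zero element (empty sum) and additive inverses, which the paper leaves implicit, and you make explicit the role of the geometric-category hypothesis in guaranteeing that the infinite disjunction defines a subobject at all.
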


\begin{proof}
    Write $\imath:\llbracket\varphi\rrbracket\rightarrow AX$, and consider variables $x,x'\!:\!X,a\!:\!A$. Given $\llbracket\varphi\rrbracket\times\llbracket\varphi\rrbracket$ is a subobject of $AX\times AX$, consider the restriction of $AX$'s addition, $+:\llbracket\varphi\rrbracket\times\llbracket\varphi\rrbracket\rightarrow AX$. Then we want $\vDash\forall x,x'\!:\!\llbracket\varphi\rrbracket(\varphi(x+x'))$. But $$\vDash\bigvee\!\!\!\!\!\phantom{.}_{\phantom{.}_{p\geq0}}\bigvee\!\!\!\!\!\phantom{.}_{\phantom{.}_{q\geq0}}\exists a_1,...,a_{p+q}\!:\!A,\exists x_1,...,x_{p+q}\!:\!X,\big(x=\sum_{i=1}^pa_ix_i\land x'=\sum_{i=p+1}^{p+q}a_ix_i\big).$$ As a result, $$\vDash\bigvee\!\!\!\!\!\phantom{.}_{\phantom{.}_{p+q\geq0}}\exists a_1,...,a_{p+q}\!:\!A,\exists x_1,...,x_{p+q}\!:\!X,x+x'=\sum_{i=1}^{p+q}a_ix_i;$$ substituting a $P=p+q$, we see that $\vDash\varphi(x+x')$. Note that this holds even if the $x_i$ variables overlap - the sum notation simply denotes the addition structure of $AX$. 

    Now, given $A\times\llbracket\varphi\rrbracket$ is a subobject of $A\times AX$, consider the restriction of the $A$-module action on $AX$, $A\times\llbracket\varphi\rrbracket\rightarrow AX$. Then we want $\vDash\forall x\!:\!\llbracket\varphi\rrbracket,\forall a\!:\!A(\varphi(ax))$. But $$\vDash\bigvee\!\!\!\!\!\phantom{.}_{\phantom{.}_{p\geq0}}\exists a_1,...,a_p\!:\!A,x_1,...,x_p\!:\!X,x=\sum_{i=1}^pa_ix_i;$$ as a result, $$\vDash\bigvee\!\!\!\!\!\phantom{.}_{\phantom{.}_{p\geq0}}\exists a_1,...,a_p\!:\!A,x_1,...,x_p\!:\!X,ax=\sum_{i=1}^p(aa_i)x_i.$$ Hence, $\vDash\varphi(ax)$. 
\end{proof}

\begin{proposition}
    Let our topos $\E$ be a geometric category. Considered in $A\M$, $\llbracket\varphi\rrbracket=AX$. Hence, writing $x\!:\!AX$,
    $$\vDash\bigvee\!\!\!\!\!\phantom{.}_{\phantom{.}_{p\geq0}}\exists a_1,...,a_p\!:\!A,x_1,...,x_p\!:\!X(x=\sum_{i=1}^pa_ix_i).$$ 
\end{proposition}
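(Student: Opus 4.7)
The plan is to recycle the argument of Proposition~\ref{variables-type-tensor-II} essentially verbatim, using the free-module universal property in place of the tensor-product one. By the preceding lemma, $L := \llbracket\varphi\rrbracket$ is already known to be a sub-$A$-module of $AX$, so in the category $A\M$ I can form the quotient $Q := AX / L$ with quotient homomorphism $q : AX \twoheadrightarrow Q$.

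Next I would verify that the unit $\eta_X : X \to AX$ factors through $L$. For a variable $x \! : \! X$, the single-term witness $p = 1$, $a_1 = 1_A$, $x_1 = x$ immediately gives $\vDash \varphi(\eta_X(x))$, since $\eta_X(x) = 1_A \cdot \eta_X(x)$ in $AX$. By the usual internal-logic characterisation of subobjects (\handbooktwo~(5)), this shows $\eta_X$ factors through the inclusion $L \mono AX$, and therefore $q \circ \eta_X = 0 = 0 \circ \eta_X$ as morphisms $X \to Q$ in $\E$.

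From here the universal property of the free module finishes the argument: both $q$ and the zero map are $A$-module homomorphisms $AX \to Q$ whose images under the forgetful functor agree on $\eta_X$, so by the uniqueness in Proposition~\ref{handbooktwo-free} (equivalently, by the adjunction $A(-) \dashv \overline{(-)}$) we conclude $q = 0$. Hence $L = \ker q = AX$ as sub-$A$-modules of $AX$. The internal statement $\vDash \varphi(x)$ for a generic variable $x \! : \! AX$ is then immediate from $\llbracket \varphi \rrbracket = AX$ via the Kripke--Joyal semantics.

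I do not expect any real obstacle. The only delicate point is that the disjunctive/existential formula $\varphi$ must be well-formed as an object of the subobject lattice, which is precisely why the hypothesis that $\E$ be a geometric category is in force; granted that, everything reduces to the observation that $\eta_X(x) = 1_A \cdot x$ realises $\varphi$ at $p = 1$, together with the freeness of $AX$.
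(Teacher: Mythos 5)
Your argument is correct, but it is not the route the paper takes for this proposition. The paper proves $\llbracket\varphi\rrbracket=AX$ by showing directly that $\llbracket\varphi\rrbracket$ represents the functor $\E(X,\overline{(-)})$: it constructs an explicit bijection $\Hom_A(\llbracket\varphi\rrbracket,M)\cong\E(X,M)$ (restricting $\epsilon_M\circ Ag$ along $\imath$ in one direction, precomposing with the factorisation of $\eta_X$ in the other) and verifies in the internal logic that the two assignments are mutually inverse, using the shape of $\varphi$ to compute $\hat{g}$ on a generic ``linear combination''. You instead run the quotient argument: $\llbracket\varphi\rrbracket$ is a submodule by the preceding lemma, $\eta_X$ lands in it via the $p=1$, $a_1=1_A$ witness and unitality of $AX$, so the quotient map $q:AX\to AX/\llbracket\varphi\rrbracket$ satisfies $q\circ\eta_X=0\circ\eta_X$ and hence $q=0$ by Proposition \ref{handbooktwo-free}. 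This is precisely the strategy the paper itself uses for the tensor-product analogue (Proposition \ref{variables-type-tensor-II}), so your proof is shorter, internally consistent with the paper's methods, and avoids the bookkeeping of the there-and-back verification; it does rely on $A\M$ having cokernels, which holds since module objects in a topos form an abelian category and which the paper already assumes implicitly in its proof of Proposition \ref{variables-type-tensor-II}. What the paper's longer computation buys is the explicit ``extend linearly'' description of the adjunct $\hat{g}$ on sums $\sum_i a_ix_i$, which is reused later (e.g.\ in Proposition \ref{disjunction-generated}); your argument establishes the equality of subobjects without producing that formula.
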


\begin{proof}    
    We wish to show $\Hom_A(\llbracket\varphi\rrbracket,M)\cong\E(X,M)$. So, given $f\in\Hom_A(\llbracket\varphi\rrbracket,M)$, let our new adjunction map $f$ to the upper path of the diagram whose left hand triangle commutes:
    \[\begin{tikzpicture}[scale=2]
        \node (A1) at (0,0.5) {$X$};
        \node (B1) at (1,0.5) {$\llbracket\varphi\rrbracket$};
        \node (B2) at (1,0) {$AX$};
        \node (C1) at (2,0.5) {$M$.};
        \draw[->]
        (A1) edge node [above] {} (B1)
        (A1) edge[bend right=30] node [below] {$\eta_X$} (B2)
        (B1) edge node [above] {$f$} (C1)
        (B1) edge node [right] {$\imath$} (B2);
    \end{tikzpicture}\]
    Similarly, given $g\in\E(X,M)$, we identify it with $\hat{g}$, the composite $$\llbracket\varphi\rrbracket\xmono{\imath}kX\xrightarrow{Ag}AM\xrightarrow{\epsilon_M}M.$$
    We turn to internal logic. We have that, given variables $a_1,...,a_p\!:\!A, x_1,...,x_p\!:\!X$ for some $p$, $$\vDash\hat{g}(\sum_{i=1}^pa_i\eta_X(x_i))=\epsilon_M(Ag(\sum_{i=1}^pa_i\eta_X(x_i)))=\sum_{i=1}^pa_i\epsilon_M(Ag(\eta_X(x_i)))=\sum_{i=1}^pa_ig(x_i),$$ from the commutative diagram earlier in the proof. Hence, $$\vDash\bigvee\!\!\!\!\!\phantom{.}_{\phantom{.}_{p\geq0}}\exists a_1,...,a_p\!:\!A,\exists x_1,...,x_p\!:\!X,\Big(x=\sum_{i=1}^pa_ix_i\:\land\:\hat{g}(x)=\sum_{i=1}^pa_ig(x_i)\Big).$$
    In other words, $\hat{g}$ acts on elements of $X$ as $g$ does, and extends linearly, as in the classical case. Sending $\E$-morphism $g:X\rightarrow M$ there and back via our new correspondence obtains composite $g':X\xrightarrow{\eta_X}\llbracket\varphi\rrbracket\xrightarrow{\hat{g}}M$. Then for $x\!:\!X$ we have $\vDash g'(x)=g(x)$, i.e. $g'=g$. Sending $f:\llbracket\varphi\rrbracket\rightarrow M$ there and back via our new correspondence obtains morphism $f':\llbracket\varphi\rrbracket\rightarrow M$ s.t. $$\vDash f'(\sum_{i=1}^pa_i(x_i))=\sum_{i=1}^pa_if(x_i)=f(\sum_{i=1}^pa_i(x_i)),$$ as $f$ is a $A$-module homomorphism. Hence, $$\vDash\bigvee\!\!\!\!\!\phantom{.}_{\phantom{.}_{p\geq0}}\exists a_1,...,a_p\!:\!A,x_1,...,x_p\!:\!X,\Big(x=\sum_{i=1}^pa_ix_i       \:\land\:f'(x)=f(x)\Big).$$
    This simplifies to $\vDash f'(x)=f(x)$, and so $f'=f$ by \cite{handbook} Proposition 6.10.2(a). So, for all $X\in\E,M\in A\M$, we have $\E(X,M)\cong\Hom_A(\llbracket\varphi\rrbracket,M)$; hence, $\llbracket\varphi\rrbracket=AX$. From this, we obtain the naturality of the isomorphism. 
\end{proof}

\subsection{Submodule generated by a subobject}

\begin{definition}
    Suppose $i:X\mono M$ is a monic in $\E$ for $A$-module $M$. The \emph{submodule $\langle X\rangle$ generated by $X$} is the image of the composite $AX\xrightarrow{Ai}AM\xrightarrow{\epsilon_M}M$. 
\end{definition}

\begin{corollary}
    Subobject $\langle X\rangle$ is a submodule of $M$.
\end{corollary}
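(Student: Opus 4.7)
The plan is to realise $\langle X\rangle$ as the image of the $A$-module homomorphism
$$\phi := \epsilon_M\circ Ai : AX \longrightarrow M,$$
and then to invoke the fact that images of module homomorphisms are submodules. First I would note that $Ai$ is an $A$-module map because $A(-)$ is a functor landing in $A\M$, and that $\epsilon_M$ is an $A$-module map by virtue of being the counit of the free/forgetful adjunction; consequently $\phi$ is a morphism in $A\M$. Hence the epi-mono factorisation of $\phi$ in the abelian category $A\M$ produces a subobject $\im(\phi)\mono M$ in $A\M$, whose underlying object in $\E$ agrees with $\langle X\rangle$ because the forgetful functor $A\M\to \E$ creates monomorphisms and preserves epimorphisms (and therefore preserves image factorisations).

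Alternatively, assuming $\E$ is a geometric category so that the description of $AX$ from the preceding proposition is available, I would argue directly in internal logic. The subobject $\langle X\rangle\mono M$ is precisely $\llbracket\psi\rrbracket$, where
$$\psi(y\!:\!M) \;\equiv\; \bigvee\!\!\!\!\!\phantom{.}_{\phantom{.}_{p\geq0}}\exists a_1,\ldots,a_p\!:\!A,\;\exists x_1,\ldots,x_p\!:\!X\;\Bigl(y=\sum_{j=1}^p a_j\, i(x_j)\Bigr),$$
because any $u\!:\!AX$ is expressible as $\sum a_j\eta_X(x_j)$ and $\phi$ sends such a $u$ to $\sum a_j i(x_j)$. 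Closure of $\llbracket\psi\rrbracket$ under the addition and $A$-action of $M$ is then the same bookkeeping used in the previous lemma about $\llbracket\varphi\rrbracket\subset AX$: two such finite sums concatenate to another finite sum, and scalar multiplication reabsorbs into the $a_j$ coefficients.

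The only genuine subtlety, rather than an obstacle, is confirming that the two candidate $A$-module structures on $\langle X\rangle$ (the one inherited as a subobject of $M$ and the one coming from the image factorisation of $\phi$ in $A\M$) coincide; this is automatic because the mono $\langle X\rangle\mono M$ is, by construction, a morphism in $A\M$, so the $A$-action on $\langle X\rangle$ is forced to be the restriction of that on $M$. With that verified, the corollary follows.
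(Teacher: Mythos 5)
Your primary argument is exactly the paper's proof: $\langle X\rangle$ is by definition the image of $\epsilon_M\circ Ai$, and since this composite is an $A$-module homomorphism its image is a submodule of $M$; you merely flesh out why each factor is a morphism in $A\M$ and why the image factorisation computed in $A\M$ has the right underlying object, which the paper leaves implicit. The internal-logic alternative you sketch is not needed here and essentially anticipates the paper's later Proposition on $\langle X\rangle\cong\llbracket\varphi\rrbracket$ in the geometric case.
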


\begin{proof}
    This follows from the fact that $AX\rightarrow AM\xrightarrow{\epsilon_M}M$ is an $A$-module homomorphism.   
\end{proof}

\begin{proposition}
    \label{handbooktwo-generated}Given $A$-homomorphisms $f,g:\langle X\rangle\to N$, we have $$f=g\text{ if and only if }\vDash\forall x\!:\!X(f(x)=g(x)).$$
\end{proposition}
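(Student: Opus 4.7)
The plan is to reduce the statement to the already-established free-module version, Proposition \ref{handbooktwo-free}, by using the image factorisation that defines $\langle X\rangle$.

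First I would set up notation. By definition, $\langle X\rangle$ is the image of the $A$-module homomorphism $\epsilon_M\circ Ai:AX\to M$, so this map factors as
$$AX\xepi{\pi}\langle X\rangle\xmono{\iota}M,$$
where $\pi$ is an (epi) $A$-module homomorphism and $\iota$ is the inclusion of the submodule into $M$. A direct computation using naturality of $\eta$ gives $\iota\circ\pi\circ\eta_X=\epsilon_M\circ Ai\circ\eta_X=i$, so the composite $j:=\pi\circ\eta_X:X\to\langle X\rangle$ is precisely the lift of $i$ through $\iota$; this is the map encoded by the notation $x\!:\!\langle X\rangle$ for a variable $x\!:\!X$.

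The forward direction is immediate: if $f=g$ then $f\circ j=g\circ j$, so $\vDash\forall x\!:\!X(f(x)=g(x))$. For the reverse direction, assume $\vDash\forall x\!:\!X(f(x)=g(x))$, i.e.\ $f\circ j=g\circ j$ as morphisms $X\to N$ in $\E$. Consider the $A$-module homomorphisms
$$f\circ\pi,\;g\circ\pi:AX\to N.$$
Precomposing with $\eta_X$ yields $f\circ\pi\circ\eta_X=f\circ j=g\circ j=g\circ\pi\circ\eta_X$, so by Proposition \ref{handbooktwo-free} applied to $f\circ\pi$ and $g\circ\pi$, we conclude $f\circ\pi=g\circ\pi$. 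Since $\pi$ is an epimorphism in $A\M$, cancellation gives $f=g$.

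The only subtlety, and the step I would flag for care, is making sure the identification of variables $x\!:\!X$ inside $\langle X\rangle$ genuinely matches the map $j=\pi\circ\eta_X$: it is this identification that lets the hypothesis $\vDash\forall x\!:\!X(f(x)=g(x))$ be rewritten as the external equality $f\circ j=g\circ j$, after which the result is a formal reduction to the free case via the epi $\pi$.
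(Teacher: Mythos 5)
Your proposal is correct and follows essentially the same route as the paper: both factor $\epsilon_M\circ Ai$ through the epimorphism $AX\epi\langle X\rangle$ (your $\pi$, the paper's $e$), identify the variable $x\!:\!\langle X\rangle$ with the composite through $\eta_X$, apply Proposition \ref{handbooktwo-free} to the precomposed maps $AX\to N$, and cancel the epimorphism. The only difference is presentational — you spell out the two directions and the identification $j=\pi\circ\eta_X$ slightly more explicitly than the paper does.
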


\begin{proof}
    Here, the variable $x$ is the variable $e(\eta_X(x))\!:\!\langle X\rangle$, where $\eta_X:X\to AX$ is the counit of the free adjunction, and $e:AX\to\langle X\rangle$ is the map into the image of $\epsilon_M\circ A_i$ given by the universal property. It is known that, given a category $\mathscr{C}$ with equalisers, such a morphism is an epimorphism (see, for example, \cite{mitchell}). So, $f=g$ if and only if $f\circ e=g\circ e$. From Proposition \ref{handbooktwo-free}, we have that $f\circ e=g\circ e$ if and only if $\vDash\forall x\!:\!X(f(e(\eta_X(x)))=g(e(\eta_X(x))))$, and so we have our result. 
\end{proof}

\begin{lemma}
    Considering $X\xmono{i}M$ as a subobject of $AM$ (via $\eta_M\circ i$), the submodule $\langle X\rangle$ of $AM$ generated by $X$ is the image of $Ai$. 
\end{lemma}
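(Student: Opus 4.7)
The plan is to unfold the definition of $\langle X\rangle$ inside $AM$ and reduce it to $\im(Ai)$ by a short diagram chase that relies only on functoriality of $A(-)$ and the triangle identity of the free–forgetful adjunction.

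First I would write out the definition applied to the subobject $\eta_M\circ i:X\mono AM$. Since $AM$ is an $A$-module, the submodule generated by $X$ inside $AM$ is, by definition, the image of the composite
$$AX\xrightarrow{A(\eta_M\circ i)}A(AM)\xrightarrow{\epsilon_{AM}}AM.$$
Next I would use functoriality of the free functor $A(-):\E\to A\text{-Mod}$ to split $A(\eta_M\circ i)=A\eta_M\circ Ai$, so the composite becomes
$$AX\xrightarrow{Ai}AM\xrightarrow{A\eta_M}A(AM)\xrightarrow{\epsilon_{AM}}AM.$$

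Now I would invoke the triangle identity of the adjunction $A(-)\dashv \bar{(-)}$, which on an object $Y\in\E$ reads $\epsilon_{AY}\circ A\eta_Y=\id_{AY}$. Taking $Y=M$ gives $\epsilon_{AM}\circ A\eta_M=\id_{AM}$, so the three-term composite collapses to $Ai$. Therefore the image of $\epsilon_{AM}\circ A(\eta_M\circ i)$ coincides with the image of $Ai:AX\to AM$, which is precisely the claim $\langle X\rangle=\im(Ai)$.

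Strictly speaking there is no obstacle here: the argument is formal once one recognises which triangle identity to apply, and the only thing to be careful about is keeping track of when we are in $\E$ versus $A\M$ (since $\eta_M\circ i$ is a morphism of $\E$, but $Ai$ and $\epsilon_{AM}$ are morphisms of $A$-modules, so the image in the final step can be taken either in $\E$ or in $A\M$ and yields the same subobject of $AM$ by the construction of images in these categories).
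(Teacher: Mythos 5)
Your proof is correct and follows essentially the same route as the paper's: unfold the definition of $\langle X\rangle$ applied to the subobject $\eta_M\circ i:X\mono AM$ and collapse the resulting composite to $Ai$ by a unit--counit cancellation. The only (cosmetic) difference is that you factor $A(\eta_M\circ i)$ as $A\eta_M\circ Ai$ and invoke the triangle identity $\epsilon_{AM}\circ A\eta_M=\id_{AM}$, whereas the paper writes the middle map as $\eta_{AM}$ and cites its earlier proposition that $\epsilon_{AM}\circ\eta_{AM}=\id_{AM}$; either way the composite reduces to $Ai$ and the images agree.
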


\begin{proof}
    \label{submodule-generated-free}The subobject $\langle X\rangle$ of $AM$ generated by $X$ is the image of the composite $AX\xmono{Ai}AM\xrightarrow{\eta_{AM}}AAM\xrightarrow{\epsilon_{AM}}AM$; but by Lemma \ref{free-unit-counit}, $\epsilon_{AM}\circ\eta_{AM}=\id_{AM}$, so this composite is simply $Ai$.
\end{proof}

\begin{proposition}
    \label{disjunction-generated}Let our topos $\E$ be a geometric category. Writing $\varphi(m\!:\!M)$ to be $\bigvee\!\!\!\!\!\phantom{.}_{\phantom{.}_{p\geq0}}\exists a_1,...,a_p\!:\!A,x_1,...,x_p\!:\!X(m=\sum_{i=1}^pa_ix_i)$, we have $$\langle X\rangle\cong\llbracket\varphi\rrbracket.$$
\end{proposition}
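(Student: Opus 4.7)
The plan is to compute the image $\im(\epsilon_M \circ Ai)$, which by definition equals $\langle X\rangle$, using the internal logic description of $AX$ provided by the preceding proposition, and to check that the resulting subobject of $M$ coincides with $\llbracket\varphi\rrbracket$.

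First I would recall that in a geometric category, for any morphism $f: P \to Q$ the image is described internally as $\im(f) = \llbracket \exists p\!:\!P (f(p) = q) \rrbracket \mono Q$. Applying this to the composite $\epsilon_M \circ Ai : AX \to M$, we obtain
\[
\langle X\rangle = \llbracket \exists y\!:\!AX\,\big(\epsilon_M(Ai(y)) = m\big)\rrbracket.
\]
Next I would invoke the preceding proposition, which identifies $AX$ with $\llbracket\varphi'\rrbracket$ where $\varphi'(y\!:\!AX)$ is the formula $\bigvee_{p\geq 0}\exists a_1,\ldots,a_p\!:\!A,\ x_1,\ldots,x_p\!:\!X\ (y = \sum_{i=1}^p a_i x_i)$. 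Hence the existential over $y:AX$ in the image formula can be rewritten as an existential over the generating data $(a_i,x_i)$, so that the image becomes
\[
\llbracket \bigvee\!\!\!\!\!\phantom{.}_{\phantom{.}_{p\geq 0}} \exists a_1,\ldots,a_p\!:\!A,\ x_1,\ldots,x_p\!:\!X\,\big(\epsilon_M(Ai(\textstyle\sum a_i x_i)) = m\big)\rrbracket.
\]

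The central computation is to verify that $\vDash \epsilon_M(Ai(\sum a_i x_i)) = \sum a_i x_i$, where on the right the $x_i$ are viewed inside $M$ via $i$. This follows from two ingredients: first, naturality of $\eta$ gives $Ai\circ \eta_X = \eta_M \circ i$, so $Ai(\eta_X(x_j)) = \eta_M(i(x_j))$; second, since $Ai$ is an $A$-module homomorphism, $Ai(\sum a_i \eta_X(x_i)) = \sum a_i \eta_M(i(x_i))$; third, Proposition~\ref{free-unit-counit} (or more precisely the fact that $\epsilon_M$ is an $A$-module homomorphism together with $\epsilon_M \circ \eta_M = \id_M$) gives $\epsilon_M(\sum a_i \eta_M(i(x_i))) = \sum a_i i(x_i)$. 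Under the convention that we suppress $i$ when writing variables of type $X$ inside $M$, this is exactly $\sum a_i x_i$.

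Substituting this identity into the displayed image formula collapses the condition to $m = \sum a_i x_i$, yielding
\[
\langle X\rangle = \llbracket \bigvee\!\!\!\!\!\phantom{.}_{\phantom{.}_{p\geq 0}} \exists a_1,\ldots,a_p\!:\!A,\ x_1,\ldots,x_p\!:\!X\,\big(m = \sum a_i x_i\big)\rrbracket = \llbracket\varphi\rrbracket,
\]
which is the desired isomorphism. The main obstacle I anticipate is notational hygiene: one must be careful to distinguish the variable $x_i\!:\!X$ from its images under $\eta_X$ and $i$, and to justify that ``existing in $AX$'' is replaceable by ``existing in the generating data'' using the geometric-category description of $AX$ rather than a set-theoretic argument. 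Once this bookkeeping is in place, the proof is a direct chase through the naturality square of $\eta$ and the identity $\epsilon_M \circ \eta_M = \id_M$.
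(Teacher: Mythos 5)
Your proposal is correct and follows essentially the same route as the paper: both describe $\langle X\rangle$ internally as the image $\{m\!:\!M\mid\exists x\!:\!AX\,(\epsilon_M(Ai(x))=m)\}$, expand $x$ as a finite sum $\sum a_i\eta_X(x_i)$ via the preceding proposition, and reduce to the identity $\epsilon_M(Ai(\sum a_i\eta_X(x_i)))=\sum a_i i(x_i)$. The only cosmetic difference is that you derive this identity from naturality of $\eta$ together with $\epsilon_M\circ\eta_M=\id_M$, whereas the paper reads it off directly from the triangle $\epsilon_M\circ Ag\circ\eta_X=g$ with $g=i$; these are the same computation.
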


\begin{proof}
    Write $i:X\mono M$. We have that $\langle X\rangle=\{m\!:\!M|\exists x\!:\!AX(\epsilon_M(Ai(x))=m)\}$. Writing variable $x\!:\!AX$, we also have $\vDash\bigvee\!\!\!\!\!\phantom{.}_{\phantom{.}_{p\geq0}}\exists a_1,...,a_p\!:\!A,x_1,...,x_p\!:\!X(x=\sum_{i=1}^pa_i\eta_Xx_i)$. 
    
    Fix a natural number $p$. For variables $a_1,...,a_p\!:\!A,x_1,...,x_p\!:\!X$, 
    \begin{align}
        \vDash\big((x=\sum_{i=1}^pa_i\eta_Xx_i)\big)\Rightarrow(\epsilon_M(Ai(x))=\epsilon_M(Ai(\sum_{i=1}^pa_i\eta_Xx_i))&=\sum_{i=1}^pa_i(\epsilon_M\circ Ai\circ\eta_X)(x_i)\notag\\&=\sum_{i=1}^pa_ii(x_i))\notag
    \end{align} which we can shorten to $\vDash(x=\sum_{i=1}^pa_i\eta_Xx_i)\Rightarrow(\epsilon_M(Ai(x))=\sum_{i=1}^pa_ii(x_i))$. Hence, by \textbf{T55}, we have $$\vDash\exists a_1,...,a_p\!:\!A,x_1,...,x_p\!:\!X((x=\sum_{i=1}^pa_i\eta_Xx_i)\Rightarrow(\epsilon_M(Ai(x))=\sum_{i=1}^pa_ii(x_i))).$$ 
    
    So, $\vDash\bigvee\!\!\!\!\!\phantom{.}_{\phantom{.}_{p\geq0}}\exists a_1,...,a_p\!:\!A,x_1,...,x_p\!:\!X\big(\epsilon_M(Ai(x))=\sum_{i=1}^pa_ii(x_i))\big)$. Now, for $m\!:\!M$, clearly the formula $\exists x\!:\!AX(\epsilon_M(Ai(x))=m$ is equivalent to the formula $\bigvee\!\!\!\!\!\phantom{.}_{\phantom{.}_{p\geq0}}\exists a_1,...,a_p\!:\!A,x_1,...,x_p\!:\!X\big(\exists x\!:\!AX(\epsilon_M(Ai(x))=m\big)$. Hence, 
    \begin{align}
        \langle X\rangle&=\{m\!:\!M|\exists x\!:\!AX(m=\epsilon_M(Ai(x)))\}\notag\\
        &=\{m\!:\!M|\bigvee\!\!\!\!\!\phantom{.}_{\phantom{.}_{p\geq0}}\exists a_1,...,a_p\!:\!A,x_1,...,x_p\!:\!X\big(\exists x\!:\!AX(m=\epsilon_M(Ai(x))\big)\}\notag\\
        &=\{m\!:\!M|\bigvee\!\!\!\!\!\phantom{.}_{\phantom{.}_{p\geq0}}\exists a_1,...,a_p\!:\!A,x_1,...,x_p\!:\!X\big(\exists x\!:\!AX\big((x=\sum_{i=1}^pa_i\eta_Xx_i)\land(m=\epsilon_M(Ai(x))\big)\big)\}\notag\\
        &=\{m\!:\!M|\bigvee\!\!\!\!\!\phantom{.}_{\phantom{.}_{p\geq0}}\exists a_1,...,a_p\!:\!A,x_1,...,x_p\!:\!X\big(\exists x\!:\!AX\big((x=\sum_{i=1}^pa_i\eta_Xx_i)\land(m=\sum_{i=1}^pa_ii(x_i))\big)\big)\}\notag\\
        &=\{m\!:\!M|\bigvee\!\!\!\!\!\phantom{.}_{\phantom{.}_{p\geq0}}\exists a_1,...,a_p\!:\!A,x_1,...,x_p\!:\!X\big(\exists x\!:\!AX(m=\sum_{i=1}^pa_ix_i)\big)\},\notag
    \end{align} omitting the monic $i$ in our notation. Here we have used that, if $\vDash\forall x\varphi(x)$, then $\{a|\exists x\psi\}=\{a|\exists x(\psi\land\varphi)\}$ for any variable $a$ of some type $A$. 
\end{proof}




\subsection{Constructing the tensor product as a quotient of the free module}

We begin with the ideas of \cite{lang}. Fix $A\in\text{Ring}(A)$. Let $M$ be a right $A$-module, $N$ a left $A$-module. Define the following maps in $\E$:
\begin{itemize} 
    \item $a:M\times A\times N\xrightarrow{\mu_M}M\times N$; 
    \item $b:M\times A\times N\xrightarrow{\mu_N}M\times N$; 
    \item $c:M\times A\times N\cong(M\times N)\times A\xrightarrow{\mu_{M\times N}}M\times N$;
    \item $d:M\times M\times N\xrightarrow{+_M}M\times N$; 
    \item $e:M\times M\times N\xrightarrow{(\id_M^2,\Delta)}M\times M\times N\times N\cong M\times N\times M\times N\xrightarrow{+_{M\times N}}M\times N$; 
    \item $f:M\times N\times N\xrightarrow{+_N}M\times N$; 
    \item $g:M\times N\times N\xrightarrow{(\Delta,\id_M^2)}M\times M\times N\times N\cong M\times N\times M\times N\xrightarrow{+_{M\times N}}M\times N$. 
\end{itemize} 

We can form four morphisms $a-c,b-c:M\times A\times N\xrightarrow{\mu_M}M\times N,d-e:M\times M\times N\xrightarrow{+_M}M\times N,f-g:M\times N\times N\xrightarrow{+_M}M\times N$. Let $S=\im(a-c)\cup\im(b-c)\cup\im(d-e)\cup\im(f-g)$. We can consider $S$ as a subobject of the free module $A(M\times N)$ (by composing $\eta_{M\times N}$ with the inclusion $S\xmono{i}M\times N$). Hence, we can form $A$-submodule $\langle S\rangle$ of $A(M\times N)$ generated by $S$ (from Lemma \ref{submodule-generated-free}, $\langle S\rangle=\im(Ai)$). 

From \cite{handbook} Proposition 6.10.3, 
\begin{align}
    S&=\im(a-c)\cup\im(b-c)\cup\im(d-e)\cup\im(f-g)\notag\\&=\{x\,|\,x\in\im(a-c)\lor x\in\im(b-c)\lor x\in\im(d-e)\lor x\in\im(f-g)\}\notag\\&=\{x\,|\,\exists m\exists n\exists a(x=(am,n)-a(m,n))\lor\,\exists m\exists n\exists a(x=(m,an)-a(m,n))\notag\\
    &\lor\exists m,m'\exists n(x\!=\!(m+m',n)\!-\!(m,n)\!-\!(m',n))\lor\exists m\exists n,n'(x\!=\!(m,n+n')\!-\!(m,n)\!-\!(m,n'))\}.\notag
\end{align}

\begin{proposition}
    Given $M,N\in A\M$, we have $$M\otimes_AN\cong A(M\times N)/\langle S\rangle,$$ with $\otimes$ as the composite $M\times N\xmono{\eta_{M\times N}}A(M\times N)\epi M\otimes_AN$. 
\end{proposition}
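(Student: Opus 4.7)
The plan is to verify that $T := A(M \times N)/\langle S \rangle$, equipped with $\otimes' := q \circ \eta_{M \times N}$ (where $q : A(M \times N) \epi T$ is the quotient projection), satisfies the universal property of the tensor product; the uniqueness of universal objects will then produce the claimed isomorphism $T \cong M \otimes_A N$ identifying $\otimes'$ with $\otimes$.

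First I would check that $\otimes'$ is a balanced product. By the very definition of $S$, each of the morphisms $a - c$, $b - c$, $d - e$, $f - g$ factors through $S \mono \langle S \rangle$, so composing with $q$ yields the zero morphism. Interpreting these vanishings in the internal logic with variables $m, m' \!:\! M$, $n, n' \!:\! N$, $x \!:\! A$, they translate directly into the three balanced-product axioms (B1), (B2), (B3) for $\otimes'$.

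Next, given any $P \in \text{Ab}(\E)$ and balanced product $\beta : M \times N \to P$, the free-module adjunction supplies a unique $A$-homomorphism $\hat\beta : A(M \times N) \to P$ with $\hat\beta \circ \eta_{M \times N} = \beta$. To descend $\hat\beta$ to $\tilde\beta : T \to P$, I need to show $\hat\beta$ vanishes on $\langle S \rangle$. Both $\hat\beta|_{\langle S \rangle}$ and the zero map are $A$-homomorphisms $\langle S \rangle \to P$, so Proposition \ref{handbooktwo-generated} reduces this to the assertion $\hat\beta|_S = 0$; a case split over the four disjuncts defining $S$ reduces this in turn to the three balanced-product axioms for $\beta$. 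For example, on $\im(d-e)$ one computes, reasoning in the internal language, $\hat\beta(\eta(m+m', n) - \eta(m, n) - \eta(m', n)) = \beta(m+m', n) - \beta(m, n) - \beta(m', n) = 0$, and the other three cases are analogous. Uniqueness of $\tilde\beta$ follows from $q$ being epic together with the uniqueness in the free adjunction.

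The main obstacle is the descent step, namely converting ``$\hat\beta$ vanishes on the generating subobject $S$'' into ``$\hat\beta$ vanishes on the generated submodule $\langle S \rangle$''. This is exactly the content of Proposition \ref{handbooktwo-generated}: two $A$-homomorphisms defined on a generated submodule coincide as soon as they coincide on the generating subobject. Once that is secured, verifying the balanced-product axioms for $\otimes'$ and extracting $\tilde\beta$ from $\beta$ are routine internal-logic manipulations along the lines already developed in the paper.
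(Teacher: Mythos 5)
Your argument matches the paper's proof essentially step for step: lift a balanced product through the free-module adjunction, show it vanishes on $S$ by a case split over the four defining disjuncts, upgrade this to vanishing on $\langle S\rangle$ via Proposition \ref{handbooktwo-generated}, descend to the quotient, and conclude by uniqueness of the universal object. The only (harmless) differences are that you explicitly verify that $q\circ\eta_{M\times N}$ is itself a balanced product --- a point the paper leaves implicit --- and that you derive uniqueness of the induced map from $q$ being epic together with Proposition \ref{handbooktwo-free}, where the paper instead cites Proposition \ref{handbooktwo-tensor}.
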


\begin{proof}
    We follow \cite{conrad}. Suppose we have balanced product $M\times N\xrightarrow{f}P$. Viewing it as a morphism in $\E$, the free-forgetful adjunction gives us a corresponding $A$-module homomorphism $\hat{f}:A(M\times N)\rightarrow P$, s.t. $\vDash\hat{f}(\eta_{M\times N}(m,n))=f(m,n)$, i.e. $\hat{f}\circ\eta_{M\times N}=f$.

    To argue that $\hat{f}$ is zero on $S$, we need only show that $f$ is zero on $S$, by Proposition \ref{handbooktwo-generated}. We can use \cite{handbook} Proposition 6.8.5 (T66) to pull out the existential quantifiers of the above statement of $S$ to obtain, for variable $x\!:\!S$,
    \begin{align}
        \vDash\exists m,m'\exists n,n'\exists a\big(&x=(am,n)-a(m,n))\lor(x=(m,an)-a(m,n))\notag\\&\lor(x=(m+m',n)-(m,n)-(m',n))\notag\\&\lor(x=(m,n+n')-(m,n)-(m,n'))\big).\notag
    \end{align} Applying $f$ to a variable $x\!:\!S$ in the above expression yields zero,  so we obtain
    $$\vDash\exists m,m'\exists n,n'\exists a\big(f(x)=0\lor f(x)=0\lor f(x)=0\lor f(x)=0\big),$$ so $\vDash\exists m,m'\exists n,n'\exists a(f(x)=0)$. As $m,n',n,n',a$ aren't variables of the formula $f(x)=0$, this implies $\vDash f(x)=0$, by \cite{handbook} Proposition 6.8.1 (T49). So $\hat{f}$ is zero on $S$ as a subgroup of $A(M\times N)$. By Proposition \ref{handbooktwo-generated}, it is zero on $\langle S\rangle$. So $\hat{f}$ factors through $q:A(M\times N)\rightarrow A(M\times N)/\langle S\rangle$. So we have $\hat{f}':M\otimes_AN\rightarrow P$ such that $\hat{f}'\circ q=\hat{f}$, so $\hat{f}'\circ q\circ\eta_{M\times N}=\hat{f}\circ\eta_{M\times N}$; in other words, $\hat{f}'\circ\otimes=f$. Uniqueness comes from Proposition \ref{handbooktwo-tensor}.
\end{proof}

We provide an alternative proof of Proposition \ref{variables-type-tensor-II}, given our topos $\E$ is a geometric category: 

\begin{proof}
    As $q$ is an epimorphism, we have $\vDash\forall x\!:\!M\otimes_AN(\exists\tilde{x}\!:\!A(M\times N)(x=q(\tilde{x})))$. This implies $\vDash\forall x\!:\!M\otimes_AN\big(\bigvee\!\!\!\!\!\phantom{.}_{\phantom{.}_{p\geq0}}\exists m_1,...,m_p\!:\!M,\exists n_1,...,n_p\!:\!N(x=q(\sum_{i=1}^pa_i(m_i,n_i))=\sum_{i=1}^pa_iq(m_i,n_i)=\sum_{i=1}^pa_i(m_i\otimes n_i)=\sum_{i=1}^p(a_im_i)\otimes n_i)\big)$, as $q$ is a $A$-module homomorphism. Given $a_im_i\!:\!M$ for all $a_i\!:\!A,m_i\!:\!M$, we have our result. 
\end{proof}

\subsection{Constructing the tensor product as a coequaliser}

Here, we follow \cite{lang} more closely. Let $M,N\in A\M$. Recall the 7 maps $a,b,c,d,e,f,g$ defined in the previous subsection. These induce maps $Aa,Ab,Ac:A(M\times A\times N)\rightarrow A(M\times N)$; $Ad,Ae:A(M\times M\times N)\rightarrow A(M\times N)$; $Af,Ag:A(M\times N\times N)\rightarrow A(M\times N)$. Form the diagram $D$ in $A\M$ with these four objects and seven morphisms.

\begin{proposition}
    We have $M\otimes_AN\cong\emph{colim}D$, i.e. the colimit
    \[\begin{tikzpicture}[scale=2]
	\node (A2) at (0,0.7) {$A(M\times A\times N)$};
        \node (B1) at (1,1.4) {$A(M\times M\times N)$};
        \node (B3) at (1,0) {$A(M\times N\times N)$};
	\node (C2) at (2,0.7) {$A(M\times N)$};
	\node (D2) at (3.2,0.7) {$M\otimes_AN$};
	\draw[->]
	(C2) edge node [above] {$q$} (D2)
        (A2) edge node [above] {$Aa,Ab,Ac$} (C2);
    \begin{scope}[transform canvas={yshift=.4em}]
        \draw [->] (A2) edge node [above] {} (C2);
    \end{scope}
    \begin{scope}[transform canvas={yshift=-.4em}]
        \draw [->] (A2) edge node [below] {} (C2);
    \end{scope}
    \begin{scope}[transform canvas={xshift=1.0em}]
        \draw [->] (B1) edge node [above right] {$Ad,Ae$} (C2);
    \end{scope}
    \begin{scope}[transform canvas={xshift=0em}]
        \draw [->] (B1) edge node [below left] {} (C2);
    \end{scope}
    \begin{scope}[transform canvas={xshift=1.0em}]
        \draw [->] (B3) edge node [below right] {$Af,Ag$} (C2);
    \end{scope}
    \begin{scope}[transform canvas={xshift=0em}]
        \draw [->] (B3) edge node [above left] {} (C2);
    \end{scope}
\end{tikzpicture}\]
\end{proposition}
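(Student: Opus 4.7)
The plan is to compute $\colim D$ directly from the shape of $D$ and then identify it with the quotient description $M\otimes_AN \cong A(M\times N)/\langle S\rangle$ established in the previous proposition. The key observation is that $A(M\times N)$ is the common codomain of every morphism in $D$, while the three other objects appear only as sources.

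First, I would note that a cocone on $D$ with apex $P$ in $A\M$ is completely determined by the single morphism $\phi:A(M\times N)\to P$ that is its component at $A(M\times N)$: the remaining three components are forced to equal $\phi\circ Aa=\phi\circ Ab=\phi\circ Ac$, $\phi\circ Ad=\phi\circ Ae$, and $\phi\circ Af=\phi\circ Ag$ respectively, and the requirement that these equalities hold is the only constraint imposed on $\phi$. Since $A\M$ is abelian, these four coequalisation conditions are equivalent to $\phi$ vanishing on the $A$-submodule
$$K:=\im(Aa-Ab)+\im(Ab-Ac)+\im(Ad-Ae)+\im(Af-Ag)\subseteq A(M\times N),$$
whence $\colim D\cong A(M\times N)/K$, with universal cocone given by the quotient map.

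Second, I would identify $K$ with the submodule $\langle S\rangle$ from the previous subsection. By $A$-linearity, each $\im(Ax-Ay)$ is the $A$-submodule generated by its values on the generators of the free source module; evaluating $Ax-Ay$ on a generator of type $(m,a,n)$, $(m,m',n)$, or $(m,n,n')$ produces precisely an element of the generating family of $\langle S\rangle$ listed in the construction of the quotient (for instance $(Aa-Ac)(m,a,n)=(ma,n)-a\cdot(m,n)$ matches exactly one of the four types of relation in $S$, and symmetrically the generators of $S$ all arise in this way). Hence $K=\langle S\rangle$, and the previous proposition concludes $\colim D\cong A(M\times N)/\langle S\rangle\cong M\otimes_AN$.

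I expect the main obstacle to be the bookkeeping required to match the generators of each $\im(Ax-Ay)$ with those of the corresponding component of $\langle S\rangle$: one must carefully unwind the free-module structure together with the naturality of $\eta_{M\times N}$ to verify that each relation in the generating family of $\langle S\rangle$ genuinely lies in one of the four images, and vice versa. Element-wise this can be carried out using Proposition~\ref{disjunction-generated} when $\E$ is a geometric category; alternatively, one can bypass the submodule comparison altogether by a direct check of universal properties, showing that morphisms out of either quotient into an $A$-module $P$ correspond bijectively (via the free--forgetful adjunction) to balanced products $M\times N\to P$, so that both quotients represent the same functor and are therefore canonically isomorphic.
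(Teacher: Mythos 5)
Your proposal is correct, but your main argument takes a genuinely different route from the paper's. The paper does not compute $\colim D$ as an explicit quotient: it first checks that $q$ coequalises each parallel pair (transporting the identities $\otimes\circ d=\otimes\circ e$, etc., across the naturality square for $\eta$ and using Proposition~\ref{handbooktwo-free}, i.e.\ that $A$-homomorphisms out of a free module are determined on generators), and then proves universality by passing an arbitrary coequalising $q'$ through the free--forgetful adjunction to $q'\circ\eta_{M\times N}$, observing that the coequalising hypotheses force this to be a balanced product, and invoking the universal property of $\otimes$. That is precisely the alternative you sketch in your final sentence. Your primary route --- compute $\colim D\cong A(M\times N)/K$ with $K=\im(Aa-Ab)+\im(Ab-Ac)+\im(Ad-Ae)+\im(Af-Ag)$, then identify $K$ with $\langle S\rangle$ and quote the quotient description of $M\otimes_AN$ --- is also sound, and has the virtue of isolating the purely formal computation of a colimit over a diagram all of whose arrows point into a single object. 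Its cost is exactly the submodule comparison you flag: since $\eta_{M\times N}$ is not additive, $(Aa-Ac)(\eta(x))=\eta(a(x))-\eta(c(x))$ is a formal difference of generators rather than the generator $\eta((a-c)(x))$, so the generating relations of $\langle S\rangle$ must be read as such formal differences in $A(M\times N)$ for the identification $K=\langle S\rangle$ to go through; the paper's representable-functor argument sidesteps this comparison entirely by never leaving the level of maps out of the candidate objects. Either way the conclusion, including compatibility with $q$, is the same.
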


\begin{proof}
    From Proposition \ref{tensor-identities}, we know that $\otimes\circ d=\otimes\circ e$, i.e. $q\circ\eta_{M\times N}\circ d=q\circ\eta_{M\times N}\circ e$; hence, by the naturality of $\eta$, $q\circ Ad\circ\eta_{M^2\times N}=q\circ Ae\circ\eta_{M^2\times N}$. From Proposition \ref{handbooktwo-free}, $q\circ Ad=q\circ Ae$. A similar argument shows that $q$ equalises $Af$ and $Ag$, and each pair of $Aa,Ab,Ac$. 
    
    Suppose there exists some object $T'$ with $A$-module homomorphism $A(M\times N)\xrightarrow{q'}T'$ such that $q'$ equalises our maps. Then by the free-forgetful adjunction, $q'\in\Hom_A(A(M\times N),T')$ induces $\hat{q'}=q'\circ\eta_{M\times N}\in\E(M\times N,T')$. But from the fact that $q'$ equalises $Ad$ and $Ae$, it must equalise $Ad\circ\eta_{M^2\times N},Ae\circ\eta_{M^2\times N}$; hence, it equalises $\eta_{M\times N}\circ d,\eta_{M\times N}\circ e$. By similar arguments, it equalises $\eta_{M\times N}\circ a,\eta_{M\times N}\circ b$ and $\eta_{M\times N}\circ f,\eta_{M\times N}\circ g$. Therefore, $q'\circ\eta_{M\times N}$ is a balanced product. Hence, $\hat{q'}=q'\circ\eta_{M\times N}$ induces unique map $M\otimes_AN\rightarrow T'$ which makes our diagram commute.   
\end{proof}

\section{Algebra over a ring object}

\subsection{Internal hom and tensor product}

It is a simple exercise to check that $A$-action on a $A$-module $M$ is a balanced product in $A$ and $M$ whenever $A$ is commutative. Therefore, we also have the following result which shows that any $A$-action on a $A$-module factors through the tensor:

\begin{lemma}
    Let $M\in A\M$, so that $M$ is equipped with $A$-action $\mu_M:A\times M\rightarrow M$. Then we have action $\hat{\mu}_M:A\otimes_AM\rightarrow M$ so that $\hat{\mu}_M\circ\otimes=\mu_M$. 
\end{lemma}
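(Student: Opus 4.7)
The plan is to apply the universal property of the tensor product directly, after verifying that the action map $\mu_M: A \times M \to M$ is itself a balanced product in the sense of the three axioms B1, B2, B3 stated earlier. This is essentially a tautology: the balanced product axioms for a map out of $A \times M$ (with $A$ a right $A$-module by its own multiplication, $M$ the given left $A$-module) are built to match the defining identities of a left $A$-module action.

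First I would reason in the internal logic of $\E$, using variables $a,a'\!:\!A$ and $m,m'\!:\!M$. The three axioms to verify are: B1, $\vDash \mu_M(a+a',m) = \mu_M(a,m) + \mu_M(a',m)$, which is immediate from the distributivity of the action over addition in $A$; B2, $\vDash \mu_M(a, m+m') = \mu_M(a,m) + \mu_M(a,m')$, which is additivity of the action in the second variable; and B3, $\vDash \mu_M(a \cdot a', m) = \mu_M(a, a' m)$, which is the associativity axiom $(aa')m = a(a'm)$ of the left $A$-module structure, where the left-hand side uses the right $A$-action on $A$ (just multiplication). Each of these is a module axiom, so no new content is required, and each is a validity that holds in $\E$ since $M$ is an $A$-module object.

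Having checked that $\mu_M$ is a balanced product, I would invoke the universal property of the tensor product (established in the Appendix as the correspondence $\mathrm{Bil}_A(A,M;M) \cong \Hom_{\mathbf{Z}}(A \otimes_A M, M)$) to obtain a unique group homomorphism $\hat{\mu}_M: A \otimes_A M \to M$ such that $\hat{\mu}_M \circ \otimes = \mu_M$. Concretely, $\vDash \hat{\mu}_M(a \otimes m) = am$ for all $a\!:\!A$, $m\!:\!M$, by Proposition \ref{handbooktwo-tensor}.

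There is essentially no obstacle here; the only mild subtlety is making clear which side of $A$ is being used as a right $A$-module so that $A \otimes_A M$ is well-defined (one uses multiplication on $A$ as the right action, in which case B3 is just associativity of the left action on $M$). No commutativity hypothesis on $A$ is actually needed for this step, although if one wishes $\hat{\mu}_M$ to be interpreted as an $A$-module homomorphism (rather than just a group homomorphism), one can then appeal to Corollary \ref{induced A-hom} to upgrade $\hat{\mu}_M$ accordingly.
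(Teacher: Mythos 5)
Your proof is correct and matches the paper's (implicit) argument exactly: the paper merely remarks beforehand that the $A$-action is a balanced product and then lets the universal property of $\otimes_A$ do the rest, leaving the verification of B1--B3 as a ``simple exercise.'' Your further observation that the bare factorisation $\hat{\mu}_M\circ\otimes=\mu_M$ needs no commutativity of $A$ (only the upgrade of $\hat{\mu}_M$ to an $A$-module homomorphism via Corollary \ref{induced A-hom} does) is a correct refinement of the paper's phrasing.
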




\begin{lemma}
    For all $M\in\text{Mod-}A$, we have $M\otimes_AA\cong M$.
\end{lemma}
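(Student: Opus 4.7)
The plan is to produce mutually inverse $A$-module homomorphisms $\phi:M\otimes_A A\to M$ and $\psi:M\to M\otimes_A A$, where the right $A$-module structure on $M\otimes_A A$ comes from viewing $A$ as an $A$-$A$-bimodule (as in the proposition immediately preceding this lemma). The maps should be the expected ones: $\phi(m\otimes a)=ma$ and $\psi(m)=m\otimes 1_A$.

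First I would construct $\phi$ via the universal property of the tensor product. The right action $\mu_M:M\times A\to M$ is a balanced product: axioms B1 and B2 are immediate from the right-$A$-module axioms on $M$ (bilinearity of $+_M$ over $M$ and $A$), and B3 amounts to the associativity identity $(ma)a'=m(aa')$, which holds in any right $A$-module. The universal property then yields a unique group homomorphism $\phi:M\otimes_A A\to M$ with $\vDash\phi(m\otimes a)=ma$ for variables $m\!:\!M,a\!:\!A$. By Corollary \ref{induced A-hom} (and the fact that $A$ is an $A$-$A$-bimodule giving the right $A$-action on $M\otimes_A A$ via $(m\otimes a)\cdot b=m\otimes(ab)$), $\phi$ is in fact a right $A$-module homomorphism, since $\vDash\phi((m\otimes a)\cdot b)=\phi(m\otimes ab)=m(ab)=(ma)b=\phi(m\otimes a)\cdot b$.

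Next I would define $\psi$ as the composite
$$
M\cong M\times 1\xrightarrow{(\id_M,1_A)}M\times A\xrightarrow{\otimes}M\otimes_A A,
$$
so that $\vDash\psi(m)=m\otimes 1_A$. That $\psi$ is a right $A$-module homomorphism follows from Proposition \ref{tensor-identities}: $\vDash\psi(m)\cdot b=(m\otimes 1_A)\cdot b=m\otimes(1_A\cdot b)=m\otimes b=m\otimes(b\cdot 1_A)=mb\otimes 1_A=\psi(mb)$, and additivity in $m$ is automatic by bilinearity of $\otimes$.

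Finally I would verify that $\phi$ and $\psi$ are mutually inverse. The identity $\phi\circ\psi=\id_M$ is immediate: $\vDash\phi(\psi(m))=\phi(m\otimes 1_A)=m\cdot 1_A=m$, so it follows by Proposition \handbooktwo. For $\psi\circ\phi=\id_{M\otimes_A A}$, I would apply Proposition \ref{handbooktwo-tensor} and check on generators: for variables $m\!:\!M,a\!:\!A$,
$$
\vDash\psi(\phi(m\otimes a))=\psi(ma)=ma\otimes 1_A=m\otimes(a\cdot 1_A)=m\otimes a,
$$
where the third equality uses Proposition \ref{tensor-identities}(1) and the last uses the unit axiom in $A$. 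There is no real obstacle; the only thing to be watchful of is keeping the sides of the actions straight, since $M$ is a right module while the left $A$-module structure on $A$ (needed to form the tensor over $A$) is by left multiplication, and the residual right $A$-module structure on $M\otimes_A A$ comes from right multiplication on the second factor.
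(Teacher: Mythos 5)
Your proof is correct, but it takes a genuinely different route from the paper. The paper's proof is a one-line abstract argument: it chains the adjunction $\Hom_A(M\otimes_AA,P)\cong\Hom_A(M,[A,P]_A)$ with the natural isomorphism $[A,P]_A\cong P$ of Proposition~\ref{[A,M]}, and then concludes by a Yoneda-type argument that the representing objects $M\otimes_AA$ and $M$ agree. You instead construct the two mutually inverse maps explicitly --- $\phi(m\otimes a)=ma$ via the universal property (checking that the right action $\mu_M$ is a balanced product, which it is: B1 and B2 are the module axioms and B3 is associativity $(ma)a'=m(aa')$), and $\psi(m)=m\otimes 1_A$ --- and verify the two composites on generators using Propositions~\ref{handbooktwo-tensor} and~\ref{tensor-identities}. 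Both arguments are sound. The paper's approach is shorter and yields naturality in $M$ essentially for free, since everything is phrased at the level of represented functors; your approach produces the explicit form of the isomorphism (useful whenever one actually needs to compute with it, e.g.\ in the identification $[A^{\otimes 0},M]_k\cong M$ at the bottom of the Hochschild complex) and is a good illustration that the element-style internal-logic toolkit of the appendix really does let one transcribe the classical \textbf{Set}-based proof verbatim. One minor remark: Corollary~\ref{induced A-hom} as stated in the paper concerns the \emph{left} module structure on the target, whereas you need the right $A$-linearity of $\phi$ for the bimodule-induced right action on $M\otimes_AA$; but your direct check $\phi((m\otimes a)\cdot b)=m(ab)=(ma)b=\phi(m\otimes a)\cdot b$ on generators, combined with Proposition~\ref{handbooktwo-tensor}, closes that gap without needing the corollary.
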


\begin{proof}
    We have that $\Hom_A(M\otimes_AA,M)\cong\Hom_A(M,[A,M]_A)\cong\Hom_A(M,M)$. By the naturality of these isomorphisms, we have that $M\otimes_AA\cong M$. 
\end{proof}

\subsection{Bimodules over an algebra}

Let $k$ be a commutative ring object in $\E$, and $A$ a (not necessarily commutative) ring object in $k\M$, i.e. $A\in k$-Alg. Write $A$-Mod, $A$-Bimod as the categories of left $A$-modules and $A$-bimodules, respectively. 

\begin{lemma}
    Given $A$-module $M$ and $k$-module $N$, we have $M\otimes_kN\in A\M$. If $M$ is an $A$-bimodule, then $M\otimes_kN\in A\text{-Bimod}$.
\end{lemma}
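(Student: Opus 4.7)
The plan is to mimic the construction given earlier in the paper for the case of $M\otimes_A N$ being a right $B$-module (when $N$ is an $A$-$B$-bimodule), transported to the situation where $k$ is commutative and the action comes from $M$ rather than $N$. Recall that, since $A$ is a $k$-algebra with $k$ central, the inherited $k$-action on $M$ commutes with the $A$-action, i.e.\ $\vDash \lambda(am)=a(\lambda m)=(am)\lambda$ for $\lambda\!:\!k,a\!:\!A,m\!:\!M$; this will be the crucial identity making the construction work.

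First, define $\mu: A\times M\times N\to M\otimes_k N$ as the composite $A\times M\times N\xrightarrow{(l_M,\id_N)} M\times N\xrightarrow{\otimes} M\otimes_k N$, so that $\vDash\mu(a,m,n)=(am)\otimes n$. Let $\hat{\mu}:M\times N\to[A,M\otimes_kN]$ be its adjunct, satisfying $\vDash\hat{\mu}(m,n)(a)=(am)\otimes n$ by Lemma \ref{adjunction}. I would then verify that $\hat{\mu}$ is a $k$-balanced product by checking, for each $a\!:\!A$ and variables $m,m'\!:\!M,n,n'\!:\!N,\lambda\!:\!k$, the identities
\[
\hat{\mu}(m+m',n)(a)=\hat{\mu}(m,n)(a)+\hat{\mu}(m',n)(a),
\]
\[
\hat{\mu}(m,n+n')(a)=\hat{\mu}(m,n)(a)+\hat{\mu}(m,n')(a),
\]
\[
\hat{\mu}(m\lambda,n)(a)=a(\lambda m)\otimes n=(am)\lambda\otimes n=am\otimes\lambda n=\hat{\mu}(m,\lambda n)(a),
\]
where the third line uses centrality of $k$ in $A$ together with the $k$-balancedness of $\otimes$. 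Applying Proposition \ref{internal 6.10.2} and \cite{handbook} (T54) to pull the quantifier $\forall a\!:\!A$ across the equalities, exactly as in the proof of Proposition \ref{tensor-module}, gives the three balanced product axioms for $\hat{\mu}$ as a map into $[A,M\otimes_k N]$.

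By the universal property of $M\otimes_k N$, $\hat{\mu}$ induces a unique group homomorphism $M\otimes_kN\to[A,M\otimes_kN]$, whose adjunct is a map $A\times(M\otimes_k N)\to M\otimes_k N$ satisfying $\vDash a(m\otimes n)=(am)\otimes n$. Associativity and unitality of this $A$-action then follow by Proposition \ref{handbooktwo-tensor}, since both sides of $(aa')(m\otimes n)=a(a'(m\otimes n))$ and $1\cdot(m\otimes n)=m\otimes n$ agree on simple tensors by direct calculation using the associativity and unitality of the $A$-action on $M$.

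For the bimodule case, I would repeat the construction using the right $A$-action $r_M$ to build a right action of $A$ on $M\otimes_kN$ with $\vDash(m\otimes n)\cdot a=(ma)\otimes n$, and then check that the two actions commute, again by reducing via Proposition \ref{handbooktwo-tensor} to the identity $(a(mb))\otimes n=((am)b)\otimes n$ on generators, which is immediate from $M$ being an $A$-bimodule. The main obstacle is purely bookkeeping: correctly juggling the internal-logical adjunction hoists (as in Propositions \ref{internal 6.10.2} and \ref{tensor-module}) so that pointwise identities at $a\!:\!A$ lift to equalities of morphisms into $[A,M\otimes_k N]$, and then translating these back, via the universal property, into module identities on $M\otimes_k N$; once that pattern is set up as in the earlier proofs, no new ideas are needed.
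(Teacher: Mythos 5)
Your proof is correct, but it takes a genuinely different route from the paper's. The paper defines the left action in one line as the composite $A\otimes_k(M\otimes_kN)\cong(A\otimes_kM)\otimes_kN\xrightarrow{\mu_M\otimes\id_N}M\otimes_kN$, and for the right action in the bimodule case shuffles $(M\otimes_kN)\otimes_kA$ through a chain of associativity and symmetry isomorphisms to $(M\otimes_kA)\otimes_kN$; the verification that these give module actions is left as "a simple verification". You instead rebuild the action from the ground up: a map $A\times M\times N\to M\otimes_kN$, its adjunct into $[A,M\otimes_kN]$, a check of the balanced-product axioms (where the key identity is indeed the centrality of the $k$-action, $a(m\lambda)\otimes n=(am)\lambda\otimes n=(am)\otimes\lambda n$), and then the universal property, exactly in the pattern of Proposition \ref{tensor-module}. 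What your approach buys is self-containedness in the internal logic: you never invoke the associator or symmetry of $\otimes_k$, which the paper uses but does not actually construct in the appendix, and every identity is reduced to a computation on simple tensors justified by Proposition \ref{handbooktwo-tensor}. What the paper's approach buys is brevity and the conceptual point that the module structure is transported along canonical coherence isomorphisms. One small omission on your side, at the same level of hand-waving as the paper's "simple verification": besides associativity and unitality you should also note additivity of the action in the $A$-variable, i.e.\ $(a+a')(m\otimes n)=a(m\otimes n)+a'(m\otimes n)$, which again holds on generators since $((a+a')m)\otimes n=(am)\otimes n+(a'm)\otimes n$; this is routine and does not affect the validity of your argument.
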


\begin{proof}
    We let the $A$-module action on $M\otimes_kN$ be the composite $A\otimes_k(M\otimes_kN)\cong(A\otimes_kM)\otimes_kN\xrightarrow{\mu_M\otimes\id_N}M\otimes_kN$, so that $\vDash a(m\otimes n)=(am)\otimes n$. From here, it is a simple verification that this is in fact an $A$-module action. If $M\in A\text{-Bimod}$, we construct the left action as above, and take the appropriate isomorphisms $(M\otimes_kN)\otimes_kA\cong M\otimes_k(N\otimes_kA)\cong M\otimes_k(A\otimes_kN)\cong (M\otimes_kA)\otimes_kN$ for the right action. The result follows. 
\end{proof}

\begin{lemma}
Given $M,N\in A$-Mod, $[M,N]_k\in A\M$. If $N$ is an $A$-bimodule, then $[M,N]_k\in A$-Bimod. 
\end{lemma}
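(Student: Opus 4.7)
The plan is to mirror the argument used earlier in the appendix for the $B$-action on $[M,N]_A$: construct the $A$-action on $[M,N]_k$ by restricting an action already present on the larger internal hom $[M,N]_\textbf{Z}$, exploiting the $k$-algebra compatibility of $A$. Recall from the section on module structure on internal homs that the left $A$-action on $N$ induces a left $A$-action on $[M,N]$ (adjunct to $A \times [M,N] \times M \xrightarrow{(\id_A,\ev)} A \times N \xrightarrow{\mu_N} N$), which in turn factors through $i_\textbf{Z} : [M,N]_\textbf{Z} \mono [M,N]$ by the argument already given there, and is characterised internally by $\vDash (a \cdot f)(m) = a f(m)$.

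The key verification is that this action further factors through the monomorphism $[M,N]_k \xmono{i_k} [M,N]_\textbf{Z}$. For variables $f\!:\![M,N]_k$, $a\!:\!A$, $\lambda\!:\!k$, $m\!:\!M$, we need $\vDash (a \cdot f)(\lambda m) = \lambda (a \cdot f)(m)$. Since $A$ is a $k$-algebra the image of $k$ lies in the centre of $A$, so $\vDash \lambda a = a \lambda$, whence
$$\vDash (a \cdot f)(\lambda m) = a f(\lambda m) = a \lambda f(m) = \lambda a f(m) = \lambda (a \cdot f)(m).$$
Additivity of $a \cdot f$ in $m$ is already inherited from $[M,N]_\textbf{Z}$, so $a \cdot f$ lands in $[M,N]_k$, and the $A$-module axioms for the resulting action are transferred from those for the left $A$-action on $N$ by pointwise internal-logic computations together with Proposition~\ref{internal 6.10.2}.

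For the bimodule claim, assume $N$ carries a commuting right $A$-action $r_N$. I would construct a right $A$-action on $[M,N]_k$ symmetrically: adjunct to $[M,N] \times M \times A \xrightarrow{(\ev,\id_A)} N \times A \xrightarrow{r_N} N$, restricted first to $[M,N]_\textbf{Z}$ and then to $[M,N]_k$ by exactly the same centrality argument, giving $\vDash (f \cdot a)(m) = f(m) a$. The two actions commute because $N$ is itself an $A$-bimodule: for $a,b\!:\!A$, $f\!:\![M,N]_k$, $m\!:\!M$ one has
$$\vDash (a \cdot (f \cdot b))(m) = a(f(m) b) = (a f(m)) b = ((a \cdot f) \cdot b)(m),$$
and Proposition~\ref{internal 6.10.2} promotes this pointwise internal equality to equality as elements of $[M,N]_k$. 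The only real obstacle is clerical, namely keeping track of the successive factorisations through $i_\textbf{Z}$ and $i_k$; once the centrality of $k$ inside $A$ and the bimodule compatibility on $N$ are isolated, everything else reduces to short internal-logic bookkeeping directly parallel to the treatment of $[M,N]_\textbf{Z}$ and $[M,N]_A$ earlier in the appendix.
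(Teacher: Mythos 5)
Your proof is correct, but it takes a different route from the paper's. The paper defines the left $A$-action in one stroke via the tensor--hom adjunction: it transposes the composite $A\otimes[M,N]_k\otimes M\xrightarrow{\id_A\otimes\ev_M}A\otimes N\xrightarrow{\mu_N}N$ through the isomorphism $\Hom(A\otimes[M,N]_k,[M,N]_k)\cong\Hom_k(A\otimes[M,N]_k\otimes M,N)$, so that the resulting map automatically lands in the internal $k$-hom --- the $k$-linearity of $a\cdot f$ is absorbed into the choice of adjunction rather than checked by hand. You instead start from the cartesian-closed action on $[M,N]$ constructed earlier in the appendix, and verify explicitly that it factors through $i_{\mathbf{Z}}$ and then through $i_k$; your centrality computation $\vDash(a\cdot f)(\lambda m)=a\lambda f(m)=\lambda af(m)$ is precisely the content that the paper's adjunction packages away, and it is the right identity to isolate (it is the $k$-bilinearity of the action $A\otimes_kN\to N$, guaranteed because $A$ is a monoid in $k\M$ and $N$ a module over it). For the bimodule claim the paper simply invokes the symmetry $A\otimes_AN\cong N\otimes_AA$, whereas you build the right action symmetrically and check the commutation $(a\cdot(f\cdot b))(m)=((a\cdot f)\cdot b)(m)$ pointwise; both reduce to the bimodule axiom on $N$. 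Your version is longer but more self-contained and more explicit about the successive factorisations, which is in the spirit of the rest of the appendix; the paper's version is slicker but leaves the verification ``We can verify that this corresponds to an $A$-module action'' implicit.
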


\begin{proof}
    We have $\Hom_A(A\otimes_A[M,N]_k,[M,N]_k)\cong\Hom_k(A\otimes_A[M,N]_k\otimes_AM,N)$. The evaluation map $\ev_M:[M,N]_k\otimes_AM\rightarrow N$ is a morphism in both $k\M$ and $A\M$. Hence, we let our $A$-module action on $[M,N]_k$ be the adjunct of $A\otimes_A[M,N]_k\otimes_AM\xrightarrow{\id_A\otimes\ev_M}A\otimes_AN\xrightarrow{\mu_N}A$, so that $\vDash(af)(m)=af(m)$. We can verify that this corresponds to an $A$-module action. If $N\in A$-Bimod, we construct the left action as above, and take the isomorphism $A\otimes_AN\cong N\otimes_AA$ for the right action. The result follows. 
\end{proof}

\subsection{Enriched projectivity}

Recall that $M\in k\M$ is projective iff the functor $\Hom_k(M,-)$ is exact. We say that $M$ is \emph{internally projective} if the functor $[M,-]_k$ is exact (Definition \ref{internal-proj}), and that $M$ is \emph{enriched projective} if it is both internally projective and projective (Definition \ref{enriched-proj}).

\begin{proposition}
    If $M,N\in k\M$ are enriched projective, then so is $M\otimes_kN$. \label{tensor-projective}
\end{proposition}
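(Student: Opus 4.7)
The strategy is to use the tensor--hom adjunction in both its external and internal forms together with the hypotheses that $M$ and $N$ are enriched projective. Concretely, we must verify two conditions: that $\Hom_k(M\otimes_kN,-)$ is exact (ordinary projectivity) and that $[M\otimes_kN,-]_k$ is exact (internal projectivity).

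The central observation is that, because $k\M$ is symmetric monoidal closed (Corollary~\ref{monoidal-closed}), we have, in addition to the external isomorphism
$$\Hom_k(M\otimes_kN,P)\cong\Hom_k(M,[N,P]_k),$$
a natural isomorphism of internal homs
$$[M\otimes_kN,P]_k\cong[M,[N,P]_k]_k.$$
The external form is the defining adjunction of the tensor product. The internal form is its standard consequence in a symmetric monoidal closed category, and can be obtained by applying the external adjunction to $Q\otimes_k(M\otimes_kN)\cong(Q\otimes_kM)\otimes_kN$ for arbitrary $Q\in k\M$ and invoking Yoneda.

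Granting these two adjunctions, the proof is a two-step composition. Given a short exact sequence $0\to P'\to P\to P''\to 0$ in $k\M$, internal projectivity of $N$ makes $[N,-]_k$ exact, yielding the short exact sequence $0\to[N,P']_k\to[N,P]_k\to[N,P'']_k\to 0$. Applying $[M,-]_k$ then preserves exactness by the internal projectivity of $M$, and by the internal adjunction we conclude that $[M\otimes_kN,-]_k$ preserves short exact sequences, so $M\otimes_kN$ is internally projective. An entirely parallel argument with $\Hom_k(M,-)$ replacing $[M,-]_k$ (using the external adjunction together with the ordinary projectivity of $M$ and the internal projectivity of $N$) shows that $\Hom_k(M\otimes_kN,-)$ is exact, so $M\otimes_kN$ is projective.

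The main obstacle is verifying the internal adjunction $[M\otimes_kN,P]_k\cong[M,[N,P]_k]_k$; although this is a standard consequence of symmetric monoidal closedness, the appendix only records the external adjunction explicitly, so one must either invoke the abstract closed-category machinery or write out the Yoneda argument above. Once this is in place, everything reduces to composing two exact functors, and no further delicate manipulation (in internal logic or otherwise) is required.
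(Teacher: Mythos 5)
Your proposal is correct and follows essentially the same route as the paper: compose the exactness of $[N,-]_k$ with that of $\Hom_k(M,-)$ (resp.\ $[M,-]_k$) and transport along the external (resp.\ internal) tensor--hom adjunction. Your remark about the internal adjunction $[M\otimes_kN,P]_k\cong[M,[N,P]_k]_k$ is apt --- the paper's proof also relies on it, asserting it in a displayed diagram without separate justification, so your Yoneda argument fills in a step the paper leaves implicit.
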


\begin{proof}
    Suppose $M,N$ are \emph{enriched projective} in $k\M$. We want to show that $\Hom_k(M\otimes_kN,-)$ and $[M\otimes_kN,-]_k$ are exact.  

    Let $0\rightarrow A\rightarrow B\rightarrow C\rightarrow 0$ be a short exact sequence of $k$-modules. By the exactness of $[N,-]_k$, we have short exact sequence $$0\to[N,A]_k\to[N,B]_k\to[N,C]_k\to0.$$ By the exactness of $\Hom_k(M,-)$ we have short exact sequence 
\[\begin{tikzpicture}[scale=2]
	\node (A1) at (0,0.5) {$0$};
	\node (A2) at (0,0) {$0$};
	\node (B1) at (1.2,0.5) {$\Hom_k(M,[N,A]_k)$};
	\node (B2) at (1.2,0) {$\Hom_k(M\otimes_kN,A)$};
        \node
    (C1) at (3,0.5) {$\Hom_k(M,[N,B]_k)$};
	\node (C2) at (3,0) {$\Hom_k(M\otimes_kN,B)$};
        \node
    (D1) at (4.8,0.5) {$\Hom_k(M,[N,C]_k)$};
	\node (D2) at (4.8,0) {$\Hom_k(M\otimes_kN,C)$.};
	\draw[->]
    (A1) edge node [above] {} (B1)
	(A2) edge node [below] {} (B2)
	(B1) edge node[sloped] [above] {$\simeq$} (B2)
    (B1) edge node [above] {} (C1)
	(B2) edge node [below] {} (C2)
	(C1) edge node[sloped] [above] {$\simeq$} (C2)
    (C1) edge node [above] {} (D1)
	(C2) edge node [below] {} (D2)
	(D1) edge node[sloped] [above] {$\simeq$} (D2);
\end{tikzpicture}\]
So $\Hom_k(M\otimes_kN,-)$ is right exact, i.e. $M\otimes_kN$ is projective. 

By the exactness of $[M,-]_k$, we have short exact sequence 
    \[\begin{tikzpicture}[scale=2]
	\node (A1) at (0,0.5) {$0$};
	\node (A2) at (0,0) {$0$};
	\node (B1) at (1,0.5) {$[M,[N,A]_k]_k$};
	\node (B2) at (1,0) {$[M\otimes_kN,A]_k$};
        \node
    (C1) at (2.4,0.5) {$[M,[N,B]_k]_k$};
	\node (C2) at (2.4,0) {$[M\otimes_kN,B]_k$};
        \node
    (D1) at (3.8,0.5) {$[M,[N,C]_k]_k$};
	\node (D2) at (3.8,0) {$[M\otimes_kN,C]_k$.};
        \node (E1) at (4.8,0.5) {$0$};
	\node (E2) at (4.8,0) {$0$};
	\draw[->]
    (A1) edge node [above] {} (B1)
	(A2) edge node [below] {} (B2)
	(B1) edge node[sloped] [above] {$\simeq$} (B2)
    (B1) edge node [above] {} (C1)
	(B2) edge node [below] {} (C2)
	(C1) edge node[sloped] [above] {$\simeq$} (C2)
    (C1) edge node [above] {} (D1)
	(C2) edge node [below] {} (D2)
	(D1) edge node[sloped] [above] {$\simeq$} (D2)
    (D1) edge node [above] {} (E1)
	(D2) edge node [below] {} (E2);
\end{tikzpicture}\] So $[M\otimes_kN,-]_k$ is also right exact, i.e. $M\otimes_kN$ is internally projective. 
\end{proof}





\end{document}